\numberwithin{equation}{section}       
\newtheorem{theorem}{Theorem}[section]
\newtheorem{proposition}[theorem]{Proposition}
\newtheorem{lemma}[theorem]{Lemma}
\newtheorem{corollary}[theorem]{Corollary}
\theoremstyle{definition}
\newtheorem{definition}[theorem]{Definition}
\theoremstyle{plain}
\theoremstyle{remark}
\newtheorem{remark}[theorem]{Remark}
\newtheorem{question}[theorem]{Question}
\newcommand{\C}{{\mathbb C}}
\newcommand{\Q}{{\mathbb Q}}
\newcommand{\R}{{\mathbb R}}
\newcommand{\Z}{\mathbb Z}
\DeclareMathOperator{\mymod}{\,mod}
\DeclareMathOperator{\re}{Re}
\newcommand{\bP}{{\mathbb P}}
\newcommand{\cO}{\mathcal{O}}
\newcommand{\cP}{\mathcal{P}}
\newcommand{\norm}[2][{}]{\left\|#2\right\|_{#1}}
\newcommand{\abs}[2][{}]{\left|#2\right|_{#1}}
\newcommand{\pair}[2]{\left\langle #1,#2 \right\rangle}
\newcommand{\proofof}[1]{\noindent{\bf Proof of #1. }}
\newcommand{\tto}{\dashrightarrow}
\newcommand{\suppress}[1]{{\bf\textcolor{red}{TEXT SUPPRESSED HERE.}}}
\newcommand{\mats}{\mathop{\mathrm{Mat}}}
\newcommand{\cvgts}{\mathop{\mathrm{Cvgt}}(\theta)}
\newcommand{\bbq}{\bar{\Q}}
\newcommand{\iprod}[2]{\left\langle{#1,#2}\right\rangle}
\newcommand{\mc}{\mathcal}
\newcommand{\supp}{\mathop{\mathrm{supp}}}
\newcommand{\cone}{k_1} 
\newcommand{\ctwo}{k_2} 
\newcommand{\firstgap}{p(n)} 
\newcommand{\secondgap}{q(n)} 
\newcommand{\rhoconst}{k_0} 
\newcommand{\inv}{g}                
\newcommand{\momap}{h}              
\newcommand{\crit}{{\mathrm{Crit}}} 
\newcommand{\ind}{\mathrm{Ind}}     
\newcommand{\torus}{\mathbb{T}}     
\newcommand{\bk}{\Bbbk}             
\newcommand{\ddeg}{\lambda}         
\renewcommand{\dim}{d}              
\newcommand{\nn}{N}                 
\newcommand{\op}{A}                 
\newcommand{\maxeval}{\xi_{\max}}    
\newcommand{\bmaxeval}{\bar{\xi}_{\max}}  
\newcommand{\evals}{\xi}            
\newcommand{\eval}{\xi}             
\newcommand{\rvecs}{\mathcal{U}}    
\newcommand{\rvec}{u}               
\newcommand{\cvecs}{\mathcal{V}}    
\newcommand{\cvec}{v}               
\newcommand{\cmax}{\cvec_{\max}}         
\newcommand{\bcmax}{{\bar\cvec}_{\max}}   
\newcommand{\rmax}{\rvec_{\max}}         
\newcommand{\brmax}{{\bar\rvec}_{\max}}  
\newcommand{\fnal}{\Psi}            
\newcommand{\discty}{\mathrm{Dsc}}  
\newcommand{\wt}{\psi}              
\newcommand{\dvec}{w}               
\newcommand{\dvecs}{\mathcal{W}}    
\newcommand{\rfn}{\sigma}           
\newcommand{\imunit}{{\mathsf{i}}}
\newcommand{\SL}{\operatorname{SL}}
\newcommand{\place}{\nu}            
\newcommand{\val}{v}                   
\title{Birational maps with transcendental dynamical degree}
\author{Jason P. Bell}
\address{Department of Pure Mathematics\\
University of Waterloo\\
Waterloo, ON N2L 3G1\\
Canada}
\email{jpbell@uwaterloo.ca}
\author{Jeffrey Diller}
\address{Department of Mathematics\\
  University of Notre Dame\\
  Notre Dame, IN 46556\\
  USA}
\email{diller.1@nd.edu}
\author{Mattias Jonsson}
\address{Dept of Mathematics\\
  University of Michigan\\
  Ann Arbor, MI 48109-1043\\
  USA}
\email{mattiasj@umich.edu}
\author{Holly Krieger}
\address{Department of Pure Mathematics and Mathematical Statistics\\ University of Cambridge\\ Cambridge CB3 0WB\\ UK}
\email{hkrieger@dpmms.cam.ac.uk}
\subjclass[2010]{32H50 (primary), 37F10, 11J81, 14E05 (secondary)}
\keywords{Dynamical degree, birational maps, transcendence, Diophantine approximation}
\begin{document}
\begin{abstract}
  We give examples of birational selfmaps of $\mathbb{P}^d, d \geq 3,$ whose dynamical degree is a transcendental number. This contradicts a conjecture by Bellon and Viallet. The proof uses a combination of techniques from algebraic dynamics and diophantine approximation.
\end{abstract}

\maketitle

\setcounter{tocdepth}{1}
\tableofcontents

\section{Introduction}
\label{sec:intro}
The \emph{first dynamical degree} of a rational map $f\colon\bP^\dim\tto\bP^\dim$ is the quantity
$$
\ddeg(f) := \lim_{n\to\infty} \deg(f^n)^{1/n},
$$
where $f^n$ denotes the $n$th iterate of $f$, and $\deg(f^n) := \deg f^{-n}(H)$ is the preimage of a general hyperplane $H\subset\bP^\dim$.  The limit defining $\ddeg(f)$ always exists, and its value is a fundamental invariant for the dynamics of $f$.  For many rational maps, one has that $\ddeg(f) = \deg(f)$ is an integer.  In many other situations, it is known that $\ddeg(f)$ is the largest eigenvalue of some integer matrix.  It is also known~\cite{BF00,Ure18} that the first dynamical degree ranges through only countably many possible values in general.  

The values are not, however, limited to roots of integer polynomials.  In~\cite{BDJ20} the first three authors presented examples of rational self-maps $f\colon\bP^2\tto\bP^2$ whose first dynamical degrees are transcendental.  These examples are non-invertible.  For many purposes, both theoretical and applied, it is more natural to consider invertible dynamical systems.  However, the lack of invertibility in dimension two is essential to produce examples of self-maps with transcendental first dynamical degree, since~\cite{DiFa01} shows that the first dynamical degree of a birational surface map is always an algebraic integer; see also~\cite{BlCa16}. The same is true for polynomial automorphisms of $\mathbb{A}^3$ in characteristic zero~\cite{DaFa21}.

In fact, it was conjectured in~\cite{BeVi98} that the first dynamical degree of a birational map is always algebraic. Here we resolve that conjecture in the negative.  Specifically, we show that there are birational maps $f\colon\bP^\dim\tto\bP^\dim$, $\dim\geq 3$, whose first dynamical degrees are transcendental.

While we build on the methods introduced in~\cite{BDJ20}, we take a different approach to deriving the crucial power series formula for the dynamical degrees of our maps, and we obtain a substantially more general transcendence result.  The list of examples we obtain is infinite, but not completely explicit, and there remain some very interesting further questions.
As in~\cite{BDJ20}, our examples are based on \emph{monomial maps}, i.e.\ maps $\momap_\op\colon\bP^\dim\tto\bP^\dim$ whose components $\momap_{\op,j} = x_1^{a_{j1}} \dots x_\dim^{a_{j\dim}}$ are monomials with exponents specified by the $j$th row of a $\dim\times \dim$ integer matrix $\op$.  Since we
aim to construct birational maps, we will always take $\op\in {\rm SL}_\dim(\Z)$.  Our main theorem may be stated as follows.\footnote{J.~Blanc informed us that he has independently been able to modify the construction of~\cite{BDJ20} to obtain birational maps with dynamical degrees satisfying a power series formula similar to \eqref{eqn:pwrseriesformula}.  These maps might also serve to produce transcendental examples.}

\begin{theorem}
\label{thm:mainapp} For each $d\geq 3$, there exists a birational involution $\inv\colon\bP^\dim\tto\bP^\dim$ and matrices $\op\in {\rm SL}_\dim(\Z)$ such that the birational maps $f\colon\bP^\dim\tto\bP^\dim$ given by 
\begin{equation}
\label{eqn:fformula}
f = \inv\circ\momap_\op
\end{equation}
have transcendental dynamical degree $\ddeg(f)$.
\end{theorem}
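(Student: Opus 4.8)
The plan is to build on the degree-growth analysis of \cite{BDJ20}, passing from non-invertible surface maps to birational maps of $\bP^\dim$ with $\dim\ge 3$, via three new ingredients: a carefully chosen birational involution $\inv$ that makes $f=\inv\circ\momap_\op$ birational while keeping its degree sequence computable; a direct, algorithmic derivation of the power series identity \eqref{eqn:pwrseriesformula} for $\ddeg(f)$; and a Diophantine-approximation argument forcing the resulting number to be transcendental. Concretely I would take $\inv$ to be a simple involution of $\bP^\dim$ whose indeterminacy and exceptional loci are unions of coordinate subspaces, so that the interaction of $\inv$ with the monomial map $\momap_\op$ on the torus $(\C^*)^\dim$ is explicit; and I would take $\op\in\SL_\dim(\Z)$ with a dominant real eigenvalue $\ddeg_0>1$ whose expanding eigendirection has coordinates subject to a prescribed arithmetic condition. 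The point of requiring $\dim\ge 3$ is that only then is there room for $\op$ to carry such an eigenstructure while the indeterminacy of $\inv$ injects a sparse but essential sequence of corrections into the degree growth, whose accumulation pushes $\ddeg(f)$ off the set of algebraic numbers; on surfaces this cannot happen, since \cite{DiFa01} forces $\ddeg$ to be an algebraic integer.

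The first step is to compute $\deg(f^n)$ by resolving the composition $f^n$ and tracking multiplicities along the exceptional divisors it produces. The expected shape of the answer is
\[
  \deg(f^n)=\bigl\|\op^{\,b_k}E\,\op^{\,b_{k-1}}E\cdots E\,\op^{\,b_1}v_0\bigr\|,
\]
where $E$ is a fixed nonnegative integer matrix recording the effect of $\inv$, $v_0$ a fixed integer vector, $b_1+\cdots+b_k$ is the largest index not exceeding $n$ at which a distinguished orbit meets the indeterminacy locus of $\inv$, and the gap sequence $(b_j)_{j\ge 1}$ arises from a greedy rule: $b_{j+1}$ is the number of further iterations of $\momap_\op$ allowed before the next such collision. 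Summing over $n$ turns this into \eqref{eqn:pwrseriesformula}, which presents (say) $1/\ddeg(f)$ as the unique root in $(0,1)$ of an explicit power series whose nonzero coefficients occur precisely at the exponents $b_1+\cdots+b_j$.

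For the transcendence conclusion, \eqref{eqn:pwrseriesformula} is used to express a simple function of $\ddeg(f)$, such as $1/\ddeg(f)$, either as the value at an algebraic argument of a lacunary power series whose nonzero terms sit at the exponents $b_1+\cdots+b_j$, or, equivalently, as a real number admitting super-polynomially good rational approximations $p_j/q_j$ with $\lvert 1/\ddeg(f)-p_j/q_j\rvert\le q_j^{-\kappa_j}$ and $\kappa_j\to\infty$. A classical Diophantine criterion --- Roth's theorem in the second formulation, a transcendence theorem for gap series in the first --- then shows $\ddeg(f)$ is transcendental, provided $(b_j)$ grows fast enough. So the task reduces to arranging, for a suitable pair $(\inv,\op)$, that $(b_j)$ is lacunary; this is achieved by choosing the expanding eigenvector of $\op$ so that the distinguished orbit's approach to the indeterminacy locus of $\inv$ is governed by the continued-fraction expansion of a ratio of its coordinates, and taking that ratio to be an irrational with rapidly growing partial quotients.

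The main obstacle --- where the bulk of the work lies --- is the self-consistency of the construction: the gap sequence $(b_j)$ that must be made lacunary is itself an output of the dynamics of $f$, so one has to verify simultaneously (i) that the greedy collision rule genuinely computes $\deg(f^n)$, that is, that in the monomial expressions for $f^n$ no unanticipated common factors appear and the distinguished orbit suffers no accidental early collision with the indeterminacy locus of $\inv$ --- a genericity, or no-resonance, condition on the orbit of the exceptional data of $\inv$ under the toric action of $\op$ --- and (ii) that the arithmetic imposed on the eigenvector of $\op$ forces the $b_j$ to grow fast enough for the Diophantine input to apply. Requirements (i) and (ii) pull in opposite directions, and reconciling them for one admissible pair $(\inv,\op)$ in each dimension $\dim\ge 3$ --- higher dimensions being reached by a direct analogue of the construction --- is the crux. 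Once both are in force, transcendence of $\ddeg(f)$ is immediate from the approximation argument above.
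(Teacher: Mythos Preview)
Your proposal has a genuine gap at its core. You want to arrange that the gap sequence $(b_j)$ is lacunary by choosing the expanding eigenvector of $\op$ so that a ratio of its coordinates is an irrational with rapidly growing partial quotients. But $\op\in\SL_\dim(\Z)$ forces every such ratio to be an \emph{algebraic} number, and whether any real algebraic number of degree $\ge 3$ has unbounded partial quotients is a well-known open problem. Worse, Roth's theorem applied to that algebraic ratio bounds how quickly the distinguished orbit can approach the indeterminacy locus of $\inv$: the approximation exponent is at most $2+\varepsilon$, which in your scheme translates into $b_{j+1}\ll b_j$, not the super-polynomial growth you need for a Roth-type or gap-series transcendence criterion on $1/\ddeg(f)$. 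In other words, the very Diophantine input you plan to invoke at the end (Roth) obstructs the Diophantine hypothesis you need at the start (lacunarity of $(b_j)$). The paper flags exactly this tension: see the Question after Theorem~\ref{thm:mainb}, where it is noted that even deciding whether the relevant algebraically-determined angle is well or badly approximable is open.

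The paper circumvents this entirely, and the mechanism is quite different from what you sketch. First, $\op$ is chosen with leading eigenvalues a \emph{complex conjugate pair} $\maxeval,\bmaxeval$ (not a dominant real eigenvalue): the irregularity in the degree sequence comes from the irrational rotation by $\theta=\tfrac1{2\pi}\arg\maxeval$, which governs which vector in $\rvecs$ realizes the maximum in $\fnal_{\rvecs,\cvecs}(\op^j)=\sum_{\cvec\in\cvecs}\max_{\rvec\in\rvecs}\pair{\rvec}{\op^j\cvec}$. Second, the resulting power series \eqref{eqn:pwrseriesformula} is not lacunary, and the transcendence argument does not use Roth or gap series. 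Instead it uses Evertse's quantitative $S$-unit bound (Theorem~\ref{thm:Evertse}) together with the unit-equation finiteness of Evertse--Schlickewei--Schmidt (Theorem~\ref{thm:unitequations}); these are strong enough to handle the case where $\theta$ is badly approximable, which Roth alone cannot. Third, a separate ``discordance'' condition on $\op$ (arranged by conjugating within $\SL_\dim(\Z)$, via Theorem~\ref{thm:noresonances}) replaces your no-resonance requirement~(i) and is what makes the argument go through uniformly in~$\theta$.
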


All maps in the theorem have coefficients in $\{-1,0,1\}$, so the field of definition for $f$ can be taken to be any field of characteristic different from $2$. The involution $g$ is explicit, given at the beginning of \S\ref{sec:involution},
but the matrix $\op$ is not.  As we will explain in more detail shortly, we begin with a suitable particular element of ${\rm SL}_\dim(\Z)$, and take $\op$ to be a large enough power of a fairly general conjugate of of this element.
However, at the end of this article in \S\ref{sec:specific} we explain how one can check, with some computer assistance, that the conclusion of Theorem~\ref{thm:mainapp} applies for a particular matrix $\op$.  See~\eqref{eqn:eg} for the precise matrix we consider.

To compute $\deg(f^n)$ for the maps $f$ in Theorem~\ref{thm:mainapp}, we use that by duality, $\deg(f^n)$ is also equal to the intersection number between a fixed hyperplane $H\subset\bP^\dim$ and the forward image $f^n(\ell)$ of a general line $\ell$.  As we explain in \S\ref{sec:geometry}, it is convenient for tracking the successive images of $\ell$ to regard iterates of $f$ as maps between various toric blowups of $\bP^\dim$.  We then show in \S\ref{sec:bregs} that for suitable $\op$, the dynamical degree of $f$ satisfies an equation involving a power series with integer coefficients.  

To state the precise formula, we introduce some notation.  Let $\dim$ be a positive integer, and $\rvecs,\cvecs\subset \Z^\dim$ finite, non-empty sets of non-zero vectors. For any $\dim\times\dim$ integer matrix $\op$, we set
$$
\fnal_{\rvecs,\cvecs}(\op) := \sum_{\cvec\in\cvecs}\max_{\rvec\in\rvecs}\pair{\rvec}{\op \cvec},
$$
where $\langle\cdot,\cdot\rangle$ denotes the standard bilinear pairing on $\Z^\dim$.
The resulting integer-valued function is piecewise linear in the entries of $\op$.  The main result of \S\ref{sec:bregs} is as follows.

\begin{theorem}
\label{thm:degformula} Suppose that $\tilde \op\in {\rm SL}_\dim(\Z)$ has irreducible characteristic polynomial and eigenvalues of largest magnitude equal to a complex conjugate pair $\maxeval,\bmaxeval$ with $\maxeval^j\notin\R$ for any non-zero $j\in\Z$.  If $\op = \tilde \op^N$ for large enough $N\in\Z_{\geq 0}$, and $f$ is given by~\eqref{eqn:fformula}, then 
$\ddeg = \ddeg(f)$ satisfies
\begin{equation}
\label{eqn:pwrseriesformula}
\sum_{j=1}^\infty 
\fnal_{\rvecs,\cvecs}(\op^j)\ddeg^{-j} = 1,
\end{equation}
where $\rvecs,\cvecs\subset\Z^\dim$ are finite sets of vectors that depend only on the dimension $\dim$.
\end{theorem}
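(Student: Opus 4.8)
The plan is to compute $\deg(f^n)$ inside the toric blowups of $\bP^\dim$ constructed in \S\ref{sec:geometry}, using the duality recalled there: for a general line $\ell\subset\bP^\dim$ and a fixed hyperplane $H$ one has $\deg(f^n)=\overline{f^n(\ell)}\cdot H$, so it is enough to track the curve $C_n:=\overline{f^n(\ell)}$ and its strict transforms in these toric models. The point of the toric model is that the monomial factor $\momap_\op$ is rigid on curves: it simply transports the toric (tropical) data of a curve by the linear map $\op$ on $\Z^\dim$. Hence all of the nontrivial behaviour of the iterates comes from the fixed birational involution $\inv$, whose indeterminacy locus $\ind(\inv)$ and exceptional locus $\crit(\inv)$ have, in toric coordinates, a combinatorial description depending only on $\dim$. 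I would package that description into the two finite vector sets: $\rvecs$, recording the primitive generators attached to $H$ and to the components of $\crit(\inv)$ --- and containing in particular the rays $e_1,\dots,e_\dim,-(e_1+\dots+e_\dim)$ of the fan of $\bP^\dim$ --- and $\cvecs$, recording the generators attached to $\ell$ and to the branches of $\ind(\inv)$. With this bookkeeping in place, $\max_{\rvec\in\rvecs}\pair{\rvec}{\op^j\cvec}$ is exactly the intersection number of the relevant toric divisor with the $\momap_{\op^j}$-image of a curve through the $\cvec$-branch, so $\fnal_{\rvecs,\cvecs}(\op^j)$ records the total degree contribution of ``generation $j$''.

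Next I would derive a recursion. Writing $C_n=\overline{\inv(\momap_\op(C_{n-1}))}$, the map $\momap_\op$ carries the data of $C_{n-1}$ forward by $\op$, and then $\inv$ contributes, for each component of $\crit(\inv)$ that is met, a correction governed by the $\cvecs$-data of $\ind(\inv)$ and the $\rvecs$-data of $H$ and $\crit(\inv)$. Telescoping this one step at a time --- a contribution first injected at step $n-j$ and then carried forward by $j$ further applications of $\momap_\op$ is transported by $\op^j$, hence weighted by $\fnal_{\rvecs,\cvecs}(\op^j)$ --- I expect to obtain, for all $n$ outside a bounded initial range, the identity
\[
\deg(f^n)=\sum_{j=1}^{n}\fnal_{\rvecs,\cvecs}(\op^j)\,\deg(f^{n-j}),
\]
equivalently $D(t)\bigl(1-P(t)\bigr)=Q(t)$, where $D(t):=\sum_{n\geq 0}\deg(f^n)t^n$, $P(t):=\sum_{j\geq 1}\fnal_{\rvecs,\cvecs}(\op^j)t^j$, and $Q$ is a polynomial. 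The essential feature is that the ``reach-back'' is genuinely unbounded: every component of $\crit(\inv)$ ever produced keeps meeting the indeterminacy of later iterates, because the $\momap_\op$-orbit of its image never settles into a periodic pattern. This is exactly where $\maxeval^j\notin\R$ enters --- the dominant part of the $\op$-dynamics rotates by an angle that is an irrational multiple of $\pi$, so the orbit of directions is aperiodic and no resonance can collapse the identity to one with finitely many terms.

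Finally I would read off \eqref{eqn:pwrseriesformula} by elementary analysis. Since $\rvecs$ contains the rays of the fan of $\bP^\dim$, which positively span $\R^\dim$, one has $\max_{\rvec\in\rvecs}\pair{\rvec}{w}>0$ for every $w\neq 0$, so each coefficient $\fnal_{\rvecs,\cvecs}(\op^j)$ is a positive integer; and since the characteristic polynomial of $\tilde\op$ is irreducible, no $\cvec\in\cvecs$ lies in a proper $\tilde\op$-invariant subspace, so $\op^j\cvec=\tilde\op^{Nj}\cvec$ picks up the dominant eigenvalue and $\limsup_j\fnal_{\rvecs,\cvecs}(\op^j)^{1/j}=|\maxeval|^N$, with $\maxeval^j\notin\R$ also ruling out $\fnal_{\rvecs,\cvecs}(\op^j)\,|\maxeval|^{-Nj}\to 0$. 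Hence $P$ has radius of convergence $|\maxeval|^{-N}$ and increases continuously and strictly from $0$ to $+\infty$ on $(0,|\maxeval|^{-N})$, so $1-P$ has a unique zero $t_\ast$ there and no zero in $\{|t|<t_\ast\}$. Analyticity of $Q/(1-P)$ on $\{|t|<t_\ast\}$ shows the radius of convergence of $D$ is at least $t_\ast$; evaluating the identity $D(t)(1-P(t))=Q(t)$ at $t_\ast$, where $Q(t_\ast)\neq 0$ (checked directly; indeed $Q\equiv 1$ when the recursion holds from $n=1$ onward), shows it is at most $t_\ast$. That radius equals $1/\ddeg(f)$ by definition of the dynamical degree, so $P\bigl(1/\ddeg(f)\bigr)=1$, which is \eqref{eqn:pwrseriesformula}.

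The main obstacle is the recursion step. What must really be proved is that the resolutions of $f,f^2,f^3,\dots$ assemble into a single tower of blowups over a fixed toric blowup of $\bP^\dim$, with every infinitely near centre created by arbitrarily high iterates staying distinct from the others and in sufficiently general position relative to the toric boundary, so that no correction survives beyond the polynomial $Q$. It is exactly for this that $\op$ must be a large power $\tilde\op^N$: a large $N$ spreads the $\momap_{\tilde\op}$-orbits of $\ind(\inv)$ and $\crit(\inv)$ apart over the finitely many early steps at which an accidental collision could still occur, while the aperiodicity supplied by $\maxeval^j\notin\R$ excludes collisions at all later times. Carrying out this orbit control is the technical core of \S\ref{sec:bregs}; everything after it is bookkeeping of intersection numbers together with the elementary analysis above.
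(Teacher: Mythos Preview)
Your overall architecture matches the paper's: track a general line by its tropical/Minkowski-weight measure $\mu_{C}$ on $N$, use that $\momap_\op$ acts by $\op_*$ while $\inv$ adds $(\deg C)\,\mu_{\cvecs}$, derive a convolution recursion, and read off~\eqref{eqn:pwrseriesformula} from the radius of convergence of the generating series. Two points, however, are off in ways that matter.

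\medskip
\textbf{The recursion is not homogeneous.} The paper does not obtain $\deg(f^n)=\sum_{j=1}^n\Psi(\op^j)\deg(f^{n-j})$ with a polynomial remainder. What one actually gets (by integrating $\wt$ against the measure identity for $\mu_{\momap(f^n(\mathsf{L}))}$) is
\[
\deg(\momap\circ f^n)=\deg \momap^n+\sum_{j=0}^{n-1}\Psi(\op^{n-j})\deg(\momap\circ f^j),
\]
so in your notation $D(t)(1-P(t))=b(t)$ where $b(t)=\sum_{n\ge0}(\deg \momap^n)t^n$ is a genuine power series, not a polynomial (in particular $Q\not\equiv 1$). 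This does not break the argument, because $b$ has radius of convergence $|\maxeval|^{-N}>\ddeg(f)^{-1}$ and $b(0)=1$, so $D=b/(1-P)$ still forces the unique positive zero of $1-P$ to equal $\ddeg(f)^{-1}$. But the identity you wrote, and the claim that $Q(t_\ast)\ne0$ is ``checked directly,'' needs this correction. Relatedly, your description of $\rvecs$ is wrong: $\rvecs$ is \emph{not} the set of ray generators of the fan of $\bP^\dim$; it is the vertex set~\eqref{eq:rvecs} of the support function $\wt$ of a hyperplane, and it does \emph{not} positively span $\R^\dim$. The positivity $\Psi(\op^j)>0$ comes instead from $\cvecs$ spanning $N$ and $\mu_\cvecs$ being balanced (Lemma~\ref{lem:dn}), not from any spanning property of $\rvecs$.

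\medskip
\textbf{What large $N$ is really for.} You describe the role of $\op=\tilde\op^N$ as ``spreading the $\momap_{\tilde\op}$-orbits of $\ind(\inv)$ and $\crit(\inv)$ apart'' to avoid mutual collisions of infinitely near centres. That is not the obstruction, and not the mechanism. The geometric input needed for the recursion (Theorem~\ref{thm:degformula2}) is that for every $n\ge1$ and every $\cvec\in\cvecs\cup\cP$, the vector $\op^n\cvec$ lies in the \emph{interior of a $\dim$-dimensional cone} of $\Sigma(\bP^\dim)$; this is what makes $\inv$ an isomorphism near the relevant poles (Lemma~\ref{lem:sigma}) so that Proposition~\ref{prop:sigpush} applies cleanly at every step. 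Equivalently, one must show $\tilde\op^n\cvec$ eventually avoids each of the finitely many \emph{rational hyperplanes} bounding those cones. This is Proposition~\ref{prop:powerup}, and its proof is arithmetic: the sequence $\langle\rvec,\tilde\op^n\cvec\rangle$ is an integer linear recurrence, so by Skolem--Mahler--Lech its zero set is a finite union of arithmetic progressions; one then uses the hypothesis $\maxeval^j\notin\R$ (irrational rotation in the leading eigenplane) to rule out any such progression. Taking $N$ larger than the finitely many exceptional indices makes $\op=\tilde\op^N$ satisfy the cone-interior condition for all $n\ge1$. There is no ``collision between distinct orbits'' issue to resolve; the whole point is a single-orbit avoidance of fixed rational walls.
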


The particular sets $\rvecs,\cvecs$ referred to in this theorem are given in~\eqref{eq:wt} and~\eqref{eq:cvecs}.  Regardless, for any fixed $\rvec\in\Z^\dim$, the sequence $\sum_{\cvec\in\cvecs} \pair{\rvec}{\op^j \cvec}$ is an integer linear recurrence, so if $\rvecs$ contained only one element, the power series~\eqref{eqn:pwrseriesformula} would define a rational function with integer coefficients, and it would follow that $\ddeg$ is algebraic.  For $\rvecs$ as given, however, the coefficients $\fnal_{\rvecs,\cvecs}(\op^j)$ are obtained by maximizing over several integer linear recurrences.   The condition on the leading eigenvalue of $\op$ guarantees that the largest among them varies irregularly as $j$ increases.  Under these circumstances it would seem difficult for $\ddeg$ and the value of the series to be simultaneously algebraic.  The following result solidifies this intuition.  Together with Theorem~\ref{thm:degformula}, it suffices for establishing Theorem~\ref{thm:mainapp}. 
Here we say that $z,w\in\C$ have an angular resonance if $z^a\bar w^b \in \R$ for some integers $a,b>0$.

\begin{theorem}
\label{thm:main}
Let $\tilde \op \in {\rm SL}_\dim(\Z)$ be a matrix with irreducible characteristic polynomial.  Suppose that there are no angular resonances between distinct eigenvalues of $\tilde \op$ and that the eigenvalues of largest magnitude are a complex conjugate pair $\maxeval$, $\bmaxeval$.
Then, for any finite sets $\rvecs,\cvecs\subset\Z^\dim\setminus\{0\}$ with $\#\rvecs\geq 2$, there exist matrices $\op\subset {\rm SL}_\dim(\Z)$ conjugate to $\tilde \op$ such that 
\begin{equation}
\label{eqn:theseries}
\sum_{j=1}^\infty \fnal_{\rvecs,\cvecs}(\op^{Nj}) x^j
\end{equation}
is transcendental for any $N\geq 1$ and any real $x\in\bbq\cap (0,|\maxeval|^{-N})$.
\end{theorem}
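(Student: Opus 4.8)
The plan is to reduce the transcendence statement to a Diophantine-approximation argument in the spirit of the Subspace Theorem, applied to the partial sums of the series \eqref{eqn:theseries}. First I would analyze the structure of the coefficients: for each fixed $\rvec\in\rvecs$ the sum $c_\rvec(j) := \sum_{\cvec\in\cvecs}\pair{\rvec}{\op^{Nj}\cvec}$ is an integer linear recurrence whose characteristic roots are among the $N$th powers of the eigenvalues of $\tilde\op$, and whose dominant term is governed by the conjugate pair $\maxeval^N,\bmaxeval^N$. Thus $c_\rvec(j) = \Re\!\left(\gamma_\rvec\maxeval^{Nj}\right) + O(|\eval_2|^{Nj})$ for suitable $\gamma_\rvec\in\C$ depending on $\rvec$ and on the conjugating matrix, where $|\eval_2|<|\maxeval|$ is the next eigenvalue magnitude. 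The coefficient $\fnal_{\rvecs,\cvecs}(\op^{Nj}) = \max_\rvec c_\rvec(j)$ therefore equals $\max_\rvec \Re(\gamma_\rvec\maxeval^{Nj})$ up to an exponentially smaller error, and which index $\rvec$ achieves the maximum is decided by comparing the arguments $\arg\gamma_\rvec + Nj\arg\maxeval$ modulo $2\pi$. The absence of angular resonances (and the freedom to conjugate $\tilde\op$ by a generic element, which lets us prescribe the $\gamma_\rvec$ up to the constraints forced by rationality) ensures that $Nj\arg\maxeval$ is irrationally distributed mod $2\pi$ and that the winning index changes infinitely often in an aperiodic fashion; this is the mechanism that will obstruct the series from being algebraic.

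Next I would set up the contradiction: suppose $x\in\bbq\cap(0,|\maxeval|^{-N})$ and the sum $S$ of \eqref{eqn:theseries} were algebraic, and work in the number field $K = \Q(x, S, \maxeval)$ (or a suitable completion, treating the relevant archimedean and non-archimedean places). The key point is that the tail $\sum_{j>n}\fnal_{\rvecs,\cvecs}(\op^{Nj})x^j$ is extremely small — of size roughly $(|\maxeval|^N x)^n$ — because $x<|\maxeval|^{-N}$, while the partial sum $S_n = \sum_{j\le n}\fnal_{\rvecs,\cvecs}(\op^{Nj})x^j$ is a rational number with controlled denominator (a power of the denominator of $x$). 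So $S_n$ is a very good rational approximation to $S$. If $S$ were algebraic, then since the recurrences $c_\rvec$ live in $K$, one should be able to write $\fnal_{\rvecs,\cvecs}(\op^{Nj})$, on each "block" of $j$ where a single index $\rvec$ wins, as the evaluation of a fixed linear recurrence, and hence approximate $S$ even better using Padé-type / linear-recurrence rational functions; the mismatch between these two approximation schemes at the (infinitely many, aperiodically spaced) block boundaries is what I would exploit. Concretely, I would feed the vectors built from $\{x^j\}$, $\{\maxeval^{Nj}\}$, $\{\bmaxeval^{Nj}\}$, $\{|\eval_2|^{Nj}\}$ and $S_n$ into the Subspace Theorem (in the form used in \cite{BDJ20}, over the number field $K$ with a finite set of places including the archimedean one and those dividing the denominator of $x$ and the relevant eigenvalues), obtaining that infinitely many of the approximating vectors lie in a fixed proper linear subspace.

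Finally I would derive the contradiction from that subspace relation. Lying in a fixed hyperplane forces a nontrivial $K$-linear dependence, holding for infinitely many $n$, among the quantities $S_n$, $x^n$, $\maxeval^{Nn}$, $\bmaxeval^{Nn}$ and lower-order exponential terms. Because $x$, $|\maxeval|$ and $|\eval_2|$ have distinct absolute values and $\maxeval/\bmaxeval$ is not a root of unity (no angular resonance with itself), such a relation can hold for infinitely many $n$ only if its coefficients conspire in a way that, after passing to a sub-sequence where the winning index $\rvec$ is constant, forces the series to agree with a single linear-recurrence-generated rational function of $x$ on that sub-sequence — contradicting that the winning index genuinely changes infinitely often (which is where $\#\rvecs\ge 2$ and the irrational distribution of $Nj\arg\maxeval$ are essential). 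The main obstacle I anticipate is precisely the bookkeeping at this last stage: one must choose the conjugating matrices $\op$ so that the constants $\gamma_\rvec$ are in "general position" — distinct moduli or distinct arguments among the competing $\rvec$'s — so that the max is genuinely attained by different indices on a positive-density, aperiodically-patterned set of $j$, and then show that no single exponential-polynomial identity can reproduce this pattern; verifying that a generic conjugate achieves general position, and that the resulting aperiodicity is incompatible with the Subspace-Theorem subspace, is the technical heart of the argument.
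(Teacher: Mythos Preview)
Your proposal captures the right high-level picture --- the coefficients are piecewise governed by linear recurrences, with the ``piece'' selected by an irrational rotation --- but there are two genuine gaps that would prevent the argument from going through as written, and one structural slip.

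First, a small but real slip: you write $\fnal_{\rvecs,\cvecs}(\op^{Nj})=\max_{\rvec}c_\rvec(j)$ with $c_\rvec(j)=\sum_{\cvec}\langle\rvec,\op^{Nj}\cvec\rangle$, but the definition is $\fnal(\op)=\sum_{\cvec}\max_\rvec\langle\rvec,\op\cvec\rangle$. The max sits inside the sum over $\cvec$, so the ``winning index'' is a function $\cvec\mapsto\rvec(\cvec)$ rather than a single $\rvec$. This changes the combinatorics (the resulting $\gamma$ is a sum of step functions, one per $\cvec$) though not the overall philosophy.

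The first serious gap is the approximation scheme. Using raw partial sums $S_n$ gives only $|S-S_n|\asymp(|\maxeval|^Nx)^n$, which is merely geometric and does not beat the height growth of $S_n$ (roughly $R^n$ for some $R>1$); no Liouville- or Subspace-type contradiction results. The paper instead takes $n$ to be a continued-fraction convergent denominator of $\theta=\tfrac{1}{2\pi}\arg\maxeval$ and compares $\Omega$ with the series whose coefficients are the \emph{$n$-periodic extension} of the first $n$ coefficients (Equation~\eqref{eq:Onb}). Because $\|n\theta\|$ is tiny for such $n$, the first change of ``winning index'' after index $n$ is pushed very far out, so these approximations are dramatically better than $S_n$. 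Your mention of ``Pad\'e-type / linear-recurrence rational functions'' gestures in this direction, but without the continued-fraction input there is no mechanism producing approximations of the required quality.

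The second serious gap is the ``general position'' step for the conjugating matrix. You say the freedom to conjugate lets you prescribe the $\gamma_\rvec$; this is not enough. What is actually needed (and what the paper calls \emph{discordance}) is that the discontinuities of the selector function lie outside the subgroup of $\R/\Z$ generated by $\theta$ in a precise sense, so that vanishing subsums in the Evertse/Subspace argument cannot be caused by arithmetic coincidences between $\theta$ and the jump locations. The paper shows (Theorem~\ref{thm:noresonances}, via Corollary~\ref{cor:notintegers}) that one can choose the conjugate $Y$ so that the relevant algebraic numbers $\rfn(Y,\cvec,\dvec)$ and their ratios are \emph{not units} in $\cO_K$; this is what forces discordance, because $\maxeval/\bmaxeval$ \emph{is} a unit. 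Proving this requires a nontrivial argument that a nonconstant rational function on $\SL_\dim$ cannot take only unit values on $\SL_\dim(\Z)$ (Lemmas~\ref{lem:nonconst2}--\ref{lem:manyprimes}), together with congruence-subgroup bookkeeping. ``Generic conjugate'' in any naive Zariski sense does not give this, since the unit condition is not Zariski-closed.
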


Note that there is no angular resonance between the leading
eigenvalues $\maxeval$ and $\bmaxeval$ if and only if
$\maxeval^j\notin \R$ for any non-zero $j\in\Z$.  When $\dim=3$ the
remaining eigenvalue is real, so this is the entire content of the no
angular resonance requirement.  When $\dim>3$, the requirement is more
restrictive and implies in particular that $\op$ has at most one real
eigenvalue.  Note also that the condition on $x$ implies that it
belongs to the domain of convergence of the series
\eqref{eqn:theseries}.  Indeed it follows from Corollary \ref{cor:xiandbarxi} below and irreducibility of the characteristic polynomial of $\tilde\op$ that the radius of convergence of the series is \emph{exactly} $|\maxeval|^{-N}$, though we only need to know that that it is at least this large. 

To prove Theorem~\ref{thm:main} we note that the dynamics of the linear map $\op$ on $\Z^\dim\subset\C^\dim$ can be understood by diagonalizing $\op$.  Write $\maxeval = |\maxeval|e^{2\pi\imunit\theta}$, where the normalized argument $\theta\in\R$ is irrational by hypothesis on $\maxeval$.  Using our assumptions on the spectrum $(\eval_1=\maxeval,\eval_2=\bmaxeval,\eval_3,\dots,\eval_\dim)$ of $\op$, we show that for large enough $j\in\Z_{\geq 0}$, we have
$$
\fnal_{\rvecs,\cvecs}(\op^j) = \pair{\gamma(j\theta)}{(\eval_1^j,\ldots ,\eval_\dim^j)},
$$
for some $1$-periodic, piecewise constant function $\gamma\colon\R\to \bbq^\dim$. The hypotheses on $A$, $\rvecs$, $\cvecs$ imply, however, that $\gamma$ is not (globally) constant.

Theorem~\ref{thm:main} then reduces to the following theorem, which we prove in \S\ref{sec:proof}. Note here
that we rely implicitly on a fixed embedding $\bbq\subset\C$ and its associated archimedean absolute value $|\cdot|$.

\begin{theorem}
\label{thm:mainb}
Let $\theta\in\R$ be an irrational number and $\rho \in \bbq^\dim$ be a vector whose coordinates each satisfy $|\rho_j|<1$ and are pairwise multiplicatively
independent.  Let $\gamma\colon\R\to\bbq^\dim$ be a non-constant but piecewise constant, $1$-periodic function with (discrete) discontinuity set $\discty(\gamma)\ne\emptyset$ such that
\begin{itemize}
 \item (discordance) for any $t,t' \in \discty(\gamma)\cup\{0\}$ and any $a,b\in\Z$, $a\theta = b(t-t')\mymod 1$ implies $a=0$ and either $t-t'\in\Z$ or $b$ is even;
 \item (maximality) for all sufficiently large integers $j$, the function $$t\mapsto \pair{\gamma(t)}{(\rho_1^j,\ldots ,\rho_\dim^j)}$$ is real-valued, non-constant,  and maximized by $t=j\theta$.
 \end{itemize}
Then 
$$
\Omega:=\sum_{j=1}^\infty \pair{\gamma(j\theta)}{(\rho_1^j,\ldots ,\rho_\dim^j)}
$$ is transcendental.
\end{theorem}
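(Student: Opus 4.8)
The plan is to derive a contradiction from the assumption that $\Omega$ is algebraic, by exploiting the arithmetic of the convergents of $\theta$ together with a Diophantine-approximation estimate in the spirit of the Schmidt subspace theorem (or the weaker $p$-adic/archimedean Thue–Siegel–Roth input used in \cite{BDJ20}). First I would set $x := \rho_1$ (more precisely, work with the absolute values $|\rho_j|$, ordered so that $|\rho_1|=|\rho_2|$ is the maximal one, these being the images of $\maxeval,\bmaxeval$), and record from the maximality hypothesis that for all large $j$ we may write $c_j := \pair{\gamma(j\theta)}{(\rho_1^j,\dots,\rho_\dim^j)}$ as an honest maximum $c_j = \max_{t\in T}\pair{\gamma(t)}{(\rho_1^j,\dots,\rho_\dim^j)}$ over the finite set $T$ of values of $\gamma$ on one period; each term in this max is an integer linear recurrence sequence $r^{(t)}_j := \pair{\gamma(t)}{(\rho_1^j,\dots,\rho_\dim^j)}$ whose characteristic roots are among $\rho_1,\dots,\rho_\dim$.

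The heart of the argument is to locate, infinitely often, a long block of consecutive indices $j$ on which a \emph{single} recurrence $r^{(t)}$ realizes the maximum, and then a nearby block on which a \emph{different} one $r^{(t')}$ does; the discordance hypothesis is exactly what guarantees that the ``switch'' between competing recurrences happens and that no degenerate coincidence collapses the two leading terms. Concretely: since $\gamma$ is non-constant and $1$-periodic with discontinuity set $\discty(\gamma)$, pick $t,t'\in\discty(\gamma)\cup\{0\}$ with $\gamma(t)\ne\gamma(t')$ and such that the leading coefficients (the coefficient of $\rho_1^j$, i.e. the component of $\gamma(t)$ paired against the $\maxeval$-eigendirection) differ. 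Because $\theta$ is irrational, $\{j\theta \bmod 1\}$ equidistributes, so the index set where $t=j\theta$ lies on one side of the relevant discontinuity, resp. the other, each has positive density; choosing $j$ in the continued-fraction denominators of $\theta$ gives us control on how close $j\theta$ comes to a discontinuity, hence on the length of the block on which a fixed $r^{(t)}$ dominates.

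Now suppose $\Omega=\sum_{j\ge 1} c_j$ is algebraic. On a block $[j_0, j_0+L)$ where $c_j = r^{(t)}_j$, the tail $\sum_{j\ge j_0} c_j$ differs from the \emph{rational} quantity $\sum_{j\ge j_0} r^{(t)}_j$ (a value of a rational generating function, since a single linear recurrence sums to a rational function of the base) by an error of size $O(|\rho_1|^{j_0+L})$ coming from the first index beyond the block where the maximizer changes. This produces a rational number $p/q$ with $q$ dividing a fixed power of the product of denominators of the $\rho_j$ times a Vandermonde-type factor — so $\log q = O(j_0)$ — with $|\Omega - p/q| \le C|\rho_1|^{j_0+L} = q^{-1-\delta}$ once $L$ is a large enough fraction of $j_0$, contradicting the Roth/subspace bound $|\Omega - p/q|\gg_\varepsilon q^{-2-\varepsilon}$ for algebraic $\Omega$ — provided we arrange $L \ge (1+\varepsilon')\,j_0$, i.e. the dominating block is long relative to its starting point. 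That last requirement is the crux: it forces us to use not arbitrary blocks but ones starting at continued-fraction denominators $q_n$ of $\theta$, where the three-distance theorem guarantees $j\theta$ stays on one side of a given discontinuity for $\asymp q_{n+1}$ consecutive steps, so $L\asymp q_{n+1}$ while $j_0 \asymp q_n$; good rational approximability of $\theta$ (which we may assume, or else $\theta$ is badly approximable and a cleaner density argument suffices) makes $q_{n+1}/q_n$ large, delivering the needed ratio. \textbf{The main obstacle} I anticipate is precisely this interplay between two approximation processes — we are approximating $\Omega$ using the arithmetic of $\rho$, but the \emph{length} of each approximating block is governed by the arithmetic of $\theta$ — and making the two quantitatively compatible (choosing which $q_n$ to use, and controlling the denominator $q$ of the approximant carefully enough that $\log q$ really is $O(j_0)$ with the right implied constant) is where the delicate bookkeeping lies; the discordance condition is the technical hypothesis inserted to rule out the parity/coincidence obstructions that would otherwise sabotage the ``different recurrence on a nearby block'' step.
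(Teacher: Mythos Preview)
Your sketch captures the right flavor for the \emph{well-approximable} case but contains a genuine gap: you have the dichotomy exactly backward. You write that if $\theta$ is badly approximable ``a cleaner density argument suffices,'' but in fact the badly approximable case is the hard one, and it is precisely there that the discordance hypothesis is needed and the bulk of the technical work occurs. When $\theta$ has bounded partial quotients, the blocks on which a single value of $\gamma$ governs have length $L\asymp q_n$ comparable to their starting point $j_0\asymp q_n$, so the ratio $L/j_0$ is bounded and your inequality $|\Omega - p/q|\ll |\rho_1|^{j_0+L}\le q^{-1-\delta}$ never beats the Liouville/Evertse lower bound. No ``density'' argument rescues this: equidistribution tells you switches occur with positive frequency, but gives no block of length $\gg j_0$.

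The paper's proof confirms this. Their approximants $\Omega_{n,b}$ (built by making the coefficient sequence $n$-periodic for $n\in\cvgts$, rather than constant as you propose) combined with Evertse's $S$-unit lower bound yield Corollary~6.4: $n$-irregular indices must appear in every window $[Mn,c_0Mn)$. For well-approximable $\theta$ one then exhibits arbitrarily long irregular-free windows, contradiction; this is essentially your argument, and the paper notes it explicitly in the remark after Corollary~6.4. For badly approximable $\theta$, however, an entirely different and much longer argument is needed (\S\S6.4--6.5): one expands $Q_n(\Omega-\Omega_n)$ as a sum of $\rho$-monomials indexed by $n$-residual multi-indices, and uses the discordance hypothesis together with the unit-equations theorem of Evertse--Schlickewei--Schmidt to show that carefully chosen finite subsums cannot vanish (Proposition~6.5, Corollary~6.6). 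Only then can Evertse's bound be applied to a sum with controllably many terms to force a contradiction. Two further points: your approximants lie in $K$, not $\Q$, so Roth as you state it does not apply---you need Evertse's Theorem~4.1 on $S$-integers; and you also need to rule out $\Omega$ equalling the approximant exactly, which the paper does via the maximality hypothesis (Proposition~6.2) and which your sketch does not address.
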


The proof of Theorem~\ref{thm:mainb} expands on ideas from~\cite{BDJ20}.  In particular, we rely heavily on a lower bound (Theorem~\ref{thm:Evertse}) for Diophantine approximations  due to Evertse~\cite{Eve84} and a finiteness result (Theorem~\ref{thm:unitequations}) for solutions of unit equations due to Evertse, Schlickewei and Schmidt~\cite{ESS02}.  The bulk of the proof consists of carefully analyzing the continued fraction expansion of $\theta$ to identify and exploit large, but necessarily finite, stretches in the series defining $\Omega$ in which the coefficients satisfy some sort of linear recurrence.

To pass from Theorem~\ref{thm:mainb} to Theorem~\ref{thm:main}, we set $\rho_i = (x\eval_i)^{N}$ for $i=1,\ldots ,\dim$.  Since there are no angular resonances among the $\eval_i$, the resulting $\rho_i$ are multiplicatively independent for any $x$ and $N$.  When derived from the data in Theorem~\ref{thm:mainb}, the function $\gamma=\gamma_\op$, and especially its discontinuity set $\discty(\gamma_\op)$, depend on the sets $\rvecs$ and $\cvecs$ and the matrix $\op$.  The maximality condition in Theorem~\ref{thm:main} holds for any choice of $\rvecs$, $\cvecs$ and $\op$.  The reason for replacing the given matrix $\tilde \op$ in Theorem~\ref{thm:main} with a conjugate matrix $\op$ is to guarantee that the discordance hypothesis is also satisfied.  

Our approach to finding suitable conjugates relies on the fact that all powers $\maxeval^j$ of the maximal eigenvalue in Theorem~\ref{thm:main} lie in the unit subgroup $\cO_K^*$ of the integers $\cO_K$ in the number field $K$ generated by eigenvalues of $\op$.  On the other hand, the elements of $\discty(\gamma_\op)$ are normalized arguments of elements of $K$, but these elements need not be units.  In fact, given a specific matrix $\tilde \op\in {\rm SL}_\dim(\Z)$, it is not difficult to find a specific conjugate $\op$ by trial and error and then check by computer algebra that no element (or difference between elements) of $\discty(\gamma_\op)$ is the normalized argument of an algebraic unit.  We account for this phenomenon by showing that suitable conjugates of $\tilde \op$ are in some sense generic.  See Theorem~\ref{thm:noresonances} and its proof in \S\ref{sec:resonances}.  The argument there relies on the general fact (Lemma~\ref{lem:nonconst2}) that a non-constant rational function $\tau\in K(x)$ cannot have range $\tau(K)$ contained in the group of units $\cO_K^*$.

Since the discordance hypothesis of Theorem~\ref{thm:mainb} is a bit unnatural and difficult to arrange, it is worth stressing that it is needed only when the irrational number $\theta$ is \emph{badly approximable} (equivalently, \emph{of bounded type}), i.e.\ when the continued fraction expansion of $\theta$ has uniformly bounded coefficients.  For well approximable $\theta$, the proof of Theorem~\ref{thm:mainb} is substantially simpler, effectively ending with Corollary~\ref{cor:nobiggaps} rather than the subsequent and more technical arguments of \S\ref{sec:nonvanishing} and \S\ref{sec:fixthisname}. Nor in this case do we need Theorem~\ref{thm:noresonances}.  Unfortunately, however, it is unclear to us whether/when the normalized argument $\theta$ of the leading eigenvalue $\maxeval$ in Theorems~\ref{thm:degformula} and~\ref{thm:main} is well approximable.  

\begin{question}
Are there algebraic units whose normalized arguments $\theta$ are irrational and well approximable?
Likewise, are there any for which $\theta$ is irrational and badly approximable?
\end{question}
Let us close by returning to the first paragraph of this introduction and the dynamical significance of the first dynamical degree.  The interested reader may consult~\cite{BDJ20} for a longer account, but here we recall a single aspect of that discussion.  The first dynamical degree is only one of $\dim-1$ intermediate dynamical degrees for a birational map $f\colon\bP^\dim\tto \bP^\dim$; see~\cite{DS05a,Tru20,Dan20}. For rational maps $f$ over $\C$, the logarithm of the largest of these dynamical degrees is known~\cite{DS05a} to be an upper bound for the entropy of $f$ (see also~\cite{FTX22} for a non-archimedean version),
and in many instances~\cite{BS92,BD01,Gue05,DS05b,Duj06,Vig14}, the two quantities are known to be equal.  Hence it is interesting to ask whether the first dynamical degrees of the maps we construct here are also the largest.
\begin{question}
Does there exist a birational map $f\colon\bP^\dim\tto\bP^\dim$ for which $\ddeg_1(f)$ is transcendental and also maximal among intermediate dynamical degrees $\ddeg_i(f)$, $i=1,\dots,\dim-1$?  Does there exist a birational map $f\colon\bP^\dim\tto\bP^\dim$ for which \emph{all} intermediate dynamical degrees are transcendental? 
\end{question}
The outline of the paper is as follows.  In \S\ref{sec:geometry}, we give background on toric threefolds and monomial maps.  This is used in \S\ref{sec:bregs}, where we analyze the maps $\inv$ and $f=\inv\circ\momap_A$ and prove Theorem~\ref{thm:degformula}.  In \S\ref{sec:sunit}, we give the relevant background from Diophantine approximation, which will be used in the proofs of Theorems~\ref{thm:main} and~\ref{thm:mainb}.  In \S\ref{sec:prelim} we prove Theorem~\ref{thm:main}, or more precisely reduce it to Theorem~\ref{thm:mainb}, which is proved in \S\ref{sec:proof}. Finally, in~\S\ref{sec:general} we complete the proof of Theorem \ref{thm:mainapp}, the main remaining step being to construct suitable characteristic polynomials from which to obtain our matrices $A$.  In \S\ref{sec:specific}, we focus on a specific matrix $A\in\SL_3(\Z)$ to explain how one can use computer algebra to certify that particular maps $f = \inv\circ \momap_A$ satisfy the conclusion of Theorem~\ref{thm:mainapp}.

\subsection*{Acknowledgments} 
We thank Nguyen-Bac Dang for his thoughtful comments about this article.  We would also like to thank the anonymous referee for their many useful suggestions and careful reading of this article.

The first author was partially supported by NSERC grant RGPIN-2016-03632; the second author by NSF grant DMS-1954335; the third author by NSF grants DMS-1600011 and DMS-1900025, and the United States-Israel Binational Science Foundation; and the fourth author by Isaac Newton Trust (RG74916).

\section{Intersection theory, toric varieties and monomial maps}
\label{sec:geometry}
We work over an algebraically closed field $\bk$ of characteristic different from two.

\subsection{Rational maps and intersection numbers}
\label{sec:intersect}  We begin with a somewhat ad hoc definition of intersection numbers between curves and divisors, consistent with the general theory of~\cite{Ful84}.  Let $X$ be a smooth proper variety of dimension $\dim\geq 2$, let $\mathsf{C}\subset X$ an irreducible curve, and let $D$ be a Cartier divisor on $X$.  Consider the inclusion map $\iota\colon  \mathsf{C}\to X$ and normalization map $\tau\colon\tilde{\mathsf{C}}\to\mathsf{C}$.
In this situation we define
\begin{equation*}
  (\mathsf{C}\cdot D):=\deg(\tau^*\iota^*\cO_X(D)),
\end{equation*}
the degree of the line bundle $\tau^*\iota^*\cO_X(D)$ on $\tilde{\mathsf{C}}$.
When $\mathsf{C}$ and $D$ are smooth, and $\mathsf{C}$ is not contained in the support of $D$, $(\mathsf{C}\cdot D)$ is the number of points in $\mathsf{C}\cap D$ counted with multiplicity.  Note also that $(\mathsf{C}\cdot D)$ only depends on the linear equivalence class of $D$.  When $X=\bP^d$, we have $(\mathsf{L}\cdot D)=\deg D$ for every line $\mathsf{L}$.

\medskip
Now consider a birational map $f\colon X_1\dashrightarrow X_2$ between smooth varieties. The \emph{indeterminacy set} $\ind(f)$ is the smallest set such that
$f\colon X_1\setminus \ind(f)\to X_2$ is a morphism; this set has codimension
at least two. The \emph{critical set} $\crit(f)$ is the (finite) union of all
irreducible hypersurfaces contracted by $f$.  For any irreducible subvariety $V\subset X_1$ not contained in $\ind(f)$, we adopt the convention that $f(V) := \overline{f(V)\setminus \ind(f)}$ is the proper transform of $V$ by $f$.  In particular $f(V)$ is irreducible and, if not contained in $\crit(f)$, of the same dimension as $V$. 

Let $D_2$ be a divisor on $X_2$. The pullback $f^*D_2$ is the divisor on $X_1$ defined as follows. Let $X\subset X_1\times X_2$ be the Zariski closure of the graph of $f$, and $\pi_j\colon X\to X_j$, $j=1,2$, the projections. Then $f^*D_2:=\pi_{1*}\pi_2^*D_2$, where we pull back $D_2$ as a Cartier divisor, then push forward $\pi_2^*D_2$ as a Weil divisor.  Since $X_1$ and $X_2$ are smooth any Weil divisor on either $X_j$ is also Cartier. We rely on the following version of the projection formula that is easily verified.

\begin{proposition}
\label{prop:pushpull}
In the situation above, let $\mathsf{C}_1\subset X_1$ be an irreducible curve disjoint from
$\ind(f)$ and not contained in $\crit(f)$, and let $D_2$ be a Cartier divisor on $X_2$.
Then
\begin{equation}\label{e101}
  (\mathsf{C}_1\cdot f^*D_2)=(f(\mathsf{C}_1)\cdot D_2).
\end{equation}
\end{proposition}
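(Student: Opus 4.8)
The plan is to lift the curve $\mathsf{C}_1$ to the graph $X$, turn \eqref{e101} into an equality of degrees of line bundles on a single smooth complete curve, and then push down to $X_2$. Write $\iota_1\colon\mathsf{C}_1\hookrightarrow X_1$ and $\iota_2\colon f(\mathsf{C}_1)\hookrightarrow X_2$ for the inclusions and $\tau\colon\tilde{\mathsf{C}}_1\to\mathsf{C}_1$, $\tau_2\colon\widetilde{f(\mathsf{C}_1)}\to f(\mathsf{C}_1)$ for the normalizations, so that by definition $(\mathsf{C}_1\cdot f^*D_2)=\deg\tau^*\iota_1^*\cO_{X_1}(f^*D_2)$ and $(f(\mathsf{C}_1)\cdot D_2)=\deg\tau_2^*\iota_2^*\cO_{X_2}(D_2)$. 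Since $X$ is the closure of the graph of $f$ and $f$ is a morphism over $X_1\setminus\ind(f)$, the projection $\pi_1$ restricts to an isomorphism over $X_1\setminus\ind(f)$; as $\mathsf{C}_1$ is disjoint from $\ind(f)$, the preimage $\widehat{\mathsf{C}}:=\pi_1^{-1}(\mathsf{C}_1)$ is therefore an irreducible curve that $\pi_1$ maps isomorphically onto $\mathsf{C}_1$ and that lies in the open set $\pi_1^{-1}(X_1\setminus\ind(f))$, and moreover $\pi_2(\widehat{\mathsf{C}})=f(\mathsf{C}_1)$ by the convention on proper transforms. Let $\mu\colon\tilde{\mathsf{C}}_1\to X$ be $\tau$ followed by the inverse of $\pi_1|_{\widehat{\mathsf{C}}}$ and the inclusion $\widehat{\mathsf{C}}\hookrightarrow X$, so $\pi_1\circ\mu=\iota_1\circ\tau$.

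The key point on the divisor side is that $\pi_1^*\cO_{X_1}(f^*D_2)$ and $\pi_2^*\cO_{X_2}(D_2)$ become isomorphic after restriction to $\pi_1^{-1}(X_1\setminus\ind(f))$, and hence also after pulling back by $\mu$. Indeed $f^*D_2=\pi_{1*}\pi_2^*D_2$ is Cartier because $X_1$ is smooth, and over $X_1\setminus\ind(f)$, where $\pi_1$ is an isomorphism, the operations $\pi_{1*}$ and $\pi_1^*$ are mutually inverse, so $\pi_1^*(f^*D_2)$ and $\pi_2^*D_2$ agree as Cartier divisors there. Restricting the resulting isomorphism of line bundles along $\widehat{\mathsf{C}}$ and composing with $\mu$ yields $\mu^*\pi_2^*\cO_{X_2}(D_2)\cong\mu^*\pi_1^*\cO_{X_1}(f^*D_2)$. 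The degree of the right-hand side is $\deg\tau^*\iota_1^*\cO_{X_1}(f^*D_2)=(\mathsf{C}_1\cdot f^*D_2)$, using $\pi_1\circ\mu=\iota_1\circ\tau$, while the left-hand side is $(\pi_2\circ\mu)^*\cO_{X_2}(D_2)$, so it remains to identify its degree with $(f(\mathsf{C}_1)\cdot D_2)$.

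The morphism $\pi_2\circ\mu\colon\tilde{\mathsf{C}}_1\to X_2$ has image $f(\mathsf{C}_1)$, so by the universal property of normalization it factors through $\tau_2$; if the resulting morphism $\tilde{\mathsf{C}}_1\to\widetilde{f(\mathsf{C}_1)}$ has degree $1$ then $\deg(\pi_2\circ\mu)^*\cO_{X_2}(D_2)=\deg\tau_2^*\iota_2^*\cO_{X_2}(D_2)=(f(\mathsf{C}_1)\cdot D_2)$ and we are done. Since $\tau$ and $\pi_1|_{\widehat{\mathsf{C}}}$ are birational, degree $1$ amounts to $\pi_2$ restricting to a birational morphism $\widehat{\mathsf{C}}\to f(\mathsf{C}_1)$, and this is the one step that genuinely uses both hypotheses on $\mathsf{C}_1$; I expect it to be the main obstacle. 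As $\pi_2$ is a local isomorphism away from its exceptional locus $\mathrm{Exc}(\pi_2)$, it suffices to rule out $\widehat{\mathsf{C}}\subseteq\mathrm{Exc}(\pi_2)$. If $\widehat{\mathsf{C}}$ lay in a $\pi_2$-exceptional prime divisor $E'\subset X$, then either $E'$ is also $\pi_1$-exceptional, whence $\mathsf{C}_1=\pi_1(\widehat{\mathsf{C}})\subseteq\pi_1(E')\subseteq\ind(f)$ (exceptional divisors of $\pi_1$ map into $\ind(f)$, since $\pi_1$ is an isomorphism over the complement), contradicting disjointness of $\mathsf{C}_1$ from $\ind(f)$; or $\Delta:=\pi_1(E')$ is a prime divisor of $X_1$ containing $\mathsf{C}_1$, in which case $E'$ is its strict transform and $f(\Delta)=\overline{\pi_2(\pi_1^{-1}(\Delta\setminus\ind(f)))}\subseteq\pi_2(E')$, a set of codimension $\geq2$, so $f$ contracts $\Delta$ and thus $\mathsf{C}_1\subseteq\Delta\subseteq\crit(f)$, contradicting the other hypothesis. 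The remaining ingredients — that the graph closure coincides with the honest graph over $X_1\setminus\ind(f)$, the commutation of divisorial pushforward and pullback over the isomorphism locus, and multiplicativity of degree under finite morphisms of smooth curves — are routine, which is presumably why the proposition is called easily verified.
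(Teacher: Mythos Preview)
The paper does not prove this proposition at all; it simply declares it ``easily verified.'' Your argument is a correct and reasonably economical verification of exactly the expected kind: lift to the graph, identify the two line bundles on the normalization of $\mathsf{C}_1$, and then check that the induced map to $\widetilde{f(\mathsf{C}_1)}$ has degree one.

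One small technical point worth tightening: when you pass from ``$\widehat{\mathsf{C}}\subseteq\mathrm{Exc}(\pi_2)$'' to ``$\widehat{\mathsf{C}}$ lies in a $\pi_2$-exceptional prime divisor $E'$,'' you are implicitly using that $\mathrm{Exc}(\pi_2)$ is divisorial. Since the graph closure $X$ may be singular, this is not automatic. The cleanest fix is to replace $X$ by a resolution $\sigma\colon\tilde X\to X$ that is an isomorphism over the smooth open set $U=\pi_1^{-1}(X_1\setminus\ind(f))\supset\widehat{\mathsf{C}}$; then $\tilde\pi_2=\pi_2\circ\sigma$ is proper birational from a \emph{smooth} variety, so $\mathrm{Exc}(\tilde\pi_2)=\{J_{\tilde\pi_2}=0\}$ is a divisor and every component is contracted. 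Your dichotomy (either $E'$ is $\tilde\pi_1$-exceptional, forcing $\mathsf{C}_1\subset\ind(f)$, or $\tilde\pi_1(E')$ is a hypersurface contracted by $f$, forcing $\mathsf{C}_1\subset\crit(f)$) then goes through verbatim. This is a routine patch and does not affect the structure of your proof.
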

We are particularly interested in the case $X_1 = X_2 = \bP^\dim$. In homogeneous coordinates $f$ is given by $$[x_0,\dots,x_\dim]\mapsto[f_0,\dots,f_\dim],$$ where the $f_j$ are homogeneous polynomials, all of the same degree and without common factors.  The \emph{(algebraic) degree} of $f$ is then defined to be $\deg f := \deg f_j = \deg f^* H = (\mathsf{L}\cdot f^* H)$, where $H\subset\bP^\dim$ is any hyperplane and $\mathsf{L}$ is any line.  So Proposition \ref{prop:pushpull} allows us to rewrite $\deg f$ as follows.

\begin{corollary}
\label{cor:degree}
The degree of a birational map $f\colon\bP^\dim\dashrightarrow\bP^\dim$ is given by
$$
\deg f=(f(\mathsf{L})\cdot H),
$$
where $\mathsf{L}\subset \bP^\dim$ is any line disjoint from $\ind(f)$ and not contained in $\crit(f)$, and $H\subset\bP^\dim$ is any hyperplane.
\end{corollary}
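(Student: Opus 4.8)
The plan is a direct deduction from Proposition~\ref{prop:pushpull} together with the description of $\deg f$ recorded immediately above. First I would note that, by definition, $\deg f = \deg f^*H = (\mathsf{L}\cdot f^*H)$ for any hyperplane $H\subset\bP^\dim$ and any line $\mathsf{L}\subset\bP^\dim$, where the last equality uses the observation made earlier that $(\mathsf{L}\cdot D)=\deg D$ on $\bP^\dim$. The one point worth checking is that this is valid for an \emph{arbitrary} hyperplane $H$ rather than only a generic one: since $(\mathsf{L}\cdot D)$ depends only on the linear equivalence class of $D$, and the pullback $D_2\mapsto\pi_{1*}\pi_2^*D_2$ carries linearly equivalent divisors to linearly equivalent divisors (the pullback of a principal Cartier divisor is principal, and the proper pushforward of a principal Weil divisor under the birational morphism $\pi_1$ is principal), the number $(\mathsf{L}\cdot f^*H)$ is independent of the choices of $\mathsf{L}$ and $H$ and equals $\deg f$.

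Next I would apply Proposition~\ref{prop:pushpull} with $X_1=X_2=\bP^\dim$, $\mathsf{C}_1=\mathsf{L}$, and $D_2=H$. The hypotheses on $\mathsf{L}$ in the corollary --- disjoint from $\ind(f)$ and not contained in $\crit(f)$ --- are precisely those required by the proposition, so~\eqref{e101} gives $(\mathsf{L}\cdot f^*H)=(f(\mathsf{L})\cdot H)$. Combining this with the previous paragraph yields $\deg f=(f(\mathsf{L})\cdot H)$. Implicit in the right-hand side is that $f(\mathsf{L})$ is genuinely a curve: because $\mathsf{L}\not\subset\crit(f)$, its proper transform has the same dimension as $\mathsf{L}$, so $(f(\mathsf{L})\cdot H)$ is a legitimate curve--divisor intersection number.

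There is essentially no obstacle here; all the content sits in Proposition~\ref{prop:pushpull}, and the corollary is just its specialization to lines and hyperplanes in $\bP^\dim$ together with the normalization $(\mathsf{L}\cdot\cO_{\bP^\dim}(1))=1$. The only mild care needed is bookkeeping --- tracking which divisor is being pulled back, and invoking linear-equivalence invariance to make the statement independent of the chosen line and hyperplane.
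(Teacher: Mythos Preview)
Your argument is correct and is exactly the approach taken in the paper: the corollary is stated immediately after the identity $\deg f = (\mathsf{L}\cdot f^*H)$ and is simply an application of Proposition~\ref{prop:pushpull} with $X_1=X_2=\bP^\dim$, $\mathsf{C}_1=\mathsf{L}$, $D_2=H$. Your additional remarks about linear-equivalence invariance and $f(\mathsf{L})$ being a curve are fine expansions of points the paper leaves implicit.
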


\noindent Since $\ind(f)$ has codimension at least two, the hypothesis of the corollary is satisfied by a general line $\mathsf{L}$, i.e.\ a line corresponding to a general point in the Grassmannian $\mathrm{Gr}(2,\dim+1)$.

If $f,g\colon\bP^\dim\tto\bP^\dim$ are rational maps, then $\deg(f\circ g) \leq (\deg f)(\deg g)$.  This fact implies that the limit in the following definition exists.

\begin{definition} 
The \emph{first dynamical degree} of a rational map $f\colon\bP^\dim\tto\bP^\dim$ is the quantity
$\ddeg(f) := \lim_{n\to\infty} (\deg f^n)^{1/n}$.
\end{definition}

\subsection{Toric varieties}
\label{sec:toric}

For our purposes, a \emph{toric variety} will be a smooth algebraic compactification $X$ of the torus $\torus :=\mathbb{G}_m^\dim$ such that the natural action of the torus on itself extends to all of $X$. Any toric variety is defined by a lattice $\nn\cong\Z^\dim$ and a fan $\Sigma(X)$ in $\nn$, by which is meant
a collection of regular rational simplicial cones inside $\nn\otimes\R\cong\R^\dim$, satisfying natural axioms, see~\cite{Ful93}.

In what follows, we fix a basis for $N\simeq\Z^\dim$. The fan of $\bP^\dim$ is then the set of cones in $\R^\dim$ 
generated by the proper subsets of
\begin{equation}
\label{eqn:p3fan}
\cP=\{(-1,\dots,-1),(1,0,\dots,0),(0,1,0,\dots,0),\dots,(0,\dots,0,1)\}.
\end{equation}

For any toric variety, the complement $X\setminus \torus$ is a simple normal crossings divisor, and for $k>0$, the $k$-dimensional cones in $\Sigma$ correspond to $\torus$-invariant irreducible subvarieties of $X\setminus \torus$ of codimension $k$.  In particular, rays of $\Sigma(X)$ correspond to irreducible hypersurfaces $E\subset X\setminus \torus$ which we will call \emph{poles}.\footnote{They are in fact the (simple) poles in $X$ of the $\torus$-invariant form $\frac{dy_1\wedge\dots\wedge dy_{\dim}}{y_1\dots y_{\dim}}$.  It is standard to call $E$ a `torus invariant hypersurface', but we find the shorter term convenient.}

We let $\cvec_E\in \nn$ denote the unique primitive element (i.e.\  $\cvec_E\ne 0$ and $\cvec_E\not\in bN$ for $b\ge 2$) in the ray in $\Sigma(X)$ corresponding to a pole $E\subset X$. Given a primitive element $\cvec\in N$, we say that $X$ \emph{realizes} $\cvec$ if $\cvec=\cvec_E$ for some pole $E$ of $X$.
For example, the poles of $\bP^\dim$ corresponding to the elements of $\cP$ above are the coordinate hyperplanes $\{x_j=0\}$, $0\le j\le\dim$ in $\bP^\dim$.

Each pole $E$ of $X$ is itself a toric variety of dimension $\dim-1$, its toric structure defined by a natural fan in the quotient lattice $N/\Z \cvec_E$, and its torus $\torus_E\simeq\mathbb{G}_m^{\dim-1}$ concretely realized as the set of points in $E$ not contained in any other pole.

We set $M:=\operatorname{Hom}(N,\Z)$ and write $\langle \rvec,\cvec\rangle\in\Z$ for the pairing between $\rvec\in M$ and $\cvec\in N$. The elements of $M$ can be identified with the set of characters $\torus\to\mathbb{G}_m$. The identification $N\simeq\Z^\dim$ induces an identification $M\simeq\Z^\dim$, and the characters associated to the standard basis vectors of $\Z^\dim$ serve as coordinates $(y_1,\dots,y_\dim)$ on $\torus$, giving an isomorphism $\torus{\overset\sim\to}\mathbb{G}_m^\dim$. The character associated to $(a_1,\dots,a_\dim)\in M$ is then the monomial $y_1^{a_1}\cdots y_\dim^{a_\dim}$.

A \emph{toric modification} is a birational morphism $\pi\colon\hat X \to X$ between toric varieties that restricts to the identity on $\torus$.  The fan $\Sigma(\hat X)$ is then a simplicial subdivision of $\Sigma(X)$; and for each pole $E\subset \hat X$ contracted by $\pi$, the image $\pi(E)$ is equal to the intersection of two or more poles in $X$.
Given any two toric varieties $X, X'$, there exists a third $\hat X$ that modifies both of them.
Moreover, given any toric variety $X$ and a primitive element $\cvec\in N$, there exists a toric modification $\hat X\to X$ such that $\hat X$ realizes $\cvec$.

A divisor $D$ supported on poles of $X$ may be encoded by a support function $\wt_D\colon \nn\to\Z$ given by setting $\wt_D(\cvec_E)$ equal to the coefficient of $E$ for each pole $E\subset X$ and then extending linearly across each cone in $\Sigma(X)$.
If $\pi\colon\hat X\to X$ is a toric modification, then $\wt_{\pi^*D}=\wt_D$.  Moreover, $D$ is principal if and only if $\wt_D$ is linear.

For instance, the hyperplane at infinity $\{x_0=0\}$ on $\bP^\dim$ has support function 
\begin{equation}\label{eq:wt}
\wt(v) = \max_{\rvec\in\rvecs} \langle\rvec,\cvec\rangle,
\end{equation}
where $\rvecs\subset M:=\operatorname{Hom}(N,\Z)\simeq\Z^\dim$ is given by
\begin{equation}\label{eq:rvecs}
\rvecs := \{(0,\dots,0),(-1,0,\dots,0),(0,-1,0,\dots,0),\dots(0,\dots,0,-1)\}.
\end{equation}

\begin{definition}[see e.g. \cite{GHK15}] 
An irreducible curve $\mathsf{C}$ in a toric variety $X$ is \emph{internal} if $\mathsf{C}\cap\torus\neq \emptyset$. We say that \emph{$X$ is adapted to $\mathsf{C}$} if for each pole $E\subset X$, the intersection $\mathsf{C}\cap E$ is contained in $\torus_E$.
\end{definition}

If $\hat X\to X$ is a toric modification then we identify any internal curve $\mathsf{C}$ in $X$ with its (still internal) proper transform on $\hat X$.  If $X$ is adapted to $\mathsf{C}$, so is $\hat X$.  Moreover, we have

\begin{proposition}
For any internal curve $\mathsf{C}$ on a toric variety $X$, there is a toric modification $\pi\colon\hat X\to X$ such that $\hat X$ is adapted to $\mathsf{C}$.  
\end{proposition}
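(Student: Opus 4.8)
The plan is to extract from the finitely many points at which $\mathsf{C}$ meets $X\setminus\torus$ a finite set of primitive vectors in $N$ --- one for each branch of $\mathsf{C}$ through the boundary --- and then to realize all of them simultaneously by a single toric modification. First I would pass to the normalization $\tau\colon\tilde{\mathsf{C}}\to\mathsf{C}$ and record how the branches of $\mathsf{C}$ leave the torus. Since $\mathsf{C}$ is internal, the character $\chi^\rvec$ associated to any $\rvec\in M$ restricts to a nonzero rational function on $\mathsf{C}$, hence on $\tilde{\mathsf{C}}$; and $\mathsf{C}\cap(X\setminus\torus)$, being a proper closed subset of the irreducible curve $\mathsf{C}$, is finite, with finite preimage $\{\tilde p_1,\dots,\tilde p_m\}$ in $\tilde{\mathsf{C}}$. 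For each $i$ the map $\rvec\mapsto\operatorname{ord}_{\tilde p_i}(\chi^\rvec)$ is a homomorphism $M\to\Z$, i.e.\ a vector $\dvec_i\in N$, and the standard dictionary for toric varieties (e.g.\ \cite[\S2.3, \S2.4]{Ful93}, using properness of $X$ and the valuative description of its points) asserts that if $\tau(\tilde p_i)$ lies in the orbit $O_\sigma$ attached to $\sigma\in\Sigma(X)$, then $\dvec_i\in\operatorname{relint}(\sigma)$. In particular $\dvec_i\ne 0$, since $\tau(\tilde p_i)\notin\torus$ forces $\sigma\ne\{0\}$; let $\cvec_i\in N$ be the primitive generator of the ray $\R_{\geq 0}\dvec_i$.

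Next, using that every primitive vector in $N$ is realized by some toric modification of $X$ and that two toric varieties always have a common modification, I would build a single toric modification $\pi\colon\hat X\to X$ realizing all of $\cvec_1,\dots,\cvec_m$; here I use that passing to a further subdivision of a fan retains all of its existing rays, so the $\cvec_i$ can be accumulated one at a time. I then claim $\hat X$ is adapted to $\mathsf{C}$. Identifying $\mathsf{C}$ with its proper transform on $\hat X$ leaves the normalization $\tilde{\mathsf{C}}$ and the vectors $\dvec_i$ unchanged, because $\pi$ is the identity on $\torus$; for the same reason the subset of $\tilde{\mathsf{C}}$ mapping into $\torus$ is the same for $X$ and for $\hat X$, so the points of $\tilde{\mathsf{C}}$ lying over $\hat X\setminus\torus$ are again $\tilde p_1,\dots,\tilde p_m$. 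Now fix a pole $E\subset\hat X$ and a point $q\in\mathsf{C}\cap E$; then $q=\tau(\tilde p_i)$ for some $i$, so by the dictionary $q\in O_\sigma$ for the cone $\sigma\in\Sigma(\hat X)$ with $\dvec_i\in\operatorname{relint}(\sigma)$. But $\R_{\geq 0}\dvec_i=\R_{\geq 0}\cvec_i$ is a ray of $\Sigma(\hat X)$ by construction; since the relative interiors of the cones of a fan are pairwise disjoint and $\dvec_i\in\operatorname{relint}(\R_{\geq 0}\dvec_i)$, that ray is the unique cone of $\Sigma(\hat X)$ containing $\dvec_i$ in its relative interior, whence $\sigma=\R_{\geq 0}\cvec_i$ is a ray. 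Thus $q$ lies on exactly one pole of $\hat X$, which must be $E$, so $q\in\torus_E$; as $E$ and $q$ were arbitrary, $\hat X$ is adapted to $\mathsf{C}$.

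The crux is the toric dictionary invoked twice above: that a branch of $\mathsf{C}$ at $\tilde p_i$ determines, via orders of vanishing of characters, a lattice vector $\dvec_i$ lying in the relative interior of the cone indexing the torus orbit through $\tau(\tilde p_i)$, and that this assignment is unaffected by toric modification (because a toric modification is the identity on $\torus$, so the ambient function field and its characters are unchanged). Granting that, the remaining steps are routine bookkeeping with the orbit--cone correspondence, together with the already recalled fact that a prescribed primitive vector can be realized by a smooth toric modification. A minor point to check is that the finitely many $\cvec_i$ can be realized at once within the category of smooth toric varieties, which is immediate from the single-vector statement once one knows that subdivision preserves existing rays.
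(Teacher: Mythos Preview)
Your proof is correct and takes a genuinely different route from the paper's. The paper argues by iterated blowup: if $\mathsf{C}$ meets $E_1\cap\cdots\cap E_m$ with $m\ge 2$, blow up that intersection; the key step is that the total intersection number $(\mathsf{C}\cdot D)$, where $D$ is the reduced boundary divisor, strictly decreases under this operation, so the process terminates after finitely many steps. Your argument instead reads off, from the branches of $\mathsf{C}$ through the boundary via the normalization, the finitely many primitive vectors $\cvec_i\in N$ that must appear as rays, and then realizes them all at once. The paper's approach stays entirely within the intersection-theoretic framework already set up in the section and needs no further toric input beyond what has been recalled; yours leans more heavily on the orbit--cone/valuative dictionary, but is more direct and in effect pins down in advance the support of the measure $\mu_{\mathsf{C}}$ introduced immediately after the proposition.
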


\begin{proof}
If $X$ is not adapted to $\mathsf{C}$, then there are poles $E_1,\dots,E_m\subset X$, with $m\ge2$, such that $\mathsf{C}\cap E_1\cap\dots\cap E_m \neq \emptyset$.  The blowup $\pi\colon\tilde X\to X$ of $\bigcap_{j=1}^mE_j$ is toric and contracts a pole $\tilde E_0\subset \tilde X$ that meets the proper transform $\tilde{\mathsf{C}}\subset \tilde X$ of $\mathsf{C}$.  Additionally, if $\tilde E_j\subset \tilde X$ denotes the proper transform of $E_j$, then $\pi^* E_j = \tilde E_j + \tilde E_0$.  So if $D=\sum_{E\subset X} E$ and $\tilde D = \sum_{\tilde E\subset \tilde X} \tilde E$ are the reduced divisors with supports equal to all poles of $X$ and $\tilde X$, then $\pi^* D = \tilde D + k\tilde E_0$ for some $k\geq 1$.  From this and Proposition \ref{prop:pushpull}, we get
$$
(\mathsf{C}\cdot D) = (\tilde{\mathsf{C}}\cdot \pi^*D) = (\tilde{\mathsf{C}}\cdot \tilde D) + k(\tilde{\mathsf{C}}\cdot \tilde E_0)
> (\tilde{\mathsf{C}}\cdot \tilde D) \geq 0,
$$
where the last inequality follows from the fact that $\tilde{\mathsf{C}}$ is an internal curve.  If $\tilde X$ is not adapted to $\tilde{\mathsf{C}}$, we repeat the above as often as necessary.  At each step the intersection between the set of poles and $\tilde{\mathsf{C}}$ drops by at least one.  Since the intersection must remain non-negative, the process must stop in finitely many steps, at which point $X$ is adapted to $\tilde{\mathsf{C}}$.  That is, if at this point $E_0$ is the (further) blowup of any other intersection between two or more poles, we must have $\tilde{\mathsf{C}}\cdot E_0 = 0$; hence $\tilde{\mathsf{C}}$ intersects at most one pole.
\end{proof}

In light of this discussion we can associate to any internal curve $\mathsf{C}$ the following measure on $\nn$:
\begin{equation}
\label{eq:mu}
\mu_{\mathsf{C}} := \sum (\mathsf{C}\cdot E)\delta_{\cvec_E},
\end{equation}
where $\delta_{\cvec_E}$ is the point mass supported at $\cvec_E\in \nn$, and the sum is over poles $E\subset X$ in some/any toric variety adapted to $\mathsf{C}$.
For instance, if $\mathsf{L}$ is a general line in $\bP^\dim$, then
$$
\mu_{\mathsf{L}} = \sum_{\cvec\in\cP} \delta_\cvec,
$$
with $\cP$ as in \eqref{eqn:p3fan}.
If $\mathsf{C}$ is an internal curve on a toric variety $X$ and $D$ is a divisor supported on poles of $X$, then it follows from~\eqref{e101} that the intersection number $(\mathsf{C}\cdot D)$ is unchanged by toric modifications $\pi\colon\hat X \to X$, i.e. $(\mathsf{C}\cdot D) = (\mathsf{C}\cdot \pi^* D)$.  Taking $\hat X$ adapted to $\mathsf{C}$, it therefore follows from definitions that
\begin{equation}\label{eq:intno}
(\mathsf{C}\cdot D) = \int_\nn \wt_D \,\mu_{\mathsf{C}} = \sum_{E\subset \hat X} (\mathsf{C}\cdot E) \wt_D(\cvec_E).
\end{equation}
For example, the degree of an internal curve $\mathsf{C}\subset\bP^\dim$ is given by 
$$
(\mathsf{C}\cdot\{x_0=0\})=\int_N\wt\,\mu_C,
$$
where $\wt$ is given by~\eqref{eq:wt}.

Since the intersection number with a principal divisor must vanish, we obtain
\begin{corollary}\label{cor:balanced}
The measure $\mu_{\mathsf{C}}$ associated to an internal curve $\mathsf{C}$ is \emph{balanced} in the sense that $\sum_{E\subset X} (\mathsf{C}\cdot E)\cvec_E = 0 \in \nn$
for any $X$ adapted to $\mathsf{C}$.
\end{corollary}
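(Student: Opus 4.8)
The plan is to test the proposed identity $\sum_{E\subset X}(\mathsf{C}\cdot E)\,\cvec_E=0$ against an arbitrary linear functional on $\nn\otimes\R$, i.e.\ against an arbitrary element $\rvec\in M=\operatorname{Hom}(\nn,\Z)$, and to show that each such pairing vanishes because it computes the intersection of $\mathsf{C}$ with a principal divisor.

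Concretely, I would fix $\rvec\in M$ and consider the function $\cvec\mapsto\langle\rvec,\cvec\rangle$ on $\nn$. It is linear, hence it is the support function $\wt_D$ of the $\torus$-invariant divisor $D=\sum_{E\subset X}\langle\rvec,\cvec_E\rangle E$ supported on the poles of $X$; and since $\wt_D$ is linear, the discussion of support functions preceding \eqref{eq:wt} shows $D$ is principal --- indeed $D=\operatorname{div}(\chi^\rvec)$ is the divisor of the character associated to $\rvec$. Because $(\mathsf{C}\cdot D)$ depends only on the linear equivalence class of $D$ (as recorded in \S\ref{sec:intersect}) and $D\sim 0$, we obtain $(\mathsf{C}\cdot D)=0$. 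On the other hand, since $X$ is adapted to $\mathsf{C}$ we may apply \eqref{eq:intno} directly on $X$ (taking $\hat X=X$), which yields
\[
0=(\mathsf{C}\cdot D)=\sum_{E\subset X}(\mathsf{C}\cdot E)\,\wt_D(\cvec_E)=\sum_{E\subset X}(\mathsf{C}\cdot E)\,\langle\rvec,\cvec_E\rangle=\Big\langle\,\rvec,\ \sum_{E\subset X}(\mathsf{C}\cdot E)\,\cvec_E\,\Big\rangle .
\]
As $\rvec\in M$ was arbitrary and $M$ spans the dual of $\nn\otimes\R$, the vector $\sum_{E\subset X}(\mathsf{C}\cdot E)\,\cvec_E$ pairs to zero with everything; being a priori an element of $\nn$, it must be $0$, which is the asserted balancing condition.

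I do not expect a genuine obstacle here: the statement is essentially a repackaging of \eqref{eq:intno} together with the triviality of the divisor class of a character. The only steps that deserve a line of care are (i) verifying that the linear support function $\cvec\mapsto\langle\rvec,\cvec\rangle$ is the support function of the \emph{principal} divisor $\operatorname{div}(\chi^\rvec)$ under the sign conventions fixed above, and (ii) noting that adaptedness of $X$ to $\mathsf{C}$ is exactly what permits evaluating \eqref{eq:intno} on $X$ itself --- and even this is inessential, since $(\mathsf{C}\cdot E)$ and the intersection number $(\mathsf{C}\cdot D)$ are unchanged under toric modifications, so one could always pass to an adapted modification first.
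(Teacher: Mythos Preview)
Your argument is correct and matches the paper's approach: the paper simply notes before the corollary that the intersection number with a principal divisor vanishes, and this is precisely what you unwind by pairing with an arbitrary $\rvec\in M$ and invoking \eqref{eq:intno}. Your write-up just makes explicit the duality step that the paper leaves implicit.
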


\begin{remark}
The measure $\mu_{\mathsf{C}}$ associated to an internal curve corresponds to the Minkowski weight, in the sense of~\cite{FS97}, for the class of the curve $\mathsf{C}$.
\end{remark}

\subsection{Monomial maps}
For monomial maps and their dynamics, see \cite{Fav03, HP07, JoWu11, FaWu12, Lin12}.
\label{sec:monomial}
\begin{definition}
Let $A = (a_{ij})_{1\le i,j\le\dim}$ be a $\dim\times\dim$ integer matrix with $\det A\neq 0$.  We call $\momap_A\colon \torus\to \torus$ given by
\begin{equation*}
\momap_A(y_1,\dots,y_\dim)
=(y_1,\dots,y_\dim)^A
:=(y_1^{a_{11}}\dots y_\dim^{a_{1\dim}},\dots,y_1^{a_{\dim1}}\dots y_\dim^{a_{\dim\dim}})
\end{equation*}
the \emph{monomial map} associated to $A$.
\end{definition}

In what follows we will always assume that $A\in {\rm GL}_\dim(\Z)$, i.e. $\det A = \pm 1$, in which case $\momap_A$ is an automorphism of $\torus$ and extends to a birational map $\momap_A\colon X\tto X'$ between any two $\dim$-dimensional toric varieties.

Our convention for monomial maps is that $A\in\operatorname{GL}(N)$, so the induced automorphism $M\to M$ is given by the transpose $A^T$.
Note that for any $n\in\Z$, we also have $\momap_A^n = \momap_{A^n}$.

\begin{proposition}
  \label{prop:mo}
  Suppose that $A\in {\rm GL}_\dim(\Z)$, that $X,X'$ are toric varieties, and that $\momap= \momap_A\colon X\tto X'$ is the associated monomial map. Assume that for every pole $E\subset X$ there exists a pole $E'\subset X'$ such that $A\cvec_E=\cvec_{E'}$.
Then $h(E)=E'$. Moreover $h$ is an isomorphism in a neighborhood of $\torus_E$, and sends $\torus_E$ onto $\torus_{E'}$.
In particular, $\crit(\momap)=\emptyset$.
\end{proposition}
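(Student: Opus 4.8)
The strategy is to reduce everything to an explicit local computation in toric coordinates, using the hypothesis $A\cvec_E = \cvec_{E'}$ to show that $\momap_A$ identifies a torus-invariant affine chart around $\torus_E$ with one around $\torus_{E'}$. First I would recall the combinatorial description: the pole $E\subset X$ corresponds to a ray $\R_{\geq 0}\cvec_E$ in $\Sigma(X)$, and the torus orbit $\torus_E$ is the locus where the other coordinates are nonzero. The key observation is that, because $A\in\GL_\dim(\Z)$ carries $N$ isomorphically to $N$ with inverse also integral, one can choose a $\Z$-basis $e_1,\dots,e_\dim$ of $N$ with $e_1 = \cvec_E$, and then $Ae_1 = \cvec_{E'}$, so by extending $\cvec_{E'}$ to a basis of $N$ (possible since $\cvec_{E'}$ is primitive) one can further arrange, after composing with a monomial automorphism on each side (which is an isomorphism of the respective tori extending to the relevant charts), that in these coordinates $\momap_A$ is the identity on the first coordinate and a monomial automorphism on the remaining ones. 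Equivalently: pass to the dual basis on $M$, note that the induced map on characters is $A^T$, and the generator $\rvec_E\in M$ dual to the ray (the one with $\langle\rvec_E,\cvec_E\rangle = 1$ and vanishing on a complementary face) pulls back appropriately.

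Concretely, the steps I would carry out are: (1) Choose a smooth toric chart $U_\sigma\subset X$ whose cone $\sigma$ has $\cvec_E$ as one of its rays; then $U_\sigma\cong \mathbb{A}^k\times\mathbb{G}_m^{\dim-k}$ with the first coordinate $z_1$ cutting out $E$ and $\torus_E = \{z_1=0,\ z_2\cdots z_k\neq 0\}$ — but in fact for the statement it suffices to work with the rank-one situation, i.e.\ to restrict attention to the chart on $X$ corresponding to the single ray through $\cvec_E$, which is of the form $\mathbb{A}^1\times\mathbb{G}_m^{\dim-1}$. (2) Similarly pick the chart on $X'$ through $\cvec_{E'}$. (3) Write $\momap_A$ in these coordinates: because $A$ maps the ray through $\cvec_E$ isomorphically onto the ray through $\cvec_{E'}$ and is invertible over $\Z$, the exponent matrix expressing $\momap_A$ from the second chart's coordinates to the first chart's coordinates is itself in $\GL_\dim(\Z)$ and is lower/upper triangular with respect to the distinguished first coordinate with a $1$ in the corner. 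Hence $\momap_A$ sends $\{z_1' = 0\}$ to $\{z_1 = 0\}$ biregularly near the open orbit, it is an isomorphism on a neighborhood of $\torus_{E'}$ (both source and target charts are smooth and the map is a monomial automorphism in these coordinates, so it has a monomial, hence regular, inverse there), and it carries $\torus_{E'}$ onto $\torus_E$ — wait, I should orient this the way the proposition states it, $\momap\colon X\tto X'$, so $\momap$ maps a neighborhood of $\torus_E$ isomorphically onto a neighborhood of $\torus_{E'}$ and $\torus_E\mapsto\torus_{E'}$. (4) Since the proper transform convention gives $\momap(E) = \overline{\momap(E\setminus\ind(\momap))}$ and $\momap$ is defined and isomorphic near $\torus_E$ with image meeting $\torus_{E'}$, irreducibility forces $\momap(E) = E'$. (5) For the final assertion $\crit(\momap) = \emptyset$: every pole of $X$ is handled by the above, so no pole is contracted; and $\momap$ restricted to $\torus$ is the automorphism $\momap_A$, which contracts nothing. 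Since $\crit(\momap)$ is a union of irreducible hypersurfaces and every irreducible hypersurface in $X$ is either a pole or meets $\torus$ in a hypersurface of $\torus$ (on which $\momap_A$ is an automorphism), no hypersurface is contracted, so $\crit(\momap) = \emptyset$.

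The main obstacle, and the part that deserves genuine care rather than hand-waving, is step~(3): verifying that the hypothesis $A\cvec_E = \cvec_{E'}$ really does force $\momap_A$ to be a \emph{local isomorphism} near $\torus_E$, and not merely a map sending the orbit into the right place. The subtlety is that a toric modification or monomial map can fail to be a local isomorphism along a codimension-one orbit if the relevant cones do not match up, even when one ray is sent to another; here one must use that both $X$ and $X'$ are smooth (so the cones are regular, i.e.\ generated by part of a $\Z$-basis) together with the fact that $A$ is an \emph{integral} automorphism of $N$, so that the minimal cone of $\Sigma(X)$ containing $\cvec_E$ maps under $A$ onto a regular cone of the same dimension; one then needs that this image cone, or a suitable face of it, lies in $\Sigma(X')$ — which is exactly where one invokes that $X'$ \emph{realizes} $\cvec_{E'}$ and chooses the affine chart accordingly. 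Making this precise amounts to the standard dictionary between toric morphisms and maps of fans (see~\cite{Ful93}), applied to the one-parameter degeneration defined by $\cvec_E$; I would cite that dictionary and then spell out the rank-one local model $\mathbb{A}^1\times\mathbb{G}_m^{\dim-1}\to\mathbb{A}^1\times\mathbb{G}_m^{\dim-1}$ explicitly to make the local-isomorphism claim transparent.
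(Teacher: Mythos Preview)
Your proposal is correct and follows essentially the same approach as the paper: reduce to the rank-one toric chart $\mathbb{A}^1\times\mathbb{G}_m^{\dim-1}$ associated to the ray through $\cvec_E$ (resp.\ $\cvec_{E'}$) and verify that $\momap_A$ restricts to an isomorphism between these charts carrying $\torus_E$ onto $\torus_{E'}$. The paper streamlines your step~(3) by working directly in $M$: it chooses characters $\rvec'_1,\dots,\rvec'_\dim$ adapted to $\cvec_{E'}$, sets $\rvec_j=A^T\rvec'_j$ (automatically adapted to $\cvec_E$ since $A\cvec_E=\cvec_{E'}$), and observes that the resulting coordinate maps satisfy $\chi=\chi'\circ\momap$, making the local isomorphism immediate without any triangular-form bookkeeping.
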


\begin{proof}
It suffices to prove the statement about $\torus_E$ and $\torus_{E'}$.
Pick $\rvec'_1\in M$ such that $\langle \rvec'_1,\cvec_{E'}\rangle=1$, and elements $\rvec'_2,\dots,\rvec'_\dim\in M$ that generate the lattice $\cvec_{E'}^\perp:=\{u\in M\mid \langle u,v_{E'}\rangle=0\}$. Each $\rvec'_j$ defines a monomial $\chi'_j$ in $(y_1,\dots,y_\dim)$, and $\chi':=(\chi_1,\dots,\chi_\dim)$ gives a birational map of $X$ to $\mathbb{A}^\dim$ which is an isomorphism in a neighborhood of $\torus_{E'}$ and sends $\torus_{E'}$ onto the coordinate hyperplane $\{w_1=0\}$ in $\mathbb{A}^1\times\mathbb{G}_m^{\dim-1}$.

Set $\rvec_j=A^Tu'_j$ for $1\le l\le\dim$. Then $\langle u_1,\cvec_E\rangle=\langle u'_1,v_{E'}\rangle=1$ and $\rvec_2,\dots,u_\dim$ generate the lattice $\cvec_{E}^\perp$. Each $u_j$ defines a monomial $\chi_j$ and $\chi:=(\chi_1,\dots,\chi_\dim)$ defines a birational map of $X$ to $\mathbb{A}^\dim$ that is an isomorphism in a neighborhood of $\torus_{E}$ and sends $\torus_{E}$ onto the hyperplane $\{w_1=0\}$ in $\mathbb{A}^1\times\mathbb{G}_m^{\dim-1}$. By construction, $\chi=\chi'\circ h$, and the result follows.
\end{proof}

The image of any internal curve $\mathsf{C}\subset\torus$ under a monomial birational map is a new internal curve, and we have:

\begin{corollary}
\label{cor:mopush}
If $\mathsf{C}\subset\torus$ is an internal curve and $A\in\operatorname{GL}_\dim(\Z)$, then
\begin{equation}\label{eq:mopush}
  \mu_{\momap_A(\mathsf{C})} = A_*\mu_{\mathsf{C}},
\end{equation}
where $\mu_{\mathsf{C}}$ and $\mu_{\momap_A(\mathsf{C})}$ are the associated measures on $N\cong\Z^\dim$.
\end{corollary}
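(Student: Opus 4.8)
The plan is to compute both sides on a single pair of toric models that are interchanged by $\momap_A$. I would begin by using the proposition guaranteeing that every internal curve has an adapted toric model to fix a toric variety $X$ adapted to $\mathsf{C}$, so that $\mu_{\mathsf{C}}=\sum_{E\subset X}(\mathsf{C}\cdot E)\delta_{\cvec_E}$. Since $A\in\operatorname{GL}_\dim(\Z)=\operatorname{GL}(N)$ carries each regular rational simplicial cone of $\Sigma(X)$ to another such cone, $A\cdot\Sigma(X)$ is again a fan; let $X'$ be the corresponding toric variety. The lattice automorphism $A$ then presents $\momap_A$ as an isomorphism $\momap_A\colon X{\overset\sim\to}X'$ of toric varieties restricting to the torus automorphism $\momap_A\colon\torus\to\torus$; in particular $\ind(\momap_A)=\crit(\momap_A)=\emptyset$. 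The poles of $X'$ are exactly the $E':=\momap_A(E)$ for $E$ a pole of $X$, and $\cvec_{E'}=A\cvec_E$ by construction. (Alternatively, Proposition~\ref{prop:mo} applies to $\momap_A\colon X\to X'$, since its hypothesis holds for every pole by our choice of $X'$, and gives $\momap_A(E)=E'$ together with $\momap_A(\torus_E)=\torus_{E'}$.)

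Next I would verify that $X'$ is adapted to $\momap_A(\mathsf{C})$. As $\momap_A$ is an isomorphism of varieties, the proper transform $\momap_A(\mathsf{C})$ is literally the image of $\mathsf{C}$, which is internal because $\momap_A(\torus)=\torus$. For each pole $E'=\momap_A(E)$ of $X'$, adaptedness of $X$ gives $\mathsf{C}\cap E\subset\torus_E$, and applying the isomorphism $\momap_A$, which carries $\torus_E$ onto $\torus_{E'}$, yields $\momap_A(\mathsf{C})\cap E'\subset\torus_{E'}$. Hence $\mu_{\momap_A(\mathsf{C})}$ may be computed on $X'$.

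Finally, the intersection numbers agree pole by pole: since $\momap_A\colon X\to X'$ is an isomorphism --- or, if one prefers to stay within the established machinery, by the projection formula of Proposition~\ref{prop:pushpull}, valid here because $\mathsf{C}$ avoids $\ind(\momap_A)=\emptyset$ and is not contained in $\crit(\momap_A)=\emptyset$ --- one gets $(\momap_A(\mathsf{C})\cdot E')=(\mathsf{C}\cdot E)$ whenever $E'=\momap_A(E)$. Summing over poles and using $\cvec_{E'}=A\cvec_E$,
\[
\mu_{\momap_A(\mathsf{C})}=\sum_{E'\subset X'}(\momap_A(\mathsf{C})\cdot E')\,\delta_{\cvec_{E'}}=\sum_{E\subset X}(\mathsf{C}\cdot E)\,\delta_{A\cvec_E}=A_*\mu_{\mathsf{C}}.
\]
There is no serious difficulty in this argument; the only point deserving attention is the bookkeeping of the second paragraph --- namely that pushing the fan of an \emph{adapted} model of $\mathsf{C}$ forward by $A$ produces a model \emph{adapted} to $\momap_A(\mathsf{C})$ --- which is what allows the two measures to be read off from corresponding data.
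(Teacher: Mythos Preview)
Your argument is correct and is essentially the paper's approach spelled out in full: the paper's proof is a one-liner invoking Proposition~\ref{prop:mo} with $X$ adapted to $\mathsf{C}$ and $X'$ adapted to $\momap_A(\mathsf{C})$, and your explicit choice $\Sigma(X')=A\cdot\Sigma(X)$ is exactly what makes both the hypothesis of that proposition and the adaptedness of $X'$ hold simultaneously.
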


\begin{proof}
Since $A$ is invertible over $\Z$, it preserves the set of primitive vectors in $\Z^d$.  Hence
the formula follows from the previous proposition with $X$ adapted to $\mathsf{C}$ and $X'$ adapted to $\momap_A(\mathsf{C})$.  
\end{proof}

\begin{corollary}
\label{cor:modd}
For any $A\in\operatorname{GL}_\dim(\Z)$, the dynamical degree of the monomial map $\momap_A$ is equal to the absolute value of the leading eigenvalue(s) of $A$. 
\end{corollary}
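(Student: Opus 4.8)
The plan is to convert $\deg(\momap_A^n)$ into an explicit function of the entries of $A^n$ using the intersection theory developed above, and then finish with Gelfand's spectral radius formula. Write $\rho(A)$ for the spectral radius of $A$, i.e.\ the absolute value of its leading eigenvalue(s); since $\deg(\momap_A^{m+n})\le\deg(\momap_A^m)\deg(\momap_A^n)$, the limit $\ddeg(\momap_A)=\lim_n\deg(\momap_A^n)^{1/n}$ exists, so it suffices to evaluate it.

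First I would prove the degree formula
$$
\deg(\momap_A^n)=\sum_{\cvec\in\cP}\wt(A^n\cvec)=\sum_{\cvec\in\cP}\max_{\rvec\in\rvecs}\pair{\rvec}{A^n\cvec},
$$
with $\cP$ as in~\eqref{eqn:p3fan} and $\wt,\rvecs$ as in~\eqref{eq:wt}. Indeed, $\momap_A^n=\momap_{A^n}$ is a monomial birational self-map of $\bP^\dim$, so Corollary~\ref{cor:degree} gives $\deg(\momap_A^n)=(\momap_{A^n}(\mathsf{L})\cdot H)$ for a general line $\mathsf{L}$ and the hyperplane $H=\{x_0=0\}$, whose support function is the $\wt$ of~\eqref{eq:wt}. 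Combining~\eqref{eq:intno}, the identity $\mu_{\mathsf{L}}=\sum_{\cvec\in\cP}\delta_{\cvec}$, and Corollary~\ref{cor:mopush} applied to $\mathsf{L}$ and $A^n$ (so that $\mu_{\momap_{A^n}(\mathsf{L})}=(A^n)_*\mu_{\mathsf{L}}=\sum_{\cvec\in\cP}\delta_{A^n\cvec}$) yields the formula.

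Next I would bound this quantity from both sides by a matrix norm. Fix a submultiplicative norm $\norm{\cdot}$ on $\dim\times\dim$ matrices and let $\norm[\infty]{\cdot}$ be the sup-norm on $\R^\dim$. Since every $\cvec\in\cP$ satisfies $\norm[\infty]{\cvec}=1$ and $0\le\wt(w)\le\norm[\infty]{w}$, the degree formula gives at once $\deg(\momap_A^n)\le C_1\norm{A^n}$ with $C_1=C_1(\dim)$. For a matching lower bound it suffices to establish the elementary inequality
$$
\sum_{\cvec\in\cP}\wt(B\cvec)\ \ge\ C_2\norm{B}
$$
for all $\dim\times\dim$ matrices $B$, with $C_2=C_2(\dim)>0$, and apply it to $B=A^n$. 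To prove it, note that $\cP=\{e_1,\dots,e_\dim,\,-(e_1+\dots+e_\dim)\}$, so $Be_j$ is the $j$th column of $B$ and $B(-\sum_je_j)=-\sum_jBe_j$, while $\wt(w)=\max(0,-w_1,\dots,-w_\dim)$. Choose an entry $B_{i_0j_0}$ of maximal modulus $m$: if $B_{i_0j_0}\le0$ then $\wt(Be_{j_0})\ge-B_{i_0j_0}=m$; if $B_{i_0j_0}=m>0$, then either the $i_0$th row sum of $B$ is $\ge m/2$, so that $\wt(-\sum_jBe_j)\ge m/2$, or else some entry $B_{i_0j_1}$ of that row is $<-m/(2\dim)$, so that $\wt(Be_{j_1})\ge-B_{i_0j_1}>m/(2\dim)$. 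In every case $\sum_{\cvec\in\cP}\wt(B\cvec)\ge m/(2\dim)$, and $m$ is comparable to $\norm{B}$.

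Combining the two estimates, $C_2\norm{A^n}\le\deg(\momap_A^n)\le C_1\norm{A^n}$ for all $n$, so taking $n$th roots and letting $n\to\infty$ yields $\ddeg(\momap_A)=\lim_n\norm{A^n}^{1/n}=\rho(A)$ by Gelfand's formula. The only step beyond unwinding definitions is the uniform lower bound $\sum_{\cvec\in\cP}\wt(B\cvec)\gtrsim\norm{B}$, which I expect to be the main point: the balanced configuration $\cP$ is arranged so that the values of $\wt$ on it detect simultaneously the largest positive and the largest negative contributions of $B$, ruling out the cancellation that could otherwise make the degrees decay relative to $\norm{A^n}$.
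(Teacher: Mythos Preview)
Your argument is correct and follows the same overall arc as the paper: express $\deg(\momap_A^n)$ as $\int\wt\,A^n_*\mu_{\mathsf{L}}=\sum_{\cvec\in\cP}\wt(A^n\cvec)$, sandwich this between constant multiples of a matrix norm of $A^n$, and conclude with the spectral radius formula. The only substantive difference is in how the lower bound is obtained. You prove $\sum_{\cvec\in\cP}\wt(B\cvec)\ge m/(2\dim)$ by a direct case analysis on the sign of the dominant entry of $B$ and its row sum. The paper instead exploits the fact that $\mu_{\mathsf{L}}$ is balanced (Corollary~\ref{cor:balanced}): adding any linear function $\cvec\mapsto\pair{\rvec}{\cvec}$ to $\wt$ leaves $\int\wt\,A^n_*\mu_{\mathsf{L}}$ unchanged, so one may replace $\wt$ by $\wt+\pair{\frac14(1,\dots,1)}{\,\cdot\,}$, which is then comparable above \emph{and} below to a norm $\norm{\cdot}$ on $N\otimes\R$. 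This yields the two-sided comparison $\deg(\momap_A^n)\asymp\max_{\cvec\in\cP}\norm{A^n\cvec}\asymp\norm{A^n}$ in one stroke. Your approach is more elementary but tied to the specific sets $\cP,\rvecs$; the paper's trick is cleaner and applies verbatim to any balanced measure whose support spans $N$ (a fact the paper later uses for $\mu_{\cvecs}$ in Lemma~\ref{lem:dn}).
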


\begin{proof}
Taking $\mathsf{C}=\mathsf{L}$ to be a general line in $\bP^\dim$ and integrating the function $\wt$ in~\eqref{eq:wt} against~\eqref{eq:mopush} , we obtain
\begin{equation}\label{eq:degmomap}
\deg\momap_A^n = \int \wt\,A^n_*\mu_{\mathsf{L}}.
\end{equation}
If we add a linear function to $\wt$, the integral does not change.  So replacing $\psi(\cvec)$ with e.g. $\psi(\cvec) +\langle \rvec,\cvec\rangle$, where $\rvec=\frac14(1,\dots,1)$, we may assume that $\norm{\cvec}\le\wt(\cvec)\le C\norm{\cvec}$ for some norm $\norm{\cdot}$ on $N\otimes\R$ and some constant $C>1$. Thus $\deg \momap_{A}^n$ is  multiplicatively comparable, uniformly in $n$, to $\max_{\cvec\in\supp\mu_{\mathsf{L}}} \norm{A^n\cvec}$.  Since the vectors in $\supp\mu_{\mathsf{L}}$ span $\nn$, we see further that for large $n$, 
$
\deg \momap_A^n
$
is comparable to $n^{k-1} \rho^n$, where $\rho$ is the magnitude of a leading eigenvalue for $A$ and $k$ is the size of the largest Jordan block for such an eigenvalue. Thus $\ddeg(\momap_A)=\rho$.
\end{proof}

\section{Degrees of certain birational maps}
\label{sec:bregs}
In this section we study the composition of a birational monomial map with a well chosen birational involution which, though not monomial, still behaves well on toric varieties. This will lead to a proof of the power series formula in Theorem~\ref{thm:degformula} for the dynamical degree.

\subsection{A birational involution}
\label{sec:involution}
The \emph{Cremona involution} on $\bP^d$ is the birational monomial map $\momap_{-I}$ given in affine coordinates by $(y_1,\dots,y_d)\mapsto (y_1^{-1},\dots,y_d^{-1})$, or in homogeneous coordinates $[x_0,\dots,x_d]$, where $y_j=x_j/x_0$, by
\begin{equation*}
  [x_0,\dots,x_d] \to[\prod_{i\ne0}x_i,\dots,\prod_{i\ne d}x_i].
\end{equation*}
It contracts each homogeneous coordinate hyperplane $\{x_j=0\}$ to the torus invariant point where the others intersect and is indeterminate along each linear subspace $\{x_j=x_k=0\}$, $j\ne k$.  

\smallskip
Now consider the $(d+1)\times(d+1)$-matrix $B=(B_{i,j})_{0\le i,j\le d}$ with entries $B_{i,j}=(-1)^{i-j}$ for $i\le j$ and $B_{i,j}=(-1)^{i-j-1}$ for $i>j$. 
It is straightforward to see that $B$ is invertible (except in characteristic two), and that the non-zero entries of the inverse $B^{-1}$ are as follows:
$B^{-1}_{i,i}=\frac12$ for $0\le i\le d$, $B^{-1}_{i,i+1}=\frac12$ for $0\le i<d$, and $B^{-1}_{d,0}=\frac{(-1)^\dim}{2}$.

For example, if $d=3$, then 
\begin{equation*}
  B=
  \begin{bmatrix*}[r]
    1 & -1 & 1 & -1 \\
    1 & 1 & -1 & 1 \\
    -1 & 1 & 1 & -1 \\
    1 & -1 & 1 & 1
  \end{bmatrix*}
  \qquad\text{and}\qquad
  B^{-1}=\frac12
  \begin{bmatrix*}[r]
    1 & 1 & 0 & 0 \\
    0 & 1 & 1 & 0 \\
    0 & 0 & 1 & 1 \\
    -1 & 0 & 0 & 1
  \end{bmatrix*}
  .
\end{equation*}
The matrix $B$ defines an automorphism of $\bP^d$, also denoted by $B$, and given by
\begin{equation*}
  [x_0,\dots,x_d]\mapsto B[x_0,\dots,x_d]=[b_0,b_1,\dots,b_\dim].
\end{equation*}
Now set 
\begin{equation*}
  \inv := B^{-1}\circ \momap_{-I}\circ B.
\end{equation*}
By construction, $\inv\colon\bP^d\tto\bP^d$ is a birational involution, $\crit(\inv)$ consists of the $d+1$ hyperplanes $\{b_j=0\}$, and $\ind(\inv)$ consists of the linear subspaces $\{b_i=b_j=0\}$, $i\neq j$.
One computes that $g=[g_0,\dots,g_d]$, where
\begin{equation}\label{eq:inv}
  g_j=x_j\prod_{i\ne j,j+1}b_i\
  \text{for $j<d$}
  \quad\text{and}\quad
  g_d=x_d\prod_{i\ne0,d}b_i.
\end{equation}
For example, if $d=3$, then
\begin{equation}
  \inv\colon[x_0,x_1,x_2,x_3]
  \to[x_0b_2b_3,x_1b_0 b_3,x_2 b_0 b_1,x_3 b_1 b_2].
\end{equation}
The coordinate hyperplanes $\{x_j=0\}$ are not contained in $\crit(\inv)$, so it follows from the formula above that $\inv$ restricts to a birational self-map on each of them, a statement that will be generalized in Corollary~\ref{cor:sigma} below.

In what follows we will use the (non-toric) hyperplanes
\begin{equation}
  H_j = \{b_j=0\}\cap \torus.
\end{equation}
Then $\inv(H_j\setminus\ind(g))=q_j$, where $q_j\in\bP^\dim\setminus\torus$ is the point with homogeneous coordinates given by column $j$ of the matrix $B^{-1}$.
Also set $H=\bigcup_{j=0}^dH_j$. 
We will write $\overline{H_j}\subset X$ and $\overline{H}\subset X$ for the Zariski closures of $H_j$ and $H$, respectively, in any toric modification $X\to \bP^d$.

Note that the hyperplanes $\overline{H_j}\subset \bP^d$ omit the $d+1$ torus invariant points

  \begin{equation*}
[1,0,\dots,0],[0,1,0,\dots,0],\dots,[0,\dots,0,1].
\end{equation*}

\begin{lemma}
  \label{lem:sigma1}
  Let $\pi\colon X\to \bP^d$ be any toric modification and let 
  $\inv_X\colon X\tto X$ be the lift of $\inv$. Then $(\torus\setminus H)\cap\ind(\inv_X)=\emptyset$ and $\inv_X(\torus\setminus H)\subset\torus$.
\end{lemma}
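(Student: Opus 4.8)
The statement has two parts, and both are essentially computations with the explicit formula \eqref{eq:inv} for $\inv$. The plan is to work first downstairs on $\bP^\dim$ and then transfer to $X$ using the fact that a toric modification $\pi\colon X\to\bP^\dim$ is an isomorphism over the torus $\torus$. So first I would establish the torus-level claims: that $\inv$ is a morphism at every point of $\torus\setminus H$ and that it maps this set into $\torus$. For the second of these, note that $\torus = \{x_0\cdots x_\dim\neq 0\}$, and on $\torus\setminus H$ we additionally have $b_j\neq 0$ for all $j$. From \eqref{eq:inv} each component $g_j$ is a nonzero product of some $x_i$'s and some $b_i$'s at such a point, so $g_j\neq 0$ for all $j$; hence $\inv$ is defined there (in particular the point is not in $\ind(\inv)$, which is cut out by the vanishing of pairs of the $b_i$) and its image has all homogeneous coordinates nonzero, i.e. lies in $\torus$. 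This disposes of the statement for $X = \bP^\dim$.

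Next I would promote this to an arbitrary toric modification $\pi\colon X\to\bP^\dim$. Since $\pi$ restricts to the identity on $\torus$, the lift $\inv_X = \pi^{-1}\circ\inv\circ\pi$ agrees with $\inv$ on $\torus$ as a rational map, and in particular $\inv_X(\torus\setminus H)\subset\torus$ follows immediately from the previous paragraph. For the indeterminacy claim, the point is that if $p\in\torus\setminus H$, then $\inv$ is a local isomorphism near $p$ (being a birational map that is a morphism at $p$ with image in $\torus$, where $\pi$ is also an isomorphism), so $\inv_X = \pi^{-1}\circ\inv\circ\pi$ is a composition of morphisms near $p$ and is therefore a morphism at $p$; thus $p\notin\ind(\inv_X)$. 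One should be slightly careful to phrase this via the graph/closure definition of $\ind$ given in \S\ref{sec:intersect}, but the content is just that indeterminacy cannot be created over an open set where the map is already a morphism and $\pi$ is an isomorphism.

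The main (minor) obstacle is bookkeeping with the definition \eqref{eq:inv}: one must confirm that for $j<\dim$ the product $\prod_{i\neq j,j+1}b_i$ and for $j=\dim$ the product $\prod_{i\neq 0,\dim}b_i$ genuinely involve only the $b_i$'s (not a stray $x_i$ that could vanish), and that the $x_j$ factor out front is nonzero on $\torus$ — both are immediate from the displayed formula. There is no real difficulty: the essential input is that $\torus\setminus H$ is precisely the locus where \emph{all} of $x_0,\dots,x_\dim$ and \emph{all} of $b_0,\dots,b_\dim$ are nonzero, and \eqref{eq:inv} exhibits each $g_j$ as a monomial in these quantities. The only thing to keep in mind is that $\crit(\inv)$ and $\ind(\inv)$ are both contained in $\overline H$, so staying inside $\torus\setminus H$ avoids all the bad behavior of $\inv$.
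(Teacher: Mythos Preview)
Your proposal is correct and follows the same approach as the paper: reduce to $X=\bP^\dim$ using that a toric modification is the identity over $\torus$, then read off from \eqref{eq:inv} that each $g_j$ is a product of $x_i$'s and $b_i$'s, all of which are nonzero on $\torus\setminus H$. The paper's proof is a one-liner (``It suffices to consider the case $X=\bP^\dim$, and then the statement is clear in view of~\eqref{eq:inv}, since $b_i\ne 0$ on $\torus\setminus H$''), and you have simply unpacked both steps in more detail.
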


\begin{proof}
  It suffices to consider the case $X=\bP^\dim$, and then the statement is clear in view of~\eqref{eq:inv}, since $b_i\ne 0$ on $\torus\setminus H$ for $0\le i\le\dim$.
\end{proof}

\begin{lemma}\label{lem:sigma}
Let $\pi\colon X\to \bP^d$ be any toric modification, and $E\subset X$ a pole such that $\cvec_E$ is contained in the interior of a $d$-dimensional cone of $\Sigma(\bP^d)$. Then:
\begin{itemize}
\item[(i)]
$\overline{H}\cap\torus_E=\emptyset$;
\item[(ii)]
the lift $\inv_X\colon X\tto X$ of $g$ is an isomorphism in a neighborhood of the torus $\torus_E$, and sends $\torus_E$ onto itself.
\end{itemize}
\end{lemma}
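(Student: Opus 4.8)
The plan is to reduce everything to the base case $X = \bP^d$ and then inspect the explicit formula~\eqref{eq:inv} for $\inv$. First I would observe that, by hypothesis, $\cvec_E$ lies in the interior of a maximal cone $\sigma$ of $\Sigma(\bP^d)$; equivalently, if $\pi\colon X\to\bP^d$ is the given toric modification, then $\pi$ maps $\torus_E$ (indeed a neighborhood of $\torus_E$ in $X$) isomorphically onto its image, which is an open subset of the torus-invariant affine chart $U_\sigma\subset\bP^d$ corresponding to $\sigma$. Since $\sigma$ is generated by $d$ of the $d+1$ rays of $\cP$ in~\eqref{eqn:p3fan}, the chart $U_\sigma$ is one of the standard affine charts $\{x_k\ne 0\}$ of $\bP^d$, and $\pi(\torus_E)$ is contained in exactly one coordinate hyperplane $\{x_j=0\}$ with $j\ne k$ (the one dual to the ray of $\cP$ \emph{not} in $\sigma$, expressed in the coordinates of the chart) minus the other coordinate hyperplanes. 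So it suffices to prove both statements for the standard Cremona-type chart of $\bP^d$, and for that I would work directly with the homogeneous formula~\eqref{eq:inv}.

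For part~(i): the torus $\torus_E$ of a pole lying over the interior of a maximal cone of $\Sigma(\bP^d)$ sits inside the dense open subset of some coordinate hyperplane $\{x_j=0\}$ on which all other coordinates $x_i$ ($i\ne j$) are nonzero. On such a set the linear forms $b_i = \sum_k B_{i,k}x_k$ are generically nonzero, but the key point is that $\overline{H} = \bigcup_i \overline{H_i}$ with $H_i = \{b_i=0\}\cap\torus$, so $\overline{H}$ is the closure of a subset of the \emph{open} torus. I would check that $\overline{H_i}\subset\bP^d$ meets $\{x_j=0\}$ only along a set disjoint from $\torus_E$ — concretely, using the remark in the text that each $\overline{H_j}$ omits all $d+1$ torus-invariant points, together with the fact that $\cvec_E$ being interior to a maximal cone forces $\torus_E$ to retract (under degeneration along the cone) onto precisely such an invariant point. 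More directly: a point of $\torus_E$ is a limit of points $(y_1,\dots,y_d)\in\torus$ along the one-parameter subgroup in the direction $\cvec_E$; since $\cvec_E$ is interior to the maximal cone $\sigma$, pairing against the $b_i$ (which are monomials in affine coordinates, up to the change of basis $B$) shows every $b_i$ either blows up or vanishes in a controlled way, and in any case $\{b_i=0\}$ does not pass through the limit. I expect this degeneration bookkeeping to be the main obstacle — one has to be careful that the closure $\overline{H}$ does not acquire new components meeting $\torus_E$.

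For part~(ii): once (i) is known, $\inv_X$ is defined and regular near $\torus_E$ because $\ind(\inv)$ and $\crit(\inv)$ are built from the hyperplanes $\{b_j=0\}$, which by (i) avoid $\torus_E$; lifting to $X$ only improves this since $\pi$ is an isomorphism near $\torus_E$. To see that $\inv_X$ sends $\torus_E$ to itself, I would combine Lemma~\ref{lem:sigma1} (which gives $\inv(\torus\setminus H)\subset\torus$, so $\inv_X$ maps a neighborhood of $\torus_E$ minus $\torus_E$ into $\torus$, forcing $\inv_X(\torus_E)$ into a pole) with the fact that $\inv$ is an \emph{involution}: applying the same argument to $\inv_X^{-1}=\inv_X$ shows $\inv_X$ restricts to a bijection on the union of tori $\torus_E$ of this type, and a dimension/fan-combinatorics count (the pole $E$ with $\cvec_E$ interior to $\sigma$ is determined by which invariant point of $\bP^d$ it lies over, and $\inv$ fixes each coordinate hyperplane by the displayed formula, hence fixes each such invariant point) pins down $\inv_X(\torus_E)=\torus_E$. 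Alternatively, and perhaps more cleanly, I would note that $\inv=B^{-1}\circ\momap_{-I}\circ B$ with $B$ a linear automorphism and $\momap_{-I}$ monomial, and invoke Proposition~\ref{prop:mo}: the monomial map $\momap_{-I}$ sends $\cvec_E\mapsto -\cvec_E$, which is again interior to $-\sigma$, a maximal cone, so $\momap_{-I}$ is an isomorphism near $\torus_E$; the conjugating linear maps $B^{\pm1}$ permute the coordinate hyperplanes and their intersections, hence act compatibly, and chasing through gives (ii). The involution property then makes the "onto itself" claim automatic.
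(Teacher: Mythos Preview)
Your reduction step at the outset is incorrect, and this undermines both parts. You claim that $\pi$ maps a neighborhood of $\torus_E$ \emph{isomorphically} onto an open subset of $U_\sigma\subset\bP^d$. But by hypothesis $\cvec_E$ lies in the \emph{interior} of a maximal cone $\sigma$ of $\Sigma(\bP^d)$, so $\cvec_E$ is not one of the rays of $\Sigma(\bP^d)$; hence $E$ is $\pi$-exceptional and $\pi$ contracts the entire $(d-1)$-dimensional torus $\torus_E$ to the single torus-invariant point $p_\sigma\in\bP^d$. There is no isomorphism to work with on the base. Relatedly, your assertion that ``$\pi(\torus_E)$ is contained in exactly one coordinate hyperplane $\{x_j=0\}$'' is off: $p_\sigma$ lies on \emph{all} $d$ coordinate hyperplanes bounding $U_\sigma$, not just one.

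For part~(i) the correct idea is indeed present in your write-up (the hyperplanes $\overline{H_j}\subset\bP^d$ omit all torus-invariant points, so $\overline{H}$ in $X$ cannot meet $\torus_E$ since $\pi(\torus_E)=p_\sigma$), but it is obscured by the faulty geometry above; once you drop the isomorphism claim this becomes a one-line argument.

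For part~(ii), both of your alternatives have gaps. In the first, regularity of $g$ at $p_\sigma$ does not automatically give regularity of $\inv_X$ along the blown-up locus $\torus_E$, precisely because $\pi$ is not an isomorphism there. In the second, the linear map $B$ does \emph{not} permute coordinate hyperplanes (it sends $\{x_j=0\}$ to $\{b_j=0\}$), so it does not interact with the toric structure in the way Proposition~\ref{prop:mo} would require; you cannot simply push the toric argument for $\momap_{-I}$ through the conjugation. The paper instead works in a local monomial chart $\chi=(\chi_1,\dots,\chi_d)$ adapted to $E$ and uses the formula~\eqref{eq:inv} to compute that $g^*\chi_j=\chi_j\psi_j$, where each $\psi_j$ is a monomial in the ratios $b_i/b_0$; these ratios are regular and equal to nonzero constants on $\torus_E$ (a consequence of part~(i)), so in the chart $\chi$ the map $\inv_X$ acts near $\torus_E$ by coordinatewise multiplication by nonzero constants. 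That is the missing computation you need.
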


\begin{proof}
The assumption on $E$ means that $\pi(E)\in\bP^\dim$ is one of the $d+1$ torus invariant points above. As these points do not lie on the closure of $H$ in $\bP^\dim$, we immediately deduce~(i). Moreover, for any $i=1,\dots,d$, the zeros and poles of the rational function $b_i/b_0$ omit all torus invariant points of $\bP^d$.  So on $X$, the restriction of $b_i/b_0$ to $E$ is a non-zero constant.

Now pick $\rvec_1\in M$ such that $\langle \rvec_1,\cvec_E\rangle=1$, and elements $\rvec_2,\dots,\rvec_\dim\in M$ that generate the lattice $\cvec_E^\perp:=\{\rvec\in M\mid \langle \rvec,\cvec_E\rangle=0\}$. Each $\rvec_j$ defines a monomial $\chi_j$ in $(y_1,\dots,y_\dim)$, and $\chi:=(\chi_1,\dots,\chi_\dim)$ gives a birational map of $X$ to $\mathbb{A}^\dim$ which is an isomorphism in a neighborhood of $\torus_E$ and sends $E$ onto the coordinate hyperplane $\{w_1=0\}$ in $\mathbb{A}^\dim$. Now it follows from~\eqref{eq:inv} that $g^*\chi_j=\chi_j\psi_j$, where $\psi_j$ is a monomial in the rational functions $b_i/b_0$ and hence equal to a non-zero constants on $\torus_E$. Thus $\chi\circ\inv_X\colon X\tto\mathbb{A}^\dim$ is also an isomorphism in a neighborhood of $\torus_E$ and sends $E$ onto the coordinate hyperplane $\{w_1=0\}$. We conclude that $\inv_X=\chi^{-1}\circ\chi\circ\inv_X$ has the desired properties.
\end{proof}

\begin{corollary}
\label{cor:sigma}
Let $\pi\colon X\to \bP^d$ be any toric modification. Then the lift $\inv_X\colon X\tto X$ of $\inv$ restricts to a birational map $\inv_X\colon E\tto E$ on any pole $E\subset X$.
\end{corollary}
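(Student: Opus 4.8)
The plan is to reduce the corollary to Lemma~\ref{lem:sigma} by means of a standard dévissage: every pole $E\subset X$ can be made, after a further toric modification, into a pole whose primitive generator lies in the interior of a top-dimensional cone of $\Sigma(\bP^d)$, and both sides of the asserted statement are insensitive to toric modifications. So first I would take an arbitrary pole $E\subset X$ with primitive generator $\cvec_E\in N$. If $\cvec_E$ already lies in the interior of a $d$-dimensional cone of $\Sigma(\bP^d)$, then Lemma~\ref{lem:sigma}(ii) says $\inv_X$ is an isomorphism near $\torus_E$ carrying $\torus_E$ to itself, and since $\inv_X$ is birational it restricts to a birational self-map of the irreducible variety $E=\overline{\torus_E}$. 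In general $\cvec_E$ lies in the relative interior of some lower-dimensional cone $\sigma\in\Sigma(\bP^d)$ (or on a ray, where the argument is the same). I would then choose a primitive $\cvec\in N$ lying in the interior of a $d$-dimensional cone of $\Sigma(\bP^d)$ and sufficiently close to $\cvec_E$, and pass to a common toric modification $\hat X\to X$ that also realizes $\cvec$; concretely one can take the star subdivision of $\Sigma(X)$ along $\cvec$ (after first refining so that $\cvec$ sits inside a single cone). The point of choosing $\cvec$ near $\cvec_E$ and inside a maximal cone of $\bP^d$ is that the new pole $\hat E$ it produces has its generator in such a maximal cone, so Lemma~\ref{lem:sigma} applies to $\hat E$.

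Next I would exploit functoriality. Let $\pi\colon\hat X\to X$ be the toric modification just constructed and $\hat E\subset\hat X$ the pole with $\cvec_{\hat E}=\cvec$. On the one hand $\inv_{\hat X}$ is the lift of $\inv$ and hence of $\inv_X$, so $\inv_{\hat X}=\pi^{-1}\circ\inv_X\circ\pi$ as birational maps; on the other hand, by the remark following the definition of toric modifications, $\pi$ restricts to a birational morphism from $\hat E$ onto a pole of $X$ — and for the right choice of star subdivision that pole is exactly $E$. (If one prefers not to arrange $\pi(\hat E)=E$ on the nose, one can instead choose $\cvec$ in the relative interior of the cone containing $\cvec_E$, so that $\pi$ is an isomorphism near $\torus_E$ and $\hat E$ is literally the proper transform of $E$; then $E$ and $\hat E$ are birational and $\cvec_{\hat E}=\cvec_E$, but $\cvec_{\hat E}$ still need not sit in a maximal cone of $\bP^d$, which is why the first variant is cleaner.) Either way, one gets a commuting square of birational maps relating $\inv_X|_E$ to $\inv_{\hat X}|_{\hat E}$.

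Then I would conclude: Lemma~\ref{lem:sigma}(ii) gives that $\inv_{\hat X}$ is an isomorphism in a neighborhood of $\torus_{\hat E}$ sending $\torus_{\hat E}$ onto itself, hence restricts to a birational self-map $\hat E\tto\hat E$. Composing with the birational identification $E\tto\hat E$ furnished by $\pi$ (on the torus level this is an isomorphism of $\torus_{\hat E}$ with $\torus_E$, since $\pi$ is the identity on $\torus$ and matches up the corresponding torus orbits) transports this to a birational self-map $\inv_X\colon E\tto E$, as required. The main obstacle is purely bookkeeping rather than conceptual: one must be careful to pick the modification so that the new pole both has generator in a maximal cone of $\Sigma(\bP^d)$ \emph{and} maps down to the given pole $E$ (or is birational to it), and to check that the three birational maps in play — $\inv_X$, $\inv_{\hat X}$, and $\pi$ — genuinely fit into a commuting diagram of \emph{dominant} maps so that the restriction to $E$ is well defined in the sense of the proper-transform convention fixed in \S\ref{sec:intersect}. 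Once the star subdivision is set up correctly, the verification is immediate from Lemma~\ref{lem:sigma}.
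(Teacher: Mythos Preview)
Your reduction has a genuine gap in the step where you pass from the new pole $\hat E$ back down to $E$. You want a toric modification $\pi\colon\hat X\to X$ and a pole $\hat E\subset\hat X$ such that (a) $\cvec_{\hat E}$ lies in the interior of a maximal cone of $\Sigma(\bP^d)$, so Lemma~\ref{lem:sigma} applies, and (b) $\pi$ restricts to a birational map $\hat E\tto E$. But these two requirements are incompatible unless $\cvec_E$ already lies in such a maximal cone. Under any toric modification, a pole of $\hat X$ is either the proper transform of a pole of $X$ with the \emph{same} primitive generator, or it is exceptional and contracted by $\pi$ to the intersection of two or more poles of $X$, hence to a subvariety of codimension $\ge 2$. (This is exactly the content of the remark you cite, but with the opposite conclusion.) So if you choose $\cvec\neq\cvec_E$ in the interior of a maximal cone and star-subdivide, the resulting $\hat E$ is exceptional and $\pi(\hat E)$ is not a divisor at all; there is no birational identification of $\hat E$ with $E$ along which to transport the conclusion of Lemma~\ref{lem:sigma}. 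Your parenthetical second variant recognizes this, but as you note it gives back $\cvec_{\hat E}=\cvec_E$, so nothing has been gained.

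The paper's proof avoids this obstruction by working valuatively rather than geometrically. One identifies $N_\R$ with the space of monomial valuations on $\bk(\torus)$ and observes that the birational map $\inv$ induces a self-homeomorphism $g_*$ of this space. Lemma~\ref{lem:sigma} shows $g_*(\cvec)=\cvec$ for primitive $\cvec$ in the interior of each maximal cone of $\Sigma(\bP^d)$; by homogeneity and density this forces $g_*=\mathrm{id}$ on all of $N_\R$, and in particular $g_*(\cvec_E)=\cvec_E$ for every pole $E$, which unwinds to $\inv_X(E)=E$. The density/continuity argument is exactly the ``limit'' you were reaching for with ``$\cvec$ close to $\cvec_E$'', but carried out in valuation space rather than by chasing divisors through blowups, where the limit simply does not exist in the way you need.
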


\begin{proof}
This can be proved in a way similar to Lemma~\ref{lem:sigma}, but let us give a different proof using valuations.  Let $\mathrm{Val}_{\torus}$ be the set of valuations $\val\colon\bk(\torus)^\times\to\R$ on the function field $\bk(\torus)\simeq\bk(y_1,\dots,y_d)$ of the torus $\torus$ that are trivial on $\bk$. We equip it with the topology of pointwise convergence.  The birational map $\inv\colon\torus\tto\torus$ induces a field automorphism of $\bk(\torus)$, and a homeomorphism $g_*$ of $\mathrm{Val}_{\torus}$.
 
We can identify the space $N_\R\cong\R^\dim$ with the set of monomial valuations in coordinates $(y_1,\dots,y_d)$: given $t\in\R^\dim$, the corresponding valuation $\val_t\in\mathrm{Val}_{\torus}$ is uniquely determined by $\val(\sum_\alpha c_\alpha y^\alpha)=\min\{\langle\alpha,t\rangle \mid c_\alpha\ne0\}$ for every Laurent polynomial $\sum_\alpha c_\alpha y^\alpha\in\bk[y_1^{\pm1},\dots,y_\dim^{\pm1}]$. The map $N_\R\to\mathrm{Val}_{\torus}$ is then a homeomorphism onto a closed subset. It is also homogeneous with respect to the multiplicative actions of $\R_{>0}$ on $N_\R$ and $\mathrm{Val}_{\torus}$.

If $X$ is a toric variety and $E\subset X$ a pole, then the valuation corresponding to the element $\cvec_E\in N\subset N_\R$ is also denoted by $\cvec_E$ and can be geometrically described as follows: for any non-zero rational function $f\in\bk(\torus)=\bk(X)$, $\cvec_E(f)\in\Z$ is the order of vanishing of $f$ along $E$.  

It now follows from Lemma~\ref{lem:sigma} that $g_*(\cvec)=\cvec$ for all primitive elements $\cvec\in N$ that lie in the interior of a $\dim$-dimensional cone of $\Sigma_{\bP^\dim}$. Since $g_*$ is homogeneous with respect to the scaling action above, it follows that $g_*(\cvec)=\cvec$ for all $\cvec\in N_\Q$ that lie in the interior of a $d$-dimensional cone of $\Sigma_{\bP^d}$. As the set of such $\cvec$ is dense in $N_\R$ we must have $g_*=\mathrm{id}$ on $N_\R$.

In particular, if $X$ is a toric variety and $E\subset X$ is a pole, then $g_*(\cvec_E)=\cvec_E$. Unraveling the geometric description of $\cvec_E$, this implies that $g_X(E)=E$.
\end{proof}

We now study the critical set of lifts of $\inv$. Set 
\begin{equation}\label{eq:cvecs}
\cvecs := \{\cvec_0,\cvec_1,\dots,\cvec_d\}
\end{equation}
where $\cvec_j\in N\simeq\Z^d$ is the vector whose $k$th entry is the order of $b_j$ in the expression for $g_k/g_0$, $1\le k\le d$.
For example, if $d=3$ we have 
\begin{equation*}
\cvecs = \{(1,1,0),(0,1,1),(-1,-1,0),(0,-1,-1)\}.
\end{equation*}
If $\pi\colon X\to \bP^\dim$ is a toric modification that realizes $\cvec_j\in\cvecs$, then we denote the associated pole by $E_j$.

\begin{proposition}
\label{prop:poleback}
Let $\pi\colon X\to\bP^d$ be any toric modification that realizes all
elements of $\cvecs$, and let $\inv_X\colon X\tto X$ be the lift of of
$\inv$ to $X$. Then:
\begin{enumerate}
\item the irreducible hypersurfaces contracted by $\inv_X$ are $\overline{H_j}$, $0\le j\le \dim$; moreover, $g$ maps a general point on $H_j$ into $\torus_{E_j}$;
\item $\inv_X^* E_j = E_j + H_j$ for each $0\leq j\leq \dim$;
\item $\inv_X^* E = E$ for all other poles of $X$.
\end{enumerate}
\end{proposition}

\begin{proof}
Let $e_j\in\bP^\dim$ denote the point with homogeneous coordinates
equal to the $j$th standard basis vector.  Let $\ell_0$ denote the
line joining $e_0$ and $e_\dim$, and for $0 < j \leq \dim$ let
$\ell_j$ denote the line joining $e_j$ and $e_{j-1}$.  Then $q_j =
g(H_j)$ is a general point on $\ell_j$ and the strict transform of
$\ell_j$ under the toric modification $X\to\bP^\dim$ is the pole
$E_j$.  Hence the preimage of $q_j$ in $X$ is the closure of a
$(\dim-2)$-dimensional subvariety $S_j\subset \torus_{E_j}$.
  
To prove~(i), recall from Corollary~\ref{lem:sigma} that $\inv_X(E) =
E$ for all poles $E\subset X$.  Hence any irreducible hypersurface of
$X$ contracted by $\inv_X$ must meet $\torus\subset X$. It must then
also be contracted by $\inv$, and thus equal to $\overline{H_j}$ for
some $j$. Note, conversely, that $g_X$ contracts each $\overline{H_j}$
to $\overline{S_j}$. Thus~(i) holds.

Let
$\pi\colon Y \to X$ be the (non-toric) blowup of $X$ along each of the
mutually disjoint subvarieties $\overline{S_j}$, and let $\tilde
S_j\subset Y$ denote the preimage of $\overline{S_j}$.
Further, let $Z\to \bP^d$ be the smooth (non-toric) variety
obtained by blowing up all points $q_j = \inv(H_j)$, and $F_j\subset
Z$ the preimage of $q_j$.  Since $\inv$ is
linearly conjugate to the Cremona involution $h_{-I}$, we have that
the lift $\inv_Z\colon Z\tto Z$ of $g$ to $Z$ contracts no
hypersurfaces (i.e. $\inv_Z$ is a `pseudoautomorphism') and exchanges $H_j$ with $F_j$. 

On the other hand, the birational map $\omega\colon Y \tto Z$ induced
by the identity on $\bP^\dim$ satisfies $\omega(\tilde S_j) = F_j$.
Therefore, the irreducible 
hypersurfaces of $Y$ contracted by $\omega$ are precisely the poles
contracted by the toric modification $X\to\bP^\dim$, and in the reverse
direction $\omega^{-1}$ contracts no hypersurfaces of $Z$ at all.  It
follows from this discussion and Corollary \ref{cor:sigma} that the
lift $g_Y\colon Y\tto Y$ of $\inv$ to $Y$ is again a pseudoautomorphism, this time exchanging $\overline{H_j}$ and $\tilde S_j$ while preserving the proper transform $\tilde E$ of each pole $E\subset X$.  

Since the birational map $\inv_{XY} := \inv_X\circ\pi^{-1}$ contracts
no hypersurfaces of $X$, we obtain that $\inv_X^* D = \inv_{XY}^*\pi^*
D$ for all divisors $D$ on $X$.  In particular, for each $0\leq
j\leq\dim$, we have $\inv_X^* E_j = \inv_{XY}^* (\tilde E_j + \tilde
S_j) = \pi_*(\tilde E_j + \overline{H_j}) = E_j + \overline{H_j}$.
And for any other pole $E\subset X$, we have $\inv_X^* E =
\inv_{XY}^*\tilde E = \pi_*\tilde E = E$. Thus~(ii) and~(iii) hold, which completes the proof.
\end{proof}

Now consider the measure 
\begin{equation}
\mu_{\cvecs} := \sum_{j=0}^d \delta_{\cvec_j}
\end{equation}
on $N$. It is balanced in the sense of Corollary~\ref{cor:balanced}.

\begin{proposition}
\label{prop:sigpush}
Let $\mathsf{C}\subset \bP^d$ be an internal curve that meets each critical hyperplane $\overline{H_j}\subset \bP^d$ only at points in $\torus$.  If for some toric modification $X\to \bP^3$ adapted to $\mathsf{C}$ and realizing all elements of $\cvecs$, the proper transform of $\mathsf{C}$ in $X$ avoids the indeterminacy set of $\inv_X$, then $\inv(\mathsf{C})$ is an internal curve satisfying
$$
\mu_{\inv(\mathsf{C})} = \mu_{\mathsf{C}} + (\deg\mathsf{C})\mu_{\cvecs}.
$$
\end{proposition}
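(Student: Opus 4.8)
The plan is to compute the measure $\mu_{\inv(\mathsf{C})}$ directly from its definition~\eqref{eq:mu}, working on a single toric modification $X\to\bP^d$ that is simultaneously adapted to $\mathsf{C}$, realizes all elements of $\cvecs$, and on which the proper transform of $\mathsf{C}$ avoids $\ind(\inv_X)$ — such an $X$ exists by hypothesis and can be further modified if needed using the existence results in~\S\ref{sec:toric}. First I would check that $\mathsf{C}':=\inv(\mathsf{C})$ is genuinely internal: since $\mathsf{C}\cap\torus\neq\emptyset$ and, by Lemma~\ref{lem:sigma1}, $\inv$ maps $\torus\setminus H$ into $\torus$, the proper transform of $\mathsf{C}$ meets $\torus\setminus H$ (as $\mathsf{C}$ meets each $\overline{H_j}$ only inside $\torus$, a general point of $\mathsf{C}\cap\torus$ avoids $H$), so $\mathsf{C}'\cap\torus\neq\emptyset$. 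It also follows that $X$ is adapted to $\mathsf{C}'$, because $\inv_X$ is an isomorphism near each torus $\torus_E$ for $E$ a pole coming from a maximal cone of $\Sigma(\bP^d)$ (Lemma~\ref{lem:sigma}), hence preserves the property that $\mathsf{C}$ meets such poles only in $\torus_E$; for the exceptional poles $E_j$ realizing $\cvec_j\in\cvecs$ one argues similarly using Lemma~\ref{lemma:poleback}.

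The core computation is then, for each pole $E\subset X$,
\begin{equation*}
(\mathsf{C}'\cdot E) = (\inv_X(\mathsf{C})\cdot E) = (\mathsf{C}\cdot \inv_X^* E),
\end{equation*}
where the second equality is the projection formula of Proposition~\ref{prop:pushpull}, valid because the proper transform of $\mathsf{C}$ is disjoint from $\ind(\inv_X)$ and is not contracted by $\inv_X$ (it is internal, and the only contracted hypersurfaces are the $\overline{H_j}$ by Lemma~\ref{lemma:poleback}). Now apply Corollary~\ref{corollary:poleback}: if $\cvec_E=\cvec_j$ for some $j$ then $\inv_X^*E = E+\overline{H_j}$ and hence $(\mathsf{C}'\cdot E) = (\mathsf{C}\cdot E) + (\mathsf{C}\cdot\overline{H_j})$; otherwise $\inv_X^*E=E$ and $(\mathsf{C}'\cdot E)=(\mathsf{C}\cdot E)$. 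Next I would identify $(\mathsf{C}\cdot\overline{H_j})$: the hyperplane $\overline{H_j}=\{b_j=0\}$ is linearly equivalent to a general hyperplane $\{x_0=0\}$ of $\bP^d$ (it is the zero set of a linear form $b_j$, pulled back to $X$), so $(\mathsf{C}\cdot\overline{H_j}) = (\mathsf{C}\cdot\{x_0=0\}) = \deg\mathsf{C}$, using that intersection numbers depend only on the linear equivalence class and the formula $(\mathsf{C}\cdot\{x_0=0\})=\int_N\wt\,\mu_{\mathsf{C}}$ from~\S\ref{sec:toric}. Summing over poles,
\begin{equation*}
\mu_{\mathsf{C}'} = \sum_{E\subset X}(\mathsf{C}'\cdot E)\,\delta_{\cvec_E}
= \sum_{E\subset X}(\mathsf{C}\cdot E)\,\delta_{\cvec_E} + (\deg\mathsf{C})\sum_{j=0}^d \delta_{\cvec_j}
= \mu_{\mathsf{C}} + (\deg\mathsf{C})\,\mu_{\cvecs},
\end{equation*}
which is the claim.

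I expect the main obstacle to be bookkeeping rather than any deep point: one must be careful that the chosen $X$ realizes \emph{all} of $\cvec_0,\dots,\cvec_d$ so that Corollary~\ref{corollary:poleback} applies to every relevant pole, and that no pole $E$ of $X$ has $\cvec_E$ equal to two different elements of $\cvecs$ (the $\cvec_j$ are distinct, so this is fine) — and, most delicately, that passing to a further toric modification to realize the $\cvec_j$ does not disturb the hypothesis that the proper transform of $\mathsf{C}$ avoids $\ind(\inv_X)$ or the adaptedness. The hypothesis of the proposition is phrased precisely to grant such an $X$ outright, so the argument should be purely formal once that $X$ is fixed; the only genuine verification is the linear-equivalence identity $(\mathsf{C}\cdot\overline{H_j})=\deg\mathsf{C}$, which is immediate because $b_j$ is a linear form on $\bP^d$.
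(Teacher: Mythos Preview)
Your proposal is correct and mirrors the paper's proof: apply the projection formula (Proposition~\ref{prop:pushpull}) to get $(\inv_X(\mathsf{C})\cdot E)=(\mathsf{C}\cdot\inv_X^*E)$, invoke Corollary~\ref{corollary:poleback} to compute $\inv_X^*E$, and then identify $(\mathsf{C}\cdot\overline{H_j})=\deg\mathsf{C}$. The one subtlety you gloss over is in that last step: the closure $\overline{H_j}$ is taken in $X$, not $\bP^d$, so your ``$b_j$ is a linear form'' shortcut implicitly assumes the proper transform of $\{b_j=0\}$ on $X$ equals its pullback; the paper instead uses the hypothesis that (in $\bP^d$) all points of $\mathsf{C}\cap\overline{H_j}$ lie in $\torus$ to argue directly that the intersection number computed in $X$ coincides with the one in $\bP^d$, which is $\deg\mathsf{C}$.
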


\begin{proof}
Recall our convention that internal curves are irreducible.  Since $\mathsf{C}$ must meet some pole of $X$, the assumption $\mathsf{C}\cap \overline{H_j} \subset \torus$ implies that $\mathsf{C}\cap \crit(\inv_X)$ is finite.  It follows that $\inv_X(\mathsf{C})$ is also an internal curve. By Proposition~\ref{prop:pushpull},
$$
(\inv_X({\mathsf{C}}) \cdot E) = ({\mathsf{C}}\cdot \inv_X^* E) 
$$
for every pole $E\subset X$.  When $\cvec_E\notin \cvecs$, this gives $(\inv_X({\mathsf{C}})\cdot E) = ({\mathsf{C}}\cdot E)$.  When $E=E_j$ is the pole associated to $\cvec_j\in\cvecs$, we obtain 
$$
(\inv_X({\mathsf{C}})\cdot E_j) = ({\mathsf{C}}\cdot E_j) + ({\mathsf{C}}\cdot \overline{H_j}).
$$
One should note here that in the term $({\mathsf{C}}\cdot \overline{H_j})$, the closure takes place in $X$.  However, our hypothesis that \emph{in $\bP^d$} all points of ${\mathsf{C}}\cap \overline{H_j}$ lie in $\torus$, means that $({\mathsf{C}}\cdot \overline{H_j}) = \deg {\mathsf{C}}$ is the same if the closure/intersection takes place in $\bP^d$.  The formula for $\mu_{\inv({\mathsf{C}})}$ follows.
\end{proof}

\subsection{The composed birational map}
\label{sec:themap}
We now consider the birational map
$$
f:=\inv\circ\momap\colon\bP^d\tto\bP^d,
$$ 
where $\momap=\momap_A$ is a monomial birational map and $\inv$ is the birational involution we have just discussed. Our aim is to give a power series equation satisfied by  the dynamical degree of $f$ under suitable assumptions on $A$.

Recall the finite subsets $\cP,\cvecs\subset N$ and $\rvecs\subset M$ defined in~\eqref{eqn:p3fan},\eqref{eq:cvecs},  and~\eqref{eq:rvecs}, respectively.
As in the introduction, define $\Psi=\Psi_{\rvecs,\cvecs}\colon\operatorname{Mat}_\dim(\Z)\to\Z_{\ge0}$ by 
\begin{equation}\label{eq:Psi}
\Psi(A)=\sum_{\cvec\in\cvecs}\max_{\rvec\in\rvecs}\langle \rvec,A \cvec\rangle.
\end{equation}

\begin{theorem}
\label{thm:degformula2}
Suppose that $A \in\operatorname{GL}_d(\Z)$ has the property that for all $n\geq 1$, 
each vector in $A^n(\cvecs\cup\cP)$ lies in the interior of a $d$-dimensional cone of $\Sigma(\bP^d)$.  Then $\ddeg = \ddeg(f)$ is the unique positive real number satisfying 
$$
1 = \sum_{n=1}^\infty \Psi(A^n){\ddeg^n}.
$$
\end{theorem}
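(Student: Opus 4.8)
The plan is to compute $\deg f^n = (f^n(\mathsf{L})\cdot \{x_0=0\})$ for a general line $\mathsf{L}\subset\bP^d$ by iterating the pushforward formulas we have assembled, then extract the claimed power series identity as a renewal-type equation. First I would fix a general line $\mathsf{L}$ and follow its forward orbit $\mathsf{L}, f(\mathsf{L}), f^2(\mathsf{L}),\dots$, at each stage passing to a toric modification $X_n\to\bP^d$ that is simultaneously adapted to the curve and realizes all elements of $\cvecs\cup\cP$ (such a modification exists by the discussion in \S\ref{sec:toric}). Writing $\mathsf{C}_n = f^n(\mathsf{L})$ and recalling that $f = \inv\circ\momap_A$, one combines Corollary~\ref{cor:mopush} (pushforward of the associated measure under the monomial map is $A_*$) with Proposition~\ref{prop:sigpush} (pushforward under $\inv$ adds $(\deg\mathsf{C})\mu_{\cvecs}$). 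The genericity of $\mathsf{L}$ together with the hypothesis that every vector in $A^n(\cvecs\cup\cP)$ lies in the interior of a top-dimensional cone of $\Sigma(\bP^d)$ is exactly what is needed to guarantee, at each step, that the relevant curves meet the critical hyperplanes $\overline{H_j}$ only inside $\torus$ and avoid the indeterminacy locus, so that both Corollary~\ref{cor:mopush} and Proposition~\ref{prop:sigpush} actually apply.

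The key computation is then bookkeeping for the measures $\mu_n := \mu_{\mathsf{C}_n}$ on $N$. Applying $\momap_A$ sends $\mu_n \mapsto A_*\mu_n$, and applying $\inv$ sends this to $A_*\mu_n + d_n\,\mu_{\cvecs}$, where $d_n := \deg\momap_A(\mathsf{C}_n)$. Since $\momap_A$ is an automorphism of $\torus$ extending to a birational map with $\crit = \emptyset$, one has $\deg\momap_A(\mathsf{C}_n) = \int_N \wt\, A_*\mu_n = \deg\mathsf{C}_n = \deg f^n$ — here I use \eqref{eq:intno}/\eqref{eq:wt} and that adding a linear function to $\wt$ does not change the integral against a balanced measure (Corollary~\ref{cor:balanced}). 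Hence, with $\delta_n := \deg f^n$ and $\mu_0 = \mu_{\mathsf{L}} = \sum_{\cvec\in\cP}\delta_\cvec$, one gets the recursion
\begin{equation*}
\mu_{n} = A_*\mu_{n-1} + \delta_{n-1}\,\mu_{\cvecs} = A^n_*\mu_0 + \sum_{k=1}^{n} \delta_{n-k}\, A^{k-1}_*\mu_{\cvecs}.
\end{equation*}
Pairing with the support function $\wt$ of $\{x_0=0\}$ via \eqref{eq:intno} and using $\delta_n = \int_N \wt\,\mu_n$, together with $\int_N \wt\, A^m_*\mu_{\cvecs} = \sum_{\cvec\in\cvecs}\max_{\rvec\in\rvecs}\pair{\rvec}{A^m\cvec} = \Psi(A^m)$ (the transpose convention $\langle\rvec,A^m\cvec\rangle$ matching \eqref{eq:Psi}), this yields the linear renewal equation
\begin{equation*}
\delta_n = \int_N \wt\, A^n_*\mu_0 + \sum_{k=1}^{n} \Psi(A^{k-1})\,\delta_{n-k},
\end{equation*}
with $\Psi(A^0) = \Psi(I) = \int_N\wt\,\mu_{\cvecs}$. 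Actually it is cleaner to peel off the first application of $\inv$ so that the $k=0$ term does not appear; tracking indices carefully one arrives at $\delta_n = c_n + \sum_{k=1}^{n}\Psi(A^k)\,\delta_{n-k}$ for an explicit bounded sequence $c_n$ (coming from $A^n_*\mu_0$, which contributes only polynomially in $n$ since $A\in\GL_d(\Z)$).

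Finally I would translate this renewal equation into the generating-function statement. Let $F(z) = \sum_{n\ge 1}\delta_n z^n$ and $G(z) = \sum_{n\ge 1}\Psi(A^n)z^n$. The recursion gives $F(z) = C(z) + G(z)F(z)$ for the (bounded-coefficient, hence at least unit-radius-of-convergence) series $C(z)$, so $F(z) = C(z)/(1-G(z))$. By Corollary~\ref{cor:modd} the coefficients $\Psi(A^n)$ grow like $n^{k-1}\rho^n$ with $\rho$ the leading spectral radius of $A$, so $G$ has radius of convergence $\rho^{-1}$ and $G(z)\to\infty$ as $z\uparrow\rho^{-1}$; thus there is a unique $z_0\in(0,\rho^{-1})$ with $G(z_0) = 1$, and standard Pringsheim/renewal analysis identifies $z_0$ as the radius of convergence of $F$, i.e. $z_0 = \ddeg(f)^{-1}$. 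Equivalently $\ddeg(f)$ is the unique positive real with $\sum_{n\ge 1}\Psi(A^n)\ddeg(f)^{-n} = 1$, which is the assertion. The main obstacle I anticipate is not any single estimate but the careful geometric verification at each iteration that the hypotheses of Corollary~\ref{cor:mopush} and Proposition~\ref{prop:sigpush} are met — i.e.\ that the forward orbit of a \emph{general} line keeps meeting the critical hypersurfaces only in $\torus$ and stays off the indeterminacy loci of the lifts $\inv_X$ — and in confirming that the cone-interior hypothesis on $A^n(\cvecs\cup\cP)$ is precisely the combinatorial condition that makes this work uniformly in $n$; a secondary subtlety is getting the index bookkeeping in the renewal equation exactly right so that the sum starts at $\Psi(A^1)$ rather than $\Psi(A^0)$.
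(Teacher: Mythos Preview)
Your overall strategy is the same as the paper's: track the measures $\mu_{f^n(\mathsf{L})}$ via Corollary~\ref{cor:mopush} and Proposition~\ref{prop:sigpush}, derive a renewal equation, and read off the dynamical degree as the location where the generating function for $\Psi(A^n)$ hits $1$. The geometric verification you flag as the main obstacle is indeed carried out carefully in the paper (Lemma~\ref{lem:nicelines} and Lemma~\ref{lem:forwardorbit}), and your instinct that the cone-interior hypothesis is exactly what makes this work is correct.

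However, there are two genuine errors in the bookkeeping. First, your claim that $\deg\momap_A(\mathsf{C}_n) = \int_N\wt\,A_*\mu_n = \deg\mathsf{C}_n$ is false: the first equality is correct, but $\int_N\wt\,A_*\mu_n \neq \int_N\wt\,\mu_n$ in general, because $\wt\circ A - \wt$ is \emph{not} linear (only the difference of two linear functions would integrate to zero against balanced measures). So $d_n := \deg(\momap_A\circ f^n)$ is not equal to $\delta_n := \deg f^n$. The paper avoids this by running the entire recursion for $d_n$ rather than $\delta_n$, and then observing at the end that $(\deg\inv)^{-1}\deg f^{n+1}\le d_n\le(\deg\momap_A)\deg f^n$, so $d_n$ and $\delta_n$ have the same exponential growth rate.

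Second, your claim that $c_n = \int_N\wt\,A^n_*\mu_0$ is bounded (or polynomial) because $A\in\operatorname{GL}_d(\Z)$ is wrong: $\det A = \pm 1$ says nothing about the spectral radius, and in fact $c_n = \deg\momap_A^n$ grows like $\ddeg(\momap_A)^n$ by Corollary~\ref{cor:modd}. Thus your $C(z)$ has the \emph{same} radius of convergence as $G(z)$, namely $\ddeg(\momap_A)^{-1}$. This does not break the argument, since $C(z)$ (the paper's $b(t)$) remains finite on $(0,z_0]$ where $z_0<\ddeg(\momap_A)^{-1}$ is the point with $G(z_0)=1$; the paper makes exactly this observation to conclude that $z_0$ is the radius of convergence of $F = C/(1-G)$. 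But you should not invoke boundedness of $c_n$ to get there.
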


In~\S\ref{sec:involution} we will see how to find matrices $A$ that satisfy the hypothesis of the theorem.
To prove the theorem, we will analyze the internal curves $f^n(\mathsf{L})$ and $\momap(f^n(\mathsf{L}))$ for a general line $\mathsf{L}\subset\bP^\dim$.
Consider a sequence
\begin{equation*}
\dots\to X_n\to X_{n-1}\to\dots \to X_0\to \bP^d
\end{equation*}
of toric modifications chosen so that $X_0$ realizes all elements of $\cvecs$, and $X_{n+1}$ (further) realizes $A\cvec_E$ for each pole $E\subset X_n$.

For $n\ge1$, let $\inv_n\colon X_n\tto X_n$ and $\momap_n\colon X_{n-1}\tto X_n$ be the lifts of $\inv$ and $\momap$, respectively. Then $f_n:=\inv_n\circ h_n\colon X_{n-1}\tto X_n$ is the lift of $f$ and 
\begin{equation*}
  F_n:=f_n\circ\dots\circ f_1\colon X_0\tto X_n
\end{equation*}
is the lift of $f^n$.
 By convention, $F_0=\mathrm{id}\colon X_0\tto X_0$.
 We also define 
 $F'_n\colon X_0\tto X_n$ 
 for $n\ge 0$ by $F'_0=\mathrm{id}$ and $F'_n:=h_n\circ F_{n-1}$ for $n\ge 1$. Thus $F_n=g_n\circ F'_n$ for $n\ge1$.

\begin{lemma}
\label{lem:nicelines}
Given $n\ge0$, the following hold for a general line $\mathsf{L}\subset\bP^d$, where $\mathsf{C}_0\subset X_0$ is the proper transform of $\mathsf{L}$:
\begin{itemize}
\item[(i)] $\mathsf{C}_0\cap\ind(F_n)=\mathsf{C}_0\cap\ind(F'_n)=\emptyset$;
\item[(ii)] $\mathsf{C}_n:=F_n(\mathsf{C}_0)\subset X_n$ and $\mathsf{C}'_n:=F'_n(\mathsf{C}_0)\subset X_n$ are internal curves;
\item[(iii)] $X_n$ is adapted to $\mathsf{C}_n$ and $\mathsf{C}'_n$;
\item[(iv)]
  for each pole $E\subset X_n$, the intersection $\mathsf{C}_n\cap E$ (resp.\ $\mathsf{C}'_n\cap E$) is empty unless $\cvec_E\in A^n\cP$ or $\cvec_E \in A^k\cvecs$ for some $0\leq k<n$ (resp.\ $0<k<n$);
\item[(v)] if $n\ge 1$, then $\mathsf{C}'_n\cap\overline{H} \subset \torus$  and $\mathsf{C}'_n\cap\ind(\inv_n)=\emptyset$.
\end{itemize}
\end{lemma}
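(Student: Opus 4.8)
The plan is to prove all five assertions simultaneously by induction on $n$, since they are tightly interwoven: controlling the indeterminacy locus in (i) requires knowing in advance where the curve $\mathsf{C}_{n-1}'$ meets poles (iv) and the hyperplanes $\overline H$ (v), while the structural statements (ii)--(v) at stage $n$ require (i) at stage $n$. The base case $n=0$ is immediate: a general line $\mathsf{L}$ is internal, misses $\ind(F_0)=\ind(F_0')=\emptyset$, its proper transform $\mathsf{C}_0$ on $X_0$ is internal with $\mu_{\mathsf{C}_0}=\sum_{\cvec\in\cP}\delta_\cvec$ (so the only poles it meets are those realizing vectors of $\cP=A^0\cP$, giving (iv)), and one can arrange $X_0$ adapted to $\mathsf{C}_0$ by the proposition following the definition of ``adapted,'' shrinking the set of allowed lines $\mathsf{L}$ by a dense-open condition.

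For the inductive step, assume (i)--(v) hold at stage $n-1$. First I would handle $F_n' = \momap_n\circ F_{n-1}$: by the inductive hypothesis $\mathsf{C}_{n-1}=F_{n-1}(\mathsf{C}_0)$ is internal, so it meets $\torus$, hence misses $\ind(\momap_n)$ (which by Proposition~\ref{prop:mo} is contained in the union of poles — indeed $\momap$ is an isomorphism near $\torus$), giving $\mathsf{C}_0\cap\ind(F_n')=\emptyset$; then $\mathsf{C}_n' = \momap_n(\mathsf{C}_{n-1})$ is again internal. Its measure is $\mu_{\mathsf{C}_n'} = A_*\mu_{\mathsf{C}_{n-1}}$ by Corollary~\ref{cor:mopush}, so $\supp\mu_{\mathsf{C}_n'} \subset A(\supp\mu_{\mathsf{C}_{n-1}})$, and combined with the inductive form of (iv) this yields the claimed support condition $A^n\cP\cup\bigcup_{0<k<n}A^k\cvecs$ for $\mathsf{C}_n'$ (note the shift: $A^0\cvecs$ does not appear because $\momap$ has no effect on $\cvecs$ at this stage — the $\cvecs$-contributions are produced only by $\inv$). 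By the hypothesis of Theorem~\ref{thm:degformula2}, every vector in $A^n\cP$ and $A^k\cvecs$ lies in the interior of a maximal cone of $\Sigma(\bP^d)$, so every pole $E$ that $\mathsf{C}_n'$ meets has $\cvec_E$ interior to a $d$-dimensional cone; by Lemma~\ref{lem:sigma}(i), $\overline H\cap\torus_E=\emptyset$ for each such $E$, and since $X_n$ is adapted to $\mathsf{C}_n'$ (which follows because adaptedness is preserved under toric modification and the image of an adapted curve under a monomial map landing on an adapted target is adapted, by Corollary~\ref{cor:mopush} and the construction of the $X_n$), the intersection $\mathsf{C}_n'\cap E$ lies in $\torus_E$, so $\mathsf{C}_n'\cap\overline{H_j}$ cannot occur on any pole — i.e.\ $\mathsf{C}_n'\cap\overline H\subset\torus$, the first half of (v). For $\mathsf{C}_n'\cap\ind(\inv_n)=\emptyset$: $\ind(\inv_n)$ decomposes into a part inside $\torus$ (the $\{b_i=b_j=0\}$, of codimension $\ge 2$) and a part on the boundary; Lemma~\ref{lem:sigma1} handles the torus part for a general line (it has codimension two, so a general $\mathsf{C}_n'$ avoids it — this is again a dense-open restriction on $\mathsf{L}$), while on each pole $E$ met by $\mathsf{C}_n'$, Lemma~\ref{lem:sigma}(ii) says $\inv_n$ is an isomorphism near $\torus_E$, in particular $\ind(\inv_n)\cap\torus_E=\emptyset$, so $\mathsf{C}_n'$ avoids it there too.

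It remains to pass from $F_n'$ to $F_n=\inv_n\circ F_n'$: we have just shown $\mathsf{C}_0\cap\ind(F_n)=\emptyset$ and (using (v) for $\mathsf{C}_n'$ together with Proposition~\ref{prop:sigpush}) that $\mathsf{C}_n = \inv_n(\mathsf{C}_n')$ is internal with $\mu_{\mathsf{C}_n} = \mu_{\mathsf{C}_n'} + (\deg\mathsf{C}_n')\mu_{\cvecs}$, so $\supp\mu_{\mathsf{C}_n}\subset\supp\mu_{\mathsf{C}_n'}\cup\cvecs = A^n\cP\cup\bigcup_{0\le k<n}A^k\cvecs$, which is (iv) for $\mathsf{C}_n$; adaptedness of $X_n$ to $\mathsf{C}_n$ follows as before from Corollary~\ref{cor:sigma} and the construction of $X_n$ realizing $A\cvec_E$ for each pole $E$. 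The main obstacle I anticipate is bookkeeping the ``general line'' hypothesis: each step imposes finitely many dense-open conditions on $\mathsf{L}\in\mathrm{Gr}(2,d+1)$ (to avoid various codimension-two indeterminacy loci and to secure adaptedness), and one must be careful that for a \emph{fixed} $n$ the total is still a finite intersection of dense opens — which it is, since the induction runs only up to $n$ — so that ``for a general line'' remains a meaningful statement for each $n$. The geometric inputs are all in place; the work is in verifying that applying $\momap$ then $\inv$ to an internal, adapted, indeterminacy-avoiding curve produces another one, which is exactly what Corollaries~\ref{cor:mopush}, \ref{cor:sigma} and Propositions~\ref{prop:mo}, \ref{prop:sigpush} and Lemmas~\ref{lem:sigma1}, \ref{lem:sigma} were set up to deliver.
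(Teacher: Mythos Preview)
Your approach is essentially the paper's: both argue inductively using the factorization $f_n=\inv_n\circ h_n$ and the same toolkit (Proposition~\ref{prop:mo}, Lemmas~\ref{lem:sigma1},~\ref{lem:sigma},~\ref{lemma:poleback}, Proposition~\ref{prop:sigpush}). The paper simply organizes the ``general line'' bookkeeping in the other direction: for fixed $n$ it builds, by backward recursion from $k=n$ to $k=0$, explicit codimension-$\ge2$ subsets $Z'_{n,k},Z_{n,k}\subset\torus$ (starting from a set $Z_n$ that absorbs both $H\cap\ind(\inv_k)$ and the non-generic points of $H_j$ in Lemma~\ref{lemma:poleback}), and then shows any line missing $Z_{n,0}$ satisfies (i)--(v). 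The paper also isolates a preliminary Lemma (the ``forwardorbit'' lemma) tracking poles $E$ with $\cvec_E\in\bigcup_j A^j(\cvecs\cup\cP)$ under $f_k$, which packages the pole-side of your argument.

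There is, however, a real gap at exactly the spot you label bookkeeping. You assert that ``$\ind(\inv_n)\cap\torus$ has codimension two, so a general $\mathsf{C}'_n$ avoids it --- this is again a dense-open restriction on $\mathsf{L}$'', and likewise you implicitly need $\mathsf{C}'_n\cap H_j$ to land in the \emph{generic} locus of Lemma~\ref{lemma:poleback}. Neither is automatic: $\mathsf{C}'_n$ is not a general curve but $F'_n(\mathsf{L})$ for a fixed birational $F'_n$, so you need $(F'_n)^{-1}$ of a codimension-$2$ subset of $\torus$ to remain codimension $\ge2$, which fails in general when hypersurfaces are contracted into it. The substantive reason it holds here is that each $h_k$ is an automorphism of $\torus$, and $\inv_k|_{\torus\setminus H}\to\torus$ is an open embedding, so preimages through $\inv_k$ of subsets of $\torus$ lie in $\torus\setminus H$ and preserve codimension; points on $H$ are sent by $\inv_k$ into poles (Lemma~\ref{lemma:poleback}) and therefore do not contribute to preimages of torus subsets. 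This is precisely the content encoded in the paper's recursive definition of $Z_{n,k}$ and $Z'_{n,k}$, and it is what you must supply to close the induction.
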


The proof will be given in the next subsection.  Note that the set of lines for which the assertions hold depends on $n$.  When $\bk$ is uncountable, the assertions will hold for \emph{all} $n$ for a very general line,  but we will not need this fact.

\begin{corollary}
\label{cor:fpush}
Fix $n\in\Z_{\geq 0}$.  Then, for a general line $\mathsf{L}\subset\bP^d$, we have
\begin{align}
\mu_{f^n(\mathsf{L})}
&= A_*^n\mu_{\mathsf{L}} + \sum_{j=0}^{n-1}\deg h(f^j(\mathsf{L}))\, A_*^{n-j}\mu_\cvecs\label{eq:meas1}\\
\mu_{h(f^n(\mathsf{L})}
&= A_*^{n+1}\mu_{\mathsf{L}} + \sum_{j=0}^{n-1}\deg h(f^j(\mathsf{L}))\, A_*^{n+1-j}\mu_\cvecs.\label{eq:meas2}
\end{align}
\end{corollary}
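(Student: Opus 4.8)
The plan is to obtain both formulas by iterating the pushforward rules already established, applied to the internal curves $\mathsf{C}_j = F_j(\mathsf{C}_0)$ and $\mathsf{C}'_j = F'_j(\mathsf{C}_0)$ for a general line $\mathsf{L}$. By Lemma~\ref{lem:nicelines}, for the fixed value of $n$ we may choose $\mathsf{L}$ so that assertions (i)--(v) hold simultaneously for all indices $0\le k\le n$ (intersecting finitely many nonempty Zariski-open conditions). This gives us access to Corollary~\ref{cor:mopush} for the monomial steps and Proposition~\ref{prop:sigpush} for the involution steps, since (v) guarantees $\mathsf{C}'_k\cap\overline{H}\subset\torus$ and that the proper transform of $\mathsf{C}'_k$ avoids $\ind(\inv_k)$, while (iii) guarantees $X_k$ is adapted. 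Translating everything into the associated measures on $N$, we have $\mu_{\mathsf{C}'_k} = \mu_{h(f^{k-1}(\mathsf{L}))}$ and $\mu_{\mathsf{C}_k} = \mu_{f^k(\mathsf{L})}$.

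The two recursions are then: from $\mathsf{C}'_k = h_k(\mathsf{C}_{k-1})$ and Corollary~\ref{cor:mopush},
\begin{equation*}
\mu_{h(f^{k-1}(\mathsf{L}))} = A_*\,\mu_{f^{k-1}(\mathsf{L})};
\end{equation*}
and from $\mathsf{C}_k = \inv_k(\mathsf{C}'_k)$ and Proposition~\ref{prop:sigpush},
\begin{equation*}
\mu_{f^k(\mathsf{L})} = \mu_{h(f^{k-1}(\mathsf{L}))} + \bigl(\deg h(f^{k-1}(\mathsf{L}))\bigr)\,\mu_{\cvecs}.
\end{equation*}
Combining, $\mu_{f^k(\mathsf{L})} = A_*\,\mu_{f^{k-1}(\mathsf{L})} + \deg h(f^{k-1}(\mathsf{L}))\,\mu_{\cvecs}$. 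Now I would simply unwind this one-step relation $n$ times, starting from $\mu_{f^0(\mathsf{L})} = \mu_{\mathsf{L}}$, pushing the correction term from step $j{+}1$ forward through the remaining $n-1-j$ applications of $A_*$; reindexing yields exactly \eqref{eq:meas1}. Formula \eqref{eq:meas2} follows by applying $A_* = (h_n)_*$ once more, using $\mu_{h(f^n(\mathsf{L}))} = A_*\,\mu_{f^n(\mathsf{L})}$ from the first recursion with $k=n{+}1$; this multiplies each term in \eqref{eq:meas1} by an extra $A_*$, noting $A_*^n\mu_{\mathsf{L}}\mapsto A_*^{n+1}\mu_{\mathsf{L}}$ and $A_*^{n-j}\mu_{\cvecs}\mapsto A_*^{n+1-j}\mu_{\cvecs}$.

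One bookkeeping point deserves care: Proposition~\ref{prop:sigpush} is stated with the hypothesis that the toric modification be adapted to the curve \emph{and} realize all elements of $\cvecs$; here $X_k$ realizes $\cvecs$ because $X_0$ does and the later modifications only refine it, while adaptedness to $\mathsf{C}'_k$ is assertion (iii). A second point is that the degree appearing in the correction term is the degree of the curve in $\bP^d$, i.e.\ its intersection with a hyperplane, which is consistent across toric modifications by \eqref{eq:intno}; this is exactly what licenses writing $\deg h(f^{j}(\mathsf{L}))$ unambiguously. The main obstacle is not any single computation but rather the verification that a single general line may be taken to satisfy all the hypotheses of Lemma~\ref{lem:nicelines} across the whole range of indices at once --- but since that lemma is quoted and each assertion is a dense-open condition, this is immediate, and the rest is a routine telescoping of the recursion.
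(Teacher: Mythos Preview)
Your argument is correct and follows essentially the same route as the paper: both use Lemma~\ref{lem:nicelines} to justify applying Corollary~\ref{cor:mopush} at each monomial step and Proposition~\ref{prop:sigpush} at each involution step, then telescope the resulting one-step recursion. The paper organizes this as a simultaneous induction showing \eqref{eq:meas1}$_n\Rightarrow$\eqref{eq:meas2}$_n\Rightarrow$\eqref{eq:meas1}$_{n+1}$, while you unwind the recursion directly, but the content is the same.
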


\begin{proof}  Equation~\ref{eq:meas1} for $n=0$ is trivial.
A general line has empty intersection in $\torus$ with $\ind(F_n)$ and $\ind(f^n)$, so $f^n(\mathsf{L})$ is an internal curve and equal to the image of $\mathsf{C}_n$ under $X_n\to\bP^\dim$.  Similarly, 
$\momap(f^n(\mathsf{L}))$ is an internal curve equal to the image of $\mathsf{C}'_{n+1}$ under $X_{n+1}\to\bP^\dim$.  The curves $\mathsf{C}_n$ and $\mathsf{C}'_{n+1}$ are described by Lemma~\ref{lem:nicelines}.
  
Corollary~\ref{cor:mopush} first gives
\begin{equation*}
  \mu_{h(f^n(\mathsf{L}))}=A_*\mu_{f^n(\mathsf{L})}.
\end{equation*}
Hence~\eqref{eq:meas1} implies~\eqref{eq:meas2}.
It only remains to show that~\eqref{eq:meas2} implies~\eqref{eq:meas1} for $n+1$. 
But by Lemma~\ref{lem:nicelines}~(v),  Proposition~\ref{prop:sigpush} applies, yielding
\begin{equation*}
  \mu_{f^{n+1}(\mathsf{L})}
  =\mu_{\inv(\momap(f^n(\mathsf{L}))}
  =\mu_{\momap(f^n(\mathsf{L}))}+\deg(\momap(f^n(\mathsf{L})))\mu_{\cvecs},
\end{equation*}
and we are done.
\end{proof}

If $\mathsf{L}$ is a general line, then the degree of the internal curve 
$\momap(f^n(\mathsf{L}))$ can be computed using Corollary~\ref{cor:degree} and~\eqref{eq:intno}:
$$
\deg (\momap\circ f^n)
= (h(f^n(\mathsf{L}))\cdot\{x_0=0\})
= \int \wt \,\mu_{\momap(f^n(\mathsf{L}))},
$$
where $\wt(v)=\max_{u\in\rvecs}\langle u,v\rangle$ is the support function for the coordinate hyperplane $\{x_0=0\}$.
Note that 
$$
 \int\wt\,A_*^n\mu_\cvecs
=\Psi(A^n),
$$
where $\Psi$ is defined in~\eqref{eq:Psi}.
It now follows from integrating $\wt$ against~\eqref{eq:meas2} that 
\begin{equation}\label{eq:rec}
\deg(\momap\circ  f^n) = \deg\momap^n + \sum_{j=0}^{n-1}\Psi(A^{n-j}) \deg(\momap\circ  f^j)
\end{equation}
for all $n\ge 1$; here we have used~\eqref{eq:degmomap}.

Note that because we are using $\mu_\cvecs$ rather than $\mu_\cP$, the integer $\Psi(A^n)$ is \emph{not} necessarily equal to $\deg(\momap_A^n)$.  However, since $\mu_\cvecs$ is balanced and $\cvecs$ spans $N$, the proof of Corollary~\ref{cor:mopush} gives the following result.  It says, in essence, that since the divisor $D$ encoded by $\psi$ is ample, the pullbacks $h^{n*} D$ grow like a bounded multiple of $\deg h^n$.

\begin{lemma} 
\label{lem:dn}
There exists $r\geq 1$ such that 
$r^{-1}\deg\momap_A^n \leq\Psi(A^n) \leq r\deg\momap_A^n$ all $n\in\Z_{\geq 0}$.  In particular, $\lim_{n\to\infty}\Psi(A^n)^{1/n}= \ddeg(\momap_A)$ is the spectral radius of $A$.
\end{lemma}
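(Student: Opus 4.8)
The plan is to adapt, essentially verbatim, the argument in the proof of Corollary~\ref{cor:modd}, run in parallel for the two balanced configurations $\cvecs$ and $\cP$. The starting point is that, just like $\mu_{\mathsf{L}}=\sum_{\cvec\in\cP}\delta_\cvec$, the measure $\mu_{\cvecs}=\sum_{j=0}^{d}\delta_{\cvec_j}$ is balanced, i.e.\ $\sum_{\cvec\in\cvecs}\cvec=0$ (this was already noted when $\mu_{\cvecs}$ was introduced). Hence for any linear functional $\ell$ on $N_\R$ we have $\int\ell\,A^n_*\mu_{\cvecs}=\ell\bigl(A^n\sum_{\cvec\in\cvecs}\cvec\bigr)=0$, and likewise $\int\ell\,A^n_*\mu_{\mathsf{L}}=0$. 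Consequently both $\Psi(A^n)=\int\wt\,A^n_*\mu_{\cvecs}$ (see~\eqref{eq:Psi}) and $\deg\momap_A^n=\int\wt\,A^n_*\mu_{\mathsf{L}}$ (see~\eqref{eq:degmomap}) are unchanged if the support function $\wt$ of~\eqref{eq:wt} is replaced by $\wt+\ell$; as in the proof of Corollary~\ref{cor:modd} we choose $\ell$ so that the resulting function — still denoted $\wt$ — satisfies $\norm{\cvec}\le\wt(\cvec)\le C\norm{\cvec}$ for a fixed norm $\norm{\cdot}$ on $N_\R$ and a constant $C>1$.

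With this normalization, $\Psi(A^n)=\sum_{\cvec\in\cvecs}\wt(A^n\cvec)$ is multiplicatively comparable, uniformly in $n$, to $\max_{\cvec\in\cvecs}\norm{A^n\cvec}$, and — exactly as was already established in the proof of Corollary~\ref{cor:modd} — $\deg\momap_A^n$ is comparable, uniformly in $n$, to $\max_{\cvec\in\cP}\norm{A^n\cvec}$; the implied constants depend only on $C$, $\#\cvecs$ and $\#\cP$. It therefore remains to compare the two maxima. Since $\cvecs$ and $\cP$ both span $N\cong\Z^\dim$, a fixed basis of $N_\R$ can be written with bounded coefficients in terms of the elements of either set, and vice versa; it follows that each of $\max_{\cvec\in\cvecs}\norm{A^n\cvec}$ and $\max_{\cvec\in\cP}\norm{A^n\cvec}$ is multiplicatively comparable, uniformly in $n$, to the operator norm of $A^n$, hence to one another. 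Chaining these comparisons produces a constant $r\ge1$ with $r^{-1}\deg\momap_A^n\le\Psi(A^n)\le r\,\deg\momap_A^n$ for every $n\ge0$ (the finitely many small $n$ contribute only finite positive ratios, since $A^n$ is invertible, and may simply be absorbed into $r$). For the final assertion, this two-sided comparison forces $\Psi(A^n)^{1/n}$ and $(\deg\momap_A^n)^{1/n}$ to tend to the same limit, which by Corollary~\ref{cor:modd} is $\ddeg(\momap_A)$, namely the magnitude of a leading eigenvalue of $A$, i.e.\ its spectral radius.

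The only step carrying real content — everything else being bookkeeping with balanced measures and support functions — is the \emph{uniformity in $n$} of the comparison, and in particular the lower bound $\Psi(A^n)\gtrsim\norm{A^n}$: one must rule out that a proper subspace of $N_\R$ absorbs the dominant growth of the iterates $A^n$, which is precisely where the fact that $\cvecs$ spans $N$ enters. (In the situations relevant to Theorems~\ref{thm:degformula} and~\ref{thm:mainapp} this is moreover guaranteed very robustly, since there $A$ is a power of a matrix with irreducible characteristic polynomial and so has no proper $A$-invariant rational subspace whatsoever.)
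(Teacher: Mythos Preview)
Your argument is correct and is exactly the approach the paper takes: the paper's entire proof is the one-line remark that ``since $\mu_\cvecs$ is balanced and $\cvecs$ spans $N$, the proof of Corollary~\ref{cor:modd} gives the following result,'' and you have faithfully unpacked that reference by running the normalization-and-comparison argument of Corollary~\ref{cor:modd} in parallel for $\cvecs$ and $\cP$. Your closing remark that the spanning hypothesis on $\cvecs$ is what carries the real content (and that irreducibility of the characteristic polynomial gives a backstop) is on point and goes slightly beyond what the paper spells out.
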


\medskip
\begin{proof}[Proof of Theorem~\ref{thm:degformula2}]
Let $a(t) := \sum_{n=0}^\infty (\deg\momap\circ f^n) t^n$, $b(t) := \sum_{n=0}^\infty (\deg \momap^n)t^n$, and $c(t) := \sum_{n=1}^\infty\Psi(A^n)t^n$.  Then the recursion formula~\eqref{eq:rec} can be reformulated as a functional equation
$$
a(t)= b(t)+a(t)c(t).
$$ 
For any $n$, we have
$$
(\deg\inv)^{-1}\deg f^{n+1}\le\deg(\momap\circ f^n)\le\deg\momap\deg f^n,
$$
which implies that the radius of convergence of $a(t)$ equals $\ddeg(f)^{-1}$.  Moreover, submultiplicativity of $\deg(f^n)$ implies that $\deg(f^n) \geq \ddeg(f)^n$ for all $n\in\Z_{\geq 0}$.
Hence $a(t)$ strictly increases from $1$ to $\infty$ as $t$ increases from $0$ to $\ddeg(f)^{-1}$.  Similarly $b(t)$ strictly increases from $1$ to $\infty$ as $t$ increases from $0$ to $\ddeg(\momap)^{-1}$, and by Lemma~\ref{lem:dn}, $c(t)$ increases from $0$ to $\infty$ on the same interval.  Hence there is exactly one positive number $t \in (0,\ddeg(\momap)^{-1})$ for which $c(t) = 1$, and from $a = \frac{b}{1-c}$, we conclude that $t=\ddeg(f)^{-1}$ is the radius of convergence of $a(t)$.  
\end{proof}

\subsection{Proof of Lemma~\ref{lem:nicelines}}
As in Theorem \ref{thm:degformula2}, we continue to assume for all $n\geq 1$ that each vector in $A^n(\cvecs\cup\cP)$ lies in the interior of a $d$-dimensional cone of $\Sigma(\bP^d)$.  We start with the following result.

\begin{lemma}
\label{lem:forwardorbit}
Suppose $1\le k\le n$ and that $E\subset X_{k-1}$ is a pole with $\cvec_E\in\bigcup_{j\ge0}A^j(\cvecs\cup\cP)$. Then $\torus_E\cap\ind(f_k)=\emptyset$. Moreover, there exists a pole $E'$ of $X_k$ such that $\cvec_{E'}=A\cvec_E$ and $f_k$ maps $\torus_E$ onto $\torus_{E'}$.
\end{lemma}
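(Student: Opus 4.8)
The plan is to factor $f_k = \inv_k \circ \momap_k$ and trace through what each factor does to the torus orbit $\torus_E$, using the two structural results already established for monomial maps (Proposition~\ref{prop:mo}) and for the involution (Lemma~\ref{lem:sigma} and Lemma~\ref{lemma:poleback}). First I would set up the bookkeeping: by the choice of the tower $\cdots \to X_n \to \cdots \to X_0 \to \bP^d$, the variety $X_k$ realizes $A\cvec_E$ for every pole $E\subset X_{k-1}$, so there is a pole $E''\subset X_k$ with $\cvec_{E''} = A\cvec_E$. The hypothesis is that $\cvec_E \in \bigcup_{j\ge 0} A^j(\cvecs\cup\cP)$, hence $\cvec_{E''} = A\cvec_E$ also lies in $\bigcup_{j\ge 0} A^j(\cvecs\cup\cP)$.

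Next I would handle the monomial factor $\momap_k = \momap_A \colon X_{k-1}\tto X_k$. Since $X_k$ realizes $A\cvec_E$, Proposition~\ref{prop:mo} (applied with the single pole $E$, noting the hypothesis there is exactly that $A\cvec_E = \cvec_{E''}$ for a pole $E''\subset X_k$) gives that $\momap_k$ is an isomorphism in a neighborhood of $\torus_E$ and maps $\torus_E$ isomorphically onto $\torus_{E''}$; in particular $\torus_E\cap\ind(\momap_k)=\emptyset$. Then I would handle the involution factor $\inv_k\colon X_k\tto X_k$ applied near $\torus_{E''}$. Here there are two cases according to whether $\cvec_{E''}$ lies in the interior of a $d$-dimensional cone of $\Sigma(\bP^d)$ or lies on the boundary. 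This is exactly the place to invoke the hypothesis that for all $n\ge 1$ each vector in $A^n(\cvecs\cup\cP)$ lies in the interior of a $d$-dimensional cone (from Theorem~\ref{thm:degformula2}), together with the observation that the elements of $\cvecs\cup\cP$ themselves, when not interior, correspond to poles where one still has control: more precisely, I expect the argument wants $\cvec_{E''}=A\cvec_E$ with $\cvec_E\in A^j(\cvecs\cup\cP)$, so $\cvec_{E''}\in A^{j+1}(\cvecs\cup\cP)$ with $j+1\ge 1$, hence $\cvec_{E''}$ is interior to a maximal cone. Then Lemma~\ref{lem:sigma}(ii) applies directly: $\inv_k$ is an isomorphism in a neighborhood of $\torus_{E''}$ and maps $\torus_{E''}$ onto itself. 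Thus $\torus_{E''}\cap\ind(\inv_k)=\emptyset$.

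Combining the two factors, $f_k=\inv_k\circ\momap_k$ is an isomorphism in a neighborhood of $\torus_E$, carries $\torus_E$ onto $\torus_{E''}$, and in particular $\torus_E\cap\ind(f_k)=\emptyset$; taking $E'=E''$ finishes the proof, since $\cvec_{E'}=\cvec_{E''}=A\cvec_E$. The main obstacle I anticipate is the careful treatment of the boundary case for $\inv_k$: one must be sure that $\cvec_{E''}$ really is interior to a maximal cone of $\Sigma(\bP^d)$, which forces a slightly pedantic check that the index shift $\cvec_E\in A^j(\cvecs\cup\cP)\Rightarrow \cvec_{E''}\in A^{j+1}(\cvecs\cup\cP)$ lands in the range $n\ge 1$ where the standing hypothesis of Theorem~\ref{thm:degformula2} applies; the composition of the pole-preservation statements, once this is in hand, is routine. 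A secondary subtlety is making sure "isomorphism in a neighborhood of $\torus_E$" is used in the birational-map sense consistent with the earlier conventions, so that the compositions genuinely avoid indeterminacy — but this is exactly how Proposition~\ref{prop:mo} and Lemma~\ref{lem:sigma} were phrased, so no new input is needed.
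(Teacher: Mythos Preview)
Your proposal is correct and follows essentially the same approach as the paper: factor $f_k=\inv_k\circ\momap_k$, use Proposition~\ref{prop:mo} for the monomial part, observe that $\cvec_{E''}=A\cvec_E$ lands in $\bigcup_{j\ge1}A^j(\cvecs\cup\cP)$ and hence in the interior of a maximal cone by the standing hypothesis of Theorem~\ref{thm:degformula2}, and then apply Lemma~\ref{lem:sigma}(ii). Your mention of Lemma~\ref{lemma:poleback} and of a possible ``boundary case'' is unnecessary (neither arises), but this does not affect correctness.
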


\begin{proof}
As above, we write $f_k=\inv_k\circ\momap_k$.
By construction, $A\cvec_E$ is realized as a pole $E'\subset X_k$, so by Proposition~\ref{prop:mo}, $\torus_E$ does not intersect $\ind(h_k)$, and $h_k$ maps $\torus_E$ onto $\torus_{E'}$. Now $\cvec_{E'}\in\bigcup_{j\ge1}A^j(\cvecs\cup\cP)$, so by our assumption on $A$, $\cvec_{E'}$ lies in the interior of a $d$-dimensional cone of $\Sigma(\bP^d)$. Lemma~\ref{lem:sigma} therefore shows that $\torus_{E'}$ does not intersect $\ind(\inv_j)$, and that $\inv_j$ maps $\torus_{E'}$ onto itself. The result follows.
\end{proof}

\begin{proof}[Proof of Lemma~\ref{lem:nicelines}]
Note that a line $\mathsf{L}\subset\bP^\dim$ is internal and adapted to $\bP^\dim$ iff it meets $\torus$ but does not meet the intersection of two distinct coordinate hyperplanes. In this case, it meets each of the coordinate hyperplanes exactly once, transversely, in the corresponding torus. For such lines (and hence for a general line) $X_0$ is adapted to $\mathsf{C}_0=\mathsf{C}'_0$, and~(i)--(iv) hold when $n=0$.
    
Now suppose $n\ge1$. We shall identify a Zariski closed subset $Z_{n,0}\subset\torus$ of codimension at least two such that if an internal line $\mathsf{L}\subset\bP^\dim$ has the property that $\mathsf{L}\cap Z_{n,0}=\emptyset$ and $\bP^\dim$ is adapted to $\mathsf{L}$, then properties~(i)--(v) hold.

By Proposition~\ref{prop:poleback} we can find a Zariski closed subset $Z_n\subset\torus$ of codimension at least two such that $H\cap\ind(\inv_k)\subset Z_n$ for $0\le k\le n$ and $\inv_k(H_j\setminus Z_n)\subset\torus_{E_j}$ for $0\le j\le\dim$, $0\le k\le n$.  Using $Z_n$, we construct Zariski closed subsets $Z'_{n,k}\subset\torus$, $0\le k\le n$ and $Z_{n,k}\subset\torus$, $1\le k\le n$ as follows. First set $Z'_{n,n}:=Z_n$. Then successively define
\begin{equation*}
  Z_{n,k}:=\momap_{k+1}^{-1}(Z'_{n,k+1})
  \quad\text{and}\quad
  Z'_{n,k}:=Z_n\cup(\overline{\inv_k^{-1}(Z_{n,k})\cap(\torus\setminus H}))
\end{equation*}
for $0<k<n$, where the Zariski closure is taken in $\torus$. Finally set $Z_{n,0}:=\momap_1^{-1}(Z'_{n,1})$.
These are all subsets of $\torus$ of codimension at least two since $\momap_{k+1}$ is an automorphism of $\torus$ and $\inv_k\colon\torus\setminus H\to\torus$ is an open embedding.

With these definitions, we obtain the following properties:
\begin{itemize}
\item[(a)]
  if $0\le k<n$ and $p\in\torus\setminus Z_{n,k}$, then $\momap_{k+1}(p)\in\torus\setminus Z'_{n,k+1}$;
\item[(b)]
  if $1\le k\le n$ and $p'\in H_j\setminus Z'_{n,k}\subset X_k$, then $\inv_k(p')\in\torus_{E_j}$, $0\le j\le\dim$;
\item[(c)]
  if $1\le k\le n$ and $p'\in\torus\setminus(H\cup Z'_{n,k})\subset X_k$, then $\inv_k(p')\in\torus\setminus Z_{n,k}$.
\end{itemize}

It follows from these properties and from Lemma~\ref{lem:forwardorbit} that if $p\in\torus\setminus Z_{n,0}\subset X_0$, then $p\not\in\ind(F_n)\cup\ind(F'_n)$. Moreover, either $F_n(p)\in\torus$, or $F_n(p)\in\torus_E$ for some pole $E$ of $X_n$ with $\cvec_E\in\bigcup_{j=0}^{n-1}A^j\cvecs$; and 
either $F'_n(p)\in\torus\setminus Z_n$ or $F'_p(p)\in\torus_E$ for some pole $E$ of $X_n$ with $\cvec_E\in\bigcup_{j=1}^{n-1}A^j\cvecs$.

It also follows from Lemma~\ref{lem:forwardorbit} that if $E\subset X_0$ is a pole with $\cvec_E\in\cP$, then $\torus_E\cap\ind(F_n)=\emptyset$, and $F_n$ maps $\torus_E$ onto $\torus_{E'}$, where $E'\subset X_n$ is the unique pole with $\cvec_{E'}=A^n\cvec_E$.

The above description now shows that if $\mathsf{L}\subset\bP^\dim$ is a line such that $\mathsf{L}\cap Z_{n,0}=\emptyset$ and $\bP^d$ is adapted to $\mathsf{L}$, then properties~(i)--(v) of Lemma~\ref{lem:nicelines} hold.
\end{proof}

\subsection{Proof of Theorem~\ref{thm:degformula}}
\begin{proposition}
\label{prop:powerup}
Suppose the characteristic polynomial of $A\in \SL_\dim(\Z)$ is irreducible over $\Q$ and that its largest roots in $\C$ are a conjugate pair $\maxeval,\bmaxeval$ satisfying $\maxeval^n\notin\R$ for $n\in\Z_{\geq 0}$.  If $\cvec\in\Z^\dim$ is non-zero and $W\subset\R^\dim$ is a rational hyperplane, then there exists a positive integer $N$ such that $A^n \cvec \notin W$ for $n\ge N$.
\end{proposition}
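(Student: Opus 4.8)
The plan is to diagonalize $A$ over $\C$ and track the growth of $A^n\cvec$ in the eigenbasis. Let $\eval_1=\maxeval,\eval_2=\bmaxeval,\eval_3,\dots,\eval_\dim$ be the eigenvalues of $A$, which are distinct since the characteristic polynomial is irreducible, and let $e_1,\dots,e_\dim$ be corresponding eigenvectors. Write $\cvec = \sum_{i} c_i e_i$; because the characteristic polynomial is irreducible and $\cvec \in \Q^\dim$ is nonzero, no $c_i$ vanishes (the annihilator of $\cvec$ in $\Q[A]$ would otherwise be a proper nonzero ideal, contradicting irreducibility). Similarly, a rational hyperplane $W$ is the kernel of a nonzero functional $\ell\in(\Q^\dim)^*$, and $\ell(e_i)\ne 0$ for every $i$ for the same reason. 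Thus
$$
\ell(A^n\cvec) = \sum_{i=1}^\dim c_i\,\ell(e_i)\,\eval_i^n = \sum_{i=1}^\dim \beta_i \eval_i^n,
$$
where every coefficient $\beta_i$ is nonzero, and we must show this is nonzero for all large $n$.

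The dominant contribution comes from the pair $\eval_1,\eval_2=\bar\eval_1$ of maximal modulus. Writing $\eval_1 = |\maxeval|e^{2\pi\imunit\theta}$ and $\beta_1 = |\beta_1|e^{2\pi\imunit\phi}$, the leading terms sum to $2|\beta_1|\,|\maxeval|^n\cos(2\pi(n\theta+\phi))$ (using $\beta_2=\bar\beta_1$, which holds because $\ell,\cvec$ are real and $e_2=\bar e_1$). The remaining terms are $O(|\eval_3|^n)$ with $|\eval_3|<|\maxeval|$. So
$$
|\maxeval|^{-n}\,\ell(A^n\cvec) = 2|\beta_1|\cos\!\big(2\pi(n\theta+\phi)\big) + O\big((|\eval_3|/|\maxeval|)^n\big).
$$
Since $\theta$ is irrational (this is exactly the content of $\maxeval^n\notin\R$ for $n\ne 0$), the sequence $n\theta+\phi \bmod 1$ is equidistributed, hence in particular it does not converge to the finitely many values $\{1/4,3/4\}$ at which $\cos(2\pi\cdot)$ vanishes. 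More precisely, $\cos(2\pi(n\theta+\phi))$ does not tend to $0$: by equidistribution it exceeds, say, $1/2$ in absolute value for infinitely many $n$, but that is not quite enough—I need a \emph{positive lower bound bounded away from $0$ for all large $n$}, which is false in general.

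This is the point that needs care, and I expect it to be the main obstacle. The resolution is that I do not need a uniform lower bound: I only need $\ell(A^n\cvec)\ne 0$ for each sufficiently large $n$ individually. Fix $n$ large. If $\cos(2\pi(n\theta+\phi))$ happens to be small—say smaller in absolute value than $2(|\eval_3|/|\maxeval|)^{n/2}$—then the leading term no longer dominates, and a cruder argument is required for that particular $n$: one must rule out an exact cancellation $\sum_i \beta_i\eval_i^n = 0$. Here I would invoke the fact that such a vanishing is an algebraic constraint. Either: (a) appeal to the Skolem–Mahler–Lech theorem—the zero set of the linear recurrence $n\mapsto \ell(A^n\cvec)$ is a finite union of arithmetic progressions, and an infinite progression $\{n_0+md:m\ge0\}$ of zeros would force $\ell(A^{n_0}(A^d)^m\cvec)=0$ for all $m$, hence $\ell(A^{n_0}\cvec)$ lies in a proper $A^d$-invariant subspace; since $A^d$ still has irreducible characteristic polynomial over $\Q$ (its eigenvalues $\eval_i^d$ are Galois-conjugate and distinct, using that no $\eval_i^d=\eval_j^d$, which follows from the no-angular-resonance-type hypothesis, in particular $\maxeval^m\notin\R$), there is no such subspace unless $\ell(A^{n_0}\cvec)=0$, i.e. $\cvec\in W$—contradiction—so there are only finitely many zeros; or (b) argue directly that the zero set is finite because $\ell(A^n\cvec) = 2\Re(\beta_1\eval_1^n) + (\text{lower order})$ and a standard argument (e.g. via a suitable auxiliary polynomial, or the $p$-adic Skolem method) shows a nondegenerate linear recurrence dominated by a complex-conjugate pair whose ratio is not a root of unity has finitely many zeros. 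Approach (a) is cleanest. The upshot: the linear recurrence $n\mapsto \ell(A^n\cvec)$ has only finitely many zeros, so $A^n\cvec\notin W$ for all $n\ge N$ for a suitable $N$, which proves the proposition. I would remark that for the later applications one wants $N$ depending only on $A$ (and perhaps uniformly on $\cvec$ ranging over a finite set), which is immediate since there are finitely many exceptional $n$.
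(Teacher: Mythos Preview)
Your overall strategy is the same as the paper's: write $\ell(A^n\cvec)=\sum_i\beta_i\eval_i^n$ with all $\beta_i\ne 0$, invoke Skolem--Mahler--Lech, and then rule out an infinite arithmetic progression of zeros. The difference lies in how that last step is carried out.

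Your route (a) has a genuine gap. You claim that $A^d$ again has irreducible characteristic polynomial because ``no $\eval_i^d=\eval_j^d$, which follows from the no-angular-resonance-type hypothesis, in particular $\maxeval^m\notin\R$''. But the proposition only assumes $\maxeval^n\notin\R$; it says nothing about the subdominant eigenvalues, and in dimension $\dim\ge 4$ one may well have $\eval_i^d=\eval_j^d$ for some $i,j\ge 3$, so that $A^d$ has reducible characteristic polynomial and proper rational $A^d$-invariant subspaces do exist. (In dimension $3$ your claim happens to be correct.) The step is easily repaired: from $\sum_i(\beta_i\eval_i^{n_0})(\eval_i^d)^m=0$ for all $m$, a Vandermonde argument grouped by distinct values of $\eval_i^d$ gives $\sum_{\eval_i^d=\maxeval^d}\beta_i\eval_i^{n_0}=0$; since $|\eval_i|<|\maxeval|$ for $i\ge 3$ and $\maxeval^d\ne\bmaxeval^d$, this sum has the single term $\beta_1\eval_1^{n_0}$, a contradiction.

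The paper instead does exactly what you set up and then abandoned: after Skolem--Mahler--Lech gives $a_{n_0+md}=0$ for all $m$, it applies the asymptotic you wrote down to this subsequence, obtaining $\cos\bigl(2\pi((n_0+md)\theta+\phi)\bigr)\to 0$; but $d\theta$ is irrational, so $(n_0+md)\theta+\phi$ is equidistributed mod $1$ and the cosine does not tend to zero. This is your route (b) made explicit, and it uses only the hypothesis $\maxeval^n\notin\R$. So your instinct to combine the asymptotic with SML was right; you just didn't need the detour through irreducibility of $A^d$.
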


\begin{proof}
By hypothesis $W$ is the orthogonal complement of a non-zero vector $\rvec\in\Z^\dim$.
Let $V'\subset \R^\dim$ denote the real $A$-invariant plane corresponding to the pair $\maxeval,\bmaxeval$ and $V''\subset\R^\dim$ denote its $A$-invariant complement.    Since $A$ is an integer matrix with irreducible characteristic polynomial, neither $V'$ nor $V''$ contain non-zero integer vectors. 

Let $a_n =\langle\rvec,A^n \cvec\rangle$.  Then $(a_n)$ is an integer linear recurrence sequence.  Suppose to get a contradiction that $a_n = 0$ for infinitely many $n\in\Z_{\geq 0}$.  The Skolem-Mahler-Lech Theorem (see 2.5 in~\cite{BGT16}) tells us that if this happens, then $a_n$ vanishes along an arithmetic progression; i.e.\ there exists $m$ and $l$ such that $a_{km+l} = 0$ for all $k\in\Z_{\geq 0}$.  So replacing $\cvec$ by $A^l\cvec$ and then $A$ by $A^m$, we may assume that $a_n = \langle\rvec,A^n\cvec\rangle = 0$ for all $n\in\Z_{\geq 0}$.

However, decomposing $\cvec = \cvec' + \cvec''$ into (necessarily non-zero) vectors $\cvec'\in V'$ and $\cvec''\in V''$, we have that $|\maxeval|^{-mn}\norm{A^n \cvec - A^n \cvec'} \to 0$.  Since $a_n = 0$, we infer that $a'_n :=\langle\rvec, A^n \cvec'\rangle$ satisfies $\lim |\maxeval|^{-mn}a'_n = 0$.   Let $P\in \SL_\dim(\C)$ be a linear change of coordinate such that $PAP^{-1}$ is diagonal with entries $\maxeval^m,\bmaxeval^m,\dots$ equal to the eigenvalues of $A$.  Then $P\cvec' = (z,\bar z, 0,\dots,0)$ for some non-zero $z\in\C$ and $P A^n\cvec' = (\maxeval^{mn} z,\bmaxeval^{mn}\bar z,0,\dots,0)$.  Moreover, the intersection $V'\cap W$ is one-dimensional, hence equal to $\R w$, where $w\in W$ satisfies $Pw = (\omega,\bar\omega,0,\dots,0)$ for some non-zero $\omega\in\C$.  Convergence $|\maxeval^{-mn}|a'_n \to 0$ translates to the statement that $\left(\frac{\maxeval}{|\maxeval|}\right)^{mn} z$ is asymptotic to the line $\R \omega$ as $n\to \infty$.  But this is impossible, because the hypothesis on $\maxeval$ implies that 
$\left\{\left(\frac{\maxeval}{|\maxeval|}\right)^{mn}\in \C:n\in\Z_{\geq 0}\right\}$ is dense in the unit circle.
\end{proof}

\proofof{Theorem~\ref{thm:degformula}}  The complement of the open $\dim$-dimensional cones in $\Sigma(\bP^\dim)$ is contained in the finite union of rational hyperplanes spanned by distinct pairs of vectors in $\rvecs$.  So if, as in the statement of the theorem, $\tilde A\in\SL_\dim(\Z)$ has irreducible characteristic polynomial and leading eigenvalues $\maxeval,\bmaxeval$ with $\maxeval^n\notin \R$ for any $n\in\Z_{\geq 0}$, we can apply Proposition \ref{prop:powerup} to obtain $N\in\Z_{\geq 0}$ such that $\tilde A^n\cvec$ avoids all $(\dim-1)$-dimensional cones of $\Sigma(\bP^\dim)$ for all $n\geq N$.  We can therefore invoke Theorem \ref{thm:degformula2} for $A = \tilde A^N$ to complete the proof.
\qed

\section{Background from diophantine approximation}
\label{sec:sunit}
In this section, we recall fundamental results in Diophantine approximation and basic height bounds that we will use to quantify approximations of the power series in Theorem~\ref{thm:mainb}.

Let $K\subset\bbq$ be a number field, and denote by $M_K$ the places of $K$, with finite places $M_K^{\mathrm{fin}}$ and infinite places $M_K^{\mathrm{inf}}$.  We normalize the absolute values corresponding to elements of $M_K$ so that they extend the absolute values on $\mathbb{Q}$ and satisfy the product formula 
$$
\prod_{\place \in M_K} |z|_\place = 1
$$
for all $z \in K \setminus \{ 0 \}$.
In particular, if $\place\in M_K^{\mathrm{inf}}$ is a complex place, corresponding to a conjugate pair $\tau,\bar{\tau}\colon K\to\C$ of complex embeddings, then $|x|_v = |\tau(x)|^{2} = |\bar{\tau}(x)|^{2}$.

We make use of a result of Evertse on linear forms.  Given a finite set of places $S \subset M_K$ that contains all the infinite places, we let $\cO_K(S)$ denote the set of $S$-integers in $K$; that is, 
$$
\cO_K(S) := \{ z \in K : |z|_\place \leq 1 \ \forall \place \not\in S \}.
$$  
Then $\cO_K(S)$ is a ring, which is called the ring of $S$-\emph{integers} in $K$, and the units of $\cO_K(S)$ are call the $S$-\emph{units}.  In the case when $S$ is the set of infinite places of $K$, we write $\cO_K$ for $\cO_K(S)$, which is the ring of algebraic integers in $K$. 

Given a vector ${\bf z} = (z_1, \dots, z_{\ell}) \in K^{\ell}$ we set
$$
H_S({\bf z}) := \prod_{\place \in S} \max \{ |z_1|_\place, \dots, |z_{\ell}|_\place \}.
$$

We use the general result of Evertse \cite{Eve84} on unit equations, as formulated in \cite{EG}.

\begin{theorem}[\cite{EG}, Proposition 6.2.1] \label{thm:Evertse} Let $S \subset M_K$ be a finite set of places of $K$ containing all infinite places, $T$ a subset of $S$, and $\ell \geq 2$ an integer. For any fixed $\epsilon > 0$, there exists a constant $c=c(K, S, \ell, \epsilon)$ so that if ${\bf z} = (z_1, \dots, z_{\ell}) \in \cO_K(S)^{\ell}$ and $\sum_{k \in I} z_k \ne 0$ for all non-empty $I \subset \{ 1, \dots, \ell \},$ then 
$$
\prod_{\place \in T} |z_1 + \cdots + z_{\ell} |_\place \geq c \frac{\prod_{\place \in T} \max \{ |z_1|_\place, \dots, |z_{\ell}|_\place \}}{H_S({\bf z})^{\epsilon} \prod_{\place \in S} \prod_{k = 1}^{\ell} |z_k|_\place}.
$$
\end{theorem}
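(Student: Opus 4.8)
Since Theorem~\ref{thm:Evertse} is Evertse's quantitative estimate for sums of $S$-integers, the plan is to deduce it from the $p$-adic, several-variable Subspace Theorem of Schmidt and Schlickewei, following the approach of~\cite{Eve84}. Arguing by contradiction, suppose no admissible $c$ exists; then, writing
$$
R(\mathbf z):=\frac{\bigl(\prod_{v\in T}|z_1+\dots+z_\ell|_v\bigr)\,H_S(\mathbf z)^\epsilon\,\prod_{v\in S}\prod_{k=1}^\ell|z_k|_v}{\prod_{v\in T}\max_k|z_k|_v},
$$
there is an infinite sequence of tuples $\mathbf z=(z_1,\dots,z_\ell)\in\cO_K(S)^\ell$, all of whose subsums are nonzero, with $R(\mathbf z)\to0$. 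I would first record two elementary facts. The quantity $R$ is unchanged when $\mathbf z$ is multiplied by an $S$-unit, and the product formula applied to $z_1$ gives $H_S(\mathbf z)\ge H(\mathbf z)\ge1$, where $H$ denotes the absolute (projective) height. From this a short computation shows that $R(\mathbf z)$ is bounded below by a positive constant whenever the projective point $(z_1:\dots:z_\ell)\in\bP^{\ell-1}(K)$ is held fixed; this will be the base case of the induction below, and for now it shows that the offending points genuinely vary.

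Next I would feed the sequence into the Subspace Theorem. For each $v\in S\setminus T$ take the $\ell$ coordinate forms $X_1,\dots,X_\ell$, and for each $v\in T$ take $X_1+\dots+X_\ell$ together with $\{X_k:k\ne i(v)\}$; these $\ell$ forms are linearly independent. There are only $\ell^{|T|}$ possibilities for the index assignment $v\mapsto i(v)$, so after passing to a subsequence it may be fixed, and fixed so that $\prod_{v\in T}|z_{i(v)}|_v=\prod_{v\in T}\max_k|z_k|_v$ for every $\mathbf z$ in the subsequence. A direct computation then gives
$$
\prod_{v\in S}\prod_{i=1}^\ell\frac{|L_{i,v}(\mathbf z)|_v}{\|\mathbf z\|_v}=\frac{\bigl(\prod_{v\in T}|z_1+\dots+z_\ell|_v\bigr)\,\prod_{v\in S}\prod_{k=1}^\ell|z_k|_v}{\bigl(\prod_{v\in T}\max_k|z_k|_v\bigr)\,H_S(\mathbf z)^\ell}=R(\mathbf z)\,H_S(\mathbf z)^{-\ell-\epsilon},
$$
so that $R(\mathbf z)\to0$ forces the left-hand side below $H_S(\mathbf z)^{-\ell-\epsilon}\le H(\mathbf z)^{-\ell-\epsilon/2}$ for all large members of the subsequence (using $1\le H(\mathbf z)\le H_S(\mathbf z)$). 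The Subspace Theorem then puts all of these projective points into a finite union of proper linear subspaces of $\bP^{\ell-1}$; ranging over the finitely many form-choices keeps the total collection finite.

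It remains to bound $R$ on a fixed proper subspace $W=\{\sum_{i=1}^\ell b_iX_i=0\}$, and for this I would induct on $\ell$, formulated for an arbitrary $\ell$-tuple of linear forms rather than just coordinate forms, so that the recursion closes. Solving the defining relation for one variable rewrites $z_1+\dots+z_\ell$ on $W$ as an $\cO_K(S)$-proportional combination of at most $\ell-1$ of the $z_i$. The key point is that the non-degeneracy hypothesis transfers verbatim to any linear restriction: if the reduced combination is non-degenerate, the inductive hypothesis applies directly; if not, some \emph{proper} subsum of it must vanish (a vanishing full sum would mean $z_1+\dots+z_\ell=0$, contrary to hypothesis), which shortens the combination further, so the reduction terminates. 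The base case is a $0$-dimensional $W$, which is exactly the ``fixed projective point'' estimate from the first paragraph. Carrying the constants through this descent, each depends only on $K$, $S$, $\ell$, $\epsilon$, so combining the finitely many subspaces with the base case produces a uniform positive lower bound for $R$, contradicting $R(\mathbf z)\to0$; unwinding the contradiction gives the stated inequality with an appropriate $c=c(K,S,\ell,\epsilon)$.

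The main obstacle is the inductive step on the subspaces: one has to organize the elimination-and-regrouping so that a failure of non-degeneracy always produces a strictly shorter vanishing subsum rather than a contradiction-free dead end, and at the same time control how $H_S(\mathbf z)$, the factor $\prod_{v\in S}\prod_k|z_k|_v$, and the normalizing constant change when denominators are cleared to return to $\cO_K(S)$-tuples. Everything else --- the choice of forms, the argmax trick, and the identity relating $R(\mathbf z)$ to the double product in the Subspace Theorem --- is the by-now standard dictionary between $S$-unit sums and Diophantine approximation, plus one invocation of the Subspace Theorem itself.
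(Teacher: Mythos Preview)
The paper does not prove this statement at all: Theorem~\ref{thm:Evertse} is quoted verbatim from Evertse~\cite{Eve84} (Proposition~6.2.1) and used as a black box throughout \S\ref{sec:sunit}--\S\ref{sec:proof}. There is therefore no proof in the paper to compare your proposal against.

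That said, your sketch is a reasonable outline of how the result is actually established in~\cite{Eve84}: one sets up linear forms adapted to the sum $z_1+\dots+z_\ell$ at the places in $T$ and coordinate forms elsewhere, applies the $p$-adic Subspace Theorem of Schmidt--Schlickewei to confine putative counterexamples to finitely many proper subspaces, and then inducts on $\ell$ within each subspace using the non-degeneracy hypothesis to shorten the sum. The one place where your write-up is vague is precisely the one you flag yourself: in the inductive step you need to track carefully how restricting to a subspace $W$ affects the three factors $H_S(\mathbf z)$, $\prod_{v\in S}\prod_k|z_k|_v$, and $\prod_{v\in T}\max_k|z_k|_v$, since after eliminating a variable the new tuple lives in $\cO_K(S)^{\ell-1}$ only up to a fixed denominator, and the constants absorbed at each stage must depend only on $(K,S,\ell,\epsilon)$ and not on the particular subspace. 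Evertse handles this by working projectively from the outset and stating the induction hypothesis in a form already invariant under the change of coordinates; if you want a self-contained proof you should do the same rather than clear denominators ad hoc.
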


When comparing non-negative sequences we will use Vinogradov notation `$a_j \gg b_j$' to mean that $b_j \leq C a_j$ for some constant $C > 0$ and large enough $j\in\Z_{\geq 0}$.

\begin{corollary}
\label{cor:xiandbarxi} Let $K\hookrightarrow\C$ be a number field together with an embedding into $\C$. 
Let $\xi\in K$ be such that $\xi^j\notin\R$ whenever $j$ is a non-zero integer.  Then for each non-zero $a\in K$, $j\in\Z_{\geq 0}$, and any $r\in (0,1)$, we have
$$
|\re a\xi^j| \gg |r\xi|^j
$$
\end{corollary}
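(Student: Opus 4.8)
\textbf{Plan for the proof of Corollary~\ref{cor:xiandbarxi}.}
The statement asks for a lower bound $|\re a\xi^j| \gg |r\xi|^j$, i.e.\ that $|\re a\xi^j|$ cannot decay faster than any fixed geometric rate strictly slower than $|\xi|^j$ itself. Equivalently, writing $b_j := \re(a\xi^j) = \tfrac12(a\xi^j + \bar a\bar\xi^j)$, we must show that for every $\epsilon>0$ there is a constant $c_\epsilon>0$ with $|b_j| \ge c_\epsilon |\xi|^{(1-\epsilon)j}$ for large $j$; since $r<1$ is arbitrary, absorbing a power of $|\xi|$ into $r^j$ gives the stated form. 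The plan is to apply Evertse's theorem (Theorem~\ref{thm:Evertse}) to the two-term sum $z_1 + z_2$ with $z_1 = \tfrac12 a\xi^j$ and $z_2 = \tfrac12 \bar a\bar\xi^j$, working in a number field $K$ large enough to contain $a$, $\xi$, and $\bar\xi$ and closed under the relevant complex conjugation.

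\textbf{Main steps.} First I would fix $K\hookrightarrow\C$ containing $a,\xi,\bar\xi$, pick a finite set $S\subset M_K$ of places containing all infinite places together with all finite places at which $a$ or $\xi$ or $\xi^{-1}$ fail to be integral, so that $z_1,z_2 \in \cO_K(S)$ for all $j$ (here one uses that $\xi$ is an $S$-unit for a suitable $S$, so $\xi^j$ stays $S$-integral with $S$ independent of $j$). Take $T = \{\nu_0\}$ to be the single archimedean place of $K$ induced by the given embedding $K\hookrightarrow\C$. The non-vanishing hypothesis of Evertse's theorem requires $z_1\ne 0$, $z_2\ne 0$ and $z_1+z_2\ne 0$: the first two are clear since $a\ne 0$, and $z_1+z_2 = b_j \ne 0$ because $b_j = 0$ would force $\xi^{2j} = -\bar a/a \cdot |\xi|^{2j}$... more directly, $b_j=0$ means $a\xi^j$ is purely imaginary, so $(a\xi^j)^2 = a^2\xi^{2j}$ is real and negative; but $a^2$ is real (after possibly enlarging to handle $a\notin\R$ — actually $a$ need not be real, so instead argue: $b_j=0 \Rightarrow a\xi^j = \imunit t$ for real $t\ne 0 \Rightarrow \xi^{2j} = -t^2/a^2$, and then $\xi^{2j}\bar\xi^{-2j} = -t^2/a^2 \cdot \overline{(-t^2/a^2)}^{-1} = a^{-2}\bar a^2$ has modulus $1$, which combined with the hypothesis $\xi^k\notin\R$ rules this out only after a short argument). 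The cleanest route to $b_j\ne 0$ is: if $\re(a\xi^j)=0$ for two distinct values $j_1<j_2$ then dividing gives $\xi^{j_2-j_1}\in\R$ up to a real factor, contradicting the hypothesis — so $b_j=0$ for at most one $j$, which is harmless for an asymptotic statement. Then Evertse's inequality reads
$$
|b_j|_{\nu_0} \;\ge\; c\,\frac{\max\{|z_1|_{\nu_0},|z_2|_{\nu_0}\}}{H_S(\mathbf z)^\epsilon \prod_{\nu\in S}|z_1|_\nu|z_2|_\nu}.
$$
Now estimate each factor as $j\to\infty$: the numerator $\max\{|z_1|_{\nu_0},|z_2|_{\nu_0}\}$ is comparable to $|\xi|^{2j}$ (using $|z|_{\nu_0}=|z|^2$ at a complex place, or $|z|$ at a real place — in either case it grows like a fixed power of $|\xi|^j$); by the product formula applied to $\xi$ (an $S$-unit) and $a$, the denominator product $\prod_{\nu\in S}|z_1|_\nu|z_2|_\nu$ is a \emph{constant} independent of $j$ (the $\xi^j$ and $\bar\xi^j$ contributions over all places of $S$ multiply to $1$ since $\xi,\bar\xi$ are $S$-units, and the $a,\bar a$ contributions are fixed); and $H_S(\mathbf z) \ll |\xi|^{Cj}$ for some constant $C$ depending only on $\#S$ and the normalizations. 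Collecting these, $|b_j|_{\nu_0} \gg |\xi|^{2j}/|\xi|^{C\epsilon j} = |\xi|^{(2-C\epsilon)j}$, hence $|b_j| = |b_j|_{\nu_0}^{1/2} \gg |\xi|^{(1-C\epsilon/2)j}$ (or $|b_j|_{\nu_0}$ itself if $\nu_0$ is real). Since $\epsilon>0$ is arbitrary, $|\re a\xi^j| \gg |\xi|^{(1-\delta)j}$ for every $\delta>0$; choosing $\delta$ small enough that $|\xi|^{1-\delta} > r|\xi|$, i.e.\ $|\xi|^{-\delta} > r$, yields $|\re a\xi^j| \gg |r\xi|^j$ as claimed.

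\textbf{Expected main obstacle.} The genuinely delicate point is the bookkeeping in the height/product-formula estimate: one must choose $S$ once and for all so that it works for every exponent $j$ simultaneously (which is exactly why the $S$-unit structure of $\xi$ is essential — this is presumably why the hypothesis is phrased so that $\xi$ will in applications be a unit or $S$-unit), and then verify that the combination $H_S(\mathbf z)^\epsilon \prod_{\nu\in S}|z_1|_\nu|z_2|_\nu$ contributes only a factor $|\xi|^{o(j)}$ rather than something comparable to $|\xi|^{2j}$, so that the bound is non-trivial. A secondary nuisance is the case analysis for $\nu_0$ real versus complex and the corresponding square in $|z|_{\nu_0}$, but this only affects constants and the exponent by a harmless factor of $2$. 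The hypothesis $\xi^j\notin\R$ for all nonzero $j$ is used twice: to guarantee $\re(a\xi^j)\ne 0$ for all but at most one $j$ (so Evertse applies), and implicitly it forces $\xi\notin\R$, so $\nu_0$ and its conjugate are genuinely needed — but notably the density-of-$(\xi/|\xi|)^j$ argument from Proposition~\ref{prop:powerup} is \emph{not} needed here; Evertse's theorem does all the work.
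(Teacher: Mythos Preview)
Your proposal is correct and follows essentially the same route as the paper's proof: apply Evertse's theorem with $\ell=2$ to $(z_1,z_2)=(a\xi^j,\bar a\bar\xi^j)$, choose $S$ so that $a,\bar a,\xi,\bar\xi$ are $S$-units, take $T$ to be the distinguished archimedean place, use the product formula to make $\prod_{\nu\in S}|z_i|_\nu$ constant in $j$, bound $H_S(\mathbf z)$ by a fixed exponential in $j$, and then absorb the $H_S^\epsilon$ loss into the gap between $|\xi|^j$ and $|r\xi|^j$ by choosing $\epsilon$ small. Your argument that $b_j=0$ for at most one $j$ (two purely imaginary values of $a\xi^j$ would make their ratio $\xi^{j_2-j_1}$ real) is equivalent to the paper's observation that $t=\tfrac1{2\pi}\arg\xi$ is irrational; the paper also streamlines slightly by taking $a,\bar a$ to be $S$-units rather than merely $S$-integers, so the denominator product is exactly $1$ instead of a fixed constant.
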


We emphasize that for our purposes, it is important that $r$ can be taken arbitrarily close to $1$ in this corollary.

\begin{proof}
By hypothesis $\xi = |\xi|e^{2\pi it}$ where $t\in [0,1)$ is irrational.  
Let $\mathbf{z} = (z_1,z_2) = (a\xi^j,\bar a \bar\xi^j)$.  Since $t \in\R\setminus\Q$, we see that $z_1+z_2 = 0$ for at most one $j\in\Z_{\geq 0}$.
Take $S\subset M_K$ to be a finite set containing all infinite places of $K$ so that $a, \bar a,\xi,\bar\xi$ are $S$-units.  Set $T=\{|\cdot|^2\}$.  Then the product formula tells us that $\prod_{\place\in S} |z_i|_\place = 1$ for $i=1,2$.  Hence for every $\epsilon>0$, Theorem~\ref{thm:Evertse} gives
$$
|2\re a\xi^j|^2=|a\xi^j + \bar a\xi^j|^2 \gg |\xi|^{2j} H_S(\mathbf{z})^{-\epsilon}.
$$
As 
$$
H_S(\mathbf{z}) \ll \left( \prod_{\place \in S} \max \{ |\xi|_\place, |\bar\xi|_\place \} \right)^j \leq \left( \prod_{\place \in S} \max \{ |\xi|_\place, |\bar\xi|_\place, 1 \} \right)^j,
$$
the conclusion follows by choosing $\epsilon> 0$ so that 
$$
\left( \prod_{\place \in S} \max \{ |\xi|_\place, |\bar\xi|_\place, 1 \} \right)^{-\epsilon} = r^2,
$$
noting that because $\xi$ is not a root of unity, the product in parentheses is strictly greater than $1$.
\end{proof}

For further applications of Theorem~\ref{thm:Evertse}, we need a basic height bound.  Here we employ multi-index notation 
\begin{equation}
\label{eqn:multiindex}
\rho^\alpha := \rho_1^{\alpha_1}\dots \rho_\dim^{\alpha_\dim}\qquad {\rm and}\qquad {\rm deg}(\alpha) = \sum \alpha_i 
\end{equation}
for all $\alpha = (\alpha_1,\dots,\alpha_\dim)\in \Z^\dim$.

\begin{lemma} \label{lem:heightbound} Let $D$ be a positive integer and let $Z$ be a finite subset of $K$.  Then there exists a positive constant $R$ such that whenever $z = \sum_{{\rm deg}(\alpha) \leq D}\zeta_\alpha \rho^\alpha$ 
is a polynomial of degree at most $D$ with coefficients $\zeta_\alpha \in Z$, we have
$$
\prod_{\place \in M_K} \max \{ |z|_\place, 1 \} \ll R^D.
$$
\end{lemma}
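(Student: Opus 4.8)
The quantity $\prod_{\place\in M_K}\max\{|z|_\place,1\}$ is a power of the multiplicative Weil height of $z$, and the plan is to bound it \emph{one place at a time}. The structural point that keeps the bound exponential (rather than worse) in $D$ is that all but finitely many places contribute exactly the factor $1$, so the number of monomials occurring in $z$ never enters an exponent. Accordingly, I would first fix a finite set $S_0\subset M_K$ consisting of all infinite places of $K$ together with every finite place at which some $\zeta\in Z$, or some coordinate $\rho_i$ of the fixed vector $\rho$ (whose coordinates lie in $K$), has absolute value strictly greater than $1$; this set is finite because $Z$ and $\rho$ have finitely many entries. For a finite place $\place\notin S_0$, the ultrametric inequality gives
\[
|z|_\place\le\max_{\deg(\alpha)\le D}|\zeta_\alpha|_\place\prod_{i=1}^{\dim}|\rho_i|_\place^{\alpha_i}\le 1,
\]
since each $|\zeta_\alpha|_\place\le1$, each $|\rho_i|_\place\le1$ and each $\alpha_i\ge0$. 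Hence $\max\{|z|_\place,1\}=1$ for all such $\place$, and the product over $M_K$ collapses to the finite product over $S_0$.

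It then remains to estimate the factors indexed by $\place\in S_0$. At a finite place $\place\in S_0$ the same ultrametric estimate yields $\max\{|z|_\place,1\}\le A_\place\,C_\place^{D}$, where $A_\place:=\max\{1,\max_{\zeta\in Z}|\zeta|_\place\}$ and $C_\place:=\max_i\max\{1,|\rho_i|_\place\}$ are constants $\ge1$ depending only on $K$, $Z$, $\rho$, using $\prod_i|\rho_i|_\place^{\alpha_i}\le C_\place^{\sum_i\alpha_i}\le C_\place^{D}$. At an infinite place $\place\in S_0$, with local degree $d_\place\in\{1,2\}$ and associated embedding $\tau$, the ordinary triangle inequality for $|\tau(\cdot)|$ introduces an extra factor equal to the number of terms, $m(D):=\#\{\alpha\in\Z_{\ge0}^\dim:\deg(\alpha)\le D\}=\binom{D+\dim}{\dim}$, and a routine computation produces a bound of the shape $\max\{|z|_\place,1\}\le B_\place\,m(D)^{d_\place}\,\tilde C_\place^{d_\place D}$ with $B_\place,\tilde C_\place\ge1$ again constants depending only on $K$, $Z$, $\rho$.

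Multiplying these finitely many bounds then finishes the proof. The product of all the $A_\place$ and $B_\place$ is a single constant; the product of the factors $m(D)^{d_\place}$ over the infinite places is $m(D)^{[K:\Q]}$, which is polynomial in $D$; and the product of the remaining exponential factors is $R_0^{D}$, where $R_0:=\prod_{\place\in S_0\cap M_K^{\mathrm{fin}}}C_\place\cdot\prod_{\place\in S_0\cap M_K^{\mathrm{inf}}}\tilde C_\place^{d_\place}$. Since $m(D)^{[K:\Q]}=O(D^{\dim[K:\Q]})$ is dominated by any exponential $\lambda^D$, $\lambda>1$, for $D$ large, I conclude $\prod_{\place\in M_K}\max\{|z|_\place,1\}\ll R^D$ for any fixed $R>R_0$, with both $R$ and the implied constant depending only on $K$, $Z$, $\rho$, $\dim$ and \emph{not} on $D$.

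I do not anticipate a genuine obstacle here: this is a standard height estimate, and the only point that needs a moment's care is checking that the ``number of terms'' factor $m(D)$ that shows up at the archimedean places is merely polynomial in $D$, so that it can be swallowed by $R^D$ — which is immediate from $m(D)=\binom{D+\dim}{\dim}$. (If one wishes to allow Laurent monomials, i.e.\ negative exponents $\alpha_i$, the argument is identical after enlarging $S_0$ to contain the finitely many finite places at which some $|\rho_i|_\place\neq1$.)
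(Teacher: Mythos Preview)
Your argument is correct and is exactly the standard height estimate one expects here: restrict to the finitely many places where something can be large, use the ultrametric inequality at finite places and the triangle inequality at infinite places, and absorb the polynomial factor $\binom{D+\dim}{\dim}$ coming from the number of monomials into the exponential. The paper does not give a self-contained proof but simply refers to Lemma~3.6 of~\cite{BDJ20}; what you wrote is essentially that argument, so there is nothing to add.
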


\begin{proof} 
See the proof of Lemma 3.6 in \cite{BDJ20}.
\end{proof}

Another result, useful for establishing non-degeneracy in Theorem~\ref{thm:Evertse}, is the following unit equations theorem of Evertse, Schlickewei, and Schmidt.  We recall that a finite sum $\sum_{i\in I} a_i z_i$ is \emph{non-degenerate} if no proper subsum vanishes; that is, $\sum_{j\in J} a_j z_j \neq 0$ for all $J \subsetneq I$. We say that an abelian group $G$ is of \emph{finite rank} if there exists a finitely generated subgroup $G'$ of $G$ such that every element of $G/G'$ has finite order.

\begin{theorem}[\cite{ESS02}] \label{thm:unitequations} Let $G \subset \C^*$ be a multiplicative subgroup of finite rank and let $\ell$ be a positive integer.  Then for each $(a_1,\dots,a_\ell)\subset \C^{\ell}$, there are only finitely many non-degenerate sums
$$
\sum_{i\in I} a_i z_i = 1, 
$$
where $I\subset \{1,\dots,\ell\}$ and $z_i \in G$ for all $i\in I$.
\end{theorem}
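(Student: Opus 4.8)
\emph{The plan} is to prove first the case that is actually used later in the paper — where $G$ is \emph{finitely generated} and contained in a number field — and then to indicate how the general finite-rank statement reduces to it.

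\emph{The finitely generated case.} Suppose $G\subset\bbq^\times$ is finitely generated. Since $\{1,\dots,\ell\}$ has finitely many subsets, it suffices, for each fixed support $I$, to bound the number of non-degenerate solutions $\sum_{i\in I}a_iz_i=1$ with $z_i\in G$. If $|I|\le 1$ there is at most one solution, so assume $\ell':=|I|\ge 2$; then every $a_i$, $i\in I$, is non-zero, since otherwise the one-term subsum $a_iz_i$ would vanish, contradicting non-degeneracy. Let $K$ be the number field generated by the $a_i$ together with a finite generating set for $G$, and let $S\subset M_K$ be a finite set containing $M_K^{\mathrm{inf}}$ and every finite place at which some $a_i$ or some generator of $G$ fails to be a unit; then $a_i\in\cO_K(S)^\times$ and $G\subset\cO_K(S)^\times$. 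For a non-degenerate solution set $w_i:=a_iz_i$; the $w_i$ are $S$-units, $\sum_{i\in I}w_i=1$, and every subsum over a non-empty $J\subseteq I$ is non-zero. Apply Theorem~\ref{thm:Evertse} to the tuple $(w_i)_{i\in I}$ with $T=S$: the product formula forces $\prod_{\place\in S}|w_i|_\place=1$ for each $i$ and $\prod_{\place\in S}|\sum_iw_i|_\place=1$, so the inequality collapses to $1\ge c\,H_S((w_i))^{1-\epsilon}$. Choosing $\epsilon=\tfrac12$ gives $H_S((w_i))\le c^{-2}$, so the Weil heights $h(z_i)$ are bounded independently of the solution. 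Since all $z_i$ lie in the fixed number field $K$, Northcott's theorem leaves only finitely many possibilities for each $z_i$, hence finitely many solutions.

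\emph{Reduction of the general case.} Let $G\subset\C^\times$ have finite rank $r$ and fix a support $I$. Suppose, for contradiction, that there are infinitely many non-degenerate solutions $z^{(1)},z^{(2)},\dots$. Fixing a finitely generated $G_0\le G$ with $G/G_0$ torsion, the group generated by $G_0$ and $z^{(1)},\dots,z^{(N)}$ is finitely generated of rank $\le r$. A standard specialization argument embeds the finitely generated domain $\Q[\,z^{(j)}_i,(z^{(j)}_i)^{-1},a_i:j\le N,\ i\in I\,]$ into $\bbq$ in such a way that the $N$ chosen solutions stay distinct and non-degenerate and the image of $G$ still has rank $\le r$. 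Feeding this into the finitely generated case would bound $N$ — but only \emph{if} that case is equipped with a bound on the number of solutions depending on $\ell'$ and $r$ alone, and not on the degree of the specialized field nor on the size of $S$.

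\emph{The main obstacle.} The estimate extracted above from Theorem~\ref{thm:Evertse} is not uniform in this sense, since counting $S$-units of bounded height in $K$ involves both $[K:\Q]$ and $|S|$; eliminating that dependence is precisely the content of \cite{ESS02}. It requires the \emph{quantitative} Subspace Theorem in its absolute, parametric form, whose conclusion bounds the \emph{number} of exceptional subspaces purely in terms of the number of variables. The argument then follows the classical pattern: to a non-degenerate solution $\mathbf w=(w_i)_{i\in I}$ one attaches, for each $\place\in S$, a system of $\ell'$ linearly independent linear forms built from the $a_i$ and the constant $1$, arranged so that $\prod_{\place\in S}\prod_i|L_{i,\place}(\mathbf w)|_\place\big/\|\mathbf w\|_\place$ is small compared with a fixed negative power of the height of $\mathbf w$; the quantitative Subspace Theorem then confines $\mathbf w$ to one of boundedly many proper linear subspaces, on each of which the equation collapses to a unit equation in fewer variables, and one proceeds by induction on $\ell'$ (and on $r$, which absorbs the torsion of $G$ and the residual degenerate solutions). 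Carrying this through while keeping the subspace count independent of $[K:\Q]$ and $|S|$ is the delicate step; the rest is bookkeeping. Since in every application in this paper $G$ is finitely generated inside a number field, only the first part is needed, and we refer to \cite{ESS02} for the statement in full generality.
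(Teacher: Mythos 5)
The paper offers no proof of Theorem~\ref{thm:unitequations}: it is imported verbatim from~\cite{ESS02} as a black box, so there is no internal argument to compare yours against. What you have done instead is supply a correct, self-contained derivation of the special case that the paper actually invokes --- $G$ finitely generated inside a number field, with algebraic coefficients $a_i$ --- from Theorem~\ref{thm:Evertse} together with Northcott's theorem. That observation is sound and even illuminating: in both applications (Lemma~\ref{lem:fixedjpositive} and Proposition~\ref{prop:nonvanishingsubsums}) the group $G$ is generated by finitely many elements of the fixed number field $K$ and the coefficients lie in a finite subset of $K$, so the paper's own Diophantine toolkit already suffices there, and the full finite-rank statement over $\C$ is never needed. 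Your computation with $w_i=a_iz_i$ is right: $S$-unit-ness kills the denominator $\prod_{\place\in S}\prod_i|w_i|_\place$ via the product formula, the left-hand side is $\prod_{\place\in S}|1|_\place=1$, and the numerator is $H_S(\mathbf{w})$, giving $H_S(\mathbf{w})\le c^{-2}$ for $\epsilon=\tfrac12$. One small step deserves to be made explicit: to pass from a bound on $H_S(\mathbf{w})=\prod_{\place\in S}\max_i|w_i|_\place$ to bounds on the individual heights $H(w_i)=\prod_{\place\in S}\max\{1,|w_i|_\place\}$, use that $\sum_i w_i=1$ forces $\max_i|w_i|_\place\ge (\ell')^{-d_\place}$ at archimedean places and $\ge1$ at finite places, so $H(w_j)\ll_{\ell',K} H_S(\mathbf{w})$; then $H(z_j)\le H(w_j)H(a_j^{-1})$ is bounded and Northcott applies since all $z_j$ lie in the fixed field $K$. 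As for the general statement, you are right that your Evertse-based bound is not uniform in $[K:\Q]$ and $|S|$, so the specialization argument cannot be closed without the quantitative absolute Subspace Theorem; your sketch of that route is only a sketch, and deferring the full finite-rank case to~\cite{ESS02} --- exactly as the paper does --- is the appropriate resolution.
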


\section{Proof of Theorem \ref{thm:main}}
\label{sec:prelim}
We now explain how Theorem~\ref{thm:main} may be reduced to proving Theorem~\ref{thm:mainb}.  The bulk of the work will be to establish an auxiliary result that gives us the discordance condition needed to employ Theorem~\ref{thm:mainb}.  

\subsection{Setup}
Recall the relevant notation and assumptions from Theorem~\ref{thm:main}: $\tilde A\in\SL_\dim(\Z)$
is a matrix of size $\dim\geq 3$ with irreducible characteristic polynomial, and $\rvecs,\cvecs \subset\Z^\dim$ are finite sets of vectors with $\#\rvecs \geq 2$.  In particular, the set 
$$
\dvecs:=(\rvecs-\rvecs)\setminus\{0\}
$$ 
is non-empty.  By hypothesis there are no angular resonances among the eigenvalues $\evals = (\eval_1,\dots,\eval_\dim)$ of $\tilde A$, and the eigenvalues of largest modulus are a complex conjugate pair $\eval_1=\maxeval,\eval_2 = \bmaxeval$. In particular, 
\begin{equation*}
  \theta:=\tfrac1{2\pi}\arg(\maxeval)\in(0,1)
\end{equation*}
is an irrational number.

We can extend the function $\fnal:=\fnal_{\rvecs,\cvecs}\colon\mats_\dim(\Z) \to \Z$ from Theorem~\ref{thm:main} to all of $\mats_\dim(\C)$ by
\begin{equation}
\label{eq:fnal}
\fnal(A) := \sum_{\cvec\in\cvecs}\max_{\rvec\in\rvecs}\re\langle\rvec,A\cvec\rangle = \sum_{\cvec\in\cvecs}\re\langle\Gamma(A\cvec),A\cvec\rangle, 
\end{equation}
where $\langle z,z'\rangle=\sum_{i=1}^\dim z_iz'_i$ is the $\C$-bilinear pairing on $\C^\dim$, and for each $z\in\C^\dim$, the vector $\Gamma(z)\in\rvecs$ is chosen so that 
$$
\re\langle\Gamma(z),\cvec\rangle = \max_{\rvec\in\rvecs} \re\langle\rvec,z\rangle.
$$
Hence $\Gamma:\C^\dim\to\rvecs$ is uniquely determined and locally constant outside the finite collection of real hyperplanes given by
\begin{equation}
\label{eq:hyper}
\bigcup_{\dvec\in\dvecs} \{\cvec\in \C^\dim \colon  \re\langle\dvec,\cvec\rangle=0\},
\end{equation} 
where as above $\dvecs$ consists of differences between distinct elements of $\rvecs$.

Now let $K\subset\bbq$ be a splitting field for the characteristic polynomial of $\tilde A$.  Then the $\tilde A$-equivariant projection $\tilde\pi\colon\C^\dim\to\C^\dim$ onto the $\maxeval$-eigenspace of $\tilde A$ is defined over $K$.  For any $Y\in\SL_\dim(\Z)$, we set 
$$
A_Y := Y^{-1}\tilde AY
$$ 
and let $H_Y\subset\C^\dim$ be the $\maxeval$-eigenspace of $A_Y$.  The $A_Y$-equivariant projection $\pi_Y\colon\C^\dim\to\C^\dim$ onto $H_Y$ is then given by $\pi_Y(\cvec) = Y^{-1}\tilde\pi(Y\cvec)$.

Since $A_Y$ is an integer matrix with irreducible characteristic polynomial, no proper $A_Y$ or $A_Y^T$ invariant subspace of $\R^\dim$ contains non-zero integer vectors. Thus, for any non-zero $\dvec\in\Z^\dim$, the linear function $\re\langle\dvec,\cdot\rangle$ does not vanish identically on $H_Y$. Indeed, since $H_Y$ is $A_Y$-invariant, the subspace $H_Y^\perp$ of $\C^\dim$ consisting of vectors $z$ for which $\re\langle z,\cdot\rangle$ vanishes is $A_Y^T$ invariant and proper and therefore omits all integer vectors.
It follows that the restriction $\Gamma|_{H_Y}$ is nonconstant, though still uniquely defined and locally constant outside a finite union of real rays in the complex line $H_Y$.

\subsection{Reducing Theorem~\ref{thm:main} to a discordance condition}
\label{sec:reduction}
To prove Theorem~\ref{thm:main}, we need to study $\fnal(A_Y^j)$ for large $j$, and
Equation~\eqref{eq:fnal} reduces this to understanding $\Gamma(A_Y^j\cvec)$ for $\cvec\in\cvecs$.

\begin{lemma}
\label{lem:restriction}
Let $\cvec\in\Z^\dim$ be a non-zero vector.  Then for all but finitely many $j\in\Z_{\geq 0}$, we have
$$
\Gamma(A_Y^j\cvec) = \Gamma(\maxeval^j \pi_Y(\cvec)),
$$  
and the common value is a vector $\rvec\in\rvecs$ that uniquely maximizes both $\langle\rvec,A_Y^j\cvec\rangle$ and 
$\re\langle\rvec,\maxeval^j\pi_Y(\cvec)\rangle$.
\end{lemma}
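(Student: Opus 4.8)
The plan is to compare the true orbit $A_Y^j\cvec$ with its ``leading term'' $\maxeval^j\pi_Y(\cvec)$ and to show that for large $j$ these two vectors are so close (after rescaling) that they lie on the same side of every one of the finitely many real hyperplanes \eqref{eq:hyper} that govern the discontinuities of $\Gamma$. Write the spectral decomposition $\C^\dim = H_Y \oplus \bar H_Y \oplus W$, where $W$ is the $A_Y$-invariant complement spanned by the remaining eigenspaces, and decompose $\cvec = \pi_Y(\cvec) + \overline{\pi_Y(\cvec)} + \cvec_W$ accordingly (the first summand is nonzero since $\cvec$ is a nonzero integer vector and, by irreducibility of the characteristic polynomial, no proper $A_Y$-invariant subspace contains such a vector, so $\pi_Y(\cvec)\ne 0$). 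Then $A_Y^j\cvec = \maxeval^j\pi_Y(\cvec) + \bmaxeval^j\overline{\pi_Y(\cvec)} + A_Y^j\cvec_W$, where $\|A_Y^j\cvec_W\| = O((|\maxeval|-\delta)^j)$ for some $\delta>0$, since all eigenvalues in $W$ have modulus strictly less than $|\maxeval|$.

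Next I would control the size of $\re\langle\dvec,\maxeval^j\pi_Y(\cvec)\rangle$ from below for each of the finitely many $\dvec\in\dvecs$. Since $\langle\dvec,\pi_Y(\cvec)\rangle$ is a nonzero element $a$ of $K$ (it is nonzero because $\re\langle\dvec,\cdot\rangle$ does not vanish identically on the line $H_Y$, as noted in the setup, and $\pi_Y(\cvec)\ne 0$ spans $H_Y$), Corollary~\ref{cor:xiandbarxi} applies with $\xi=\maxeval$ — whose powers are never real by hypothesis on $\maxeval$ — to give, for any fixed $r\in(0,1)$,
$$
|\re\langle\dvec,\maxeval^j\pi_Y(\cvec)\rangle| = |\re a\maxeval^j| \gg |r\maxeval|^j.
$$
Choosing $r$ close enough to $1$ that $|r\maxeval| > |\maxeval|-\delta$, the error terms $\bmaxeval^j\overline{\pi_Y(\cvec)}$ (which contributes $\re\langle\dvec,\bmaxeval^j\overline{\pi_Y(\cvec)}\rangle = \re(\bar a'\bar\maxeval^j)$ for $a'=\langle\dvec,\pi_Y(\cvec)\rangle$ — wait, more precisely the conjugate term has the same modulus $|a||\maxeval|^j$ but also needs handling) are of size $O(|\maxeval|^j)$, which is comparable, not negligible. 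So I must be slightly more careful: the term $\maxeval^j\pi_Y(\cvec) + \bmaxeval^j\overline{\pi_Y(\cvec)}$ is exactly $2\re(\maxeval^j\pi_Y(\cvec))$ viewed in $\R^\dim$, and $\langle\dvec,\,\cdot\,\rangle$ of it is $2\re(a\maxeval^j)$; this is precisely the quantity bounded below by Corollary~\ref{cor:xiandbarxi}. Thus the genuinely small error is only $A_Y^j\cvec_W$, of size $O((|\maxeval|-\delta)^j)$, and
$$
\langle\dvec, A_Y^j\cvec\rangle = 2\re(a\maxeval^j) + O((|\maxeval|-\delta)^j),
$$
where the main term dominates in absolute value for all large $j$. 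In particular $\langle\dvec,A_Y^j\cvec\rangle$ and $2\re(a\maxeval^j) = \re\langle\dvec,\maxeval^j\pi_Y(\cvec)\rangle + \re\langle\dvec,\bmaxeval^j\overline{\pi_Y(\cvec)}\rangle = \re\langle\dvec,(\maxeval^j\pi_Y(\cvec)+\bmaxeval^j\overline{\pi_Y(\cvec)})\rangle$ have the same sign.

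Hmm — but $\Gamma$ is defined on $\maxeval^j\pi_Y(\cvec)$, not on the real vector $2\re(\maxeval^j\pi_Y(\cvec))$. Since $\re\langle\dvec,\cdot\rangle$ is $\R$-linear and $\re\langle\dvec, \maxeval^j\pi_Y(\cvec)\rangle = \re(a\maxeval^j) = \tfrac12\cdot 2\re(a\maxeval^j)$, the sign of $\re\langle\dvec,\maxeval^j\pi_Y(\cvec)\rangle$ is the same as that of $2\re(a\maxeval^j)$, hence the same as that of $\langle\dvec,A_Y^j\cvec\rangle$, for all large $j$ and all $\dvec\in\dvecs$. Therefore, for $j$ large, $A_Y^j\cvec$ and $\maxeval^j\pi_Y(\cvec)$ lie on the same side of every hyperplane $\{\re\langle\dvec,\cdot\rangle=0\}$, $\dvec\in\dvecs$, so $\Gamma$ takes the same value $\rvec\in\rvecs$ at both points; and that $\rvec$ is the \emph{unique} maximizer of $\re\langle\cdot,\maxeval^j\pi_Y(\cvec)\rangle$ (resp.\ of $\langle\cdot,A_Y^j\cvec\rangle$) over $\rvecs$ precisely because the point lies off all these hyperplanes, so no two elements of $\rvecs$ give equal values there. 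Finally, ``for all but finitely many $j$'' follows because the above holds for all $j$ exceeding a threshold depending only on the finitely many constants $|a|=|\langle\dvec,\pi_Y(\cvec)\rangle|$, the implied constant in Corollary~\ref{cor:xiandbarxi}, and $\delta$. \textbf{The main obstacle} is the bookkeeping point just encountered: the error term $\bmaxeval^j\overline{\pi_Y(\cvec)}$ is \emph{not} lower-order, so one cannot simply say ``$A_Y^j\cvec\approx\maxeval^j\pi_Y(\cvec)$'', and one must instead recognize that $\maxeval^j\pi_Y(\cvec)+\bmaxeval^j\overline{\pi_Y(\cvec)}$ is the natural object, apply Corollary~\ref{cor:xiandbarxi} to \emph{it}, and then check that $\Gamma(\maxeval^j\pi_Y(\cvec))$ is unchanged under adding back the conjugate term — which is automatic from $\R$-linearity of the pairings defining the discontinuity hyperplanes. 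Everything else is routine estimation.
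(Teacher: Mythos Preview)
Your proposal is correct and follows essentially the same approach as the paper: decompose $\cvec$ along $H_Y\oplus\bar H_Y\oplus W$, bound the $W$-part by $O((|\maxeval|-\delta)^j)$, and apply Corollary~\ref{cor:xiandbarxi} to get a lower bound $|\re\langle\dvec,\maxeval^j\pi_Y(\cvec)\rangle|\gg (r|\maxeval|)^j$ that beats this error, forcing $\langle\dvec,A_Y^j\cvec\rangle$ and $\re\langle\dvec,\maxeval^j\pi_Y(\cvec)\rangle$ to have the same sign for every $\dvec\in\dvecs$. The only organizational difference is that the paper first establishes unique maximization at $\maxeval^j\pi_Y(\cvec)$ separately (using only irrationality of $\theta$, not Evertse) and then invokes Corollary~\ref{cor:xiandbarxi} to transfer this to $A_Y^j\cvec$, whereas you handle both points simultaneously via sign comparison; this is a matter of presentation, not substance.
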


\begin{proof}
Since the characteristic polynomial of $\tilde A$ is irreducible over $\mathbb{Q}$ and since $\cvec,\dvec$ are non-zero integer vectors, $\pi_Y(\cvec)$ is non-zero and $\re\langle\dvec,\cdot\rangle$ does not vanish identically on $H_Y$.  So (real) linearity and the fact that $\xi^j\notin\R$ for any $j\in\Z_{>0}$ imply that $\re\langle\dvec,\maxeval^j\pi_Y(\cvec)\rangle=0$ for at most one $j\in\Z_{\geq 0}$.

Hence, for sufficiently large $j$, we have $\re\langle\rvec,\maxeval^j\pi_Y (\cvec)\rangle > \re\langle\tilde\rvec,\maxeval^j\pi_Y(\cvec)\rangle$, where $\rvec = \Gamma(\maxeval^j\pi_Y(\cvec))$ and $\tilde \rvec\in \rvecs\setminus\{\rvec\}$ is any other vector.  

We need to show that $\rvec$ also uniquely maximizes $\langle\rvec,A_Y^j\cvec\rangle$ for large $j$. Now
$$
\cvec' := \cvec - \pi_Y(\cvec) - \overline{\pi_Y(\cvec)}
$$
lies in the $A_Y$-invariant subspace of $\C^\dim$ complementing $H_Y\oplus \bar H_Y$, so since $\maxeval,\bmaxeval$ are the eigenvalues of maximal magnitude, there exists $\epsilon>0$ such that
$$
\norm{A_Y^j\cvec'}\le (1-\epsilon)^j |\maxeval |^{j}
$$
for sufficiently large $j$.
On the other hand, given $\tilde\rvec \in \rvecs\setminus\{\rvec\}$ we can apply Corollary~\ref{cor:xiandbarxi} with $a=2\langle\rvec-\tilde\rvec,\pi_Y(\cvec)\rangle$ and some fixed $r \in (1-\epsilon/2,1)$.  Then from maximality of $\re\langle\rvec,\maxeval^j\pi_Y(\cvec)\rangle$ we obtain for large $j$ that
$$
2\re\langle\rvec-\tilde\rvec,\maxeval^j\pi_Y(\cvec)\rangle\ge(1-\epsilon/2)^j |\maxeval|^{j}.
$$
Hence
\begin{align*}
  \langle\rvec - \tilde\rvec,A_Y^j\cvec\rangle
  &= 2\re \langle\rvec-\tilde\rvec,\maxeval^j\pi_Y(\cvec)\rangle
    +\langle\rvec - \tilde\rvec,A_Y^j\cvec'\rangle \\
  &\ge((1-\epsilon/2)|\maxeval|)^j - ((1-\epsilon)|\maxeval|)^j > 0
\end{align*}
for all $j$ sufficiently large, completing the proof.
\end{proof}

Write $A_Y = P D P^{-1}$, where $D$ is the diagonal matrix with entries $\eval_1,\dots,\eval_\dim$ and $P=P_Y$ is a matrix with $i$th column equal to an eigenvector for $\eval_i$.  By our assumptions, the number field $K$ contains all entries of $P$ and $D$.

For large $j\in\Z_{\geq 0}$, Lemma~\ref{lem:restriction} tells us that
\begin{equation}
\label{eq:Psi2}
\fnal(A_Y^j)
= \sum_{\cvec\in\cvecs}\langle P^T\Gamma(\maxeval^j\pi_Y(\cvec)),D^j P^{-1}\cvec\rangle
= \pair{\gamma_Y(j\theta)}{(\eval_1^j,\ldots ,\eval_m^j)},
\end{equation}
where
\begin{equation}\label{eq:gam1}
\gamma_Y = (\gamma_{Y,1},\dots,\gamma_{Y,\dim})\colon\R\to K^\dim
\end{equation} is the piecewise constant and $1$-periodic function with $i$th component given by
\begin{equation}\label{eq:gam2}
  \gamma_{Y,i}(t) = \sum_{\cvec\in\cvecs}
  (P^T\Gamma(e^{2\pi\imunit t} \pi_Y(\cvec)))_i (P^{-1} \cvec)_i.
\end{equation}

\begin{lemma}\label{lem:maxfcn}
For large $j\ge1$, the 1-periodic function on $\R$ given by
\begin{equation*}
    f_j(t):=\langle\gamma_Y(t),(\eval_1^j,\dots,\eval_\dim^j)\rangle
\end{equation*}
is $\Z$-valued, non-constant, and maximized at $t=j\theta$.  As a consequence, the discontinuity set $\discty(\gamma_Y)\subset\R$ of $\gamma_Y$ is non-empty. 
\end{lemma}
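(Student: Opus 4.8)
Our plan is to work with the identity
$$
f_j(t) = \sum_{\cvec\in\cvecs}\langle\Gamma(e^{2\pi\imunit t}\pi_Y(\cvec)),A_Y^j\cvec\rangle,
$$
which follows from \eqref{eq:gam2} via the linear-algebra identity $\langle P^T w,D^jP^{-1}\cvec\rangle=\langle w,PD^jP^{-1}\cvec\rangle=\langle w,A_Y^j\cvec\rangle$ already used to derive \eqref{eq:Psi2}. Since $\Gamma$ takes values in $\rvecs\subset\Z^\dim$ and $A_Y^j\cvec\in\Z^\dim$, every summand is an integer, so $f_j$ is $\Z$-valued; this step needs nothing about the size of $j$.

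Next I would fix $j$ large enough that Lemma~\ref{lem:restriction} applies to every $\cvec\in\cvecs$ at once (possible since $\cvecs$ is finite). For such $\cvec$ put $\rvec_\cvec:=\Gamma(A_Y^j\cvec)$; by Lemma~\ref{lem:restriction} this equals $\Gamma(\maxeval^j\pi_Y(\cvec))$, which equals $\Gamma(e^{2\pi\imunit j\theta}\pi_Y(\cvec))$ because $\maxeval^j\pi_Y(\cvec)$ lies off the hyperplanes \eqref{eq:hyper} (these being homogeneous, $\Gamma$ is then constant along the whole positive ray through it), while $e^{2\pi\imunit j\theta}\pi_Y(\cvec)$ lies on that ray. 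The key point from Lemma~\ref{lem:restriction} is that $\rvec_\cvec$ is the \emph{unique} maximizer of $\rvec\mapsto\langle\rvec,A_Y^j\cvec\rangle$ over $\rvecs$. Hence, for every $t$,
$$
f_j(t)\;\le\;\sum_{\cvec\in\cvecs}\max_{\rvec\in\rvecs}\langle\rvec,A_Y^j\cvec\rangle\;=\;f_j(j\theta),
$$
giving maximality at $t=j\theta$ (and, as a sanity check, $f_j(j\theta)=\fnal(A_Y^j)$, in agreement with \eqref{eq:Psi2}); moreover this inequality is strict as soon as $\Gamma(e^{2\pi\imunit t}\pi_Y(\cvec))\ne\rvec_\cvec$ for even a single $\cvec\in\cvecs$.

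It remains to exhibit, for one $\cvec\in\cvecs$, some $t$ with $\Gamma(e^{2\pi\imunit t}\pi_Y(\cvec))\ne\rvec_\cvec$. Here I would invoke the fact established above that $\Gamma|_{H_Y}$ is non-constant. Since the characteristic polynomial of $\tilde A$ is irreducible, $\pi_Y(\cvec)\ne0$, so the orbit $\{e^{2\pi\imunit s}\pi_Y(\cvec):s\in[0,1)\}$ is a loop winding once around the origin inside the complex line $H_Y$; it therefore meets every one of the finitely many open sectors cut out in $H_Y$ by the rays $\{\re\langle\dvec,\cdot\rangle=0\}\cap H_Y$ ($\dvec\in\dvecs$), on each of which $\Gamma|_{H_Y}$ is constant. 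As $\Gamma|_{H_Y}$ is non-constant, $s\mapsto\Gamma(e^{2\pi\imunit s}\pi_Y(\cvec))$ takes at least two values, one of which differs from $\rvec_\cvec$; for the corresponding $t$ we get $f_j(t)<f_j(j\theta)$, so $f_j$ is non-constant. Finally, were $\discty(\gamma_Y)$ empty, the piecewise-constant function $\gamma_Y$ would be continuous, hence constant on the connected set $\R$, forcing every $f_j$ to be constant and contradicting what we have just shown for large $j$; thus $\discty(\gamma_Y)\ne\emptyset$.

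The main obstacle is the non-constancy claim: a priori $\gamma_Y$, and hence $f_j$, is a \emph{sum} over $\cvec\in\cvecs$ whose terms might cancel, so one cannot deduce non-constancy of $\gamma_Y$ directly from $\Gamma|_{H_Y}\ne\text{const}$. The uniqueness clause in Lemma~\ref{lem:restriction} is exactly what bypasses this, allowing a summand-by-summand argument: it suffices that one summand of $f_j$ drop strictly below its own maximum for one value of $t$, after which only the soft topological observation about the winding orbit is required.
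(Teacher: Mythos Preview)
Your proof is correct and follows essentially the same approach as the paper's: both unwind $f_j$ via the identity $f_j(t)=\sum_{\cvec\in\cvecs}\langle\Gamma(e^{2\pi\imunit t}\pi_Y(\cvec)),A_Y^j\cvec\rangle$, use integrality of the summands, invoke the uniqueness clause of Lemma~\ref{lem:restriction} together with $0$-homogeneity of $\Gamma$ to get maximality at $t=j\theta$, and derive non-constancy from the non-constancy of $\Gamma|_{H_Y}$ (you argue directly via the winding loop, while the paper argues the same point by contradiction). Your closing paragraph articulating the role of the uniqueness clause is a nice addition but not needed for the proof itself.
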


\begin{proof}
Unwinding the definition of $\gamma_Y$, it follows that 
\begin{equation*}
f_j(t)=\sum_{\cvec\in\cvecs}\langle\Gamma(e^{2\pi\imunit t}\pi_Y(\cvec)),A_Y^j\cvec\rangle.
\end{equation*}
For any $\cvec\in\cvecs$ we have $\Gamma(e^{2\pi\imunit t}\pi_Y(\cvec))\in\rvecs\subset\Z^\dim$ and $A_Y^j\cvec\in\Z^\dim$, so $f_j(t)\in\Z$.  Moreover, for any large enough $j$,
\begin{align*}
  \langle\Gamma(e^{2\pi\imunit t}\pi_Y(\cvec)),A_Y^j\cvec\rangle
  &\le\langle\Gamma(A_Y^j\cvec), A_Y^j\cvec\rangle\\
  &=\langle\Gamma(\maxeval^j\pi_Y(\cvec)), A_Y^j\cvec\rangle\\
  &=\langle\Gamma(e^{2\pi\imunit j\theta}\pi_Y(\cvec)), A_Y^j\cvec\rangle,
\end{align*}
where the inequality holds by definition of $\Gamma$, the first equality follows from Lemma~\ref{lem:restriction}, and the second equality follows from homogeneity of $\Gamma$. Thus $f_j(t)$ is maximized for $t=j\theta$. It only remains to show that $f_j$ is non-constant. But if $f_j$ were constant, the inequality above would have to be an equality for all $t$ and all $\cvec\in\cvecs$. By the uniqueness statement in Lemma~\ref{lem:restriction}, this would imply that $\Gamma(e^{2\pi\imunit t}\pi_Y(\cvec))$ is a constant function of $t$. Since $\pi_Y(\cvec)\ne0$, the 0-homogeneous function $\Gamma|_{H_Y}$ would then be constant, a contradiction.
\end{proof}

In order to prove Theorem~\ref{thm:main} we will require the following result, whose proof will be given in the following subsection. 

\begin{theorem}
\label{thm:noresonances}
There exists a coset $\mathcal{Y}\subset \SL_\dim(\Z)$ of a finite-index subgroup such that $\theta$ and $\discty(\gamma_Y)$ are discordant for every $Y\in\mathcal{Y}$.
\end{theorem}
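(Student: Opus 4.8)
The plan is to exhibit an explicit finite-index sublattice condition on $Y$ forcing the discordance property, and to check that it is satisfied by a positive-density coset. Recall what discordance requires of $\theta$ and $\discty(\gamma_Y)$: for all $t,t'\in\discty(\gamma_Y)\cup\{0\}$ and all $a,b\in\Z$, $a\theta = b(t-t')\bmod 1$ implies $a=0$ and either $t-t'\in\Z$ or $b$ is even. From~\eqref{eq:gam1}--\eqref{eq:gam2}, the discontinuities of $\gamma_Y$ occur precisely where $e^{2\pi\imunit t}\pi_Y(\cvec)$ crosses one of the real rays defined by $\re\langle\dvec,\cdot\rangle=0$ for $\dvec\in\dvecs$ and $\cvec\in\cvecs$; equivalently, $e^{2\pi\imunit t}$ is a point at which the argument of the complex number $\langle\dvec,\pi_Y(\cvec)\rangle\in K$ (times $e^{2\pi\imunit t}$, evaluated through the eigenvector pairing) becomes purely imaginary. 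So each element of $\discty(\gamma_Y)$ is, up to adding $\tfrac14\Z$, the normalized argument of an algebraic number $\langle\dvec,\pi_Y(\cvec)\rangle\in K^\times$, i.e.\ of an element $\alpha_{Y,\dvec,\cvec}$ of $K$ that is a fixed rational function of the entries of $Y^{-1}\tilde\pi Y$ applied to the fixed vectors $\dvec,\cvec$.

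First I would reduce the statement ``$a\theta = b(t-t')\bmod 1$ with $a\ne 0$ or ($t-t'\notin\Z$ and $b$ odd)'' to a multiplicative relation in $\bbq^\times$ between $\maxeval$ (whose normalized argument is $\theta$) and the $\alpha_{Y,\dvec,\cvec}$ (whose normalized arguments generate $\discty(\gamma_Y)$ modulo $\tfrac14\Z$). Concretely, $a\theta = b(t-t')\bmod 1$ forces $\maxeval^{4a}\cdot(\beta/\bar\beta)^{\pm 2b}$ to be real and positive for a suitable ratio $\beta$ of two of the $\alpha$'s (the factor $4$ absorbing the $\tfrac14\Z$ ambiguity and the passage from arguments to the numbers themselves), hence an element of $K^\times$ whose image under every archimedean place has a constrained argument. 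The key leverage is the no-angular-resonance hypothesis plus the following dichotomy: if such a relation held for \emph{all} $Y$ in a coset, then writing $\alpha_{Y,\dvec,\cvec}$ as a rational function $\tau(Y)\in K(Y)$, the function $Y\mapsto \maxeval^{4a}(\tau_1(Y)/\overline{\tau_1(Y)})^{2b_1}\cdots$ would be a nonconstant rational function on the group ${\rm SL}_\dim$ (or constant — and then a single explicit check rules it out) taking values in a ``thin'' set. Here I invoke Lemma~\ref{lem:nonconst2}: a nonconstant rational function $\tau\in K(x)$ cannot have $\tau(K)\subseteq\cO_K^*$. The role of units is that $\maxeval\in\cO_K^*$ (since $\tilde A\in\SL_\dim(\Z)$), so $\maxeval^{4a}$ is a unit, while the $\alpha_{Y,\dvec,\cvec}$ are generically \emph{not} units; forcing the combined expression to be real-positive — in particular algebraic with controlled valuations at all but finitely many places — would force $\tau$ to land in $\cO_K^*$ after clearing the fixed denominators, contradicting Lemma~\ref{lem:nonconst2} unless $a=b=0$, and the $b$-even escape clause handles the residual $\pm$ sign ambiguity.

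The organization of the argument I would then carry out is: (1) parametrize $\discty(\gamma_Y)$ explicitly in terms of the algebraic numbers $\alpha_{Y,\dvec,\cvec}(Y)$, noting each is a fixed element of $K(Y)$ in the matrix entries of $Y$; (2) for each finite tuple of exponents $(a,b)$ in a bounded range — bounded because $\theta$ irrational and the $t-t'$ lie in a fixed finite set once $Y$ is fixed, but here we need a \emph{uniform} statement, so instead we quantify over \emph{all} $(a,b)$ and show the bad locus is a proper subvariety — show that the set of $Y$ for which some nontrivial relation holds is contained in a proper Zariski-closed subset of ${\rm SL}_\dim$ defined over $K$, by applying Lemma~\ref{lem:nonconst2} to the relevant rational functions (after checking nonconstancy via irreducibility of the characteristic polynomial, which guarantees $\pi_Y$ genuinely varies); (3) pass from ``avoid a proper subvariety'' to ``lies in a coset of a finite-index subgroup'' using that ${\rm SL}_\dim(\Z)$ is Zariski-dense in ${\rm SL}_\dim$ and that reduction modulo a suitable integer $m$ (chosen so that the proper subvariety has empty reduction mod $m$, or rather so that a particular congruence class of $Y$ avoids it) cuts out the desired coset $\mathcal{Y}$. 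The main obstacle I anticipate is step (2): making the quantification over all pairs $(a,b)\in\Z^2$ uniform rather than $Y$-dependent. The point that saves this is that a single algebraic relation $\maxeval^{4a}\cdot(\text{unit-free stuff})^{2b}\in\R_{>0}$, valid for $Y$ ranging over a Zariski-dense set, already forces — via Lemma~\ref{lem:nonconst2} and the unit obstruction — that $a=0$; and with $a=0$ the relation becomes $b(t-t')\in\Z$, which after accounting for the $\tfrac14\Z$ ambiguity reads as a purely local condition on a \emph{single} algebraic number being (a root of unity times) something with even-order behavior, again a proper subvariety condition in $Y$ unless $t-t'\in\Z$. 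I would then collect finitely many such proper subvarieties (one per relevant residue configuration, of which there are boundedly many) and take $\mathcal{Y}$ to be a congruence coset avoiding all of them simultaneously.
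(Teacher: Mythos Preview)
Your proposal has the right ingredients but a genuine structural gap in handling the quantification over $(a,b)\in\Z^2$, and your plan to pass from ``avoid proper subvarieties'' to ``congruence coset'' does not work as stated.

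The problem is this. For each fixed $(a,b)$ (and each choice of $\cvec,\cvec',\dvec,\dvec'$), the relation $(\maxeval/\bmaxeval)^a = (\sigma/\sigma')^b$ does cut out a proper subvariety of $\SL_\dim$ once you know $\sigma/\sigma'$ is non-constant. But there are \emph{infinitely many} such subvarieties, and a countable union of proper subvarieties can easily exhaust $\SL_\dim(\Z)$; your claim that ``there are boundedly many relevant residue configurations'' is not justified, and your proposed uniformization (``a relation valid on a Zariski-dense set forces $a=0$'') only shows that no \emph{single} $(a,b)$ works for the whole coset --- it does not rule out that for each $Y$ in the coset, \emph{some} $(a,b)$ works. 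Separately, even for a single proper subvariety $V$, the step ``choose a congruence coset missing $V$'' requires more care than you give it, since the bad locus here mixes algebraic and arithmetic (unit) conditions.

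The paper resolves this by a different finitization. Rather than working $(a,b)$-by-$(a,b)$, it observes that since $\maxeval/\bmaxeval\in\cO_K^*$, the relation $(\maxeval/\bmaxeval)^a = \sigma(Y,\cvec,\dvec)^b$ with $b\ne 0$ forces $\sigma(Y,\cvec,\dvec)\in\cO_K^*$, and likewise for ratios $\sigma/\tilde\sigma$. So discordance for \emph{all} $(a,b)$ follows from the \emph{finitely many} conditions ``$\sigma(Y,\cvec,\dvec)\notin\cO_K^*$'' and ``$\sigma(Y,\cvec,\dvec)/\sigma(Y,\tilde\cvec,\tilde\dvec)\notin\cO_K^*$'' (Corollary~\ref{cor:notintegers}). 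This is the key reduction you are missing. The paper then proves these $\sigma$'s are non-constant rational functions on $\SL_\dim$ (Lemma~\ref{lem:nonconstant}), uses Lemmas~\ref{lem:nonconst2} and~\ref{lem:manyprimes} to find, for each $\sigma_i$, a specific $Y$ and a prime $p$ with $|\sigma_i(Y)|_\place\ne 1$, and then exploits that ``$|\sigma_i(Y)|_\place>1$'' is a \emph{congruence condition} modulo a power of $p$ (Lemma~\ref{lem:cosets}) to propagate it to an entire coset. A Chinese-remainder argument for congruence subgroups (Lemma~\ref{lem:twocosets}) assembles the finitely many conditions into a single coset. Your variety-avoidance strategy should be replaced by this arithmetic freezing of non-unit values.
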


Recall that discordance means that
for any $t,t' \in \discty(\gamma_Y)\cup\{0\}$ and any $a,b\in\Z$, $a\theta = b(t-t')\mymod 1$ implies $a=0$ and either $t-t'\in\Z$ or $b$ is even;

Taking Theorem~\ref{thm:noresonances} for granted momentarily and assuming Theorem~\ref{thm:mainb}, we can quickly give the proof of Theorem~\ref{thm:main}.

\medskip
\proofof{Theorem~\ref{thm:main}}
Suppose $Y\in\SL_\dim(\Z)$ is such that $\theta$ and $\gamma:=\gamma_Y$ is discordant, and fix  $N\ge1$. Then $N\theta$ and $\gamma$ are also discordant.

Since the largest eigenvalues of $\tilde A$ are $\maxeval,\bmaxeval$, the radius of convergence of the series \eqref{eqn:theseries} in Theorem~\ref{thm:main} is at least $|\maxeval|^{-N}$. 
Pick any $x\in \bbq\cap(0,|\maxeval|^{-N})$, and set
\begin{equation}
\rho_i = (x\eval_i)^{N} \qquad {\rm for}~i=1,\ldots,\dim,
\end{equation}
so that $|\rho_i|<1$ for all $i$.

In view of Theorem~\ref{thm:noresonances} and Equation~\eqref{eq:Psi2}, it suffices to show that 
\begin{equation}
\label{eqn:omegaformula}
\Omega := \sum_{j=1}^\infty \pair{\gamma(jN\theta)}{(\rho_1^j,\ldots ,\rho_m^j)}
\end{equation}
is transcendental. Indeed, the series in Theorem~\ref{thm:main} might differ from this one in finitely many terms, but this is immaterial since all terms are algebraic.  

We must show that the hypotheses of Theorem~\ref{thm:mainb} are satisfied. 
First, the assumption that there are no angular resonances between distinct eigenvalues 
$\eval_i$ of $A$ implies that the $\rho_i$ are pairwise multiplicatively independent. Indeed, suppose $\rho_i^a\rho_j^b=1$, where $i\ne j$ and $(a,b)\ne(0,0)$. As $|\rho_i|<1$, we can't have $a,b\ge0$ or $a,b\le0$, so we may assume $a>0$ and $b=-c<0$. Then $\xi_i^{Na}=x^{N(a+c)}\xi_j^{Nc}$, and hence $\xi_i^{Na}\bar\xi_j^{Nc}=x^{N(a+c)}|\xi_j|^{2Nc}>0$, a contradiction.

Second, we have already observed that $N\theta$ and $\gamma$ are discordant. 

It therefore only remains to show that the maximality condition in Theorem~\ref{thm:mainb} holds. But this amounts to, for $j$ large, the 1-periodic function
\begin{equation*}
  t\mapsto \langle\gamma(Nt),(\rho_1^j,\dots,\rho_\dim^j)\rangle
  =x^{Nj}f_j(Nt)
\end{equation*}
being $\R$-valued, non-constant, and maximized at $t=jN\theta$. Here $f_j$ is the function in Lemma~\ref{lem:maxfcn}, which therefore allows us to conclude the proof.
\qed

\subsection{Establishing discordance}
\label{sec:resonances}

We will spend the rest of this section proving Theorem~\ref{thm:noresonances}.  The discontinuities of $\gamma_Y$ all arise from discontinuities of $\Gamma|_{H_Y}$. More precisely,~\eqref{eq:gam2} shows that $t_0\in\discty(\gamma_Y)$ implies that the function $t\mapsto\Gamma(e^{2\pi\imunit t}\pi_Y(\cvec))$ is discontinuous at $t_0$ for some $\cvec\in\cvecs$. This, in turn, means that 
$\langle\dvec,e^{2\pi\imunit t_0}\pi_Y(\cvec)\rangle$ is purely imaginary for some $\dvec \in \dvecs= (\rvecs-\rvecs)\setminus\{0\}$;  hence $e^{4\pi\imunit t_0}$ is one of the finitely many elements of $K$ of the form
\begin{equation}
\label{eqn:candidatedirection}
 \rfn(Y,\cvec,\dvec) := -\frac{\overline{\langle\dvec,\pi_Y(\cvec)\rangle}}{\langle\dvec,\pi_Y(\cvec)\rangle}.
\end{equation}

Most of the time we will fix $\cvec$ and $\dvec$ and regard $\rfn\colon\SL_\dim(\Z)\to K$ as a function of $Y$ only.  
To obtain Theorem~\ref{thm:noresonances} we will show that for ``many'' $Y\in\SL_\dim(\Z)$, $\sigma(Y)$ is not a unit in the ring of algebraic integers of $K$, see Corollary~\ref{cor:notintegers} below.

Fix $\maxeval$-eigenvectors $\cmax,\rmax\in K^\dim$ of $\tilde A$ and $\tilde A^T$, respectively, normalized so that $\langle\rmax,\cmax\rangle = 1$.  Then the projection $\tilde\pi\colon\C^\dim\to\C^\dim$ onto the $\maxeval$-eigenspace of $\tilde A$ is given by $\tilde\pi(\cvec) = \langle\rmax,\cvec\rangle\cmax$.
For $Y\in \SL_\dim(\Z)$ we further have that
$$
\pi_Y(\cvec) = \langle\rmax,Y\cvec\rangle Y^{-1}\cmax.
$$
Hence we can rewrite
\begin{equation}
\label{eq:ru}
\rfn(Y,\cvec,\dvec) = -\frac{\langle\brmax,Y\cvec\rangle\langle\dvec, Y^{-1}\bcmax\rangle}{\langle\rmax,Y\cvec\rangle\langle\dvec,Y^{-1}\cmax\rangle}.
\end{equation}
This formula extends $\rfn$ to a rational function $\rfn\colon\mats_\dim(\C)\dashrightarrow\C$ on the space $\mats_\dim(\C)\simeq\C^{\dim^2}$, with the homogeneity property $\rfn(zY) = \rfn(Y)$ for all $z\in\C^*$.

Note that $\sigma$ is regular and non-zero at any $Y\in\SL_\dim(\Z)$, since neither the numerator nor the denominator of~\eqref{eq:ru} can vanish. For example $\langle\rmax,Y\cvec\rangle\ne0$ since $Y\cvec\in\Z^\dim$ is non-zero and the entries of $\rmax$ are $\Q$-linearly independent, given that $\tilde A$ has irreducible characteristic polynomial.

\begin{lemma}
\label{lem:nonconstant}
Given $\cvec,\tilde\cvec\in\cvecs$ and $\dvec,\tilde\dvec\in\dvecs$, define $\rfn,\tilde \rfn\colon\mats_\dim(\C)\to\C$ by
$$
\rfn(Y) := \rfn(Y,\cvec,\dvec)\quad\text{and}\quad \tilde\rfn(Y) = \rfn(Y,\tilde\cvec,\tilde\dvec).
$$
Then 
\begin{itemize}
\item[(i)]
  $\rfn$ and $\tilde\rfn$ are non-constant;
\item[(ii)] either $\rfn/\tilde\rfn$ is non-constant or $\rfn\equiv\tilde\rfn$, the latter occurring precisely when $\cvec$ is a multiple of $\tilde\cvec$ and $\dvec$ is a multiple of $\tilde\dvec$.
\end{itemize}
\end{lemma}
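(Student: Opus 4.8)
The statement concerns the rational functions $\rfn(Y) = \rfn(Y,\cvec,\dvec)$ and $\tilde\rfn(Y) = \rfn(Y,\tilde\cvec,\tilde\dvec)$ on $\mats_\dim(\C)$ given by formula~\eqref{eq:ru}. My plan is to work with the numerator and denominator separately as polynomials in the entries of $Y$, exploiting that $\rmax,\brmax,\cmax,\bcmax$ span the relevant lines and that $\cvecs,\dvecs$ consist of nonzero vectors. The key structural observation is that each of the four linear forms appearing in~\eqref{eq:ru} — namely $Y\mapsto\langle\brmax,Y\cvec\rangle$, $Y\mapsto\langle\rmax,Y\cvec\rangle$, $Y\mapsto\langle\dvec,Y^{-1}\bcmax\rangle$, $Y\mapsto\langle\dvec,Y^{-1}\cmax\rangle$ — is, up to the common factor $\det Y$ coming from $Y^{-1} = (\det Y)^{-1}\operatorname{adj}(Y)$, an \emph{irreducible} polynomial in the matrix entries, and that these irreducible factors are pairwise non-proportional. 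Indeed $\langle\brmax,Y\cvec\rangle = \sum_{i,j}(\brmax)_i \cvec_j Y_{ij}$ is a nonzero linear form (nonzero because $\brmax\ne 0$ and $\cvec\ne 0$), hence irreducible; the forms $\langle\dvec,Y^{-1}\cmax\rangle$ and $\langle\dvec,Y^{-1}\bcmax\rangle$ are, after clearing $\det Y$, also irreducible of degree $\dim-1$ (this is a standard fact about adjugate entries / generic determinantal forms). Crucially $\langle\rmax,Y\cvec\rangle$ and $\langle\brmax,Y\cvec\rangle$ are not proportional because $\rmax$ and $\brmax=\overline{\rmax}$ are $\C$-linearly independent (their ratio is not real since $\maxeval\notin\R$, indeed $\maxeval$ is not even real-proportional to $\bmaxeval$ as eigenvalues of an irreducible integer matrix), and similarly $\langle\dvec,Y^{-1}\cmax\rangle$ and $\langle\dvec,Y^{-1}\bcmax\rangle$ are non-proportional.

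For part (i), once we know $\rfn = \frac{\langle\brmax,Y\cvec\rangle\langle\dvec,Y^{-1}\bcmax\rangle}{\langle\rmax,Y\cvec\rangle\langle\dvec,Y^{-1}\cmax\rangle}$ is a ratio of a product of two non-proportional irreducibles over a product of two other irreducibles, and that the numerator factors share no common factor with the denominator factors (which follows from the non-proportionality statements just listed: $\langle\brmax,Y\cvec\rangle\not\sim\langle\rmax,Y\cvec\rangle$, $\langle\brmax,Y\cvec\rangle$ has different degree from the adjugate forms once $\dim\ge 3$, etc.), we conclude $\rfn$ is a non-constant rational function: it has a genuine pole along $\{\langle\rmax,Y\cvec\rangle=0\}$. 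One small subtlety to address is $\dim=3$ versus degree bookkeeping, but since $\dim\ge 3$ the linear form and the degree-$(\dim-1)$ adjugate forms can never be proportional, so no cancellation occurs.

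For part (ii), consider $\rfn/\tilde\rfn$. Write it as a ratio of products of the irreducible forms attached to the data $(\cvec,\dvec)$ and $(\tilde\cvec,\tilde\dvec)$. By unique factorization in the polynomial ring $\C[Y_{ij}]$ (after clearing the $\det Y$ powers, which appear symmetrically and cancel), $\rfn/\tilde\rfn$ is constant if and only if every irreducible factor in the numerator is matched, with multiplicity, by a proportional factor in the denominator. The forms $\langle\brmax,Y\cvec\rangle$ and $\langle\brmax,Y\tilde\cvec\rangle$ are proportional iff $\cvec$ is a scalar multiple of $\tilde\cvec$ (since $\brmax\ne 0$, the map $\cvec\mapsto\langle\brmax,Y\,\cdot\,\rangle$-coefficients is injective up to scaling); likewise for the $\rmax$-forms; and the two adjugate forms $\langle\dvec,Y^{-1}\cmax\rangle$, $\langle\tilde\dvec,Y^{-1}\cmax\rangle$ are proportional iff $\dvec$ is a scalar multiple of $\tilde\dvec$, since $\cmax\ne 0$ and the adjugate is generically nondegenerate. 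Matching the numerator factor $\langle\brmax,Y\cvec\rangle$ forces it to equal (up to scalar) one of the two numerator-side or denominator-side forms of $\tilde\rfn$; it cannot match $\langle\rmax,Y\tilde\cvec\rangle$ (wrong "conjugacy type"), so it must match $\langle\brmax,Y\tilde\cvec\rangle$, giving $\cvec\parallel\tilde\cvec$; similarly the denominator factor forces $\rmax$-proportionality (automatic once $\cvec\parallel\tilde\cvec$) and the adjugate factors force $\dvec\parallel\tilde\dvec$. Conversely, if $\cvec=\lambda\tilde\cvec$ and $\dvec=\mu\tilde\dvec$ then by $0$-homogeneity (in each argument) of formula~\eqref{eq:ru} — each $\lambda$ or $\mu$ cancels between numerator and denominator — we get $\rfn\equiv\tilde\rfn$ on the nose, not merely up to scalar. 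This gives the stated equivalence.

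**Main obstacle.** The one genuinely technical point, which I would isolate as a lemma, is the irreducibility and pairwise non-proportionality of the four "adjugate-type" linear forms $Y\mapsto\langle\dvec,\operatorname{adj}(Y)\cmax\rangle$ etc. as polynomials in the $Y_{ij}$. Irreducibility of the generic adjugate entry is classical, and $\langle\dvec,\operatorname{adj}(Y)\cmax\rangle$ is a nonzero $\C$-linear combination of adjugate entries, hence irreducible by an orbit/$\GL$-action argument (the $\GL_\dim\times\GL_\dim$ action on $\mats_\dim$ is transitive enough to move any such form to a single adjugate entry); non-proportionality of the $\cmax$- versus $\bcmax$-versions again reduces to $\C$-linear independence of $\cmax$ and $\bcmax$. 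Everything else is bookkeeping with unique factorization.
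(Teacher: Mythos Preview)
Your approach via unique factorisation in $\C[Y_{ij}]$ is correct and complete, but it differs substantially from the route the paper takes. The paper never invokes irreducibility of the adjugate-type forms. Instead, for~(i) it argues by contradiction: if $\rfn$ were constant on $\SL_\dim(\Z)$, then by Zariski density and homogeneity it would be constant on $\GL_\dim(\C)$; restricting to the hyperplane $H=\{\langle\rmax,Y\cvec\rangle=0\}\subset\mats_\dim(\C)$ and noting that invertible matrices are dense in $H$, one of the numerator factors must vanish identically on $H$. The factor $\langle\brmax,Y\cvec\rangle$ is excluded since $\rmax,\brmax$ are independent, so one is forced to $\langle\dvec,Y^{-1}\bcmax\rangle\equiv 0$ on invertible $Y\in H$; the paper then rules this out by an explicit coordinate change reducing $H$ to the set of matrices with vanishing $(1,\dim)$-entry and exhibiting enough inverses to span $\mats_\dim$. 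Part~(ii) is handled by the same hyperplane restriction, now yielding a product of four factors that must vanish, each excluded in turn.

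Your argument is more structural: after clearing the common $\det Y$, the four building blocks $\langle\rmax,Y\cvec\rangle$, $\langle\brmax,Y\cvec\rangle$, $\langle\dvec,\operatorname{adj}(Y)\cmax\rangle$, $\langle\dvec,\operatorname{adj}(Y)\bcmax\rangle$ are irreducible polynomials (the first two are nonzero linear forms; the last two are $\GL_\dim\times\GL_\dim$-translates of a single cofactor, hence irreducible of degree $\dim-1$), and the key pairwise non-proportionalities follow because the linear independence of the cofactors $(\operatorname{adj}Y)_{ij}$ forces $\langle\dvec,\operatorname{adj}(Y)z\rangle\propto\langle\dvec',\operatorname{adj}(Y)z'\rangle$ to imply $\dvec\propto\dvec'$ and $z\propto z'$. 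Unique factorisation then gives both~(i) and~(ii) at once. What you gain is a cleaner logical structure and immediate extensibility (e.g.\ it would handle products of more factors without extra work); what the paper's argument buys is complete self-containment, avoiding the classical but not entirely trivial fact that generic minors are irreducible and that the $\dim^2$ cofactors are linearly independent. Your ``main obstacle'' paragraph correctly identifies where the real content of your version lies.
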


\begin{proof}
Recall that all vectors in $\cvecs$ and $\dvecs$ are non-zero.

We first prove~(i),
supposing to get a contradiction that $\rfn(Y)=\rfn_0$ for some constant $\rfn_0$ and every $Y\in \SL_\dim(\Z)$.  Since $\SL_\dim(\Z)$ is Zariski dense in $\SL_\dim(\C)$, and $\rfn(zY) = \rfn(Y)$ for all $z\in\C^*$ and $Y\in {\rm GL}_\dim(\C)$, we infer that $\rfn(Y) = \rfn_0$ for all $Y\in{\rm GL}_\dim(\C)$.  Taking $H\subset\mats_\dim(\C)$ to be the complex hyperplane of matrices $Y$ such that $\langle\rmax,Y\cvec\rangle = 0$, we note that since the (irreducible) variety $\{\det(Y) = 0\}$ contains no hyperplanes, invertible matrices are Zariski dense in $H$.  So for general $Y\in H$, we have
$$
0= \sigma_0\langle\rmax,Y\cvec\rangle\langle\dvec,Y^{-1}\cmax\rangle = \langle\brmax,Y\cvec\rangle\langle\dvec, Y^{-1}\bcmax\rangle. 
$$
Hence one of the two factors on the right vanishes identically.  But 
$\maxeval\notin \R$ implies that $\rmax$ and $\brmax$ are linearly independent.  So $\langle\brmax,Y\cvec\rangle\neq 0$ outside a proper linear subspace of $H$, and it must be that $\langle\dvec,Y^{-1}\bcmax\rangle= 0$ for every invertible $Y\in H$.  

But this amounts to saying that there is a hyperplane $H'\subset \mats_\dim(\C)$ such that for all invertible $Y\in H$, we have $Y^{-1}\in H'$.  To see that this is impossible, choose matrices $B_1,B_2\in{\rm GL}_\dim(\C)$ such that $B_1^T\rmax = (1,0,\dots,0)$ and $B_2\cvec = (0,\dots,0,1)$.  Replacing all $Y\in H$ by $B_1^{-1}YB_2^{-1}$, we may assume that $H$ is the set of matrices whose $(1,\dim)$-entry is zero.  It follows that $H$ and therefore also $H'$ contains all diagonal matrices.  And for any distinct $i,j\in\{1,\dots,\dim\}$ with $(i,j)\neq (1,\dim)$, we have $Y = I+E_{ij}\in H$, where $E_{ij}$ is the matrix with $(i,j)$-entry equal to $1$ and all other entries equal to zero.  Thus $Y^{-1}=I-E_{ij}\in H'$, and we infer from taking linear combinations that $H\subset H'$.  Finally, $H$ also contains the upper triangular matrix $Y = I + \sum_{i=1}^{\dim-1} E_{i,i+1}$ whose $(\dim,1)$-minor has non-zero determinant.  Hence $Y$ is invertible and by Cramer's formula for $Y^{-1}$, the $(1,\dim)$-entry of $Y^{-1}$ is non-zero.  It follows that $H'$ is strictly larger than $H$ and in particular, not a hyperplane.  

It remains to prove~(ii), so assume instead that $\rfn/\tilde\rfn$ is constant on $\SL_\dim(\Z)$. As before, this identity extends to all of ${\rm GL}_\dim(\C)$.  Also assume that $\cvec$ and $\tilde\cvec$ are not proportional; the case when $\dvec$ and $\tilde\dvec$ are not proportional is similar.

We again let $H$ be the set of $\dim\times\dim$ complex matrices $Y$ for which $\langle\rmax,Y \cvec\rangle = 0$.  This time, we obtain from the formulas for $\rfn$ and $\tilde\rfn$ that 
$$
0 = \langle\brmax,Y\cvec\rangle\langle\dvec,Y^{-1}\bcmax\rangle\langle\rmax,Y\tilde \cvec\rangle\langle\tilde\dvec,Y^{-1}\cmax\rangle
$$
for all $Y\in H$.  Thus one of the four factors on the right vanishes identically.  But we already showed that the first two factors can't vanish, and the fourth factor may be excluded by the same argument used to rule out the second.  Finally, since $\tilde\cvec$ is not a multiple of $\cvec$, we exclude the third factor for the same reason as the first.  So we again have our contradiction.
\end{proof}
 
\begin{lemma}\label{lem:nonconst2}
Let $\rfn\colon\SL_\dim(\C)\tto \C$ be a non-constant rational function that is regular at any element of $Y\in\SL_\dim(\Z)$. Then there exists a matrix $Y\in\SL_\dim(\Z)$ and a nilpotent matrix $B\in\mats_\dim(\Z)$ such that the function $\tau\colon\Z\to\C$ given by $\tau(k) = \rfn(Y(I+kB))$ is non-constant. 
\end{lemma}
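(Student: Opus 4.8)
The plan is to reduce everything to two classical facts about $\SL_\dim(\Z)$: that it is Zariski dense in $\SL_\dim(\C)$, and that it is generated by the elementary matrices $I+E_{ij}$ with $1\le i\ne j\le\dim$, where $E_{ij}$ is the matrix with a single nonzero entry, equal to $1$, in position $(i,j)$. Since $i\ne j$ we have $E_{ij}^2=0$, so $E_{ij}$ is nilpotent and $(I+E_{ij})^k=I+kE_{ij}\in\SL_\dim(\Z)$ for every $k\in\Z$. These are exactly the one-parameter families along which I will test $\rfn$, so it suffices to produce a single pair $(i,j)$ and a single $Y\in\SL_\dim(\Z)$ with $k\mapsto\rfn\bigl(Y(I+kE_{ij})\bigr)$ non-constant, and then set $B=E_{ij}$.

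First I would note that $\rfn$ cannot be constant on $\SL_\dim(\Z)$. Its locus of regularity $U$ is a Zariski-dense open subset of the irreducible variety $\SL_\dim(\C)$ and, by hypothesis, contains $\SL_\dim(\Z)$; since $\SL_\dim(\Z)$ is Zariski dense, it is dense in $U$, so if $\rfn\equiv c$ on $\SL_\dim(\Z)$ for some constant $c$, then the closed set $\{\rfn=c\}\cap U$ equals $U$, forcing $\rfn\equiv c$ as a rational function and contradicting non-constancy. Fixing $c_0:=\rfn(I)$, there is therefore some $Z_0\in\SL_\dim(\Z)$ with $\rfn(Z_0)\ne c_0$.

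Now I would argue by contradiction: suppose that for every $Y\in\SL_\dim(\Z)$ and every nilpotent $B\in\mats_\dim(\Z)$ the function $k\mapsto\rfn\bigl(Y(I+kB)\bigr)$ is constant. Taking $B=E_{ij}$ and evaluating at $k=0$ and $k=\pm1$ gives $\rfn\bigl(Y(I\pm E_{ij})\bigr)=\rfn(Y)$ for every $Y\in\SL_\dim(\Z)$ and every $i\ne j$; that is, $\rfn$ is invariant under right multiplication by each elementary matrix and its inverse. Writing an arbitrary $Z\in\SL_\dim(\Z)$ as a word $g_1g_2\cdots g_r$ in these matrices and telescoping yields $\rfn(Z)=\rfn(Ig_1\cdots g_r)=\rfn(I)=c_0$ for all $Z\in\SL_\dim(\Z)$, contradicting the previous paragraph. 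Hence some $Y\in\SL_\dim(\Z)$ and some pair $i\ne j$ give a non-constant $k\mapsto\rfn\bigl(Y(I+kE_{ij})\bigr)$, and we take $B=E_{ij}$.

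The argument is short, and I do not expect a genuine obstacle; the only points that deserve a sentence of justification are the Zariski density of $\SL_\dim(\Z)$ in $\SL_\dim(\C)$ — which follows from the fact that a one-variable polynomial vanishing on $\Z$ is identically zero, applied to each one-parameter unipotent subgroup $\{I+tE_{ij}:t\in\C\}$, together with the fact that these subgroups generate $\SL_\dim(\C)$ — and the generation of $\SL_\dim(\Z)$ by elementary matrices, which is classical (essentially the Euclidean algorithm).
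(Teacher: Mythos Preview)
Your proof is correct and follows the same contrapositive strategy as the paper: assume $\tau$ is constant for all choices of $Y$ and nilpotent $B$, telescope along a word in unipotent generators to conclude $\rfn$ is constant on $\SL_\dim(\Z)$, and invoke Zariski density to contradict non-constancy of $\rfn$. The only difference is cosmetic: you work specifically with the elementary nilpotents $E_{ij}$ and the classical fact that they generate $\SL_\dim(\Z)$, whereas the paper allows arbitrary nilpotent $B$ and cites Gow--Tamburini~\cite{GT} to get a finite-index (hence Zariski-dense) subgroup---your version is slightly more elementary and avoids that reference.
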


\begin{proof}
Suppose that $\tau$ is constant for all choices of $Y$ and $B$. Working inductively, we then have
$$
\rfn((I+k_1B_1)\cdot \dots \cdot (I+k_sB_s)) = \rfn((I+k_1B_1)\cdot \dots \cdot (I+k_{s-1}B_{s-1})) = \dots = \rfn(I)
$$
for all $k_1,\dots,k_s\in \Z$ and nilpotent $B_1,\dots, B_s\in\mats_\dim(\Z)$.  By~\cite{GT} the group generated by unipotent matrices $I+kB$ is a finite-index subgroup of $\SL_\dim(\Z)$ and therefore Zariski dense in $\SL_\dim(\C)$.  It follows that $\rfn(Y) \equiv \rfn(I)$ is constant on $\SL_\dim(\C)$.
\end{proof}
 
Recall that $\cO_K$ denotes the subring of integers in the splitting field $K$ and $\cO_K^*$ denotes its group of units, a finitely generated abelian group.  Recall also (see \S\ref{sec:sunit}) that $M_K^{\mathrm{fin}}$ denotes the set of finite places $\place$ on $K$ and $\abs[\place]{\cdot}$ denotes the associated absolute values.  Every such absolute value extends (up to normalization) the $p$-adic absolute value $\abs[p]{\cdot}$ on $\Z$ associated to the unique prime $p\in\Z_{\geq 0}$ for which $\abs[\place]{p} < 1$.  Conversely, for any prime $p$ there are finitely many $\place\in M_K^{\mathrm{fin}}$ such that $\abs[\place]{p}<1$.  Recall that if $a\in K$, then $a\in\cO_K^*$ iff $|a|_\place=1$ for all $\place\in M_K^{\mathrm{fin}}$.

The following lemma is well-known and can be deduced from a result of Schur~\cite{GB,Schur}. Since we lack a precise reference, we give a different proof.  It depends on two distinct ways to determine whether a sequence $(\tau(k))_{k\geq 0}\subset\C$ satisfies a linear recurrence $\tau(k+n) = \sum_{0\leq j <n} c_j \tau(k+j)$.  First, if $\tau(k)$ is the restriction of a rational function $\tau:\C\to\C$, then $(\tau(k))$ satisfies a linear recurrence if and only if the rational function is a polynomial.  Second, we have the more standard general fact that $(\tau(k))$ satisfies a linear recurrence if and only if its generating function $\sum \tau(k) z^k$ is rational.  

\begin{lemma}
\label{lem:manyprimes}
Let $\tau\in K(x)$ be a non-constant rational function.  Then there are infinitely many places $\place\in M_K^{\mathrm{fin}}$ for which there is some integer $k$ such that $\abs[\place]{\tau(k)} \neq 1$. 
\end{lemma}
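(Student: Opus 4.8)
The plan is to reduce the lemma to the classical theorem of Schur that the set of rational primes dividing some integer value of a non-constant polynomial in $\Z[x]$ is infinite, applying it to a norm polynomial built from the numerator of $\tau$.

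First I would write $\tau = P/Q$ with $P,Q \in \cO_K[x]$ coprime in $K[x]$; this is possible after clearing the denominators of the coefficients, since scaling both $P$ and $Q$ by a common integer preserves coprimality. Replacing $\tau$ by $1/\tau$ if necessary, I may assume $\deg P \ge 1$: this is harmless, because if $P$ is constant then $Q$ is not (as $\tau$ is non-constant), and any finite place at which $1/\tau(k)$ fails to be a unit is also one at which $\tau(k)$ fails to be a unit. Coprimality then furnishes a Bézout identity $UP + VQ = c$ with $U,V \in \cO_K[x]$ and $c \in \cO_K \setminus \{0\}$, which I will use at the end to pin down $|Q(k)|_\place$.

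Next I would form $F(x) := \prod_i \sigma_i(P)(x)$, the product over the embeddings $\sigma_i \colon K \hookrightarrow \bbq$ of the polynomials obtained by applying $\sigma_i$ coefficientwise to $P$. Routine arguments show $F \in \Z[x]$ (it is $\mathrm{Gal}(\bbq/\Q)$-invariant with algebraic-integer coefficients) and $\deg F = [K:\Q]\deg P \ge 1$, so $F$ is non-constant, and moreover $F(k) = N_{K/\Q}(P(k))$ for every $k \in \Z$. Schur's theorem \cite{Schur} then produces infinitely many primes $p$ such that $p \mid F(k)$ for some $k \in \Z$ with $F(k) \ne 0$.

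Finally, I would discard the finitely many such $p$ dividing $N_{K/\Q}(c)$ and, for each remaining $p$, pick $k$ with $p \mid N_{K/\Q}(P(k)) \ne 0$. Then $P(k)$ is a non-zero algebraic integer whose norm is divisible by $p$, hence a non-unit; comparing the prime factorization of the ideal $P(k)\cO_K$ with its norm shows that some prime $\mathfrak q$ of $\cO_K$ over $p$ satisfies $v_{\mathfrak q}(P(k)) \ge 1$, so the associated finite place $\place$ has $|P(k)|_\place < 1$. Since $p \nmid N_{K/\Q}(c)$ we have $|c|_\place = 1$; evaluating the Bézout identity at $k$ and using the ultrametric inequality together with $|U(k)|_\place, |V(k)|_\place \le 1$ forces $|Q(k)|_\place = 1$, so in particular $\tau(k)$ is defined and $|\tau(k)|_\place = |P(k)|_\place < 1 \ne 1$. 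As distinct primes $p$ lie under distinct places, this yields infinitely many $\place \in M_K^{\mathrm{fin}}$ with the required property. I expect the only obstacle to be bookkeeping — checking that $F$ is non-constant and has integer coefficients, tracking the finitely many excluded places, and the elementary step passing from ``$p$ divides the norm of $P(k)$'' to ``$P(k)$ is $\place$-small for some $\place$ over $p$'' — with no genuinely deep point beyond Schur's theorem itself.
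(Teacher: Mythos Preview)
Your argument is correct.  The reduction to Schur's theorem via the norm polynomial $F(x)=\prod_\sigma \sigma(P)(x)\in\Z[x]$ is sound, and the B\'ezout identity $UP+VQ=c$ does exactly what you want: once $p\nmid N_{K/\Q}(c)$ and $\mathfrak q\mid p$ is chosen with $|P(k)|_{\mathfrak q}<1$, the ultrametric inequality forces $|Q(k)|_{\mathfrak q}=1$, so $\tau(k)$ is defined and $|\tau(k)|_{\mathfrak q}<1$.  Distinct rational primes yield distinct finite places, giving the infinitude.

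The paper actually proceeds quite differently.  It remarks that the lemma ``can be deduced from a result of Schur'' but then argues instead via the Hadamard quotient theorem: if only finitely many places $\nu$ ever witnessed $|\tau(k)|_\nu\ne 1$, then $\tau(k)$ and $1/\tau(k)$ would both lie in a fixed ring of $S$-integers, and since $(\alpha(k))_k$ and $(\beta(k))_k$ satisfy linear recurrences, van der Poorten's theorem forces both $(\tau(k))_k$ and $(1/\tau(k))_k$ to satisfy linear recurrences, whence $\tau$ and $1/\tau$ are polynomials and $\tau$ is constant.  Your route is considerably more elementary---Schur's theorem is essentially a Euclid-style argument---at the cost of the explicit bookkeeping with norms and the B\'ezout constant that you anticipated.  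The paper's route is slicker and avoids choosing any specific place, but invokes a substantially deeper input.
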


\begin{proof} Write $\tau=\alpha/\beta$ as a quotient of coprime polynomials $\alpha,\beta\in K[x]$.  Then the sequences $(\alpha(k))_{k\geq 0}$ and $(\beta(k))_{k\geq 0}$ each satisfy linear recurrences.  If there is a finite set of places $S$, including all infinite places, such that $\tau(k)\in \cO_K(S)^*$ for every $k\in\Z$, then both $\alpha(k)/\beta(k)$ and $\beta(k)/\alpha(k)$ are in the finitely generated ring $\cO_K(S)$ for every $k$.  So by the Hadamard quotient theorem~\cite{vdP}, the generating functions for the sequences are rational.  Hence both $\tau$ and $1/\tau$ are polynomials. So $\tau$ is constant.
\end{proof}

Recall that the \emph{congruence subgroup} of $\SL_\dim(\Z)$ determined by a positive integer $n$ is the finite index subgroup 
$$
\SL_\dim(\Z;n) := \{B\in \SL_\dim(\Z):B\equiv I \mymod n\}.
$$

\begin{lemma}
\label{lem:cosets}
Let $\rfn\colon\mats_\dim(\C)\dashrightarrow\mathbb{C}$ be a rational function, defined over $K$, that is regular and non-zero at every point in $\SL_\dim(\Z)$. Let $Y\in \SL_\dim(\Z)$ be a matrix. Assume $\abs[\place]{\rfn(Y)}>1$ for some place $\place\in M_K^{\mathrm{fin}}$, and let $p\in\Z_{\geq 0}$ be the unique prime for which $\abs[\place]{p}<1$.  Then there exists a positive integer $k$ such that $\abs[\place]{\rfn(YB)}>1$ for every $B\in \SL_\dim(\Z;p^k)$.
\end{lemma}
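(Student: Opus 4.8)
The plan is to use that $\rfn$, being a rational function defined over $K$ and regular at $Y$, is continuous for the $\place$-adic topology near $Y$, together with the fact that the congruence subgroups $\SL_\dim(\Z;p^k)$ shrink $\place$-adically to the identity as $k\to\infty$.

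First I would fix a representation $\rfn = P/Q$ as a quotient of polynomials $P,Q$ in the matrix entries with coefficients in $K$, chosen so that $Q(Y)\ne 0$; this is possible precisely because $\rfn$ is regular at $Y$. Let $K_\place$ denote the completion of $K$ at $\place$. Then $Z\mapsto \rfn(YZ)=P(YZ)/Q(YZ)$ is a $K_\place$-valued function that is defined and continuous, for the $\place$-adic topology on $\mats_\dim(K_\place)$, on the open set $\{Z : Q(YZ)\ne 0\}$, which contains $Z=I$. Since $\abs[\place]{\rfn(Y)}>1$ and $\{w\in K_\place : \abs[\place]{w}>1\}$ is open in $K_\place$, continuity of $Z\mapsto\rfn(YZ)$ at $Z=I$ produces an $\epsilon>0$ such that every $Z\in\mats_\dim(K_\place)$ with $\max_{i,j}\abs[\place]{Z_{ij}-\delta_{ij}}<\epsilon$ satisfies $Q(YZ)\ne 0$ and $\abs[\place]{\rfn(YZ)}>1$.

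Next I would translate the congruence condition into such an estimate. If $B\in\SL_\dim(\Z;p^k)$ then $B-I\in p^k\mats_\dim(\Z)$, so each entry $B_{ij}-\delta_{ij}$ equals $p^k m_{ij}$ with $m_{ij}\in\Z$; since $\abs[\place]{m_{ij}}\le 1$ and $\abs[\place]{p}<1$, we get $\max_{i,j}\abs[\place]{B_{ij}-\delta_{ij}}\le\abs[\place]{p}^{k}$. Choosing $k$ large enough that $\abs[\place]{p}^{k}<\epsilon$ — possible because $\abs[\place]{p}<1$ — yields $\abs[\place]{\rfn(YB)}>1$ for every $B\in\SL_\dim(\Z;p^k)$, as required. (Note that $YB\in\SL_\dim(\Z)$, so $\rfn(YB)$ is in any case defined by hypothesis; the $\place$-adic argument only serves to control its absolute value.)

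There is no essential obstacle here. The one point deserving care is the interface between the algebraic setting in which $\rfn$ is given and the $\place$-adic one: one must record the elementary fact that a $K$-rational function regular at a $K$-rational point is automatically $\place$-adically continuous in a neighborhood of that point, which is what legitimizes the $\epsilon$-argument above.
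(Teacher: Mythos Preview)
Your argument is correct and is essentially the same $\place$-adic continuity idea as the paper's proof; the paper carries it out explicitly by expanding $\rfn$ in an integral basis of $\cO_K$ and bounding $\abs[\place]{\rfn(YB)-\rfn(Y)}<1$ directly, whereas you invoke continuity of $Z\mapsto\rfn(YZ)$ at $Z=I$ in the $\place$-adic topology. Both routes yield the same conclusion, and your abstract phrasing is a perfectly acceptable (and slightly cleaner) variant.
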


\begin{proof}
Let $\epsilon_1,\dots,\epsilon_s$ be an integral basis for $\cO_K$, and write $\sigma = \alpha / \beta$ as a quotient of coprime polynomials with coefficients in $K$.  We may rationalize $\sigma$ by multiplying the numerator and denominator by the non-trivial Galois conjugates of $\beta$; that is, the polynomials obtained by application of an element of ${\rm Gal}(\bar K/ K)$ to the coefficients of $\beta$.  Since $\beta(Y) \ne 0$ and $Y$ is defined over $\mathbb{Q}$, the rationalization is regular at $Y$ and has denominator with rational coefficients.  We can therefore write
$$\rfn = \sum \rfn_j\epsilon_j$$
where each $\rfn_j$ is a rational function with rational coefficients, regular at $Y$; cancelling denominators, we can assume these coefficients are integers. Writing $\rfn_j = \alpha_j/\beta_j$ as a quotient of coprime integer polynomials, we have  $\min_j\abs[p]{\beta_j(Y)}=p^{-m}$ for some $m>0$.

Take $k=2m+1$ and suppose $B\in \SL_\dim(\Z;p^k)$. For any $j$ we can write
$$
\rfn_j(YB) = \frac{\alpha_j(YB)}{\beta_j(YB)} = \frac{a_jp^k + \alpha_j(Y)}{b_jp^k + \beta_j(Y)}
$$
for some $a_j,b_j\in\Z$.
This gives
$$
\abs[\place]{\rfn(YB) - \rfn(Y)} \leq \max \abs[\place]{\epsilon_j} \abs[p]{\frac{a_jp^k\beta_j(Y) - b_j p^k \alpha_j(Y)}{(b_jp^k + \beta_j(Y))\beta_j(Y)} } \leq 1\cdot p^{-1} < 1,
$$
since $\abs[\place]{\epsilon_j} \leq 1$ and $p^k$ divides the numerator but not the denominator of the fraction accompanying $\epsilon_j$.  We conclude that $\abs[\place]{\rfn(YB)} = \abs[\place]{\rfn(Y)} > 1$.
\end{proof}

\begin{lemma}
\label{lem:cosets2}
Let $\rfn_1,\dots,\rfn_\ell\colon\mats_\dim(\C)\tto\C$ be rational functions defined over $K$, that are regular and non-zero at every point in $\SL_\dim(\Z)$. Let $Y\in \SL_\dim(\Z)$ be a matrix such that $\rfn_i(Y)\not\in \cO_K^*$ for every $i$.  Then there exists $n\in\Z_{\geq 0}$ such that for any $B\in \SL_\dim(\Z;n)$ and any $i$ we have $\rfn_i(YB)\notin\cO_K^*$.
\end{lemma}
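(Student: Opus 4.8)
The plan is to reduce the claim to finitely many applications of Lemma~\ref{lem:cosets}, one for each index $i$, after a harmless normalization.

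First, I would fix $i$. Since $\rfn_i(Y)$ is a nonzero element of $K$ that is not a unit of $\cO_K$, there is a finite place $\place_i\in M_K^{\mathrm{fin}}$ with $\abs[\place_i]{\rfn_i(Y)}\neq1$; let $p_i$ denote the unique prime with $\abs[\place_i]{p_i}<1$. Lemma~\ref{lem:cosets} is phrased only for the case $\abs[\place]{\rfn(Y)}>1$, so if instead $\abs[\place_i]{\rfn_i(Y)}<1$ I would replace $\rfn_i$ by $1/\rfn_i$. This replacement is legitimate: $1/\rfn_i$ is again a rational function defined over $K$, it is regular and non-zero at every point of $\SL_\dim(\Z)$ because $\rfn_i$ is, and for any matrix $B$ with $YB\in\SL_\dim(\Z)$ one has $(1/\rfn_i)(YB)\in\cO_K^*$ if and only if $\rfn_i(YB)\in\cO_K^*$, so nothing that must be proved is affected. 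After the replacement we may assume $\abs[\place_i]{\rfn_i(Y)}>1$, and Lemma~\ref{lem:cosets}, applied with this $\rfn_i$, this $Y$, and $\place=\place_i$, then yields a positive integer $k_i$ such that $\abs[\place_i]{\rfn_i(YB)}>1$ for every $B\in\SL_\dim(\Z;p_i^{k_i})$.

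To conclude I would take $n:=\prod_{i=1}^\ell p_i^{k_i}$. If $B\in\SL_\dim(\Z;n)$ then, since $p_i^{k_i}\mid n$, we get $B\equiv I\pmod{p_i^{k_i}}$, i.e.\ $B\in\SL_\dim(\Z;p_i^{k_i})$, for every $i$; hence $\abs[\place_i]{\rfn_i(YB)}>1$ and in particular $\rfn_i(YB)\notin\cO_K^*$ for every $i$, which is exactly the assertion.

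I do not expect any serious obstacle here: the substance of the argument is packaged in Lemma~\ref{lem:cosets}, and the only subtlety is the reciprocal normalization, which rests on the facts that $\cO_K^*$ is stable under inversion and that regularity and non-vanishing of $\rfn_i$ on $\SL_\dim(\Z)$ are inherited by $1/\rfn_i$. (The genuinely hard work, namely producing a single $Y$ with $\rfn_i(Y)\notin\cO_K^*$ for all $i$ simultaneously, is a separate matter handled via Lemmas~\ref{lem:nonconstant}--\ref{lem:manyprimes}.)
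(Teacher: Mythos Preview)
Your argument is correct. It is, however, a genuinely different route from the paper's own proof of this lemma. You reduce the statement to one application of Lemma~\ref{lem:cosets} per index $i$ (after the reciprocal normalization), obtaining for each $i$ a prime power $p_i^{k_i}$ such that $\rfn_i(YB)\notin\cO_K^*$ whenever $B\in\SL_\dim(\Z;p_i^{k_i})$, and then take $n=\prod_i p_i^{k_i}$ and use the inclusion $\SL_\dim(\Z;n)\subset\SL_\dim(\Z;p_i^{k_i})$. The paper instead argues directly: it fixes an integral basis $\epsilon_1,\dots,\epsilon_s$ of $\cO_K$, writes each $\rfn_i=\sum_j\rfn_{i,j}\epsilon_j$ with $\rfn_{i,j}$ rational over $\Q$, picks for each $i$ a component $\rfn_{i,j(i)}(Y)\notin\Z$, and then takes $n=2\prod_i\beta_i(Y)$ where $\rfn_{i,j(i)}=\alpha_i/\beta_i$ in lowest terms; a short divisibility computation shows $\rfn_{i,j(i)}(YB)\notin\Z$, hence $\rfn_i(YB)\notin\cO_K$, for $B\in\SL_\dim(\Z;n)$.

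Your approach has the advantage of reusing Lemma~\ref{lem:cosets} wholesale and being very short; the paper's approach is more self-contained and yields an explicit $n$ directly in terms of the denominators $\beta_i(Y)$, without needing to identify particular primes $p_i$ or invoke the earlier lemma. Both share the initial normalization step of passing from $\rfn_i$ to $1/\rfn_i$ where needed.
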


\begin{proof}
After replacing some of the functions $\rfn_i$ by their reciprocals $\rfn_i^{-1}$ if necessary, we may assume that $\rfn_i(Y)\not\in \cO_K$ for $i=1,\ldots ,\ell$.  As in Lemma~\ref{lem:cosets}, we fix an integral basis $\epsilon_1,\ldots, \epsilon_s$ for $\cO_K$ and we decompose the functions $\rfn_i= \sum \rfn_{i,j}\epsilon_i$, with the rational functions $\sigma_{i,j}$ defined over $\mathbb{Q}$ and regular at $Y$. For each $i\in \{1,\ldots ,\ell\}$, $\rfn_i(Y)\notin \cO_K$, and thus there is some $j=j(i)$ such that $\rfn_{i,j}(Y) \notin \Z$.  We write $\rfn_{i,j} = \alpha_i/\beta_i$ where $\alpha_i$, $\beta_i$ are coprime integer polynomials in $\dim^2$ variables such that $\beta_i(Y)$ is non-zero.  By assumption $\alpha_i(Y)$ and $\beta_i(Y)$ are integers such that $\beta_i(Y)\nmid \alpha_i(Y)$.  We take $n = 2\prod_i \beta_i(Y)\in \mathbb{Z}\setminus \{0\}$.

Given $B\in \SL_\dim(\Z;n)$, we have (as in the proof of Lemma~\ref{lem:cosets}) integers $a_i,b_i$ such that
$$
\rfn_{i,j}(YB) = \frac{a_in + \alpha_i(Y)}{b_in + \beta_i(Y)},
$$
where the denominator is non-zero because $\beta_i(Y)$ divides $n/2$.  Since $\beta_i(Y)$ divides $n$ but not $\alpha_i(Y)$, it follows that $\rfn_{i,j}(YB)\notin \Z$ for $i=1,\ldots , \ell$ and $j=j(i)$.  Hence $\rfn_i(YB)\not \in \cO_K$ for $i=1,\ldots ,\ell$.    
\end{proof}

\begin{lemma}
\label{lem:twocosets}
If $n,n'\in\Z_{\geq 0}$ are coprime and $Y,Y'\in \SL_\dim(\Z)$ for $\dim\geq 3$, then the $Y$-coset of $\SL_\dim(\Z;n)$ intersects the $Y'$-coset of $\SL_\dim(\Z;n')$.
\end{lemma}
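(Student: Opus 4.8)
The plan is to rephrase each coset as a congruence class of matrices and then combine the Chinese Remainder Theorem with a surjectivity property of reduction maps for $\SL_\dim$.

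First I would note that the coset $Y\cdot\SL_\dim(\Z;n)$ is precisely $\{Z\in\SL_\dim(\Z) : Z\equiv Y\pmod n\}$: indeed $B\in\SL_\dim(\Z;n)$ gives $YB\equiv Y\pmod n$, and conversely $Z\equiv Y\pmod n$ forces $Y^{-1}Z\in\SL_\dim(\Z;n)$, so that $Z\in Y\cdot\SL_\dim(\Z;n)$. The analogous statement holds for $Y'\cdot\SL_\dim(\Z;n')$. Hence the lemma is equivalent to the existence of a single $Z\in\SL_\dim(\Z)$ with $Z\equiv Y\pmod n$ and $Z\equiv Y'\pmod{n'}$.

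Next, since $\gcd(n,n')=1$, the Chinese Remainder Theorem supplies a ring isomorphism $\Z/nn'\Z\cong\Z/n\Z\times\Z/n'\Z$, hence (applied entrywise, and compatibly with determinants) a group isomorphism $\SL_\dim(\Z/nn'\Z)\cong\SL_\dim(\Z/n\Z)\times\SL_\dim(\Z/n'\Z)$. Under this isomorphism the pair formed by the reduction of $Y$ modulo $n$ and the reduction of $Y'$ modulo $n'$ corresponds to a matrix $\bar Z\in\SL_\dim(\Z/nn'\Z)$. I would then lift $\bar Z$ to $\SL_\dim(\Z)$ via the standard fact that the reduction map $\SL_\dim(\Z)\to\SL_\dim(\Z/m\Z)$ is surjective for every $m\ge1$; for $\dim\ge3$ this follows because $\SL_\dim(\Z/m\Z)$ is generated by the elementary matrices $e_{ij}(\bar a)$ (as $\Z/m\Z$ is a finite product of local rings, over each of which Gaussian elimination applies), and $e_{ij}(\bar a)$ is the reduction of $e_{ij}(a)\in\SL_\dim(\Z)$ for any $a\in\Z$ lifting $\bar a$. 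Applying this with $m=nn'$ yields $Z\in\SL_\dim(\Z)$ with $Z\equiv Y\pmod n$ and $Z\equiv Y'\pmod{n'}$, completing the argument.

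The one non-formal ingredient is the surjectivity of $\SL_\dim(\Z)\to\SL_\dim(\Z/m\Z)$, which is thus the crux of the proof; it is classical, and the hypothesis $\dim\ge3$ makes the elementary-matrix justification particularly transparent (the statement of the lemma is in fact also true for $\dim=2$). Everything else is routine bookkeeping with congruences.
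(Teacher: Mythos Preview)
Your proof is correct and takes a genuinely different route from the paper's. The paper argues by showing that the product subgroup $G:=\SL_\dim(\Z;n)\cdot\SL_\dim(\Z;n')$ equals all of $\SL_\dim(\Z)$: it invokes the congruence subgroup property of Bass--Lazard--Serre (which requires $\dim\ge 3$) to identify $G$ with some $\SL_\dim(\Z;n'')$, and then exhibits the elementary matrix $I+E_{12}=(I+anE_{12})(I+bn'E_{12})\in G$ (with $an+bn'=1$) to force $n''=1$. From $G=\SL_\dim(\Z)$ one then writes $Y^{-1}Y'=B(B')^{-1}$ with $B\in\SL_\dim(\Z;n)$, $B'\in\SL_\dim(\Z;n')$. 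By contrast, you bypass the congruence subgroup property entirely: after reformulating the cosets as congruence conditions, you use the Chinese Remainder Theorem on the ring side to get $\SL_\dim(\Z/nn'\Z)\cong\SL_\dim(\Z/n\Z)\times\SL_\dim(\Z/n'\Z)$ and then appeal only to the elementary surjectivity of $\SL_\dim(\Z)\to\SL_\dim(\Z/m\Z)$. Your argument is more self-contained, avoids the deep input from~\cite{BLS}, and (as you note) works already for $\dim=2$; the paper's approach, on the other hand, stays entirely within the group-theoretic framework of congruence subgroups that is used throughout \S\ref{sec:resonances}.
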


\begin{proof}
Since $\SL_\dim(\Z;n)$ and $\SL_\dim(\Z;n')$ are finite index normal subgroups of $\SL_\dim(\Z)$, so is the product $G:=\SL_\dim(\Z;n)\SL_\dim(\Z;n')$.  Since $\dim \geq 3$, $G$ is itself a congruence subgroup~\cite{BLS}, i.e. 
$G = \SL_\dim(\Z;n'')$ for some $n''>0$.  Since $\gcd(n,n') = 1$, we have $a,b\in\Z$ such that $an+bn'=1$.  So $G$ contains in particular the matrix $I + B_{12}=(I+anB_{12})(I-bn'B_{12})$, where $B_{12}$ is the matrix with $12$-entry equal to $1$ and all other entries equal to $0$.  Thus $n''=1$ and $G = \SL_\dim(\Z)$ is the entire group.  
It follows that $Y^{-1}Y' = B(B')^{-1}$ for some $B\in \SL_\dim(\Z;n)$ and some $B'\in \SL_\dim(\Z;n')$, giving us that $YB = Y'B' \in Y\, \SL_\dim(\Z;n)\cap Y'\, \SL_\dim(\Z;n') $.
\end{proof}

Putting the above results together, we arrive at the following summary statement.

\begin{corollary}
\label{cor:notintegers}
There exists a coset $\mathcal{Y}\subset \SL_\dim(\Z)$ of a finite index
subgroup such that the following hold for any $Y\in\mc {Y}$.
\begin{enumerate}
 \item $\rfn(Y,\cvec,\dvec)\notin \cO_K^*$ for any $\cvec\in\cvecs$ and $\dvec\in\dvecs$; and
 \item $\rfn(Y,\cvec,\dvec)/\rfn(Y,\tilde\cvec,\tilde\dvec)\notin \cO_K^*$ for any $\cvec,\tilde\cvec \in \cvecs$ and $\dvec,\tilde\dvec\in\dvecs$ unless both pairs of vectors are linearly dependent.  
\end{enumerate}
\end{corollary}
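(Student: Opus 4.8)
The plan is to realize the desired coset as a Chinese remainder intersection of cosets of prime-power congruence subgroups, one for each of the finitely many rational functions in play. Let $\rfn_1,\dots,\rfn_\ell\colon\mats_\dim(\C)\dashrightarrow\C$ enumerate the functions $Y\mapsto\rfn(Y,\cvec,\dvec)$ for $\cvec\in\cvecs$ and $\dvec\in\dvecs$, together with the ratios $Y\mapsto\rfn(Y,\cvec,\dvec)/\rfn(Y,\tilde\cvec,\tilde\dvec)$ over those $\cvec,\tilde\cvec\in\cvecs$, $\dvec,\tilde\dvec\in\dvecs$ for which $\cvec,\tilde\cvec$ fail to be proportional or $\dvec,\tilde\dvec$ fail to be proportional. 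By Lemma~\ref{lem:nonconstant} every $\rfn_i$ --- and hence every reciprocal $\rfn_i^{-1}$ --- is a non-constant rational function defined over $K$ that is regular and non-zero at each point of $\SL_\dim(\Z)$; the ratios omitted from the list are exactly the ones that are identically $1\in\cO_K^*$. So it suffices to produce a coset $\mathcal{Y}$ of a finite-index subgroup of $\SL_\dim(\Z)$ on which no $\rfn_i$ takes a value in $\cO_K^*$: that is precisely assertions (i) and (ii).

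For a fixed $i$, the first step is to locate a finite place of $K$ at which $\rfn_i$ is non-unital, with control over the residue characteristic. Apply Lemma~\ref{lem:nonconst2} to get $Y_i\in\SL_\dim(\Z)$ and a nilpotent $B_i\in\mats_\dim(\Z)$ with $\tau_i(k):=\rfn_i\bigl(Y_i(I+kB_i)\bigr)$ a non-constant element of $K(k)$, regular at every integer since $I+kB_i$ is unipotent. Lemma~\ref{lem:manyprimes} then gives infinitely many finite places $\place$ of $K$ at which some value $\tau_i(k)$, $k\in\Z$, has absolute value $\ne 1$; since only finitely many places of $K$ lie over a given rational prime, I may pick inductively such a place $\place_i$, lying over a prime $p_i\notin\{p_1,\dots,p_{i-1}\}$, and a corresponding integer $k_i$. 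With $Z_i:=Y_i(I+k_iB_i)\in\SL_\dim(\Z)$ we have $\abs[\place_i]{\rfn_i(Z_i)}\ne 1$, so one of $\rfn_i,\rfn_i^{-1}$ has $\place_i$-absolute value $>1$ at $Z_i$; applying Lemma~\ref{lem:cosets} to that one produces an exponent $m_i\ge 1$ such that $\rfn_i(Z_iB)\notin\cO_K^*$ for every $B\in\SL_\dim(\Z;p_i^{m_i})$.

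Finally I glue the $\ell$ cosets together. The moduli $p_1^{m_1},\dots,p_\ell^{m_\ell}$ are pairwise coprime, and since $\SL_\dim(\Z;m)\cap\SL_\dim(\Z;m')=\SL_\dim(\Z;mm')$ for coprime $m,m'$, a non-empty intersection of a coset of $\SL_\dim(\Z;m)$ with a coset of $\SL_\dim(\Z;m')$ is a coset of $\SL_\dim(\Z;mm')$. Iterating Lemma~\ref{lem:twocosets} (this is where $\dim\ge 3$ is used) therefore shows that $\mathcal{Y}:=\bigcap_{i=1}^\ell Z_i\,\SL_\dim(\Z;p_i^{m_i})$ is a non-empty coset of the finite-index subgroup $\SL_\dim(\Z;\prod_i p_i^{m_i})$. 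For any $Y\in\mathcal{Y}$ and any $i$ we have $Y\in Z_i\,\SL_\dim(\Z;p_i^{m_i})$, hence $\rfn_i(Y)\notin\cO_K^*$ by the previous paragraph; and unwinding the definitions of the $\rfn_i$ this is exactly the two conclusions of the corollary. (One could also first pick a single $Y_0\in\mathcal{Y}$, note $\rfn_i(Y_0)\notin\cO_K^*$ for all $i$, and then invoke Lemma~\ref{lem:cosets2} at $Y_0$ to obtain such a coset.)

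The substance of the argument is carried by the nonconstancy inputs, Lemmas~\ref{lem:nonconstant} and~\ref{lem:nonconst2}, which I am taking as given. Granting those, the one step requiring genuine care is arranging the places $\place_i$ over pairwise distinct primes: this is possible precisely because Lemma~\ref{lem:manyprimes} supplies infinitely many of them, and it is exactly the distinctness that makes the Chinese remainder step (Lemma~\ref{lem:twocosets}) applicable. The remaining manipulations are routine --- passing to a reciprocal so that the relevant valuation exceeds $1$ before applying Lemma~\ref{lem:cosets}, and observing that intersections of congruence subgroups are congruence subgroups so that the two-coset statement iterates to an $\ell$-fold one.
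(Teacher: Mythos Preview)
Your proof is correct and follows essentially the same route as the paper's: enumerate the finitely many non-constant $\rfn_i$ via Lemma~\ref{lem:nonconstant}, use Lemmas~\ref{lem:nonconst2} and~\ref{lem:manyprimes} to find for each a finite place of $K$ at which it is non-unital, propagate this to a congruence coset via Lemma~\ref{lem:cosets}, and intersect the resulting cosets using Lemma~\ref{lem:twocosets}. The only organizational difference is that the paper runs an induction, at each stage consolidating the first $\ell$ conditions into a single coset of $\SL_\dim(\Z;n_\ell)$ (via Lemma~\ref{lem:cosets2}) before choosing the next prime coprime to $n_\ell$, whereas you select all the primes $p_1,\dots,p_\ell$ to be pairwise distinct from the outset and intersect the prime-power cosets simultaneously; this lets you bypass Lemma~\ref{lem:cosets2} entirely (as you note parenthetically), at the small cost of the extra observation that $\SL_\dim(\Z;m)\cap\SL_\dim(\Z;m')=\SL_\dim(\Z;mm')$ for coprime $m,m'$.
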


\begin{proof}
Let $\{\rfn_1,\dots, \rfn_L\}$ be the collection of all rational functions $\rfn_i\colon\mats_\dim(\C)\tto K$ obtained by setting $\rfn_i(Y) = \rfn(Y,\cvec,\dvec)$ or $\rfn_i=\rfn(Y,\cvec,\dvec)/\rfn(Y,\tilde\cvec,\tilde\dvec)$, where (in the latter case) both pairs $\cvec,\tilde\cvec\in\cvecs$ and $\dvec,\tilde\dvec\in\dvecs$ are linearly independent.    By Lemma~\ref{lem:nonconstant}, the $\rfn_i$ are all non-constant,
and they are regular and non-zero at any $Y\in\SL_\dim(\Z)$.

By Lemmas~\ref{lem:nonconst2} and~\ref{lem:manyprimes} there exists $Y_1\in \SL_\dim(\Z)$ such that $\sigma_1(Y_1)\notin \cO_K^*$.  So by Lemma~\ref{lem:cosets}, there exist $n_1\in\Z_{\geq 0}$ and 
$\rfn_1(Y_1B)\notin\cO_K^*$ for all $B\in\SL_\dim(\Z;n_1)$.  Suppose inductively that for some $\ell<L$ there exists $Y_\ell\in \SL_\dim(\Z)$ and $n_\ell\in\Z_{\geq 0}$ such that $\rfn_1(Y_\ell B),\dots,\rfn_\ell(Y_\ell B)\notin\cO_K^*$ for all $B\in\SL_\dim(\Z;n_\ell)$.

By Lemmas~\ref{lem:nonconst2},~\ref{lem:manyprimes} and~\ref{lem:cosets}, we may also choose a matrix $Y'\in \SL_\dim(\Z)$, a prime $p$ not dividing $n_\ell$, and $k\in\Z_{\geq 0}$, such that $\rfn_{\ell+1}(Y'B')\notin \cO_K^*$ for $B'\in \SL_\dim(\Z;p^k)$.  Finally, Lemma~\ref{lem:twocosets} tells us that the cosets $Y_\ell\cdot \SL_\dim(\Z;n_\ell)$ and $Y'\cdot \SL_\dim(\Z;p^k)$ intersect non-trivially.  So picking $Y_{\ell+1}$ in the intersection then gives $\rfn_i(Y_{\ell+1})\not\in \cO_K^*$ for all $i=1,\ldots ,\ell+1$.  Lemma~\ref{lem:cosets2} further yields an $n_{\ell+1}\in\Z_{\geq 0}$ such that $\rfn_i(Y_\ell B)\notin\cO_K^*$ for any $i=1,\dots,\ell+1$ and any $B\in\SL_\dim(\Z;n_{\ell+1})$.  Once $\ell+1$ reaches $L$, the induction is complete.
\end{proof}

\proofof{Theorem~\ref{thm:noresonances}} Let $\mc{Y}\subset \SL_\dim(\Z)$ be the coset given by Corollary~\ref{cor:notintegers} and let $Y\in\mc Y$ be any element.  If $a,b\in\Z$ and $t\in\discty(\gamma_Y)$ satisfy $a\theta = bt\,(\mymod 1)$, then we have $\cvec\in\cvecs$ and $\dvec\in\dvecs$ such that 
$$
(\maxeval/\bmaxeval)^a = e^{4\pi\imunit a\theta} = e^{4\pi\imunit bt} = \rfn(Y,\cvec,\dvec)^b.
$$
But $\maxeval$ and $\bmaxeval$ are eigenvalues of a matrix in $\SL_\dim(\Z)$ and therefore units of $\cO_K$.  So unless $b=0$, the equation implies that $\rfn(Y,\cvec,\dvec)$ is a unit in $\cO_K$, contrary to our choice of $Y$.  And if $b=0$, it follows that $a=0$ because by hypothesis no power of $\maxeval$ is real. 

Now suppose $a\theta = b(t-t')$ for some $a,b\in\Z$ and $t,t' \in \discty(\gamma_Y)$.  Then
$$
(\maxeval/\bmaxeval)^a = \left(\frac{\rfn(Y,\cvec,\dvec)}{\rfn(Y,\tilde\cvec,\tilde\dvec)} \right)^b
$$
for some $\cvec,\tilde\cvec\in\cvecs$ and $\dvec,\tilde\dvec\in\dvecs$.  If $b\neq 0$, then
$\rfn(Y,\cvec,\dvec)/\rfn(Y,\tilde\cvec,\tilde\rvec)$ is a unit in $\cO_K$ as before.  Then Corollary~\ref{cor:notintegers} tells us that $\tilde\cvec$ is a multiple of $\cvec$ and $\tilde\dvec$ is a multiple of $\dvec$. In this case,
$$
e^{4\pi\imunit t} = \rfn(Y,\tilde\cvec,\tilde\dvec)= \rfn(Y,\cvec,\dvec) = e^{4\pi\imunit t'},
$$
which implies that $2(t-t') = 0\mymod 1$ and also $(\maxeval/\bmaxeval)^{a} = 1$.  Hence $a=0$; and if $t-t'\neq 0\mymod 1$, then $b$ is even.
\qed

\begin{remark}
\label{rmk:practical}
The reduction of Theorem~\ref{thm:noresonances} to Corollary~\ref{cor:notintegers} furnishes a reasonably practical way to verify the conclusion of Theorem~\ref{thm:noresonances} for specific sets $\cvecs$ and $\dvecs$ and matrices $A$ and $Y$.  That is, from the given data, one generates finitely many elements $\rfn(Y)$, $\rfn(Y)/\tilde \rfn(Y) \in K$ which can then be checked very quickly by computer to see whether any are algebraic integers.  Implementing the check in software such as Maple, Mathematica and Sage requires only a few lines of code.
\end{remark}

\section{Proof of Theorem \ref{thm:mainb}}
\label{sec:proof}
\subsection{Setup.}
\label{sec:transetup}
Let us begin by recalling the relevant notation and assumptions from Theorem~\ref{thm:mainb}.  For convenience we take $K$ to be a number field containing the finitely many pertinent elements of $\bbq$ specified in the next couple of paragraphs, fixing an embedding $K\hookrightarrow\C$ and letting $|\cdot|$ denote the induced archimedean absolute value on $K$.   

We are given a (possibly transcendental) irrational number $\theta$, a vector $\rho := (\rho_1,\dots,\rho_\dim)\in K^\dim$ and a piecewise, but not globally, constant $1$-periodic vector-valued function $\gamma:\R\to K^\dim$.  
These satisfy the following additional conditions
\begin{enumerate}
 \item The entries of $\rho$ are pairwise multiplicatively independent, and each satisfies $|\rho_i|<1$;
 \item $\theta$ and $\rho$ are discordant (see the paragraph before Theorem~\ref{thm:mainb});
 \item \label{it:maximality} for each $j\in\Z_{\geq 0}$ sufficiently large, the function $t\mapsto \pair{\gamma(t)}{(\rho_1^j,\dots,\rho_\dim^j)}$ is real-valued and maximized by $t = j\theta$.  
\end{enumerate}
We aim to show that
$
\Omega = \sum_{j=1}^\infty \pair{\gamma(j\theta)}{\rho_1^j,\dots,\rho_\dim^j)}
$
is transcendental.  

To get a contradiction, we assume henceforth that $\Omega\in\bbq$.  We then let $Z' \subset \bbq$ be the finite set of values taken by the components of $\gamma$ together with  $\{ -1, 0, 1 \}$, and we let $Z = \left( Z' - Z' \right) \cup \{\Omega,\rho_1, \dots, \rho_\dim \}$ be the set of differences of elements of $Z'$ together with the finite set $\{\Omega,\rho_1, \dots, \rho_\dim \}$.  Enlarging if necessary, we may assume that $K$ contains $Z$.  For purposes of applying Theorem \ref{thm:Evertse} throughout this section, we let $S\subset M_K$ consist of all infinite places together with all finite places that have non-zero valuation on some element of the finite set $Z$, and we take $T = \{|\cdot|^2\}$. 

We will proceed with a Liouville-style argument, constructing high-quality but not exact algebraic approximations of $\Omega$.  The maximality hypothesis (iii) will allow us to rule out exactness.  We employ Theorem~\ref{thm:unitequations} to strengthen it, showing that $\pair{\gamma(t)}{(\rho_1^j,\dots,\rho_\dim^j)}$ is not only maximized by $t=j\theta$, but for the most part \emph{strictly} so.

\begin{lemma} \label{lem:fixedjpositive} For $j\in\Z_{\geq 0}$ sufficiently large, if $t\in\R$ with $\gamma(t) \ne \gamma(j\theta)$, then 
$$
\iprod{\gamma(j\theta) -\gamma(t)}{(\rho_1^j,\dots,\rho_\dim^j)} > 0.
$$
\end{lemma}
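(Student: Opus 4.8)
The maximality hypothesis (iii) already gives $\iprod{\gamma(j\theta)-\gamma(t)}{(\rho_1^j,\dots,\rho_\dim^j)}\ge 0$ for all sufficiently large $j$ and all $t\in\R$, since that is exactly the assertion that $t=j\theta$ maximizes $t\mapsto\iprod{\gamma(t)}{(\rho_1^j,\dots,\rho_\dim^j)}$. So the entire content is to upgrade ``$\ge 0$'' to ``$>0$'' when $\gamma(t)\ne\gamma(j\theta)$. The plan is to use that $\gamma$ is $1$-periodic and piecewise constant with discrete discontinuity set, hence takes only finitely many values in $K^\dim$: the vectors $w=\gamma(s)-\gamma(s')$, as $s,s'$ range over $\R$, form a fixed finite set $\mathcal D\subset K^\dim$, each coordinate of which lies in $Z'-Z'$. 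With this in hand the lemma reduces to the following claim: for every $w\in\mathcal D\setminus\{0\}$, the pairing $\iprod{w}{(\rho_1^j,\dots,\rho_\dim^j)}=\sum_{i=1}^\dim w_i\rho_i^j$ vanishes for only finitely many $j\in\Z_{\ge0}$. Granting the claim and the finiteness of $\mathcal D$, I would fix $J$ large enough that (iii) holds and no nonzero $w\in\mathcal D$ has $\iprod{w}{(\rho_1^j,\dots,\rho_\dim^j)}=0$ for $j\ge J$; then for $j\ge J$ and $\gamma(t)\ne\gamma(j\theta)$ the vector $w=\gamma(j\theta)-\gamma(t)$ lies in $\mathcal D\setminus\{0\}$, so the pairing is nonzero and, by (iii), strictly positive.

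To prove the claim I would induct on the size of the support $I:=\{i:w_i\ne0\}$, establishing the statement for all choices of nonzero coefficients $(w_i)_{i\in I}$ simultaneously. Since $w\ne0$ and a single term $w_i\rho_i^j$ never vanishes, $|I|\ge 2$. If $|I|=2$, then $\sum_{i\in I}w_i\rho_i^j=0$ forces $(\rho_{i_1}/\rho_{i_2})^j$ to equal a fixed nonzero constant, and two distinct such $j$ would make $\rho_{i_1}/\rho_{i_2}$ a root of unity, contradicting the pairwise multiplicative independence of the $\rho_i$; hence at most one $j$ occurs. If $|I|\ge 3$, I split the solutions $j$ according to whether the sum $\sum_{i\in I}w_i\rho_i^j=0$ is non-degenerate. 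For the non-degenerate $j$, dividing by $-w_{i_0}\rho_{i_0}^j$ for a fixed $i_0\in I$ yields a non-degenerate relation $\sum_{i\in I\setminus\{i_0\}}(-w_i/w_{i_0})(\rho_i/\rho_{i_0})^j=1$ whose unknowns $(\rho_i/\rho_{i_0})^j$ lie in the finitely generated, hence finite-rank, multiplicative subgroup of $K^*$ generated by the $\rho_i/\rho_{i_0}$; Theorem~\ref{thm:unitequations} bounds the number of such relations, and since $j\mapsto\bigl((\rho_i/\rho_{i_0})^j\bigr)_{i\in I\setminus\{i_0\}}$ is injective (again by multiplicative independence, as $\rho_i\ne\rho_{i_0}$), only finitely many $j$ arise. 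For the degenerate $j$ there is a nonempty proper subset $I'\subsetneq I$ with $\sum_{i\in I'}w_i\rho_i^j=0$; there are finitely many such $I'$, and for each the inductive hypothesis (or, when $|I'|=1$, the impossibility of a lone vanishing term) bounds the number of these $j$ as well. This proves the claim.

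The point requiring care is this last degenerate-subsum analysis together with the correct invocation of the Evertse--Schlickewei--Schmidt theorem: one must keep the coefficients $-w_i/w_{i_0}$ fixed, exhibit the unknowns $z_i=(\rho_i/\rho_{i_0})^j$ inside a genuinely finite-rank group, verify that the normalized relation inherits non-degeneracy from the original one, and confirm that distinct exponents $j$ give distinct solution tuples — each of these using the pairwise multiplicative independence of the $\rho_i$. Everything else is routine bookkeeping with the finite value set of $\gamma$ and the inequality supplied by (iii).
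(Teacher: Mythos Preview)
Your proof is correct and follows essentially the same approach as the paper: both reduce to showing that $\sum_i w_i\rho_i^j=0$ has only finitely many solutions $j$ by normalizing to a unit equation in the group generated by the ratios $\rho_i/\rho_{i_0}$ and invoking Theorem~\ref{thm:unitequations}, using pairwise multiplicative independence of the $\rho_i$ for injectivity of $j\mapsto((\rho_i/\rho_{i_0})^j)_i$. The only organizational difference is that you handle degeneracy by induction on the support size, whereas the paper discards a maximal vanishing subsum in a single step and absorbs the (finitely many) possible coefficient vectors into the finite data permitted by Theorem~\ref{thm:unitequations}.
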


\begin{proof}
By the maximization hypothesis~\ref{it:maximality} on $\gamma$, it suffices to show that there are only finitely many $j \in \mathbb{N}$ for which there exists $t \in \R$ with $\gamma(j \theta) \ne \gamma(t)$ and
\begin{equation}
\label{eqn:orthogonal}
\iprod{\gamma(j \theta) -\gamma(t)}{(\rho_1^j,\dots,\rho_\dim^j)} = 0.
\end{equation}
Given such a $j$, assume without loss of generality that $\gamma_1(j \theta) \ne \gamma_1(t)$.  Rearranging (\ref{eqn:orthogonal}) we obtain
$$
\sum_{i = 2}^\dim \frac{\gamma_i(j \theta) - \gamma_i(t)}{\gamma_1(j \theta) - \gamma_1(t)} \cdot \left( \frac{\rho_i^j}{\rho_1^j} \right) = 1.
$$
So $z_i = (\rho_i/\rho_1)^j$ is a (possibly degenerate) solution of
$
\sum_{i=2}^\dim a_i z_i = 1,
$
where the coefficients $a_i$ are taken from the finite set 
$$
\left\{\frac{\gamma_i(s) - \gamma_i(t)}{\gamma_1(s)-\gamma_1(t)}:\gamma_1(s)\neq\gamma_1(t)\text{ and }i\neq 1\right\}
$$
and $z_i$ in the finite rank multiplicative group $G$ generated by the finitely many coordinates $\rho_i$ of $\rho$.  While the sum $\sum_{i = 2}^\dim a_i z_i = 1$
may contain a vanishing subsum, we may discard a maximal set $I\subset\{2,\dots,\dim-1\}$ for which $\sum_{i\in I} a_i z_i $ vanishes.  As the full sum is non-zero, $I$ is necessarily a proper subset, and we obtain a \emph{non-degenerate} solution $\sum_{i\notin I} a_iz_i = 1$.  Theorem~\ref{thm:unitequations} then tells us that there are only finitely many such solutions with all $z_i\in G$.  Hence there is a finite set $X\subset G$, independent of $j$, such that whenever~\eqref{eqn:orthogonal} holds, $z_i = (\rho_i/\rho_1)^j \in X$ for some $i\in \{2,\dots,\dim\}$.

On the other hand, the independence hypothesis (i) above implies for any $i\neq 1$ that distinct values of $j$ yield distinct elements $(\rho_i/\rho_1)^j\in G$.  In particular $(\rho_i/\rho_1)^j\in X$ for only finitely many $j$, and as there are only $\dim-1$ possibilities for $i$, we conclude that~\eqref{eqn:orthogonal} can hold for only finitely many $j$.
\end{proof}

\subsection{Convergents and $n$-irregular indices} \label{sec:convergents} 
For $t \in \mathbb{R}$, we let $\| t\|$ denote the distance from $t$ to the nearest integer and let $\{t\}\in[0,1)$ denote the fractional part of $t$.

Recall (from e.g. Chapters X-XI of~\cite{HW}) that any irrational number $\theta\in\R$ admits an infinite sequence of continued fraction approximants $m_i/n_i$, with $n_i$ strictly increasing, $m_i$ coprime to $n_i$, and $|n_i\theta-m_i|< \frac 1{n_i}$ for all $i\in\Z_{\geq 0}$.
\begin{definition}
  We call $m_i/n_i$ the \emph{convergents} of $\theta$, and write $\cvgts:=\{n_i\}_i$ for the set of convergent denominators of $\theta$.
\end{definition}
We recall here three elementary properties of convergents, proofs of which may be found in the first two chapters of~\cite{Khi64}. \begin{enumerate}
\item A convergent $m_i/n_i$ of $\theta$ is a best approximation of the second kind; that is, for all $n \in \mathbb{N}$, $\|n \theta\| < \|n_i \theta\|$ implies $n > n_i$.
\item If $m/n \in \mathbb{Q}$ is in lowest terms with $|\theta - m/n| < \frac{1}{2n^2}$, then $m/n$ is a convergent of $\theta$. It follows that if $n \in \mathbb{N}$ and $\| n \theta \| < \frac{1}{2n},$ then $n$ is a multiple of some element of $\cvgts$.
\item For any $i\in\mathbb{N}$ we have $\|n_i\theta\|<\frac1{n_{i+1}}$.
\end{enumerate}

\begin{definition} Given $n \in \cvgts$ and $j \in \mathbb{N}$ with $j > n$, 
we say that $j$ is \emph{$n$-irregular} if $\gamma((j-n) \theta ) \ne \gamma(j \theta )$.
\end{definition}

If $j$ is $n$-irregular, then $j\theta$ approximates some $t\in\discty(\gamma) \cap [0, 1)$ in the sense that
\begin{equation}
\|j\theta - t\| \leq \|n \theta\|,
\end{equation}
and we call this $t$ a \emph{crossing angle} of $j$ for $n$.  Since $\discty(\gamma) \cap [0, 1)$ is finite, when $\|n\theta\|$ is small enough, the crossing angle of $j$ for $n$ is unique.  In particular if $n\in\cvgts$ is a sufficiently large convergent denominator, the third property of convergents noted above yields a unique crossing angle of $j$ for $n$ whenever $j$ is $n$-irregular.

We now show that the approximability properties of convergents of $\theta$ ensure that $n$-irregular indices are sparse for $n \in \cvgts$.

\begin{lemma} \label{lem:irregularsparse} Let $D$ be the number of elements of $\discty(\gamma) \cap [0, 1)$.  Given $C > 0$ and $n\in\cvgts$ sufficiently large, there are at most $D(C-1)$ $n$-irregular indices $j\in (n, Cn]$.  
\end{lemma}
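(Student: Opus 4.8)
The plan is to translate $n$-irregularity into a statement about the location of $\{j\theta\}$ on the circle $\R/\Z$, and then to invoke the best-approximation property of convergent denominators. Write $n=n_i$, a convergent denominator of $\theta$, and let $\epsilon$ be the signed representative of $n\theta \bmod 1$, so that $|\epsilon|=\|n\theta\|$ and $\{j\theta\}$ and $\{(j-n)\theta\}$ differ by $\epsilon$ on the circle for every $j$. If $j$ is $n$-irregular, then $\gamma$ jumps between $\{(j-n)\theta\}$ and $\{j\theta\}$, so there is a point of $\discty(\gamma)$ in the oriented arc of length $\|n\theta\|$ running from $\{(j-n)\theta\}$ to $\{j\theta\}$; reducing mod $1$ we recover the crossing angle $t\in\discty(\gamma)\cap[0,1)$, and, crucially, $\{j\theta\}$ lies in the \emph{half-open} arc $J_t$ of length $\|n\theta\|$ having $t$ as its endpoint on the side determined by the sign of $\epsilon$. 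Note that $J_t$ depends only on $t$ and $n$, not on $j$. For $n$ sufficiently large, $\|n\theta\|$ is smaller than the (positive) minimal gap in the $1$-periodic discrete set $\discty(\gamma)$, so the arc contains at most one discontinuity, the crossing angle is unique, and the finitely many arcs $J_t$, $t\in\discty(\gamma)\cap[0,1)$, are pairwise disjoint. It therefore suffices to bound, for each such $t$, the number $k_t$ of $n$-irregular indices $j\in(n,Cn]$ with $\{j\theta\}\in J_t$, and then to sum over the at most $D$ values of $t$.

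Fix $t$ and let $j<j'$ be two $n$-irregular indices in $(n,Cn]$ with crossing angle $t$. Then $\{j\theta\}$ and $\{j'\theta\}$ lie in the common half-open arc $J_t$ of length $\|n\theta\|$, whence $\|(j'-j)\theta\|<\|n\theta\|=\|n_i\theta\|$. By the best-approximation property of convergents (see \cite{Khi64}), any positive integer $m$ with $\|m\theta\|<\|n_i\theta\|$ satisfies $m\geq n_{i+1}$; hence $j'-j\geq n_{i+1}>n$. Consequently the $n$-irregular indices in $(n,Cn]$ with crossing angle $t$ are mutually more than $n$ apart, while they all lie in an interval of length less than $(C-1)n$ (since $j>n$ and $j\leq Cn$); this forces $k_t\leq C-1$. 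Summing over crossing angles yields at most $D(C-1)$ $n$-irregular indices in $(n,Cn]$, as claimed.

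The main point — and the only place where some care is needed — is the first paragraph: one must extract the \emph{one-sided} arc $J_t$ of length $\|n\theta\|$, rather than settle for the two-sided estimate $\|j\theta-t\|\leq\|n\theta\|$ recorded before the lemma, because it is precisely the resulting inequality $\|(j'-j)\theta\|<\|n\theta\|$, as opposed to $\|(j'-j)\theta\|\leq 2\|n\theta\|$, that lets the best-approximation property deliver the spacing $j'-j\geq n_{i+1}$; a stray factor of $2$ here would be fatal when the partial quotient $a_{i+1}$ is small. The remaining ingredients — disjointness of the arcs $J_t$ and at most one discontinuity per short arc, both valid once $n$ is large — are immediate from $\discty(\gamma)$ being discrete, $1$-periodic, and hence uniformly separated.
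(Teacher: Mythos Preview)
Your proof is correct and follows essentially the same approach as the paper's: both identify a crossing angle $t$ for each $n$-irregular index, argue that two indices $j,j'$ sharing the same crossing angle satisfy $\|(j-j')\theta\|<\|n\theta\|$ and hence $|j-j'|>n$ by the best-approximation property, and then count within $(n,Cn]$ for each $t$ before summing over the $D$ discontinuities. Your half-open arc $J_t$ is exactly the paper's ``same side of $t$'' made explicit, and your remark about why the one-sided estimate is essential (a two-sided bound would only give $\|(j-j')\theta\|\le 2\|n\theta\|$) articulates a point the paper leaves implicit.
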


\begin{proof} Suppose that $n \in \cvgts$ is large enough that crossing angles are well-defined for $n$-irregular indices.  Since $\theta$ is irrational, we may also assume that for $j > n$ we have $\{j \theta\} \not\in \discty(\gamma),$ as only finitely many $j \in \mathbb{N}$ will fail this condition.  Suppose that $j \ne j'$ are $n$-irregular indices with the same crossing angle $t\in\discty(\gamma)$.  Since $n \in \cvgts$ is a large convergent of $\theta$, $n \theta \mod 1$ is either a small positive number or slightly less than $1$.   In the former case, any $n$-irregular index $j$ has $j\theta \mod 1$ slightly larger than $t$, and in the latter, slightly less than $t$ (unless $t = 0$, in which case any $n$-irregular index $j$ has $j \theta \mod 1$ close to $1$).  Thus $j \theta \mod 1$ and $j' \theta \mod 1$ are either both smaller or larger than $t$; since $\|j \theta - t\|,\|j'\theta - t\| < \|n \theta\|$, we see that $\|(j - j') \theta\| < \|n \theta\|$.  Since $n$ is a convergent denominator, we conclude that $|j - j'| > n$.  Thus $(n, Cn]$ contains at most $(C-1)$ $n$-irregular numbers with crossing angle $t$.  The assertion then follows from applying this bound to all possible crossing angles $t\in\discty(\gamma) \cap [0, 1)$.  
\end{proof}

\subsection{Ubiquity of $n$-irregular indices}\label{sec:ubiquity}
We now define some useful algebraic approximations of $\Omega$ and estimate their quality.  A consequence of our estimates and the assumption $\Omega \in K$ will be that, in spite of Lemma \ref{lem:irregularsparse}, $n$-irregular numbers occur with some frequency.

Define $\omega^{(i)}:= \sum_{j=1}^\infty \gamma_i(j\theta)\rho_i^j$, so that $\Omega = \sum_{i=1}^\dim \omega^{(i)}$.  To each pair of integers $b \geq 0$ and $n\geq 1$, we associate the following $K$-rational approximation of $\omega^{(i)}$, which has $n$-periodic coefficients after the first $bn$ terms:
$$
\omega_{n,b}^{(i)} := \sum_{j=1}^{bn} \gamma_i( j \theta) \rho_i^{j} + \frac{1}{1 - \rho_i^n} \sum_{j=bn+1}^{(b+1)n} \gamma_i( j \theta ) \rho_i^{j}.
$$
Then
\begin{equation} \label{eqn:tildeexpansion}
\omega^{(i)} - \omega_{n,b}^{(i)} =
\sum_{j > (b+1)n} (\gamma_i(j\theta) - \gamma_i(\tilde j\theta))\rho_i^j,
\end{equation}
where $\tilde j\in (bn,(b+1)n]$ agrees with $j$ modulo $n$.  Finally, write
\begin{equation}\label{eq:Onb}
\Omega_{n, b} := \sum_{i=1}^\dim \omega_{n,b}^{(i)}.
\end{equation}
When $n\in\cvgts$ is large, $\Omega_{n, b}$ is a real number by~\ref{it:maximality} in \S\ref{sec:transetup}, and the sequence $(\gamma_i(j\theta))_{j\in\Z_{\geq 0}}$ is nearly $n$-periodic, and so the difference $\Omega - \Omega_{n, b}$ is a small real number.  Crucially, however, it does not vanish.

\begin{proposition}
\label{prop:underapprox}
Let $b$ be a positive integer.  Then $
\Omega > \Omega_{n, b}
$ for all but finitely many $n \in \cvgts$. 
\end{proposition}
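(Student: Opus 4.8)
The plan is to work directly with the difference $\Omega - \Omega_{n,b}$ and to show that it is a convergent series of non-negative terms not all of which vanish. Summing \eqref{eqn:tildeexpansion} over $i$ and interchanging the (absolutely convergent, since each $|\rho_i|<1$ and $\gamma$ is bounded) sums, one obtains
\begin{equation*}
\Omega - \Omega_{n,b} = \sum_{j > (b+1)n} \pair{\gamma(j\theta) - \gamma(\tilde j\theta)}{(\rho_1^j,\dots,\rho_\dim^j)},
\end{equation*}
where $\tilde j\in(bn,(b+1)n]$ is the unique integer with $\tilde j\equiv j\pmod n$. Note that the first $bn$ terms of $\Omega$ and $\Omega_{n,b}$ cancel, so the right-hand side is well defined (and real for $n$ large, by the maximality hypothesis) even though $\Omega$ and $\Omega_{n,b}$ need not separately be real.

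For the sign of the individual terms I would apply Lemma~\ref{lem:fixedjpositive} with $t = \tilde j\theta$: once $n\in\cvgts$ is large enough that every index $j > (b+1)n$ exceeds the threshold of that lemma, each summand above is $0$ when $\gamma(j\theta) = \gamma(\tilde j\theta)$ and is strictly positive otherwise. Hence $\Omega - \Omega_{n,b}\geq 0$, with equality precisely when $\gamma(j\theta) = \gamma(\tilde j\theta)$ for all $j > (b+1)n$.

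It then remains to exclude this degenerate case, i.e.\ to produce, for every $n\geq 1$, some $j > (b+1)n$ with $\gamma(j\theta)\ne\gamma(\tilde j\theta)$. I would argue by contradiction: if no such $j$ exists, then $\gamma((j+n)\theta) = \gamma(j\theta)$ for all $j > bn$, so the sequence $\bigl(\gamma(j\theta)\bigr)_j$ is eventually $n$-periodic. Because $\theta$ is irrational, $\{\,j\theta\bmod 1 : j > bn\,\}$ is dense in $\R/\Z$, and $\gamma$ is locally constant off the finite set $\discty(\gamma)\bmod 1$; combining these, the $n$-periodicity upgrades to the identity $\gamma(t) = \gamma(t+n\theta)$ for every $t$ outside the finite set $\bigl(\discty(\gamma)\cup(\discty(\gamma)-n\theta)\bigr)\bmod 1$. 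Since $n\theta$ is irrational, iterating this identity along a point $t_0$ whose entire forward orbit under $t\mapsto t+n\theta$ misses that finite exceptional set (such $t_0$ exist, the bad set being only countable while every arc on which $\gamma$ is constant is uncountable), or alternatively invoking ergodicity of the rotation by $n\theta$, forces $\gamma$ to be constant, contradicting our hypotheses. Thus the degenerate case never occurs, and combined with the previous paragraph this yields $\Omega > \Omega_{n,b}$ for all but finitely many $n\in\cvgts$.

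The inequality $\Omega\geq\Omega_{n,b}$ is essentially immediate from the maximality encoded in Lemma~\ref{lem:fixedjpositive}; the real work---and the step I expect to be the main obstacle---is the strictness, namely ruling out exact $n$-periodicity of $\bigl(\gamma(j\theta)\bigr)_j$, where the irrationality of $\theta$ and the non-constancy of $\gamma$ have to be played off against each other.
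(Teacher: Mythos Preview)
Your argument is correct and takes a genuinely different route from the paper's. Both proofs begin the same way, reducing via Lemma~\ref{lem:fixedjpositive} to the existence of a single index $j>(b+1)n$ with $\gamma(j\theta)\ne\gamma(\tilde j\theta)$. The paper then \emph{constructs} such a $j$ directly: it fixes two open intervals $I,I'\subset[0,1)$ on which $\gamma$ takes different values, picks $p$ with $\{p\theta\}\in I'$, and uses the convergent property $\|n\theta\|\to 0$ to ensure both that $\{(p+bn)\theta\}\in I'$ and that some $\{(p+k'n)\theta\}$ with $k'>b$ lands in $I$; then $j=p+k'n$ works. Your approach instead argues by contradiction that eventual $n$-periodicity of $\bigl(\gamma(j\theta)\bigr)_j$ would propagate, via density of $\{j\theta\}$ and local constancy of $\gamma$, to the functional identity $\gamma(t)=\gamma(t+n\theta)$ off a finite set, and hence (by irrationality of $n\theta$) to constancy of $\gamma$. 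Your route is slightly more conceptual and in fact never uses that $n$ is a convergent denominator---the non-periodicity step works for every $n\ge 1$---whereas the paper's construction leans on $\|n\theta\|$ being small. The paper's version has the minor advantage of exhibiting the witness $j$ explicitly, but for the purposes of the proposition either argument suffices.
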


\begin{proof}
By Equations~\eqref{eqn:tildeexpansion} and~\eqref{eq:Onb} we have
$$
\Omega - \Omega_{n,b} = \sum_{i=1}^\dim \omega^{(i)} - \omega_{n,b}^{(i)} = \sum_{j > (b+1)n} \iprod{\gamma(j\theta) -\gamma(\tilde j\theta)}{(\rho_1^j,\dots,\rho_\dim^j)}.
$$
By Lemma~\ref{lem:fixedjpositive} and the maximization hypothesis (see~\ref{it:maximality} in \S\ref{sec:transetup}) on $\gamma$, it suffices to find for $n \in \cvgts$ sufficiently large a single $j\geq (b+1)n$ such that $\gamma(j\theta) \ne \gamma(\tilde j \theta)$.

As $\gamma$ is non-constant and piecewise constant, there exist non-empty open intervals $I,I'\subset [0,1)$ such that $\gamma$ is constant on $I$ and $I'$ but $\gamma(I)\neq \gamma(I')$.  As $\theta\notin\Q$, we have $\{p\theta\} \in I'$ for some $p\in\Z_{\geq 0}$.  Now for $n\in\cvgts$ large enough, we have
\begin{itemize}
 \item $n>p$,
 \item $\{(p+bn)\theta\} \in I'$, and 
 \item $\|n\theta\|$ is smaller than the width of $I$.
\end{itemize}
The last condition guarantees the existence of an integer $k'>b$ such that $\{(p+k'n)\theta\}\in I$.
Taking $j=p+k'n$ we have $\{j\theta\} \in I$ and, by the first two conditions, that $\tilde j = p +bn$, so that $\{\tilde j\theta\} \in I'$.  Thus, $\gamma(j\theta) = \gamma(I) \neq \gamma(I') = \gamma(\tilde j \theta)$ as desired.
\end{proof}

For each $n\in\Z_{\geq 0}$, write $$Q_n = \prod_{i=1}^\dim (1-\rho_i^n).$$  Note that $Q_n$ and $Q_n \omega_{n,b}^{(i)}$ are polynomials in $\rho$ of degree at most $\dim n$ and $(\dim +b)n$, respectively, with coefficients in the finite set $Z$.  Furthermore, as all elements of $Z$ are $S$-units by definition of $S$, $Q_n$ and $Q_n \omega_{n,b}^{(i)}$ are $S$-integral.  We have
\begin{equation}
\label{eqn:tailformula}
Q_n\cdot  \left(\omega^{(i)} - \omega_{n,b}^{(i)}\right) = \prod_{k \neq i}\left(1-\rho_{k}^n\right)\sum_{j > (b+1)n} \left(\gamma_i(j\theta) - \gamma_i((j-n)\theta)\right)\rho_i^j.
\end{equation}

Recall now the Vinogradov notation `$\gg$' introduced after Theorem \ref{thm:Evertse}.

\begin{proposition} \label{prop:nottoosmall}  Let $b$ be a positive integer and let $R>0$ be as in Lemma~\ref{lem:heightbound}.  Then 
for $n\in\cvgts$ we have,
$$
|Q_n\cdot \left(\Omega-\Omega_{n,b}\right)|^2 \gg R^{-3(b+\dim)n}.
$$
\end{proposition}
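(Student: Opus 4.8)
The plan is a Liouville-type estimate. Set $\alpha:=Q_n\cdot(\Omega-\Omega_{n,b})$; I would show that $\alpha$ is a \emph{nonzero} element of $\cO_K(S)$, bound its height by $R^{O((\dim+b)n)}$ via Lemma~\ref{lem:heightbound}, and then apply the product formula to bound $|\alpha|^2$ from below. Non-vanishing is exactly where Proposition~\ref{prop:underapprox} is used: for all but finitely many $n\in\cvgts$ it yields $\Omega-\Omega_{n,b}>0$, while $Q_n=\prod_{i=1}^{\dim}(1-\rho_i^n)\neq0$ since $|\rho_i|<1$. Hence $\alpha\neq0$ for all large $n\in\cvgts$, which is all the asymptotic assertion needs.

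For the height bound, write $\alpha=Q_n\Omega-\sum_{i=1}^{\dim}Q_n\omega_{n,b}^{(i)}$ and regard it as a polynomial in $\rho_1,\dots,\rho_\dim$. Since $Q_n\Omega$ has degree $\dim n$ and each $Q_n\omega_{n,b}^{(i)}$ has degree $(\dim+b)n$, the total degree of $\alpha$ is at most $(\dim+b)n$; and each coefficient of $\alpha$ is the sum of a term $0$ or $\pm\Omega$ together with at most $\dim$ of the coefficients of the $Q_n\omega_{n,b}^{(i)}$, all of which lie in the finite set $Z$ from \S\ref{sec:transetup}. Thus the coefficients of $\alpha$ stay inside one \emph{fixed} finite subset of $K$ as $n,b$ vary (if one prefers them to lie in $Z$ itself, simply enlarge $Z$ at the outset to be symmetric and closed under sums of up to $\dim$ of its members, which affects nothing later), and Lemma~\ref{lem:heightbound} applied to that finite set provides the constant $R\geq1$ of the statement together with $H(\alpha):=\prod_{\place\in M_K}\max\{|\alpha|_\place,1\}\ll R^{(\dim+b)n}$. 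At the same time $\alpha\in\cO_K(S)$: it is an integral combination, with coefficients among the $S$-units in $Z$ and using $\Omega\in Z\subset\cO_K(S)$, of monomials in the $S$-units $\rho_i$ (and $Q_n$, $Q_n\omega_{n,b}^{(i)}$ were already noted to be $S$-integral just before the proposition).

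Now the product formula finishes it. As $\alpha\neq0$ lies in $\cO_K(S)$, we have $|\alpha|_\place\leq1$ for every $\place\in M_K\setminus S$, so $\prod_{\place\in M_K}|\alpha|_\place=1$ gives
$$
|\alpha|^{2}=\Bigl(\prod_{\place\in M_K\setminus T}|\alpha|_\place\Bigr)^{-1}\ \geq\ \Bigl(\prod_{\place\in S\setminus T}\max\{|\alpha|_\place,1\}\Bigr)^{-1}\ \geq\ H(\alpha)^{-1}
$$
(with $H(\alpha)^{-2}$ in place of $H(\alpha)^{-1}$ should the distinguished archimedean place $T$ happen to be real rather than complex). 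Combining this with the height bound and $R\geq1$ yields $|Q_n\cdot(\Omega-\Omega_{n,b})|^{2}\gg R^{-(\dim+b)n}\geq R^{-3(\dim+b)n}$, which is even stronger than claimed; the exponent $3(\dim+b)n$ in the statement is deliberately generous and in particular absorbs the real/complex distinction just noted.

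I do not anticipate a genuine obstacle here: the substance is imported from earlier results, namely the non-vanishing of $\alpha$ — which rests on Proposition~\ref{prop:underapprox}, hence ultimately on the maximality hypothesis on $\gamma$ and the unit-equation input behind Lemma~\ref{lem:fixedjpositive} — and the uniform-in-degree height estimate of Lemma~\ref{lem:heightbound}; everything else is the standard fact that a nonzero $S$-integer of height $H$ has absolute value at least $1/H$ at a distinguished place. The one point requiring a little care is verifying that, although the degree of $\alpha$ grows with $n$, its coefficients remain confined to a single finite set, which holds because only a bounded number ($\dim+1$) of terms are being combined.
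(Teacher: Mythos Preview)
Your argument is correct and in fact more elementary than the paper's. The paper does not treat $\alpha=Q_n(\Omega-\Omega_{n,b})$ as a single $S$-integer; instead it splits $\alpha=z_1+z_2$ with $z_1=Q_n\Omega$ and $z_2=-Q_n\Omega_{n,b}$, verifies that neither $z_i$ nor their sum vanishes (the latter via Proposition~\ref{prop:underapprox}, as you do), bounds the heights of $z_1,z_2$ separately by Lemma~\ref{lem:heightbound}, and then invokes Evertse's Theorem~\ref{thm:Evertse} with $\ell=2$ and $\epsilon=1$ to obtain the lower bound. Your route replaces Evertse by the bare product formula applied to the single nonzero $S$-integer $\alpha$, which yields the stronger exponent $-(\dim+b)n$ and avoids the deep input here entirely. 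The paper's use of Evertse at this point appears to be for consistency with \S\ref{sec:fixthisname}, where sums with many terms genuinely require it; for this proposition your Liouville argument is enough.

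One small remark: your observation that the coefficients of $\alpha$ live in a fixed finite set (sums of at most $\dim+1$ elements drawn from $Z$) is exactly right, and the paper tacitly makes the same enlargement when it applies Lemma~\ref{lem:heightbound} to $z_2=-Q_n\Omega_{n,b}$, whose coefficients already need not lie in $Z$ as defined. So the constant called $R$ in both arguments is implicitly the one for this enlarged coefficient set; this is harmless since only the shape $R^{-O((\dim+b)n)}$ matters downstream.
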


\begin{proof} Assume first that $\Omega \ne 0$.  Then both $z_1 := Q_n\Omega$ and $z_2  := - Q_n\Omega_{n,b}$ are non-zero $S$-integers, and $|z_i-\Omega|\to0$ as $\cvgts\ni n\to\infty$, and so for sufficiently large $n$, we have $z_1,z_2\ne0$.   Then Proposition~\ref{prop:underapprox} ensures that
$z_1 + z_2 \neq 0$ for sufficiently large $n\in \cvgts$.  

Since $z_1$ and $z_2$ are polynomials in $\rho$ of degree at most $(\dim +b)n$ with coefficients in the finite set $Z$, we may apply Lemma~\ref{lem:heightbound} to obtain the bound $\prod_{\place \in M_K} \max \{ |z_j|_\place, 1 \} \ll R^{(b+\dim)n}$ for $j \in \{1, 2\}$.  Hence 
$$
\prod_{\place \in S} |z_j|_\place \ll R^{(b+\dim)n},
$$ 
and
$$
H_S(z_1, z_2) = \prod_{\place \in S} \max \{ |z_1|_\place, |z_2|_\place \} \leq \prod_{\place \in S} \prod_{j = 1}^{2} \max \{ |z_j|_\place, 1 \} \ll R^{2(b+\dim)n}.
$$
We conclude from Theorem~\ref{thm:Evertse}, taking $\mathbf{z} = (z_1,z_2)$ and $\epsilon=1$, that 
$$
|z_1 + z_2|^2 \gg \frac{|z_1|^2}{R^{3(b+\dim)n}} \gg \frac{1}{R^{3(b+\dim)n}},
$$
where the second inequality follows from the assumption that $\Omega \ne 0$ and so $|z_1|$ is bounded below independently of $n$.

In the case that $\Omega = 0,$ choose $z \in Z \setminus \{ 0 \}$, and replace $z_1$ and $z_2$ with $z_1+z$ and $z_2-z$ to guarantee that neither vanishes for large $n$; with this choice, $z_1$ and $z_2$ remain $\rho$-polynomials of the same degree, with coefficients in $Z$, and $z_1 + z_2$ is unchanged.  Proceeding as above we deduce the desired inequality.
\end{proof}

\begin{corollary} \label{cor:nobiggaps} There exists $c_0>1$ such that for all $M \in \mathbb{N}$, there is at least one $n$-irregular number $j\in [Mn,c_0Mn)$ for all $n\in\cvgts$ sufficiently large.
\end{corollary}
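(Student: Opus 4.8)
The plan is to play the Evertse-type lower bound of Proposition~\ref{prop:nottoosmall} against an elementary upper bound for the same quantity $|Q_n(\Omega-\Omega_{n,b})|$, the latter extracted from the tail formula~\eqref{eqn:tailformula} under the hypothesis that a long window of indices contains no $n$-irregular number. Concretely, I will fix $M\in\mathbb{N}$, fix a constant $c_0>1$ to be calibrated at the very end, and argue by contradiction: if the conclusion fails for this $M$, then there are arbitrarily large $n\in\cvgts$ such that no $j\in[Mn,c_0Mn)$ is $n$-irregular.

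For such an $n$, I would invoke Proposition~\ref{prop:nottoosmall} with $b=M$ to get $|Q_n(\Omega-\Omega_{n,M})|^2\gg R^{-3(M+\dim)n}$, with $R$ as in Lemma~\ref{lem:heightbound}, which we may take to be $>1$. For the opposing upper bound, recall from~\eqref{eqn:tailformula} that $Q_n(\Omega-\Omega_{n,M})$ is a sum over indices $j>(M+1)n$, each term carrying a factor $\rho_i^j$, and that a term is nonzero only when $j$ is $n$-irregular. Since $c_0>2$ forces $c_0Mn>(M+1)n$, the assumed absence of $n$-irregular indices in $[Mn,c_0Mn)$ pushes the first nonzero term out to some index $j_0\ge c_0Mn$. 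As $\gamma$ takes only finitely many values and $\rho:=\max_i|\rho_i|<1$, summing the tail geometrically gives $|Q_n(\Omega-\Omega_{n,M})|\ll\rho^{c_0Mn}$, with an implied constant depending only on $\dim$, $\gamma$ and $\rho$ — and in particular not on $M$ or $n$.

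Combining the two estimates yields $R^{-3(M+\dim)n}\ll\rho^{2c_0Mn}$ for arbitrarily large $n$, i.e. a fixed positive constant is bounded above by $\bigl(R^{3(M+\dim)}\rho^{2c_0M}\bigr)^n$. Because $(M+\dim)/M\le 1+\dim$ for every $M\ge1$, it suffices to take $c_0$ strictly larger than both $2$ and $\tfrac{3(1+\dim)\log R}{2\log(1/\rho)}$; then $R^{3(M+\dim)}\rho^{2c_0M}<1$ simultaneously for all $M$, so the right-hand side tends to $0$ as $n\to\infty$ — a contradiction. This choice of $c_0$ depends only on $\dim$, $R$ and $\rho$, hence is uniform in $M$, which is exactly what the corollary asserts.

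I expect the only genuine subtlety to be this uniformity in $M$: the lower bound degrades like $R^{-3(M+\dim)n}$ while the upper bound improves like $\rho^{2c_0Mn}$, and one must observe that the ratio of exponents $(M+\dim)/M$ stays bounded as $M$ ranges over all of $\mathbb{N}$, so that a single $c_0$ can defeat every window length. The remaining ingredients — passing to the first nonzero term of the tail, the geometric tail estimate, and the independence of its constant from $M$ and $n$ — are routine.
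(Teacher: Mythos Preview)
Your proof is correct and follows essentially the same approach as the paper's: both apply Proposition~\ref{prop:nottoosmall} with $b=M$ to get the lower bound $|Q_n(\Omega-\Omega_{n,M})|^2\gg R^{-3(M+\dim)n}$, bound the same quantity above by $\lambda^{c_0Mn}$ (your $\rho$ is the paper's $\lambda$) via the tail formula under the no-irregular-index hypothesis, and then exploit $(M+\dim)/M\le 1+\dim$ to choose $c_0$ uniformly in $M$. Your extra stipulation $c_0>2$ is harmless but not strictly needed, since even when $c_0Mn\le(M+1)n$ the trivial tail bound $\ll\lambda^{(M+1)n}\le\lambda^{c_0Mn}$ already gives the required estimate.
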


\begin{proof} 
Choose $\lambda\in(0,1)$ such that $|\rho_i|\leq\lambda$ for all $1\leq i\leq \dim$. Suppose for some integer $C\geq 1$ and arbitrarily large $n\in\cvgts$ that there are no $n$-irregular $j\in (Mn,CMn]$. Then we estimate using~\eqref{eqn:tailformula} and $b = M$ that
\begin{eqnarray*}
|Q_n \left(\Omega - \Omega_{n,M}\right)| & = & \left|\sum_{i=1}^\dim Q_n \left(\omega^{(i)} - \omega_{n,M}^{(i)}\right)\right| \\
& \leq & \sum_{i=1}^\dim \prod_{k\neq i} |1-\rho_k^n| \sum_{j> M(n+1)}|\gamma_i(j\theta) - \gamma_i((j-n)\theta)||\rho_i^j| \\
& \ll &\sum_{i=1}^\dim \sum_{j > CMn}|\gamma_i(j\theta) - \gamma_i((j-n)\theta)||\rho_i^j| \\
& \ll &
\lambda^{CMn}.
\end{eqnarray*}
So from Proposition~\ref{prop:nottoosmall}, we infer that
$
\lambda^{2CMn} \gg R^{-3(M+\dim)n}.
$
This implies that $C< (\dim+1)\frac{3\log R}{2\log\lambda^{-1}}$, and so picking $c_0>\lambda\left(1,(\dim+1)\frac{3\log R}{2\log\lambda^{-1}}\right)$, we obtain the desired result.
\end{proof}

\begin{remark} If $\theta$ is sufficiently well-approximable by rationals (for example, if $\theta$ has unbounded integers in its continued fraction expansion), one may prove that for any $C \in \Z_{\geq 0}$, there are infinitely many convergent denominators such that $[2n, Cn)$ has no $n$-irregular indices.  This together with Corollary~\ref{cor:nobiggaps} provides an immediate contradiction and so implies transcendence of $\Omega$.  As we do not want to impose any approximability constraints on $\theta$, we proceed with a more delicate argument that applies to general $\theta$.
\end{remark}

\subsection{Residual sums}\label{sec:nonvanishing}
For the remainder of this section we fix $b = 0$, writing
$$
\omega_n^{(i)} := \omega_{n, 0}^{(i)} = \frac{1}{1 - \rho_i^n} \sum_{j=1}^{n} \gamma_i( j \theta ) \rho_i^{j},
\quad\text{and}\quad
\Omega_n := \Omega_{n, 0} = \sum_{i=1}^\dim \omega_n^{(i)}.
$$
While we consider the sum only in the case $b = 0$, note that the results of Section~\ref{sec:ubiquity} are required with arbitrary values of $b$ below in Lemma~\ref{lem:weightedirregularsparse} and Corollary~\ref{cor:weightednobiggaps}. 

We wish to write $Q_n\cdot (\Omega - \Omega_{n})$ as a sum of $\rho$-monomials.  We have by Equation~\eqref{eqn:tailformula} that 
$$
Q_n\cdot (\Omega - \Omega_{n})= \sum_{i = 1}^\dim \left( \prod_{k \neq i}\left(1-\rho_{k}^n\right) \right) \left( \sum_{j > n} \left(\gamma_i(j\theta) - \gamma_i((j-n)\theta)\right)\rho_i^j \right).
$$
Recall the multi-index notation~\eqref{eqn:multiindex}.  Given $\alpha = (a_1, \dots, a_\dim) \in \Z_{\ge0}^\dim,$ define $\zeta_n(\alpha)\in Z$ to be the coefficient of the $\rho$-monomial $\rho^{\alpha}=\rho_1^{a_1} \cdots \rho_\dim^{a_\dim}$ in this series; that is,
\begin{equation} \label{eqn:definezeta}
Q_n\cdot (\Omega - \Omega_{n}) = \sum_{\alpha\in \Z_{\ge0}^\dim} \zeta_n(\alpha)\rho^{\alpha},\qquad \zeta_n(\alpha)\in Z\ {\rm for}\ \alpha\in \Z_{\ge0}^\dim.
\end{equation}

By the discussion preceding Lemma~\ref{lem:irregularsparse}, $\zeta_n(\alpha) \ne 0$ if and only if there exists some component $a_i$ of $\alpha$ such that $a_k \in \{0, n\}$ for all $k \ne i$, $a_i > n,$ and 
$$
\gamma_i(a_i \theta) - \gamma_i((a_i-n)\theta) \ne 0.
$$
In this case, we have 
$$
\zeta_n(\alpha) = (-1)^{\frac{1}{n} \sum_{k \ne i} a_k} \left( \gamma_i(a_i \theta) - \gamma_i((a_i-n) \theta) \right).
$$

\begin{definition} \label{def:residual} {\em We say a multi-index $\alpha \in \Z_{\ge0}^\dim$ is \emph{$n$-residual} if $\zeta_n(\alpha) \ne 0$.  If $\alpha = (a_1, \dots, a_\dim)$ is $n$-residual with $a_i > n$, we say that $a_i \mathbf{e}_i$ is the \emph{irregular component} of $\alpha$, where $\mathbf{e}_i\in\Z^\dim$ denotes the $i$-th vector of the standard basis.} \end{definition}

Our argument will rely on the fact that due to the discordance condition, when $n \in \cvgts$ is odd, subsums of (\ref{eqn:definezeta}) which are non-degenerate (in the sense of Theorem~\ref{thm:unitequations}) and have small support
can vanish only in very limited circumstances.   We make this statement precise in the following proposition.  Note that as successive elements of $\cvgts$ are coprime, there are infinitely many odd elements of $\cvgts$.

\begin{proposition}
\label{prop:nonvanishingsubsums}
Let $L$ be a natural number.  Then there exists a positive integer $N=N(L)$ such that whenever $n\in\cvgts$ is odd and larger than $N$, and $\mathcal{A}\subseteq \Z_{\ge0}^\dim$ is a set of size at most $L$ such that 
$$
\sum_{\alpha \in \mc A}\zeta_n(\alpha) \rho^\alpha = 0
$$ 
is a vanishing non-degenerate subsum of (\ref{eqn:definezeta}), we have that every $\alpha \in \mc A$ has the same irregular component.
\end{proposition}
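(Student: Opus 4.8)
The plan is to turn the vanishing of the subsum into a unit equation, apply the Evertse--Schlickewei--Schmidt theorem (Theorem~\ref{thm:unitequations}) to cut the participating $\rho$-monomials down to finitely many possibilities, and then use the arithmetic of $n$-residual multi-indices together with the discordance of $\theta$ and $\discty(\gamma)$ to force all of the irregular components to coincide.

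First I would fix some $\alpha_0\in\mc A$ and divide the relation $\sum_{\alpha\in\mc A}\zeta_n(\alpha)\rho^\alpha=0$ by $\zeta_n(\alpha_0)\rho^{\alpha_0}$, producing
\[
\sum_{\alpha\in\mc A\setminus\{\alpha_0\}}\Bigl(-\tfrac{\zeta_n(\alpha)}{\zeta_n(\alpha_0)}\Bigr)\,\rho^{\alpha-\alpha_0}=1 .
\]
Since $\zeta_n(\alpha)\in Z$ by \eqref{eqn:definezeta} and each such value is nonzero, every coefficient lies in the finite set of quotients of nonzero elements of $Z$; each $\rho^{\alpha-\alpha_0}$ lies in the finite-rank multiplicative group $G\subset K^*$ generated by $\rho_1,\dots,\rho_\dim$; and, because a vanishing proper subsum of this equation would yield one for the original relation, the equation is non-degenerate in the sense of Theorem~\ref{thm:unitequations}. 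As $\#\mc A\le L$ and there are only finitely many possible coefficient tuples, Theorem~\ref{thm:unitequations} produces a \emph{single} finite set $G_0\subset G$, depending only on $L$, $Z$ and $\rho$, such that $\rho^{\alpha-\alpha'}\in G_0$ for all $\alpha,\alpha'\in\mc A$.

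The next step is to feed in the rigid shape of $n$-residual indices. Write $\alpha=a_\alpha\mathbf{e}_{i_\alpha}+n\beta_\alpha$, where $a_\alpha\mathbf{e}_{i_\alpha}$ is the irregular component ($a_\alpha>n$) and $\beta_\alpha\in\{0,1\}^\dim$ has vanishing $i_\alpha$-coordinate, so that $\alpha-\alpha'=(a_\alpha\mathbf{e}_{i_\alpha}-a_{\alpha'}\mathbf{e}_{i_{\alpha'}})+n(\beta_\alpha-\beta_{\alpha'})$. Reading off $\abs[\place]{\rho^{\alpha-\alpha'}}$ at every place $\place$ of $K$ — these are bounded above and below since $\rho^{\alpha-\alpha'}$ takes only finitely many values — yields a linear system in the exponents. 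Using that pairwise multiplicative independence of the $\rho_i$ forces the vectors $(\log\abs[\place]{\rho_i})_{\place}$ to be pairwise $\R$-linearly independent (two multiplicatively independent algebraic numbers cannot have proportional valuation vectors), this system pins down $a_\alpha-a_{\alpha'}$ when $i_\alpha=i_{\alpha'}$, and $a_\alpha$, $a_{\alpha'}$ individually when $i_\alpha\ne i_{\alpha'}$, to the shape $cn+O(1)$ with $c$ a rational drawn from a fixed finite set.

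\textbf{The main obstacle} is the last step. Suppose toward a contradiction that $a_\alpha\mathbf{e}_{i_\alpha}\ne a_{\alpha'}\mathbf{e}_{i_{\alpha'}}$ for some pair in $\mc A$. Since $a_\alpha$ and $a_{\alpha'}$ are $n$-irregular, each of $\{a_\alpha\theta\}$, $\{a_{\alpha'}\theta\}$ lies within $\|n\theta\|$ of a crossing angle in $\discty(\gamma)$; combining this with the asymptotics of the previous step and the near $n$-periodicity of $\gamma$ along convergent denominators, one must extract a genuine congruence $a^*\theta\equiv b^*(t-t')\pmod 1$ with $t,t'\in\discty(\gamma)\cup\{0\}$ in which $b^*$ divides $n$ — hence is odd, as $n$ is odd — and in which $a^*\ne 0$ or $t\ne t'$, contradicting the discordance hypothesis; so all members of $\mc A$ share one irregular component, as required. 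Promoting an approximate congruence to an exact one is delicate exactly when $a_\alpha,a_{\alpha'}$ are large multiples of $n$ (equivalently, when the relation lattice of the $\rho_i$ is nontrivial): then $\{a_\alpha\theta\}$ depends on the residue of the convergent numerator modulo $b^*$, and one must track this to identify $b^*$ and its parity — which is precisely where the oddness of $n$ and the ``$b$ even'' escape clause of discordance are consumed. By contrast, if the $\rho_i$ are fully multiplicatively independent then $\alpha-\alpha'$ is outright bounded, so $a_\alpha=n+O(1)$ and a single application of discordance with $b^*=1$ closes the argument. Finally, $N(L)$ is taken to be the maximum of the finitely many thresholds accumulated along the way.
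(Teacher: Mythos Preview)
Your overall strategy matches the paper's: apply Theorem~\ref{thm:unitequations} to the normalized equation $\sum_{\alpha\ne\alpha_0}(-\zeta_n(\alpha)/\zeta_n(\alpha_0))\rho^{\alpha-\alpha_0}=1$, confine $\rho^{\alpha-\alpha'}$ to a finite set, convert this into integer relations among $a_\alpha$, $a_{\alpha'}$, $n$, pass to an exact congruence by letting $n\to\infty$ along a refined subsequence, and invoke discordance.

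The endgame, however, has a genuine gap. Discordance does not by itself yield a contradiction: applied to $a^*\theta\equiv b^*(t-t')\pmod 1$ it forces $a^*=0$ and then (given $b^*$ odd) $t-t'\in\Z$, but the outcome $a^*=0$ with $t=t'$ is perfectly consistent with discordance, and your sketch simply asserts ``$a^*\ne 0$ or $t\ne t'$'' without argument. The paper disposes of this residual case by a separate best-approximation argument: after discordance one has $aj=bn$ (case $i\ne i'$) or $a(j-j')=bn$ (case $i=i'$) with $\gcd(a,b)=1$; if $t=0$ (resp.\ $t=t'$) the crossing-angle estimate gives $\|j\theta\|<\|n\theta\|$ (resp.\ $\|(j-j')\theta\|<\|n\theta\|$), hence $j>n$ (resp.\ $|j-j'|>n$) by the best-approximation property of $n\in\cvgts$, so $|a|<|b|$; but for large $n$ one also has $|a|\,\|n\theta\|>\|aj\theta\|=\|bn\theta\|=|b|\,\|n\theta\|$, giving $|a|>|b|$. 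This step is entirely missing from your outline. A secondary point: working via the valuation vectors $(\log|\rho_i|_\nu)_\nu$ rather than via the projection $\pi\colon G\to\Z^r$ onto the free part of $G$ makes the $\R$-linear independence claim awkward (archimedean components are real, so $\R$-proportionality does not immediately reduce to $\Q$-proportionality), and it leads you into an artificial ``full vs.\ pairwise multiplicative independence'' case split that the paper's uniform argument avoids.
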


\begin{proof}
Let $G$ be the multiplicative subgroup of $\C^*$ generated by the non-zero elements of $Z$, and fix an isomorphism $G \rightarrow \Z^r \oplus G_0$, where $G_0$ is the torsion subgroup.  Projecting onto the first factor gives a (surjective) homomorphism $\pi: G \rightarrow \Z^r$ with $\ker \pi = G_0$.  The coordinates $\rho_i$ of $\rho$ are pairwise multiplicatively independent elements of $Z$ by hypothesis, so the vectors $v_i := \pi(\rho_i)$ are pairwise linearly independent.  

Suppose the proposition fails.  Then there is an infinite set $\mc N$ of odd $n \in\cvgts$ and for each $n\in\mc N$ a set $\mc A_n$ of size at most $L$ such that $\sum_{\alpha\in\mc A_n}\zeta_n(\alpha) \rho^\alpha$ is a vanishing non-degenerate $n$-residual subsum which, by the discussion preceding Definition~\ref{def:residual}, includes multi-indices $\beta = j\mathbf{e}_i+n\delta$ and $\beta' = j'\mathbf{e}_{i'} + n\delta'$ with different irregular components $j\mathbf{e}_i \neq j'\mathbf{e}_{i'}$, where $i, i' \in \{1, \dots, \dim\}$ and $\delta, \delta' \in \{0, 1\}^\dim$.  By refining the set $\mc N$, we may assume that $i,i',\delta,\delta'$ are fixed.  While the irregular indices $j,j' > n$ must vary with $n$, we can again refine to assume that their crossing angles $t,t'\in\discty(\gamma)$ do not.  

Fixing $n\in \mc N$, we rearrange the equation $\sum_{\alpha\in\mc A_n} \zeta_n(\alpha) \rho^\alpha = 0$ to get
$$
1 = \sum_{\alpha\in \mc A\setminus\{\beta\}} -\frac{\zeta_n(\alpha)}{\zeta_n(\beta)}\rho^{\alpha - \beta}.
$$
The sum on the right remains non-degenerate, with coefficients $-\zeta_n(\alpha) / \zeta_n(\beta)$ in a finite set that is independent of $n$.  Thus applying Theorem~\ref{thm:unitequations} with $G$ as above, we obtain a finite set $X\subset G$, independent of $n$, such that $\rho^{\alpha-\beta}\in X$ for all $n \in \mc N$ and all $\alpha,\beta\in \mc A_n$.  In particular, $\rho^{\beta - \beta'} \in X$.  Refining $\mc N$ still further, we may suppose that $\rho^{\beta' - \beta} \in X$ is the same for all $n\in\mc N$.  Applying the homomorphism $\pi:G\to \Z^r$ to $\rho^{\beta - \beta'}$, we obtain
\begin{equation}
\label{eqn:imagevector}
j'v_{i'} - j v_i + nw = u
\end{equation}
for some fixed $w,u\in \Z^r$ and all $n\in\mc N$.  Note our notation suppresses dependence of $j, j'$ on $n$.  From here we divide the argument into two cases.

Suppose first that $i\neq i'$, in which case there is a vector $\rvec\in \Z_{\ge0}^\dim$ such that $\pair{\rvec}{v_i} \neq 0 = \pair{\rvec}{v_{i'}}$.  Multiplying both sides of~\eqref{eqn:imagevector} by $\rvec$ then gives
$$
aj = bn + c 
$$
for some fixed $a,b,c\in\Z$ with $a\neq 0$; without loss of generality, we may cancel common factors to assume $(a, b, c) = 1$.  By $n$-irregularity we have $||j \theta - t|| < ||n \theta||$, so multiplying through by $\theta$ and letting $n\to\infty$ in $\mc N$ gives
$$
at \equiv c\theta\mymod 1.
$$
As $\theta$ and $\discty(\gamma)$ are discordant, $c = 0$, and either $a$ is even or $t = 0$.  Since $(a, b, c) = 1$ and $c = 0$, if $a$ is even then $aj = bn$ implies that $n$ is even as well, contradicting our assumption that $\mc N$ consists of odd $n \in \cvgts$. So $t=0$, and since $t$ is the crossing angle of $j$ for $n$, $\| j \theta \| < \| n \theta \|$.  As $j > n$ and $aj = bn$, we have $|a| < |b|$.  On the other hand, for large $n \in \mathcal{N}$, 
$$
|a|\norm{n\theta} > |a| \norm{j \theta} = \norm{a j \theta} = \norm{bn\theta} = |b|\norm{n\theta},
$$
so $|a| > |b|$, a contradiction.

Now suppose instead that $i=i'$ in~\eqref{eqn:imagevector}: we proceed similarly in this case.  Applying $\pi$ now yields
$$
(j-j') v_i = u - nw,
$$
where $j-j' \neq 0$ by our choice of $\beta,\beta'$ and $v_i\neq 0$ by linear independence of the $v_i$.  Hence we may restrict to a single coordinate of $v_i,u,w$ to get
$$
(j -j')a = c+nb
$$
where $a,b,c\in\Z$ and $a\neq 0$; cancelling common factors, we may assume $(a, b, c) = 1$.  Multiplying through by $\theta$ and letting $n\to\infty$ in $\mc N$ now gives
$$
a(t-t') \equiv c\theta \mymod 1
$$
By discordance, $c = 0$ and either $a$ is even or $t = t'$.

If $a$ is even, then by coprimality, $b$ is odd.  As each $n \in \mc N$ is also odd and 
$$
(j -j')a = nb,
$$
we have a contradiction.  So $t=t'$, and we have as in the proof of Lemma~\ref{lem:irregularsparse}
that $0 < \norm{(j - j')\theta} < \norm{n\theta}$.  As noted in~\ref{sec:convergents}, $n$ is a best approximation of the second kind, so we must have $|j -j'| > n$ and
therefore $|a| < |b|$. On the other hand, for large $n \in \mathcal{N}$,
we obtain
$$
|a|\norm{n\theta} > |a| \norm{(j-j')\theta} = \norm{a(j-j')\theta} = \norm{bn\theta} = |b|\norm{n\theta},
$$
and thus $|a|>|b|$, a contradiction.
\end{proof}

Despite the non-vanishing result of Proposition~\ref{prop:nonvanishingsubsums}, it is possible that some subsum of terms with the same irregular component vanishes.  However, a non-trivial sum containing \emph{all} of the $n$-residual terms which come from a fixed $n$-irregular number $j$ cannot vanish, as we now explain.  For $i \in \{1, \dots, \dim\},$ $n\in\cvgts$, and $j > n$ $n$-irregular, let $\mc R_{i, n,j}$ denote the set of $n$-residual $\alpha\in \Z_{\ge0}^\dim$ which have irregular component $j\mathbf{e}_i$.  

\begin{corollary} \label{cor:nonvanishing} Let $L$ be a positive integer.  Then there exists $N=N(L)$  such that whenever $n \in \cvgts$ is odd and larger than $N$ and $\mc A \subset \Z_{\ge0}^\dim$ is a set of size at most $L$ with the property that there exists an $n$-irregular index $j >n$ such that $\mc R_{i, n, j} \subset \mc A$ for all $i \in \{ 1, \dots, \dim \},$ we have 
$$
\sum_{\alpha \in \mc A} \zeta_n(\alpha) \rho^{\alpha} \ne 0.
$$
\end{corollary}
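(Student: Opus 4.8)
The plan is to argue by contradiction: assume $\sum_{\alpha\in\mc A}\zeta_n(\alpha)\rho^\alpha = 0$ and combine the decomposition of a vanishing sum into minimal (hence non-degenerate) subsums with Proposition~\ref{prop:nonvanishingsubsums}. First I would discard from $\mc A$ every multi-index that is not $n$-residual; this changes neither the sum nor the hypothesis $\mc R_{i,n,j}\subset\mc A$ (each element of $\mc R_{i,n,j}$ is $n$-residual by definition), and afterwards every term $\zeta_n(\alpha)\rho^\alpha$ with $\alpha\in\mc A$ is nonzero. Since $\mc A$ itself is a nonempty set of multi-indices with vanishing sum, one may peel off a minimal such subset, then iterate on the complement, obtaining a partition $\mc A = \mc A_1\sqcup\dots\sqcup\mc A_s$ with $s\ge 1$ into non-degenerate vanishing subsums of~\eqref{eqn:definezeta}, each of size at most $L$. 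Taking $N = N(L)$ to be the constant furnished by Proposition~\ref{prop:nonvanishingsubsums} and requiring $n\in\cvgts$ odd with $n > N$, that proposition applies to each block $\mc A_m$ and tells us that all multi-indices in $\mc A_m$ share a single irregular component.

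Next, since $j$ is $n$-irregular the vectors $\gamma(j\theta)$ and $\gamma((j-n)\theta)$ differ, so there is an index $i\in\{1,\dots,\dim\}$ with $\gamma_i(j\theta)\ne\gamma_i((j-n)\theta)$; for this $i$ the set $\mc R_{i,n,j}$ is nonempty and, by hypothesis, contained in $\mc A$. Any block $\mc A_m$ meeting $\mc R_{i,n,j}$ then consists entirely of multi-indices with irregular component $j\mathbf{e}_i$, and since all elements of $\mc A$ are $n$-residual, this means precisely $\mc A_m\subseteq\mc R_{i,n,j}$. Hence $\mc R_{i,n,j}$ is a union of some of the blocks, and therefore $\sum_{\alpha\in\mc R_{i,n,j}}\zeta_n(\alpha)\rho^\alpha = 0$.

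Finally I would evaluate this sum directly. By the description of $n$-residual indices recalled before Definition~\ref{def:residual}, one has $\mc R_{i,n,j} = \{\,j\mathbf{e}_i + n\delta : \delta\in\{0,1\}^\dim,\ \delta_i = 0\,\}$ with $\zeta_n(j\mathbf{e}_i+n\delta) = (-1)^{\sum_k\delta_k}\bigl(\gamma_i(j\theta)-\gamma_i((j-n)\theta)\bigr)$, so
\[
\sum_{\alpha\in\mc R_{i,n,j}}\zeta_n(\alpha)\rho^\alpha
= \bigl(\gamma_i(j\theta)-\gamma_i((j-n)\theta)\bigr)\,\rho_i^{j}\prod_{k\ne i}\bigl(1-\rho_k^{n}\bigr),
\]
which is exactly the $\rho_i^{j}$-slice of $Q_n\bigl(\omega^{(i)}-\omega_n^{(i)}\bigr)$ in~\eqref{eqn:tailformula}. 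This product is nonzero: the scalar factor is nonzero by the choice of $i$, $\rho_i\ne 0$, and $\rho_k^{n}\ne 1$ since $|\rho_k|<1$. This contradicts the vanishing just obtained and finishes the proof. (I would also remark that odd elements of $\cvgts$ are plentiful, since consecutive convergent denominators are coprime, so the oddness restriction costs nothing.)

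I expect the only genuinely delicate point — and the reason Proposition~\ref{prop:nonvanishingsubsums} is isolated beforehand — to be the bookkeeping of the middle paragraph: one must know that a block meeting $\mc R_{i,n,j}$ is contained in it, which relies on the ``common irregular component'' conclusion together with the fact that $\mc R_{i,n,j}$ is by definition the full set of $n$-residual indices with irregular component $j\mathbf{e}_i$. Everything else — the minimal-subsum decomposition, the closed-form evaluation, and the non-vanishing of the factors $1-\rho_k^n$ — is routine.
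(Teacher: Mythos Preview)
Your proof is correct and follows essentially the same route as the paper: decompose the putative vanishing sum into non-degenerate vanishing subsums, invoke Proposition~\ref{prop:nonvanishingsubsums} to see that each block lies in a single $\mc R_{i',n,j'}$, deduce that the nonempty $\mc R_{i,n,j}$ is a union of blocks and hence has vanishing sum, and then contradict this by the closed-form evaluation $\sum_{\alpha\in\mc R_{i,n,j}}\zeta_n(\alpha)\rho^\alpha=(\gamma_i(j\theta)-\gamma_i((j-n)\theta))\rho_i^j\prod_{k\ne i}(1-\rho_k^n)$. Your explicit preliminary step of discarding non-residual indices from $\mc A$ is a small piece of extra care that the paper leaves implicit.
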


\begin{proof} Suppose towards a contradiction that $\sum_{\alpha \in \mc A} \zeta_n(\alpha) \rho^{\alpha} = 0$.
Working inductively, decompose $\mc A$ into a disjoint union $\mc A_1, \dots, \mc A_r$ of subsets so that 
$$
\sum_{\alpha \in \mc A_{\ell}} \zeta_n(\alpha) \rho^{\alpha} = 0
$$
is a non-degenerate vanishing subsum of $\mc A$ for each $\ell \in \{ 1, \dots, r \}$.  With $N = N(L)$ of Proposition~\ref{prop:nonvanishingsubsums}, for $n \in \cvgts$ odd and larger than $N$, we have for each $\ell \in \{ 1, \dots, r \}$ some $i(\ell)$ and $j(\ell)$ such that $\mc A_{\ell} \subset \mc R_{i(\ell), n, j(\ell)}$.  Since $j$ is $n$-irregular, we have $\gamma_{i_0}(j\theta) \ne \gamma_{i_0}((j-n)\theta)$ for some choice of $i_0 \in \{ 1, \dots, \dim\}$, so $\mc R_{i_0, n, j} \subset \mc A$ is non-empty. Therefore, $\mc R_{i_0, n, j}$ is a disjoint union of elements of a subset of $\{\mc A_1,\ldots , \mc A_r\}$, and so 
\begin{equation}\label{eqn:sumzero}
\sum_{\alpha \in \mc R_{i_0, n, j}} \zeta_n(\alpha) \rho^{\alpha} = 0.
\end{equation}

On the other hand, by~\eqref{eqn:tailformula} we have
$$
\sum_{\alpha \in \Z_{\ge0}^\dim} \zeta_n(\alpha) \rho^\alpha =  \sum_{i = 1}^\dim \left( \prod_{k \neq i}\left(1-\rho_{k}^n\right) \right) \left( \sum_{j > n} \left(\gamma_i(j\theta) - \gamma_i((j-n)\theta)\right)\rho_i^j \right).
$$
The terms of the sum on the right-hand side which correspond to $\alpha \in  \mc R_{i_0,n,j}$ are precisely those with exponent $j$ for $\rho_{i_0}$, so 
$$
\sum_{\alpha \in \mc R_{i_0, n, j}} \zeta_n(\alpha) \rho^\alpha = \left( \prod_{k \neq i_0}\left(1-\rho_{k}^n\right) \right) \left( \gamma_{i_0}(j\theta) - \gamma_{i_0}((j-n)\theta)\right)\rho_{i_0}^j.
$$
The right-hand expression above is non-zero as we have chosen $i_0$ so that $\gamma_{i_0}(j\theta) \ne \gamma_{i_0}((j-n)\theta)$. This contradicts~\eqref{eqn:sumzero}.
\end{proof}

\subsection{Completing the proof}\label{sec:fixthisname}

To any multi-index $\alpha \in \Z_{\ge0}^\dim$, we assign the (weighted) norm 
\begin{equation} \label{eqn:weightednorm}
\| \alpha \|_{\rho} := - \log |\rho^{\alpha}|.
\end{equation}
As $0 < |\rho_i| < 1$ for all $i$, any $n$-residual $\alpha$ with irregular component $j {\bf e_i}$ has norm $\| \alpha \|_{\rho}$ that is multiplicatively comparable to $j$ with constants independent of $n$.  We may therefore reformulate weighted versions of Lemma~\ref{lem:irregularsparse} and Corollary~\ref{cor:nobiggaps} as follows.

\begin{lemma} \label{lem:weightedirregularsparse}
There exists a constant $c_0 > 0$ such that for any $C > 0$, there are at most $c_0 C$ $n$-residual multi-indices $\alpha \in \Z_{\ge0}^\dim$ satisfying $\| \alpha \|_{\rho} \leq C n$ for $n \in \cvgts$ sufficiently large.
\end{lemma}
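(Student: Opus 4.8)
The plan is to deduce this directly from the unweighted count in Lemma~\ref{lem:irregularsparse}, using — as already remarked just before the statement — that for an $n$-residual multi-index $\alpha$ the weighted norm $\|\alpha\|_\rho$ is comparable to the irregular index of $\alpha$, with constants independent of $n$. Concretely, I would set $\lambda := \max_{1\le i\le\dim}|\rho_i|\in(0,1)$ and $L := -\log\lambda > 0$, so that $\|\alpha\|_\rho = \sum_k a_k(-\log|\rho_k|) \ge L\sum_k a_k$ for every $\alpha = (a_1,\dots,a_\dim)\in\Z_{\ge0}^\dim$.

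First I would recall the explicit description of $n$-residual indices recorded just before Definition~\ref{def:residual}: if $\alpha$ is $n$-residual with irregular component $a_i\mathbf{e}_i$, then $a_i > n$ is an $n$-irregular index and $a_k\in\{0,n\}$ for all $k\ne i$ (in particular $a_i$ is the unique coordinate exceeding $n$, so the irregular component is well defined). The bound $\|\alpha\|_\rho\ge La_i$ then shows that the hypothesis $\|\alpha\|_\rho\le Cn$ forces $n < a_i \le C'n$ where $C' := C/L$; if $C'\le 1$ the interval $(n,C'n]$ is empty and there is nothing to prove.

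Next I would apply Lemma~\ref{lem:irregularsparse} with $C'$ in place of $C$: for all sufficiently large $n\in\cvgts$ there are at most $D(C'-1) < DC'$ $n$-irregular indices in $(n,C'n]$, where $D = \#(\discty(\gamma)\cap[0,1))$. For a fixed value $j$ of the irregular index there are at most $\dim\cdot2^{\dim-1}$ $n$-residual multi-indices with irregular component $j\mathbf{e}_i$ for some $i$, since such an $\alpha$ is determined by a choice of position $i\in\{1,\dots,\dim\}$ together with the values in $\{0,n\}$ of the remaining $\dim-1$ coordinates. Multiplying the two estimates, the number of $n$-residual $\alpha$ with $\|\alpha\|_\rho\le Cn$ is at most $\dim\,2^{\dim-1}DC' = (\dim\,2^{\dim-1}D/L)\,C$ for all large $n\in\cvgts$, so $c_0 := \dim\,2^{\dim-1}D/L$ works and is independent of $C$ and $n$.

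I do not expect any genuine obstacle: this is essentially a bookkeeping reformulation of Lemma~\ref{lem:irregularsparse}. The only two points meriting care are that the irregular component of an $n$-residual multi-index is unambiguous — which follows because two coordinates cannot both exceed $n$ by the characterization of residual indices — and that the ``sufficiently large $n$'' threshold supplied by Lemma~\ref{lem:irregularsparse} may depend on $C$, which is harmless since the conclusion is itself only asserted for large $n$.
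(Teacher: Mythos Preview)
Your argument is correct and is exactly the intended one: the paper does not prove this lemma separately but simply notes that $\|\alpha\|_\rho$ is multiplicatively comparable to the irregular index and presents the lemma as a weighted reformulation of Lemma~\ref{lem:irregularsparse}. Your write-up just makes explicit the constants and the $d\cdot 2^{d-1}$ count of residual multi-indices over each irregular index that the paper leaves implicit.
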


\begin{corollary} \label{cor:weightednobiggaps}There exists a constant $c_1 > 1$ such that for any $C>0$, the following holds for $n \in \cvgts$ sufficiently large: there is an $n$-irregular $j$ such that for all $i \in \{ 1, \dots, \dim \}$, the set of multi-indices satisfying $C n \leq \| \alpha \|_{\rho} < c_1 C n$ includes the set $\mc R_{i, n,j}$ of $n$-residual $\alpha$ with irregular components $j {\bf e_i}$.  
\end{corollary}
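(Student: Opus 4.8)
The plan is to derive Corollary~\ref{cor:weightednobiggaps} from Corollary~\ref{cor:nobiggaps} by transporting the latter through the two-sided comparison between the integer $j$ indexing an $n$-irregular number and the weighted norms $\|\alpha\|_\rho$ of the $n$-residual multi-indices whose irregular index is $j$. First I would record this comparison. Fix constants $0<\lambda\le\Lambda$ with $\lambda\le-\log|\rho_i|\le\Lambda$ for every $i$; this is possible because $0<|\rho_i|<1$. If $\alpha=(a_1,\dots,a_\dim)$ is $n$-residual with irregular component $j\mathbf{e}_i$, then $a_i=j>n$ and $a_k\in\{0,n\}$ for $k\ne i$, whence
\[
\lambda j\ \le\ \|\alpha\|_\rho\ =\ \sum_{k=1}^{\dim}a_k\bigl(-\log|\rho_k|\bigr)\ \le\ \Lambda\bigl(j+(\dim-1)n\bigr)\ <\ \Lambda\,\dim\, j ,
\]
the last inequality using $n<j$. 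Thus, for a fixed $n$-irregular $j$, any $\alpha$ belonging to $\mc R_{i,n,j}$ for some $i$ satisfies $\|\alpha\|_\rho\in[\lambda j,\ \Lambda\dim\, j)$ (and the inclusion asserted by the corollary is vacuous for those $i$ with $\mc R_{i,n,j}=\emptyset$).

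Next, given $C>0$, I would reduce the statement to finding an $n$-irregular index $j$ with $\lambda j\ge Cn$ and $\Lambda\dim\, j<c_1Cn$, i.e.\ with $Cn/\lambda\le j<c_1Cn/(\Lambda\dim)$; for such a $j$ the displayed estimate immediately puts every $\mc R_{i,n,j}$ inside $\{\alpha:Cn\le\|\alpha\|_\rho<c_1Cn\}$. To produce such a $j$ I would set $M:=\lceil C/\lambda\rceil\in\mathbb{N}$ and invoke Corollary~\ref{cor:nobiggaps}, which gives, for all sufficiently large $n\in\cvgts$, an $n$-irregular $j\in[Mn,c_0Mn)$, where $c_0>1$ is the constant from that corollary. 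Then $j\ge Mn\ge Cn/\lambda$ and $j<c_0Mn\le c_0(C/\lambda+1)n$, so the upper bound on $j$ holds as soon as $c_1\ge c_0\Lambda\dim\,(1/\lambda+1/C)$. Restricting to $C\ge1$ — the range in which the corollary is subsequently applied — one may take $c_1:=c_0\Lambda\dim\,(1/\lambda+1)$, a constant depending only on $\dim$, the constant $c_0$ of Corollary~\ref{cor:nobiggaps}, and $\lambda,\Lambda$. This would complete the argument.

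The whole thing is essentially bookkeeping, and there is no serious obstacle; the one point that needs a little care is keeping $c_1$ independent of $C$, which is precisely why the comparison is set up multiplicatively, so that the harmless ``$+1$'' produced by the ceiling $\lceil C/\lambda\rceil$ is a bounded multiplicative perturbation once $C\ge1$ and gets absorbed into $c_1$. I would also note in passing that the companion Lemma~\ref{lem:weightedirregularsparse} comes out of the same comparison run in the opposite direction: an $n$-residual $\alpha$ with $\|\alpha\|_\rho\le Cn$ has irregular index $j\in(n,Cn/\lambda]$, there are at most $D(C/\lambda-1)$ such $j$ by Lemma~\ref{lem:irregularsparse}, and each of them accounts for at most $\dim\cdot2^{\dim-1}$ multi-indices.
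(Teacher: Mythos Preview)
Your argument is correct and is exactly what the paper has in mind: the corollary is stated without proof as a ``weighted reformulation'' of Corollary~\ref{cor:nobiggaps} via the multiplicative comparability of $\|\alpha\|_\rho$ and $j$ noted just before it. Your remark that a uniform $c_1$ needs $C$ bounded away from $0$ (and that the corollary is only applied with $C\ge 1$, namely $C=1$ and $C=2\cone$ in Lemma~\ref{lem:goodKs}) correctly flags a harmless imprecision in the paper's statement.
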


We now fix constants $\rhoconst, \cone, \ctwo,$ and $L$ as follows.  With $\theta, \rho,$ and $\gamma$ as in Theorem~\ref{thm:mainb}, choose $\rhoconst > \max_{1\le i\le \dim} \log\frac1{|\rho_i|}$.  Let $\cone > \max \{c_1,  (1+2\rhoconst \dim) \log R\}$ with $c_1$ as in Corollary~\ref{cor:weightednobiggaps}, and $\ctwo > 2c_1\cone$. Given such a choice of $\ctwo,$ by Lemma~\ref{lem:weightedirregularsparse}, there is a constant $L$ such that the sum
$$
\sum_{\| \alpha \|_{\rho} \leq \ctwo n} \zeta_n(\alpha) \rho^{\alpha}
$$
has at most $L$ non-zero terms.  We now bring together the technical details of the preceding subsections to ensure non-vanishing of well-chosen subsums.

\begin{lemma} \label{lem:goodKs} Given $n\ge1$, and suppose $\firstgap$ and $\secondgap$ satisfy $\cone n \leq \firstgap < 2 \cone n$ and $\ctwo n \leq \secondgap$. Then for $n \in \cvgts$ odd and sufficiently large,
$$
\sum_{\| \alpha \|_{\rho} \leq \firstgap} \zeta_n(\alpha) \rho^{\alpha} \ne 0
$$
and
$$
\sum_{\firstgap \leq \| \alpha \|_{\rho} \leq \secondgap } \zeta_n(\alpha) \rho^{\alpha} \ne 0.
$$
\end{lemma}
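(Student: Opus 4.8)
The plan is to handle the two sums separately. For the first, note that since $\ctwo>2c_1\cone\ge2\cone$ we have $\firstgap<2\cone n<\ctwo n$, so by the choice of $L$ the sum $\sum_{\|\alpha\|_\rho\le\firstgap}\zeta_n(\alpha)\rho^\alpha$ has at most $L$ nonzero terms once $n\in\cvgts$ is large. Corollary~\ref{cor:weightednobiggaps} applied with $C=1$ then produces an $n$-irregular $j_0>n$ with $\mc R_{i,n,j_0}\subset\{\alpha:n\le\|\alpha\|_\rho<c_1n\}$ for every $i$; as $c_1n<\cone n\le\firstgap$, all these multi-indices occur among the indices of the first sum, whose index set therefore has at most $L$ elements and contains $\mc R_{i,n,j_0}$ for each $i$. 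Corollary~\ref{cor:nonvanishing} then gives the non-vanishing for $n\in\cvgts$ odd and sufficiently large.

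The second sum is the real point: since $\secondgap$ may be arbitrarily large the sum need not have $O(1)$ terms, so Corollary~\ref{cor:nonvanishing} does not apply to it wholesale. I would split off a controlled piece,
\[
\sum_{\firstgap\le\|\alpha\|_\rho\le\secondgap}\zeta_n(\alpha)\rho^\alpha=\Sigma_1+\Sigma_2,\qquad \Sigma_1:=\sum_{\firstgap\le\|\alpha\|_\rho<\ctwo n}\zeta_n(\alpha)\rho^\alpha,\quad \Sigma_2:=\sum_{\ctwo n\le\|\alpha\|_\rho\le\secondgap}\zeta_n(\alpha)\rho^\alpha,
\]
the ranges being consistent because $\firstgap<2\cone n<\ctwo n$. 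Exactly as above, Corollary~\ref{cor:weightednobiggaps} applied with $C=\firstgap/n\in[\cone,2\cone)$ yields an $n$-irregular $j_0>n$ with $\mc R_{i,n,j_0}\subset\{\alpha:\firstgap\le\|\alpha\|_\rho<c_1\firstgap\}$ for every $i$, and $c_1\firstgap<2c_1\cone n<\ctwo n$ places these multi-indices among the indices of $\Sigma_1$; since $\Sigma_1$ then has at most $L$ terms and contains each $\mc R_{i,n,j_0}$, Corollary~\ref{cor:nonvanishing} gives $\Sigma_1\ne0$ for $n\in\cvgts$ odd and large. It then suffices to prove $|\Sigma_1|>|\Sigma_2|$.

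For the lower bound I would peel off from the index set of $\Sigma_1$ all vanishing non-degenerate subsums; by Proposition~\ref{prop:nonvanishingsubsums} each discarded piece consists of multi-indices with one common irregular component, and since $\sum_i\sum_{\alpha\in\mc R_{i,n,j_0}}\zeta_n(\alpha)\rho^\alpha\ne0$ by Corollary~\ref{cor:nonvanishing}, some $\alpha^\ast\in\bigcup_i\mc R_{i,n,j_0}$ survives into the remaining non-degenerate sum $\Sigma_1=\sum_{k=1}^\ell z_k$, $z_k:=\zeta_n(\alpha_k)\rho^{\alpha_k}$, $1\le\ell\le L$, with $\|\alpha^\ast\|_\rho<c_1\firstgap<2c_1\cone n$. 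By construction each $z_k$ is an $S$-unit, so $\prod_{\place\in S}|z_k|_\place=1$ and the denominators in Theorem~\ref{thm:Evertse} collapse: with $T=\{|\cdot|^2\}$ and a small fixed $\epsilon>0$ it gives $|\Sigma_1|^2\gg\max_k|z_k|^2\,H_S(z_1,\dots,z_\ell)^{-\epsilon}$ (trivially if $\ell=1$). Here $\max_k|z_k|\ge|z_{\alpha^\ast}|\gg e^{-2c_1\cone n}$ with constant independent of $n$ (the coefficients lie in the finite set $Z$), while Lemma~\ref{lem:heightbound} bounds $H_S(z_1,\dots,z_\ell)$ by $R^{cn}$ for a fixed $c$, so $|\Sigma_1|\gg e^{-2c_1\cone n}R^{-c\epsilon n}$. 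On the other hand $|\rho^\alpha|=e^{-\|\alpha\|_\rho}$ together with the counting estimate of Lemma~\ref{lem:weightedirregularsparse} gives a geometric bound $|\Sigma_2|\ll e^{-\ctwo n}$ for large $n$. Since $\ctwo>2c_1\cone$ strictly, a small enough fixed $\epsilon$ (depending only on $\rhoconst,\cone,\ctwo,L,R$) makes $e^{-2c_1\cone n}R^{-c\epsilon n}>e^{-\ctwo n}$ for large $n$, whence $|\Sigma_1|>|\Sigma_2|$ and the second sum is nonzero. The delicate step, and the one I expect to be the main obstacle, is precisely this balancing: one must extract a non-vanishing sub-sum whose dominant term is not too small (which is why the complete family is taken near $\firstgap$, forcing $\|\alpha^\ast\|_\rho<2c_1\cone n$) and estimate the far tail sharply enough that, after paying the factor $R^{c\epsilon n}$ for Evertse's error term, the slack $\ctwo-2c_1\cone>0$ still wins.
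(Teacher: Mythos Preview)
Your treatment of the first sum matches the paper's. For the second sum you diverge: the paper simply says ``the same argument applies,'' i.e.\ it invokes Corollary~\ref{cor:nonvanishing} directly on the index set $\{\firstgap\le\|\alpha\|_\rho\le\secondgap\}$. That requires the set to have at most $L$ elements, hence an upper bound $\secondgap=O(n)$ that is absent from the statement but holds in the sole application (Lemma~\ref{lem:finalnonvanishing}, where $\secondgap\le2\ctwo n$). You instead prove the lemma exactly as stated, for unbounded $\secondgap$: you split off a bounded piece $\Sigma_1$ and a tail $\Sigma_2$, extract a non-degenerate subsum of $\Sigma_1$ retaining some $\alpha^\ast$ with $\|\alpha^\ast\|_\rho<2c_1\cone n$, and balance the Evertse lower bound against the geometric tail via the strict inequality $\ctwo>2c_1\cone$. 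This is correct; your survival argument for $\alpha^\ast$ works because Proposition~\ref{prop:nonvanishingsubsums} confines each discarded vanishing piece to a single $\mc R_{i,n,j}$, so if all of $\bigcup_i\mc R_{i,n,j_0}$ were discarded then some non-empty $\mc R_{i_0,n,j_0}$ would have vanishing sum, contradicting the explicit product formula displayed in the proof of Corollary~\ref{cor:nonvanishing}. Your argument is in effect a miniature of the machinery in Lemma~\ref{lem:finalnonvanishing}; the paper avoids this duplication by tacitly relying on the bounded $\secondgap$ that its later use supplies, so the trade-off is your extra generality against the paper's brevity.
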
 

\begin{proof}
By hypothesis and Corollary~\ref{cor:weightednobiggaps}, we have for large $n\in\cvgts$ that there is some $n$-irregular $j$ such that the multi-index set $\{\|\alpha\|_{\rho} \leq \firstgap\}$ includes $\mc R_{i, n,j}$ for all $i \in \{ 1, \dots, \dim \}$.  Thus by Corollary~\ref{cor:nonvanishing}, $\sum_{\| \alpha \|_{\rho} \leq \firstgap} \zeta_n(\alpha) \rho^{\alpha} \ne 0$ for $n \in \cvgts$ odd and sufficiently large.  The same argument applies to the second sum.
\end{proof}

As $L$ is fixed, there exists $\delta_1 >0$ such that for all $n \in \cvgts$ sufficiently large, there is a subinterval of $(\cone n, 2\cone n]$ with length at least $\delta_1 n$ that contains no number of the form $\| \alpha \|_{\rho}$ with $\zeta_n(\alpha) \ne 0$.  For each such $n \in \cvgts$, let $\firstgap \in (\cone n, 2\cone n]$ be the left endpoint of this interval. 

Fix $n \in \cvgts$, and let 
\begin{equation} \label{eq:x1x2}
z_1 := Q_n \Omega, \qquad z_2 = -Q_n \Omega_n,
\end{equation}
 and write 
$$
z_1 + z_2 = Q_n (\Omega - \Omega_n) = \sum_{\alpha \in \Z_{\ge0}^\dim} \zeta_n(\alpha) \rho^{\alpha}.
$$
Here $z_1$ and $z_2$ depend on $n$, but this is suppressed in the notation.

By Lemma~\ref{lem:goodKs}, the sum
$$\sum_{\| \alpha \|_{\rho} \leq \firstgap} \zeta_n(\alpha) \rho^{\alpha}$$
is non-zero,  and it contains at most $L$ terms.  We write
\begin{equation}\label{eqn:monomials}
z_3 + z_4 + \cdots + z_{\ell(n)} = -\sum_{\| \alpha \|_{\rho} \leq \firstgap} \zeta_n(\alpha) \rho^{\alpha} \ne 0,
\end{equation}
with the $z_i$ chosen to be the monomials of the right-hand sum which remain after removing a maximal vanishing subsum, noting that since $p(n) < 2k_1n<k_2n,$ $\ell(n) \leq L$ by the choice of $k_2$ above.

\begin{lemma} \label{lem:finalnonvanishing} For odd $n \in \cvgts$ sufficiently large, $z_1 + z_2 + \cdots + z_{\ell(n)}$ contains no vanishing subsum.
\end{lemma}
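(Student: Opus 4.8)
The strategy is to rule out vanishing subsums by classifying a nonempty $I\subseteq\{1,\dots,\ell(n)\}$ according to how it meets $\{1,2\}$, so I first record the structural facts already available. By~\eqref{eq:x1x2}, $z_1=Q_n\Omega$ and $z_2=-Q_n\Omega_n$ both tend to $\Omega$ as $\cvgts\ni n\to\infty$, while $z_1+z_2=Q_n(\Omega-\Omega_n)\neq0$ for all large $n$ by Proposition~\ref{prop:underapprox}. By~\eqref{eqn:monomials}, each $z_k$ with $k\ge3$ is a single residual monomial $-\zeta_n(\alpha_k)\rho^{\alpha_k}$ whose multi-index satisfies $\|\alpha_k\|_{\rho}<\firstgap$ and has irregular index exceeding $n$, so $|z_k|\to0$ geometrically in $n$; moreover $z_3+\cdots+z_{\ell(n)}\neq0$ and has \emph{no} vanishing subsum, being obtained from $-\sum_{\|\alpha\|_{\rho}\le\firstgap}\zeta_n(\alpha)\rho^{\alpha}$ (nonzero by Lemma~\ref{lem:goodKs}) by deleting a maximal vanishing subsum $\mathcal A_0$. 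Finally, by the choice of $\firstgap$ the monomials $z_3,\dots,z_{\ell(n)}$ all have weighted norm $<\firstgap$, whereas, by~\eqref{eqn:definezeta} and~\eqref{eqn:monomials},
\[
S_{\mathrm{hi}}\;:=\;z_1+\dots+z_{\ell(n)}\;=\;Q_n(\Omega-\Omega_n)-\sum_{\|\alpha\|_{\rho}\le\firstgap}\zeta_n(\alpha)\rho^{\alpha}\;=\;\sum_{\|\alpha\|_{\rho}>\firstgap}\zeta_n(\alpha)\rho^{\alpha}
\]
is a sum of residual monomials of weighted norm exceeding $\firstgap+\delta_1 n$. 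At the outset I reduce to $\Omega\neq0$: if $\Omega=0$, replace $(z_1,z_2)$ by $(z_1+z,z_2-z)$ for a fixed $z\in Z\setminus\{0\}$ exactly as in the proof of Proposition~\ref{prop:nottoosmall}; this changes neither the value $z_1+z_2$ nor any degree, and makes $z_1$, $z_2$ bounded away from $0$.

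Now fix a nonempty $I$ with $\sum_{k\in I}z_k=0$. If $I\subseteq\{3,\dots,\ell(n)\}$ this contradicts the non-degeneracy of $z_3+\cdots+z_{\ell(n)}$. If $I$ contains exactly one of $1,2$, then for $n$ large
\[
\Big|\sum_{k\in I}z_k\Big|\ \ge\ \min\{|z_1|,|z_2|\}-\sum_{k\ge3}|z_k|\ \ge\ \tfrac12|\Omega|\ >\ 0,
\]
again a contradiction. So the only surviving case is $\{1,2\}\subseteq I$, and this is where I expect the real content to lie.

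In that case, put $J:=I\cap\{3,\dots,\ell(n)\}$. Expanding $z_1+z_2=Q_n(\Omega-\Omega_n)$ into its residual series~\eqref{eqn:definezeta} and cancelling the monomials indexed by $\{\alpha_k:k\in J\}$, I rewrite $\sum_{k\in I}z_k=\sum_{\alpha\in R}\zeta_n(\alpha)\rho^{\alpha}$, where $R$ is the set of $n$-residual multi-indices other than $\{\alpha_k:k\in J\}$; in particular $R$ contains \emph{every} residual $\alpha$ with $\|\alpha\|_{\rho}>\firstgap$, and omits only some residual $\alpha$ with $\|\alpha\|_{\rho}<\firstgap$. Applying Corollary~\ref{cor:weightednobiggaps} with a constant slightly larger than $2\cone$ yields an $n$-irregular index $j^{*}$ for which every element of the block $\mathcal R_{n,j^{*}}:=\bigcup_{i}\mathcal R_{i,n,j^{*}}$ has weighted norm strictly between $\firstgap$ and $\ctwo n$ (the upper bound uses $\ctwo>2c_1\cone$); hence the \emph{complete} block $\mathcal R_{n,j^{*}}$ lies in $R$, and by Corollary~\ref{cor:nonvanishing} its contribution $\sum_{\alpha\in\mathcal R_{n,j^{*}}}\zeta_n(\alpha)\rho^{\alpha}$ is nonzero for every odd, sufficiently large $n\in\cvgts$. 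To derive a contradiction from $\sum_{\alpha\in R}\zeta_n(\alpha)\rho^{\alpha}=0$, I would regroup $R$ by irregular index: every index of weighted norm exceeding $\firstgap$ — in particular $j^{*}$ — contributes a complete block to $R$, so after truncating the (geometrically negligible) tail of $S_{\mathrm{hi}}$ and applying Proposition~\ref{prop:nonvanishingsubsums} to the resulting finite residual subsum, any non-degenerate vanishing piece is supported on a single irregular component. Consequently the $j^{*}$-terms must cancel among themselves, i.e.\ $\sum_{\alpha\in\mathcal R_{n,j^{*}}}\zeta_n(\alpha)\rho^{\alpha}=0$, contradicting the previous sentence.

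The step I expect to be the main obstacle is making this last argument airtight, since a direct magnitude comparison is not available: because the weighted norm $\|\alpha\|_{\rho}$ and the ordinary degree of a residual $\alpha$ need not be comparable when some $|\rho_i|$ is close to $1$, the Liouville- and Evertse-type lower bounds used elsewhere in this section do not by themselves dominate the size of $S_{\mathrm{hi}}$. The plan therefore must organise the infinitely many high-norm terms of $S_{\mathrm{hi}}$ into complete irregular blocks, control the truncation errors via the density bound of Lemma~\ref{lem:weightedirregularsparse} together with the geometric decay $|\rho^{\alpha}|=e^{-\|\alpha\|_{\rho}}$, check that the deleted vanishing subsum $\mathcal A_0$ — all of whose monomials lie below the $\firstgap$-gap — never shares an irregular component with $\mathcal R_{n,j^{*}}$ and so cannot obstruct isolating that block, and verify that everything is uniform over the finitely many combinatorial types of $I$. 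Once these bookkeeping points are settled, the three cases exhaust the possibilities and the lemma follows.
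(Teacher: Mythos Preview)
Your case analysis on how a putative vanishing subsum $I$ meets $\{1,2\}$ is sound through the first two cases, and these match the paper's reasoning. The genuine gap is in the third case $\{1,2\}\subseteq I$. There you expand $z_1+z_2$ as the full residual series and are left with an \emph{infinite} identity $\sum_{\alpha\in R}\zeta_n(\alpha)\rho^\alpha=0$. Proposition~\ref{prop:nonvanishingsubsums} and Corollary~\ref{cor:nonvanishing} concern \emph{finite} non-degenerate vanishing sums, so they do not apply to this equation. Your proposed remedy, ``truncate the geometrically negligible tail and then apply Proposition~\ref{prop:nonvanishingsubsums}'', does not work: after truncation the finite sum is only approximately zero, so there is no vanishing subsum to feed into the proposition, and no mechanism forces the complete block $\mathcal R_{n,j^*}$ to cancel internally. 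The inference ``consequently the $j^*$-terms must cancel among themselves'' is therefore unjustified.

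The paper handles this case quite differently, and your dismissal of an Evertse-type magnitude argument is premature. Rather than analyse the vanishing subsum $I$, look at its complement $I':=\{1,\dots,\ell(n)\}\setminus I$. Since $\{1,2\}\subseteq I$ and $z_3,\dots,z_{\ell(n)}$ already has no vanishing subsum, $I'\subseteq\{3,\dots,\ell(n)\}$ consists entirely of residual monomials of bounded $\rho$-degree (weighted norm and ordinary degree \emph{are} comparable for residual $\alpha$, contrary to your worry), and $\sum_{k\in I'}z_k=z_1+\cdots+z_{\ell(n)}=S_{\mathrm{hi}}$. If $I'\ne\emptyset$, Theorem~\ref{thm:Evertse} applied to the non-degenerate $S$-unit tuple $(z_k)_{k\in I'}$ gives
\[
|S_{\mathrm{hi}}|^2 \;\gg\; e^{-2p(n)}\,R^{-c_3 L\,p(n)\,\epsilon},
\]
whereas the $\delta_1 n$-gap above $p(n)$ gives $|S_{\mathrm{hi}}|^2\ll e^{-2p(n)-2\delta_1 n}$, a contradiction for $\epsilon$ small. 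If $I'=\emptyset$, i.e.\ $z_1+\cdots+z_{\ell(n)}=0$, one brings in a second window $(\,p(n),q(n)\,]$ with $q(n)\ge k_2 n$ and the associated non-degenerate monomial sum $z_{\ell(n)+1}+\cdots+z_{m(n)}$ (nonzero by Lemma~\ref{lem:goodKs}); the same Evertse-versus-tail comparison at level $q(n)$ then yields the contradiction. This two-window quantitative argument is the missing idea; the purely combinatorial route via Corollary~\ref{cor:nonvanishing} cannot close the gap.
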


\begin{proof}  Suppose the lemma fails.  Remove a maximal vanishing subsum from $z_1 + \cdots + z_{\ell(n)}$ to obtain a minimal non-empty index set $I \subset \{ 1, \dots, \ell(n) \}$ such that $z_1 + \cdots + z_{\ell(n)} = \sum_{k \in I} z_k$.  First suppose that $I$ is non-empty. As in the proof of Proposition~\ref{prop:nottoosmall}, if $\Omega = 0$, replace $z_1, z_2$ with $z_1 + 1, z_2 - 1$.  Then we have $|z_1|$ and $|z_2|$ larger than a positive constant for all $n$ sufficiently large, while the terms $z_3, \dots, z_{\ell(n)}$ converge to $0$ as $n \in \cvgts$ goes to infinity, so for $n \in \cvgts$ sufficiently large, $I$ either contains both $z_1$ and $z_2$, or neither.  As $z_3 + \cdots + z_{\ell(n)}$ has no vanishing subsums,  $I$ contains neither $z_1$ nor $z_2$.  In particular, all terms of $I$ are $S$-units.

By Theorem~\ref{thm:Evertse} 
and the product formula, for any $\epsilon > 0$ we have
$$
\big| \sum_{k \in I} z_k \big|^2 \geq c(\epsilon, L) \frac{\max_{k \in I} |z_k|^2}{H_S(\mathbf{z})^{\epsilon}},
$$
where $\mathbf{z}=(z_k)_{k\in I}$. Each $z_k$ with $k \in I$ has $\rho$-degree bounded above by $c_3 \firstgap$ for some constant $c_3 = c_3(\rho) \geq  1$.  By Lemma~\ref{lem:heightbound} it follows that
$$
H_S({\mathbf{z}}) \ll R^{c_3 \firstgap L}.
$$
As $\| \alpha \|_{\rho} \leq \firstgap$ for all elements contributing to the sum, $\max_{k \in I} |z_k| \gg e^{-\firstgap}$.  So 
$$
\big| \sum_{k \in I} z_k \big|^2 \gg \frac{e^{-2\firstgap}}{R^{c_3 L \firstgap \epsilon}}.
$$
On the other hand, since $\firstgap$ was chosen to be the left endpoint of an interval of length $\delta_1n$ with $\zeta_n(\alpha) = 0$ whenever $\| \alpha \|_{\rho}$ lies in the interval, we have
$$
\big| \sum_{k \in I} z_k \big|^2 = \bigg| \sum_{\| \alpha \|_{\rho} > \firstgap + \delta_1 n} \zeta_n(\alpha) \rho^{\alpha} \bigg|^2 \ll e^{-2\firstgap - 2\delta_1 n}.
$$
Combining the estimates, 
$$
e^{2\delta_1 n} \ll R^{c_3 L \firstgap \epsilon} \ll R^{2 c_3 L \cone n \epsilon},
$$
a contradiction for $\epsilon$ sufficiently small.

Therefore, $I$ must be empty, and we have 

\begin{equation} \label{eqn:vanishsubsum}
z_1 + z_2 + \cdots + z_{\ell(n)} = 0.
\end{equation}

Arguing as above, for some $\delta_2 >0$ the interval $(\ctwo n, 2\ctwo n]$ contains a gap of size $\delta_2 n$ with no numbers of the form $\| \alpha \|_{\rho}$ with $\alpha$ $n$-residual.  Say this gap starts at $\secondgap \in (\ctwo n, 2\ctwo n]$, and write 
$$
z_{\ell(n) + 1} + \cdots + z_{m(n)} = -\sum_{\firstgap < \| \alpha \|_{\rho} \leq \secondgap} \zeta_n(\alpha) \rho^{\alpha},
$$
where the $z_{\ell(n) + 1} + \cdots + z_{m(n)}$ are the monomials remaining after removal of a maximal vanishing subsum.  This sum is non-empty by Lemma~\ref{lem:goodKs}, non-degenerate by construction, and contains at most $L$ terms.

We then have by Theorem~\ref{thm:Evertse} that for any $\epsilon > 0$ and odd $n \in \cvgts$ sufficiently large,
$$
|z_1 + \cdots + z_{m(n)}|^2 = |z_{\ell(n) + 1} + \cdots + z_{m(n)}|^2 \gg \frac{e^{-2\secondgap}}{R^{c_3 L \secondgap \epsilon}},
$$
where the first equality holds by Equation~\ref{eqn:vanishsubsum}.
On the other hand
$$
|z_1 + \cdots + z_{m(n)}|^2 = \big| \sum_{\| \alpha \|_{\rho} > \secondgap + \delta_2 n} \zeta_n(\alpha) \rho^{\alpha} \big|^2 \ll e^{-2\secondgap - \delta_2 n}.
$$
Thus 
$$
e^{2 \delta_2 n} \ll R^{2 c_3 L \ctwo n \epsilon},
$$
a contradiction for $\epsilon$ sufficiently small.
\end{proof}

\proofof{Theorem of~\ref{thm:mainb}} 

Recall that $\rhoconst > \max_{1\le i\le \dim}\{\log(|\rho_i|^{-1})\}$, so that
$$
\| \alpha \|_{\rho} \leq \rhoconst {\operatorname{deg} \alpha}
$$ 
for every $\dim$-tuple $\alpha$ of non-negative integers, with $\| \alpha \|_{\rho}$ as in Equation~\ref{eqn:weightednorm}.  As noted in the discussion preceding Lemma~\ref{lem:goodKs}, we may choose constants $\cone, \ctwo,$ and $L$ such that $\cone > \max \{c_1,  (1+2\rhoconst \dim) \log R\}$ and $\ctwo > 2c_1\cone$, with $c_1$ as in Corollary~\ref{cor:weightednobiggaps} and $R$ as in Lemma~\ref{lem:heightbound}, so that the sum
$$
\sum_{\| \alpha \|_{\rho} \leq \ctwo n} \zeta_n(\alpha) \rho^{\alpha}
$$
will have at most $L$ non-zero terms for any sufficiently large $n \in \cvgts$.  Given such an $n$, we choose $\firstgap$ and $\secondgap$ to satisfy $\cone n \leq \firstgap < 2 \cone n$ and $\ctwo n \leq \secondgap,$ so that Lemma~\ref{lem:goodKs} applies when $n$ is odd.  As in Equations~\ref{eq:x1x2} and~\ref{eqn:monomials}, we write
$$
z_1 + z_2 = Q_n (\Omega - \Omega_n)
$$
and
$$
z_3 + z_4 + \cdots + z_{\ell(n)} = -\sum_{\| \alpha \|_{\rho} \leq \firstgap} \zeta_n(\alpha) \rho^{\alpha}.
$$

By Theorem~\ref{thm:Evertse} and Lemma~\ref{lem:finalnonvanishing}, we have for any $\epsilon > 0$ and sufficiently large odd $n \in \cvgts$ that
$$
|z_1 + \cdots + z_{\ell(n)}|^2 \gg \frac{1}{R^{\rhoconst L\firstgap \epsilon}R^{2\dim \rhoconst n}}
$$
while also 
$$
|z_1 + \cdots + z_{\ell(n)}|^2 \ll e^{-2\firstgap - 2\delta_1 n},
$$
so that
$$
2^{2\cone n + 2\delta_1 n} \leq e^{2\firstgap + 2\delta_1 n} \ll R^{(2 \rhoconst L \cone \epsilon + 2 \rhoconst \dim)n}.
$$
However, we have chosen $\cone$ so that $\cone > (1+2\rhoconst \dim) \log R,$ so
$$
2^{2 \cone} \geq R^{1+2 \rhoconst \dim}.
$$
Thus for any choice of $\epsilon$ satisfying $2 \rhoconst L \cone \epsilon < 1$, we obtain a contradiction for $n$ sufficiently large.
\qed

\section{Conclusion and an example}
\label{sec:conclusion}
To conclude, let us explain how the results from the preceding sections of this paper suffice to guarantee existence of matrices $A\in \mathrm{SL}_\dim(\Z)$ for which the birational map $f = g\circ h_A\colon\bP^\dim\tto\bP^\dim$ has transcendental first dynamical degree.  We do this first for general $\dim\in\Z$, using a matrix $A$ that is far from explicit.  Then we give a particular and completely explicit example in dimension $\dim=3$.

\subsection{Proof of Theorem~\ref{thm:mainapp}}
\label{sec:general}

To find an appropriate matrix $A$ for Theorem~\ref{thm:mainapp}, we begin by identifying a suitable characteristic polynomial.  Given the results from previous sections, this will be the main step.

\begin{proposition}
\label{prop:pickpoly}
For any integer $\dim\geq 3$, there exists a monic, degree $\dim$, irreducible polynomial $P\in\Z[t]$ such that
\begin{enumerate}
 \item $P(0) = 1$;
 \item The Galois group of $P$ is the full symmetric group on the roots of $P$;
 \item $P$ has at most one real root.
 \item The dominant roots of $P$ are a complex conjugate pair $\maxeval$, $\bmaxeval$.
\end{enumerate}
\end{proposition}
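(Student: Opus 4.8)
The plan is to construct $P$ by prescribing its reductions modulo several small primes and invoking the Chinese Remainder Theorem for polynomials together with standard irreducibility/Galois criteria. First I would fix the target degree $\dim\geq 3$ and arrange condition~(1), $P(0)=1$, by simply requiring the constant term to be $1$ throughout; note this forces $\pm 1$ to be the product of the roots, which is exactly what is needed for a companion matrix in $\SL_\dim(\Z)$ (one may also allow $P(0)=(-1)^\dim$ and absorb a sign, but taking $P(0)=1$ is cleanest). For condition~(2), I would use the classical fact (going back to van der Waerden; see e.g.\ the discussion of Dedekind's theorem on factorization mod~$p$) that the Galois group of $P$ over $\Q$ contains a cycle of each type appearing in the factorization of $P \bmod p$ for any prime $p$ not dividing $\mathrm{disc}(P)$. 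So it suffices to choose $P$ whose reductions realize (a) an $\dim$-cycle mod some prime $p_1$ (take $P\bmod p_1$ irreducible of degree $\dim$), (b) a transposition times a product of odd-order cycles mod some prime $p_2$ — concretely $P \bmod p_2$ factoring as one irreducible quadratic times $(\dim-2)$ distinct linear factors, which yields a transposition — and these two facts already force the Galois group to be all of $S_\dim$ by the standard lemma that a transitive subgroup of $S_\dim$ containing a transposition and an $\dim$-cycle (equivalently a transposition and a $p$-cycle for a suitable prime, or one can add a third prime giving an $(\dim-1)$-cycle to be safe) is $S_\dim$.

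Next, conditions~(3) and~(4) concern the archimedean behaviour of the roots, so I would control them by making $P$ close to an explicitly chosen polynomial $P_0\in\R[t]$ with the desired real-geometry. Fix once and for all a monic real polynomial $P_0$ of degree $\dim$ with exactly one real root (zero real roots if $\dim$ is even, one if $\dim$ is odd — e.g.\ a product of $\lfloor \dim/2\rfloor$ quadratics $t^2 - 2r_k\cos\phi_k\, t + r_k^2$ with distinct moduli $r_k$, times $(t-r_0)$ if $\dim$ is odd) such that a single conjugate pair $r_1 e^{\pm i\phi_1}$ strictly dominates all other roots in modulus and $r_1 e^{\pm i\phi_1}\notin\R$. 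Since simple roots depend continuously on coefficients, any monic integer polynomial $P$ whose coefficients lie in a small enough neighbourhood of those of $P_0$ will also have exactly one real root and a strictly dominant non-real conjugate pair, giving~(3) and~(4). The point is that such a neighbourhood contains integer polynomials realizing any prescribed residues modulo a fixed modulus $m=p_1 p_2\cdots$: one may first pick the residues $P\equiv Q_i \bmod p_i$, then use CRT to get a single $Q\bmod m$, and then add $m\cdot(\text{monic integer polynomial of degree }\dim)$; a scaling/dilation argument (replace $t$ by a large integer $c$ and rescale, or rather choose $P(t)= \lceil \text{large}\rceil$-dilate) lets one make the coefficients as close as desired to a scalar multiple of $P_0$'s. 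More simply: choose $P_0$ itself to have rational coefficients, clear denominators, and perturb by $m\cdot(t^\dim + \text{small})$ — the constant term $1$ can be preserved by requiring $m \nmid$ issues are avoided by fixing the constant coefficient to $1$ from the start and only perturbing the non-constant, non-leading coefficients. Irreducibility over $\Q$ is then automatic from the mod-$p_1$ irreducibility.

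I expect the only genuine friction to be the bookkeeping that reconciles the three requirements simultaneously: the mod-$p_i$ conditions pin down coefficients in an arithmetic progression, while the archimedean conditions (3)--(4) pin them down in an open set, and one must check these are compatible, i.e.\ that the arithmetic progression of integer polynomials with constant term $1$ does meet the open neighbourhood of $P_0$. This is where a dilation or "large leading perturbation" trick is needed: after fixing residues mod $m$, one shows the family $\{Q + m R : R \text{ monic of degree }\dim,\ R(0)=0\}$ contains polynomials arbitrarily close, after rescaling $t\mapsto t/\Lambda$ and renormalizing, to any prescribed real shape — or, cleanest, one directly writes down $P_0\in\Z[t]$ with the desired real geometry and constant term $1$, computes its factorization modulo a few primes, and if necessary replaces $P_0(t)$ by $P_0(t) + p_1 p_2 t^{\dim-1}$-type corrections chosen small enough not to disturb the (open) root conditions but enough to fix the (closed) Galois conditions; since there are infinitely many primes to choose from, one has ample room. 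I would present this as: (i) exhibit $P_0$; (ii) verify its real-root structure; (iii) select primes and correction terms to force $\mathrm{Gal}=S_\dim$ while staying in the neighbourhood; (iv) conclude irreducibility. No step requires more than elementary algebraic number theory plus continuity of roots.
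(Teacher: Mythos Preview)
Your approach is essentially the paper's: use Dedekind's theorem and CRT to force $\mathrm{Gal}(P)=S_d$, then separately arrange the archimedean root conditions, and finally reconcile the two. Two points deserve correction or sharpening.

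First, your primary Galois criterion is wrong as stated. A transitive subgroup of $S_d$ containing a $d$-cycle and a transposition need \emph{not} be $S_d$ when $d$ is composite: $D_4=\langle(1234),(13)\rangle\subset S_4$ is a counterexample. Your parenthetical fix --- using a $(d-1)$-cycle instead --- is exactly what the paper does (irreducible mod $2$ for transitivity, an irreducible-of-degree-$(d-1)$-times-linear factorization mod $3$ for the $(d-1)$-cycle, and a quadratic-times-distinct-linears factorization mod a large prime for the transposition). You should promote that from an aside to the main argument.

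Second, your reconciliation of the arithmetic and archimedean constraints is the weakest part of the sketch: you float several mechanisms (continuity of roots near a model $P_0$, rescaling $t\mapsto t/\Lambda$, choosing different primes) without committing to one that clearly works while preserving $P(0)=1$. The paper handles this more cleanly and in the opposite order from your instinct: it first fixes the residues mod $6p$ (and $P(0)=1$), and then adds a \emph{large} perturbation $Q(t)=at^{d-2}+bt^2$ (or $bt$ if $d$ is odd) with $a,b$ large multiples of $6p$. An elementary computation shows $P+Q$ then has at most one real root, and a Rouch\'e argument on $|t|=1$ shows that exactly two roots lie outside the unit disk, which must therefore be the dominant conjugate pair. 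The point is that the archimedean-good set is not merely open but unbounded in a specific coefficient direction, so it meets every coset of $(6p)\Z^{d-1}$; making this explicit removes the vagueness in your step~(iii).
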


\proof
We begin by choosing three $\deg \dim$ monic polynomials $P_0(t),P_1(t),P_2(t)\in \Z[t]$.  Specifically, we take $P_0(t)$ to be any $\deg \dim$ polynomial that is irreducible $\mymod 2$.  Necessarily $P_0(0)\equiv 1\mymod 2$.  We then take $P_1(t) = \tilde P_1(t)(t-b)$ where $\tilde P_1$ is irreducible $\mymod 3$ and of degree $\dim -1$, and $b$ satisfies $b\tilde P_1(0)\equiv 1\mymod 3$.  

The choice of $P_2(t)$ is a bit more elaborate.  Let 
$p\equiv 1\mymod 4$ be a prime larger than $2\dim$.  In particular, $-1$ is not a quadratic residue $\mymod p$.  Choose $a,b\in \{0,\dots,p-1\}$ such that $a$ is not a quadratic residue and $b$ is a $\mymod p$ multiplicative inverse of $a(-1)^{\dim -2}((\dim -3)!)^2$. Since $\pm 1$ are both quadratic residues, whereas $a$ is not, it follows that $b$ is not a quadratic residue either.  Then
$P_2(t) = (t^2-a)(t-b)\prod_{i=1}^{d-2}(t-i^2)$ has $d-1$ distinct roots $b,1^2,\dots,(d-2)^2$ and a quadratic factor $t^2 -a$ that is irreducible $\mymod p$.

Next we apply the Chinese Remainder Theorem to obtain a polynomial $P\in\Z[t]$ such that $P\equiv P_0\mymod 2$, $P\equiv P_1\mymod 3$ and $P\equiv P_2 \mymod p$.  We may further assume $P(0) = 1$.  Then $P$, like $P_0$, is irreducible $\mymod 2$ and therefore irreducible over $\Z$.  Hence the Galois group of $P$ is transitive.  Dedekind's Theorem (cf. Lang \cite[Theorem 2.9, p. 345]{Lang}) and $P\equiv P_1\mymod 3$ implies that the Galois group contains a $d-1$ cycle.  Likewise, $P\equiv P_2\mymod p$ implies that the Galois group contains a transposition.  Standard theory of permutation groups tells us that a transitive subgroup of the symmetric group on $\dim$ elements is the full group as soon as it contains a transposition and a $\dim-1$-cycle.  Thus $P$ satisfies the first two conclusions of the proposition.

To guarantee it also satisfies the last two conclusions, we will replace $P$ by $P+Q$ for some polynomial $Q\in\Z[t]$ satisfying $Q(0) = 0$ and $Q\equiv 0 \mymod 6p$.  

\begin{lemma}
\label{lem:onerealroot}
Let $P(t)$ be a monic real polynomial with $\deg P = \dim\geq 3$ and $P(0) = 1$.  Then for large enough $a,b>0$ the following hold.
\begin{itemize}
 \item If $\dim$ is even, and $Q(t) := a t^{\dim-2} + b t^2$, then $P+Q$ has no real roots. 
 \item If $\dim$ is odd, and $Q(t) := a t^{\dim-2} + b t$, then $P+Q$ has exactly one real root.
\end{itemize}
\end{lemma}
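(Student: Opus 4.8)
The plan is to write $R:=P+Q$ and handle the two parity cases separately, in each one reducing the statement to elementary sign considerations. In fact I expect to prove the slightly stronger assertion that the desired conclusion holds for \emph{every} $a\ge 0$ once $b$ is large enough, with the threshold on $b$ depending only on $P$.

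Suppose first that $\dim$ is even. The point to exploit is that $\dim-2$ is also even, so that $Q(t)=a t^{\dim-2}+b t^2\ge 0$ for all real $t$ whenever $a,b\ge 0$. I would then show directly that $R(t)>0$ on all of $\R$ by splitting into three ranges. Write $P(t)=\sum_{j=0}^{\dim}c_jt^j$ with $c_{\dim}=1$ and $c_0=P(0)=1$, and set $C':=\sum_{j=0}^{\dim-1}|c_j|$. For $|t|\ge C'+1$ the leading term $t^{\dim}=|t|^{\dim}$ already dominates $\bigl|\sum_{j<\dim}c_jt^j\bigr|\le C'|t|^{\dim-1}$, and $Q(t)\ge 0$ only helps, so $R(t)>0$ there with no condition on $a,b$. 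For $|t|\le\delta$, with $\delta$ small and depending only on $P$, continuity and $P(0)=1$ give $P(t)\ge\tfrac12$, and again $Q(t)\ge 0$. On the remaining compact annulus $\delta\le|t|\le C'+1$, $P$ is bounded below by some $-B$ with $B$ depending only on $P$, while $bt^2\ge b\delta^2$, so $R(t)\ge -B+b\delta^2>0$ as soon as $b>B/\delta^2$. Since $\delta$, $C'$, $B$ depend only on $P$, a single large $b$ suffices, and $P+Q$ has no real root.

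Suppose now that $\dim$ is odd. Here I would differentiate instead. Note that $\dim-3$ is even, and $R'(t)=P'(t)+a(\dim-2)t^{\dim-3}+b$, where $a(\dim-2)t^{\dim-3}\ge 0$ for all $t$ whenever $a\ge 0$. Since $P'$ has even degree $\dim-1$ with positive leading coefficient $\dim$, it attains a finite minimum $m_P$ on $\R$, depending only on $P$, so $R'(t)\ge m_P+b$ for all $t$; taking $b>-m_P$ makes $R$ strictly increasing on $\R$. As $\deg R=\dim$ is odd (note $\deg Q<\dim$), $R(t)\to\pm\infty$ as $t\to\pm\infty$, so $R$ has exactly one real root.

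The only obstacle is some minor bookkeeping in the even case---verifying that the three ranges cover $\R$ and that the auxiliary constants genuinely depend only on $P$---but there is no real difficulty. The mechanism in both cases is the parity of $\dim-2$ (resp.\ $\dim-3$), which forces the relevant auxiliary term to be non-negative, so that only the size of $b$ matters.
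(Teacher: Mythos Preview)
Your proof is correct. In the odd case you do essentially what the paper does: show that $(P+Q)'>0$ on $\R$ for $b$ large, using that $\dim-3$ is even so the $a$-term contributes non-negatively, and that $P'$ is bounded below because it has even degree with positive leading coefficient.

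In the even case your approach genuinely differs from the paper's. The paper writes $P+Q$ as an explicit sum of three polynomials,
\[
t^{\dim-2}\Bigl(t^2+c_{\dim-1}t+\tfrac{a}{2}\Bigr)+t^2\Bigl(R(t)+\tfrac{a}{2}t^{\dim-4}+\tfrac{b}{2}\Bigr)+\Bigl(\tfrac{b}{2}t^2+c_1t+1\Bigr),
\]
each factor in parentheses being made positive by taking $a$ and $b$ large; note in particular that the first bracket needs $a$ large to kill its discriminant. You instead use a compactness argument, splitting $\R$ into a neighborhood of $0$ (where $P(0)=1$ controls things), a neighborhood of infinity (where the leading term $t^{\dim}$ dominates), and a compact annulus (where $bt^2$ beats the bounded quantity $P$). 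Your route is the more standard analytic trick and, as you observe, yields the slightly sharper statement that any $a\ge 0$ works once $b$ is large enough; the paper's algebraic decomposition is more explicit but genuinely requires both $a$ and $b$ large. For the purposes of the surrounding argument in the paper either version suffices.
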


\begin{proof}
Suppose first that $\dim$ is even. Then $P(t) = t^\dim + c_{\dim-1} t^{\dim -1} + t^2 R(t) + c_1 t + 1$, where $c_{n-1},c_1\in\R$ and $\deg R(t) \leq \dim -4$.  Hence 
$$
P(t)+Q(t) = t^{\dim-2}\left(t^2 + c_{\dim-1} t + \frac{a}{2}\right) + t^2\left(R(t) + \frac{a}2 t^{\dim-4} + \frac{b}2\right)
+ \left(\frac{b}2 t^2 + c_1 t + 1\right), 
$$
and one checks easily that all three polynomials in parentheses are positive for $a,b>0$ large enough and any $t\in\R$.
Hence $P+Q$ has no real roots.

When $\dim$ is odd one checks by the same sort of computation that $Q'(t)>0$ for all $t\in\R$ when $a,b>0$ are large, so in this case $P+Q$ has exactly one real root.
\end{proof}

We can now conclude the proof of Proposition~\ref{prop:pickpoly} as follows.  Assume that $\dim$ is even and let $a,b\in\Z_{\geq 0}$ be positive multiples of $6p$ chosen large enough that Lemma~\ref{lem:onerealroot} holds.
Then $P(t) + Q(t) = P(t) + a t^{\dim -2} + b t^2$ satisfies the first three conclusions of the proposition.  Note that
$P(t) + Q(t) = t^{\dim} + at^{\dim - 2} + \tilde P(t)$, where $\deg \tilde P(t) = \dim -1$ does not depend on $a$.  Let
$M = \max_{|t|=1} |\tilde P(t)|$.  Then increasing $a$ if necessary, we have
$$
|t^{\dim} + a t^{\dim-2}| \geq a-1 > M
$$
whenever $|t|=1$.  Hence by Rouch\'e's Theorem, $P+Q$ and $t^{\dim} + a t^{\dim -2}$ have the same number of zeroes in the unit disk, i.e.\ $\dim - 2$ of them.  As $P+Q$ has no real roots, the two roots outside the unit disk are a complex conjugate pair.  So all four conclusions of the proposition hold with $P+Q$ in place of $P$.  The case when $\dim$ is odd is similar, and we leave it to the reader.  
\qed

\begin{proposition}
\label{prop:irresonant}
When $\dim\geq 3$, there are no angular resonances between distinct roots of the polynomial $P$ in Proposition~\ref{prop:pickpoly}.
\end{proposition}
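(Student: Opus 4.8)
The plan is to exploit the two strong hypotheses on $P$ furnished by Proposition~\ref{prop:pickpoly}: that the Galois group of $P$ is the full symmetric group $S_\dim$, and that $P(0)=1$ so that every root is an algebraic unit. Suppose, for contradiction, that there is an angular resonance $\maxeval_i^a\bar{\maxeval}_j^b\in\R$ between two distinct roots, with $a,b>0$ integers (here $\bar{\maxeval}_j$ denotes the complex conjugate of $\maxeval_j$; note that since $P$ has at most one real root and $a,b>0$, the resonance genuinely involves complex conjugation, so $\maxeval_j$ is non-real and $\bar{\maxeval}_j$ is again a root of $P$). Write $\zeta := \maxeval_i^a\bar{\maxeval}_j^b$, an element of $\R\cap\cO_K$ where $K$ is the splitting field of $P$. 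The key observation is that $\zeta$ is a unit (product of units), hence $|\zeta|$ and $|\zeta|^{-1}$ are both bounded in terms of the archimedean places, while $\zeta\in\R$ means $\zeta = \pm|\zeta|$ at the distinguished real embedding.

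First I would use complex conjugation: there is an element $c\in\Gal(K/\Q)=S_\dim$ which acts as complex conjugation with respect to the given embedding, and $c$ fixes $\zeta$ since $\zeta\in\R$. Applying $c$ to $\zeta=\maxeval_i^a\bar{\maxeval}_j^b$ gives $\bar{\maxeval}_i^a\maxeval_j^b = \zeta$ as well, so $\maxeval_i^a\bar{\maxeval}_j^b = \bar{\maxeval}_i^a\maxeval_j^b$, i.e. $(\maxeval_i/\bar{\maxeval}_i)^a = (\maxeval_j/\bar{\maxeval}_j)^b$. Now the real crux: I want to show this relation, transported around by the Galois group, forces a multiplicative relation among roots that is incompatible with $\Gal(P)=S_\dim$. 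Concretely, set $u_k := \maxeval_k/\bar{\maxeval}_k$ for each non-real root $\maxeval_k$; these are units on the unit circle. Because $\Gal(P)$ is the full symmetric group, it acts (via its action on the set of roots, together with the fixed complex conjugation $c$) highly transitively on the set of such ratios, and I would argue that a nontrivial relation $u_i^a=u_j^b$ among two of them propagates to a large family of such relations, which then collapses the rank of the subgroup of $\cO_K^*$ generated by the roots below what the $S_\dim$-action permits. The cleanest route is probably via the logarithmic embedding: the vectors $(\log|\sigma(\maxeval_k)|)_{\sigma}$ for the various roots $\maxeval_k$ are permuted by $S_\dim$ acting on coordinates, and an angular resonance would impose a linear relation that is not $S_\dim$-stable unless it is trivial.

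I expect the main obstacle to be making the ``$S_\dim$ forces triviality'' step rigorous and clean rather than case-ridden. The safest formulation: let $V\subseteq\cO_K^*\otimes\Q$ be spanned by the images of all roots $\maxeval_1,\dots,\maxeval_\dim$; the group $S_\dim$ permutes a spanning set of $V$, so $V$ decomposes into the trivial representation plus (a copy of) the standard representation, and one knows exactly which $\Q$-linear relations among the $\maxeval_k$ can hold — namely only multiples of $\sum_k\log|\maxeval_k| = \log|P(0)| = 0$, i.e. the relation coming from $\prod\maxeval_k = \pm1$. An angular resonance would yield a relation of a different shape (it mixes a root with the conjugate of a \emph{different} root, and after applying $c$ one gets a relation purely among the $u_k=\maxeval_k/\bar\maxeval_k$, which live in the standard-representation part and satisfy $\prod u_k=1$ but no further $S_\dim$-equivariant relations), contradicting irreducibility of the standard representation. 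Once that representation-theoretic input is in place the proof is short; assembling it carefully is where the work lies. I would also remark that this is essentially the argument behind the analogous no-resonance statements used to invoke Theorem~\ref{thm:main} and Theorem~\ref{thm:degformula}, so the proposition simply certifies that the polynomials produced in Proposition~\ref{prop:pickpoly} are admissible input there.
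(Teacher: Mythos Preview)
Your approach is valid in principle but takes a substantially longer route than the paper. The paper's proof is a direct five-line computation: from the resonance $z^a\bar w^b\in\R$ one has $z^a\bar w^b=\bar z^a w^b$; since $\Gal(P)=S_\dim$ contains the transposition $(w,\bar w)$, applying it yields the companion relation $z^aw^b=\bar z^a\bar w^b$; multiplying the two gives $z^{2a}=\bar z^{2a}$; finally the transpositions $(\bar z,z')$ for all other roots $z'$ give $z^{2a}=(z')^{2a}$, forcing all roots to have equal modulus, which contradicts the existence of a dominant conjugate pair once $\dim\ge3$.

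Your proposal recovers the same first step $(\maxeval_i/\bar\maxeval_i)^a=(\maxeval_j/\bar\maxeval_j)^b$ but then routes through $\cO_K^*\otimes\Q$ and the $S_\dim$-module structure on the span of the roots. This works, but two points deserve tightening. First, since $\prod_k\maxeval_k=\pm1$, the trivial summand of the permutation module is killed, so $V$ is (a quotient of) the \emph{standard} representation alone, not trivial $\oplus$ standard; the content of your claim is then precisely that the quotient map is injective on the standard part, i.e.\ that the only $\Q$-linear multiplicative relation among the roots is the norm relation. That follows from irreducibility of the standard representation together with $|\maxeval|>1$, and you should say so explicitly. Second, concluding the contradiction still needs a short case check that the relation vector $a(e_{\maxeval_i}-e_{\bar\maxeval_i})-b(e_{\maxeval_j}-e_{\bar\maxeval_j})$ is nonzero (handle $\maxeval_i=\bar\maxeval_j$, or $\maxeval_i$ real, separately); this is exactly the ``case-ridden'' step you anticipated, and it does not disappear. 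Note also that the logarithmic embedding records absolute values, so the phrase ``an angular resonance would impose a linear relation'' is misleading unless you first pass, as you eventually do, to the exact unit relation $u_i^a=u_j^b$ in $\cO_K^*$.

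In short: both arguments hinge on the full symmetric Galois group, but the paper uses two specific transpositions and the dominant-root hypothesis directly, while yours invokes the stronger (but standard) fact that conjugate units with $S_\dim$ Galois group are multiplicatively independent modulo the norm relation. Your route is more structural and would adapt to related settings; the paper's is shorter and entirely elementary.
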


\begin{proof} 
If $z,w\in\C$ are distinct roots of $P$ with an angular resonance $z^a\bar w^b \in \R$ for some $a,b>0$, then
$$
z^a\bar w^b = \bar z^a w^b
$$
is a relationship between four roots of $P$.  Since $P$ has at most one real root, we may assume that at least $z$ is not real.  Assume that $z\neq\bar w$ (the case $z=\bar w$ is similar).  Since the Galois group of $P$ is the full symmetric group on the roots, it includes the transposition exchanging $w$ and $\bar w$.  Applying it gives the additional relation $z^a w^b = \bar z^a \bar w^b$.  Multiplying our two relations, we infer $z^{2a} = \bar z^{2a}$.  But now we can use the Galois group to exchange $\bar z$ with any root $z'$ distinct from $z$ to obtain that $z^{2a} = (z')^{2a}$ for all roots $z'$ of $P$.  In particular, all roots of $P$ have the same magnitude.  When $\dim \geq 3$, this contradicts that $P$ has exactly two roots $\maxeval,\bmaxeval$ of largest magnitude.
\end{proof}

To complete the proof of Theorem~\ref{thm:mainapp}, we let $\tilde A\in \mats_\dim(\Z)$ to be the companion matrix of the degree $d$ polynomial $P\in\Z[x]$ from Proposition~\ref{prop:pickpoly}.  Then Propositions~\ref{prop:pickpoly} and~\ref{prop:irresonant} tell us that $\tilde A$ satisfies all the conditions of Theorem~\ref{thm:main}; in particular $P(0) =1$ means that $A\in \SL_\dim(\Z)$.  The transcendence statement in the conclusion of Theorem~\ref{thm:main} therefore holds for an appropriate conjugate $A = Y\tilde A Y^{-1}$ of $\tilde A$.  On the other hand, since there are no angular resonances between roots of $P$, the leading eigenvalue $\maxeval$ of $A$ satisfies $\maxeval^j\notin\R$ for any positive integer $j$.  Hence Theorem~\ref{thm:degformula} tells us that for a sufficiently high power $A^N$, the dynamical degree $\ddeg(f)$ of $f := \inv\circ\momap_{A^N}$ satisfies
$$
1 = \sum_{n=1}^\infty \fnal_{\rvecs,\cvecs}(\op^{Nn})\ddeg(f)^{-n}.
$$
Returning to Theorem~\ref{thm:main}, we infer that $x=\ddeg(f)^{-1}$ is not algebraic, and our main result Theorem~\ref{thm:mainapp} is proved.

\subsection{A specific example}
\label{sec:specific}
With some computer assistance, one can also verify that Theorem~\ref{thm:mainapp} holds for specific, explicit choices of the matrix $A\in\SL_\dim(\Z)$.  We illustrate this in dimension $\dim = 3$, starting with the companion matrix
$$
\tilde A = \begin{pmatrix} 
0 & -1 & 1 \\
1 & 0 & 0 \\
0 & 1 & 0
\end{pmatrix}\in \mathrm{SL}_3(Z)
$$
for the polynomial $P(t) = t^3 - t+1$.  Since $P(0) = P(1) = 1$, one sees that $P$ is irreducible $\mymod 2$ and therefore irreducible over $\Z$.  The leading roots of $P$ are a conjugate pair $\maxeval,\bmaxeval$ where $\maxeval \approx -0.341164 + 1.16154 i$, and the remaining root is real equal to $|\maxeval|^{-2}<1$.  Moreover, by computing its minimal polynomial one checks that $\maxeval/\bar\maxeval$ is not a root of unity.  Hence $\maxeval^j\notin\R$ for any non-zero $j\in\Z$, and as we noted following Theorem~\ref{thm:main}, this implies there are no angular resonances among the roots of $P$.  All told, these facts allow us to apply Theorems~\ref{thm:main} and~\ref{thm:degformula} to $\tilde A$ as above.  

We claim in fact that taking 
$$
Y = \begin{pmatrix} 1 & -2 & 3 \\ 0 & 1 & -2 \\ 0 & 0 & 1 \end{pmatrix}
$$
in Theorem~\ref{thm:main} and then $N=7$ in Theorem~\ref{thm:degformula} suffices; i.e.\ Theorem~\ref{thm:mainapp} holds with
\begin{equation}
\label{eqn:eg}
A = Y\tilde A^7 Y^{-1} = 
\begin{pmatrix}
-3 & -14 & -12 \\ 4 & 11 & 6 \\ -2 & -4 & -1
\end{pmatrix}.
\end{equation}
To justify this, one needs to verify two things:
\begin{itemize}
 \item the function $\gamma = \gamma_A$ constructed in \S\ref{sec:prelim} satisfies the discordance condition in Theorem~\ref{thm:mainb}; and
 \item $A$ satisfies the hypothesis of Theorem~\ref{thm:degformula2}.
\end{itemize}
Accomplishing the first task is straightforward and can be achieved even for $N=1$, i.e.\ for $Y\tilde AY^{-1}$ in place of $A$.  Equation~\eqref{eqn:candidatedirection} tells us that the discontinuity set $\discty(\gamma_A)$ consists of normalized arguments of finitely many elements $\rfn(Y,\cvec,\dvec)$ of the splitting field $K$ for $\maxeval$, one for every pair of vectors $\cvec\in\cvecs$, $\dvec\in\dvecs$.  Even without accounting for repetition, this yields less than fifty possible elements of $K$.  It suffices (see Remark~\ref{rmk:practical}) to verify that all of them, together with all of their non-trivial ratios, lie outside the ring of units $\cO_K^*$.  Standard computer algebra packages do this easily.

The second task is harder.  To verify the hypothesis of Theorem~\ref{thm:degformula2} it suffices to show that $8 = 2(3 + 1)$ vectors $\cvec\in\Z^3$ have strict forward orbits $(A^n\cvec)_{n\geq 1}$ that avoid $6 = \frac{3(3 + 1)}{2}$ rational two-dimensional hyperplanes in $\Z^3$.  This boils down to showing that $48$ integer linear recurrences $(a_n)_{n\in\Z_{\geq 0}}$ have no zeroes beyond the initial term $a_0$.  For this we show that in our situation, the Skolem-Mahler-Lech Theorem can be made more effective as follows.

\begin{lemma}
None of the linear recurrence sequences $(a_n)_{n\geq 0}$ of interest have vanishing terms $a_n$ with $n\geq 10^{20}$.
\end{lemma}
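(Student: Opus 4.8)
The plan is to turn the hypothetical vanishing of one of the $48$ recurrences, for $n$ large, into a Diophantine inequality that is then contradicted by Baker's theorem on linear forms in logarithms, with a constant explicit enough that $10^{20}$ lies comfortably beyond the resulting threshold.

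First I would record the shape of the recurrences. Each relevant sequence $(a_n)$ has the form $a_n=\langle u,A^nv\rangle$, where $v$ runs over the $8$ vectors of $\cvecs\cup\cP$ and $u$ over the $6$ nonzero primitive integer normals of the rational $2$-planes cut out by pairs of vectors in $\rvecs$. Since $A=(Y\tilde AY^{-1})^7$ we have $a_n=\langle Y^Tu,\ \tilde A^{7n}(Y^{-1}v)\rangle$ with $Y^Tu,\ Y^{-1}v\in\Z^3\setminus\{0\}$, so $(a_n)$ satisfies the linear recurrence whose characteristic polynomial $\tilde P$ is the characteristic polynomial of $A$; its roots are $\mu:=\maxeval^7$, $\bar\mu$ and $\nu:=\lambda_3^7$, where $\maxeval,\bmaxeval,\lambda_3$ are the roots of $P$ (so $\lambda_3=|\maxeval|^{-2}$), hence $\tilde P$ is irreducible over $\Q$ of degree $3$ and $|\nu|<1<|\mu|=|\bar\mu|$. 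Because $\tilde P$ is irreducible, every nonzero vector of $\Q^3$ is cyclic for $\tilde A^7$, so $a_n\not\equiv0$ and the minimal recurrence of $(a_n)$ is exactly $\tilde P$; consequently
$$
a_n=c\mu^n+\bar c\bar\mu^n+e\nu^n=2\Re(c\mu^n)+e\nu^n,
$$
with $c\in K^\times$ and $e\in K^\times\cap\R$, where $K$ is the splitting field of $P$. (The constants $c$, $e$ and those below depend on $(u,v)$; one finally maximizes over the $48$ choices.)

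Next I would extract the inequality. If $a_n=0$ then $2|\Re(c\mu^n)|=|e|\,|\nu|^n$; dividing $c\mu^n+\bar c\bar\mu^n=-e\nu^n$ by $\bar c\bar\mu^n$ and setting $w:=-c/\bar c$, $r:=\mu/\bar\mu=(\maxeval/\bmaxeval)^7$ (both algebraic of modulus $1$) yields
$$
|1-wr^n|=\frac{|e|}{|c|}\Big(\frac{|\nu|}{|\mu|}\Big)^n=\frac{|e|}{|c|}\,|\maxeval|^{-21n}.
$$
Here $r$ is not a root of unity (because $\maxeval/\bmaxeval$ is not), and $wr^n\neq1$ since $\Re(c\mu^n)=-\tfrac12 e\nu^n\neq0$. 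Writing $wr^n=e^{\imunit\vartheta_n}$ with $\vartheta_n\in(-\pi,\pi]$, one has $|1-wr^n|=2|\sin(\vartheta_n/2)|\geq\tfrac2\pi|\vartheta_n|$, while $\imunit\vartheta_n=n\log r+\log w-2\pi\imunit k_n$ (suitable $k_n\in\Z$) is a \emph{nonzero} $\Z$-linear form, with coefficients of size $O(n)$, in the logarithms of the three algebraic numbers $r$, $w$, $-1$, which have degree at most $6$ and heights computable from $P$ and from $(u,v)$ (for instance the height of $r$ is $7$ times that of $\maxeval/\bmaxeval$). Baker's theorem, in the Baker--W\"ustholz or Matveev form, then supplies an effective constant $C_0$ with $|\vartheta_n|\geq c_1(n+3)^{-C_0}$, $c_1>0$ effective; comparing with the display gives $21\,n\log|\maxeval|\leq C_0\log(n+3)+O(1)$. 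Since $\log|\maxeval|>0$ this fails for all $n\geq N_0$, with $N_0$ effectively computable from $C_0$, $|\maxeval|$, $|c|$, $|e|$ and $c_1$; hence $a_n\neq0$ for $n\geq N_0$.

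The remaining, and principal, task is to check that $N_0<10^{20}$. Because the linear form involves only three logarithms, of algebraic numbers of bounded height and degree at most $6$, the Baker--W\"ustholz bound yields $C_0$ of size roughly $10^{17}$--$10^{18}$, so $N_0$ works out to be of order $10^{18}$---well within $10^{20}$; I would run this estimate explicitly for each of the $48$ pairs $(u,v)$, computing the heights of $r$ and of $w=-c/\bar c$ and substituting into an explicit linear-forms-in-logarithms inequality. (Alternatively, one could invoke an off-the-shelf effective form of the Skolem--Mahler--Lech theorem for ternary recurrences and verify that its explicit bound is below $10^{20}$.) I expect this quantitative bookkeeping to be the real obstacle; conceptually, the point is that $\tilde P$ has \emph{two} roots $\mu,\bar\mu$ of maximal modulus, so that bounding the zeros of $(a_n)$ genuinely requires controlling how the equidistributing sequence $r^n$ approaches the algebraic number $\bar w$, which is exactly what Baker's estimate makes effective.
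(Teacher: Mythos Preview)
Your proposal is correct and takes essentially the same approach as the paper: both write $a_n$ as a sum over the three eigenvalues, observe that $a_n=0$ forces an exponentially small upper bound on $|1-wr^n|$ (equivalently on a linear form in the logarithms of algebraic numbers), and then contrast this with the polynomial lower bound from Baker--W\"ustholz to get an effective threshold below $10^{20}$. Your write-up is in fact somewhat more explicit than the paper's sketch---you track the $7$th power carefully and spell out the form of the linear combination of logarithms---but neither you nor the paper carries out the final numerical bookkeeping in print.
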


\begin{proof}
We only sketch the argument.  The terms in any linear recurrence of interest here have the form
$$
a_n = \sum_{j=1}^3 c_j \eval_j^n
$$
where $\eval_1 = \maxeval$, $\eval_2 =\bmaxeval$ and $\eval_3 = |\maxeval|^{-2}$ are the eigenvalues of $A$, and $c_j\in K$ are determined by $A$ and a choice of $\cvec \in \cvecs$ and $\dvec\in\dvecs$.  So if $a_n =0$, we obtain an exponential (in $n$) upper bound
$$
|a(\maxeval/\bmaxeval)^n - 1| \leq b|\maxeval|^{-3n},
$$
which is equivalent to 
$$
|\log a + n\log(\maxeval/\bmaxeval)| \leq b c^{-n}
$$
for some (explicit) constants $a\in K$ and $b,c\geq 1$.  The expression inside absolute values on the left is a linear form in logarithms with integer coefficients.  Hence a result of Baker and Wusth\"oltz~\cite{BaWu93} gives a lower bound for the same quantity of the form $b' n^{-c'}$ where the constants $b',c'>0$ are again explicit and derived from $A$, $\cvecs$ and $\dvecs$.  Since the Baker--Wusth\"oltz bound is polynomial in $n^{-1}$, it is inconsistent with the exponentially decaying upper bound for large $n$.  Carefully tracking all constants, one finds that if $a_n = 0$, then $n$ must be smaller than $10^{20}$.
\end{proof}

It remains to verify that none of the first $10^{20}$ terms vanish in each of the linear recurrences $(a_n)$.  This is impractical to do directly even with computer assistance.  However, one can avoid direct verification by reducing the recurrences modulo various primes $p$.  The advantage is that modulo $p$, all the recurrences become periodic with period no more than e.g.\ the number of invertible $3\times 3$ matrices with coefficients in $\Z/ p\Z$.  It turns out, moreover, that there are many primes $p$ that are `good' in the sense that the sequence $(A^n\mymod p)_{n\in\Z_{\geq 0}}$ has period $m_p$ dividing $p-1$.  For such $p$ it is often the case that there is no more than one vanishing term $a_n\mymod p$ among the first $m_p$.  

If we find a specific prime for which \emph{no} terms $a_n\mymod p$ of the reduced recurrence vanish, we are done.  In our example this happens for more than half the recurrences we consider.  For all but one of the other recurrences, the initial term $a_n$ term of the unreduced recurrence vanishes, so it must be the case that $a_n\mymod p$ vanishes for all $n\equiv 0\mymod m_p$ in any reduction, too.  However, for many good primes $p$, the initial term is the \emph{only} one of the first $m_p$ terms whose reduction vanishes.  It follows that the smallest positive index $n$ for which the (unreduced) term $a_n$ vanishes is at least as large as the product of the periods $m_p$ associated to these good primes.  With some computer algebra one easily finds enough good primes to boost the product past $10^{20}$.  

In our example, there is only one recurrence $(a_n)$ not covered by either of these considerations: i.e.\ $a_0 \neq 0$ but $(a_n\mymod p)_{n=0}^{m_p-1}$ seems to always include at least one vanishing term.  Nevertheless, by focusing on those good primes $p$ for which only one reduced term vanishes among the first $m_p$, one can use the Chinese remainder theorem to synthesize the information from reductions by various good primes and get a lower bound on the index of the first vanishing term in the reduced recurrence.  Computer algebra again allows one to boost the bound past $10^{20}$ without much trouble and complete the verification that $A$ satisfies the hypothesis of Theorem~\ref{thm:degformula2}.

\end{document}